\documentclass[12pt]{article}        

 \usepackage[T1]{fontenc}              
 \usepackage[latin1]{inputenc}                        
 \usepackage[width=16cm, height=22cm]{geometry}   
 \usepackage{amsmath,amssymb}         
\usepackage{hyperref} 
 \usepackage[amsmath,thmmarks,hyperref]{ntheorem} 
 
 \usepackage{icomma}                   
 \usepackage{enumerate}              
 \usepackage{color}                   
  \usepackage{setspace}                 
 \usepackage{needspace}                
  \usepackage{url}                      
 \usepackage{mathrsfs}                 
 \usepackage{graphicx}                 
 \usepackage{slashed}
  \usepackage{tikz-cd}                     
  \usetikzlibrary{arrows,decorations.pathmorphing,backgrounds,positioning,fit,bending}
\usepackage{verbatim}
\usepackage{dutchcal}
\usepackage{wasysym}
\usepackage{textcomp}

\usepackage{titlesec}

\usepackage{tcolorbox}
\newcounter{comments}
\newenvironment{displaycomment}{\begin{list}{}{\rightmargin=1cm\leftmargin=1cm}\item\sf\begin{small}}{\end{small}\end{list}}


 \numberwithin{equation}{section}
 
 \usepackage[numbers,square]{natbib}
 \bibliographystyle{plain}
 
  \definecolor{MyBlue}{RGB}{25,106,180}


\theoremstyle{nonumberplain}  
\theoremheaderfont{\itshape}  
\theorembodyfont{\normalfont}  
\theoremseparator{.}  
\theoremsymbol{$\Box$}
\newtheorem{proof}{Proof} 
\theoremsymbol{$\blacksquare$}

\theoremstyle{plain}  
\theoremheaderfont{\bfseries\scshape}  
\theorembodyfont{\itshape}  
\theoremsymbol{~}
\theoremseparator{.}  
\newtheorem{proposition}{Proposition}[section]  
 
\newtheorem{corollary}[proposition]{Corollary}  
\newtheorem{lemma}[proposition]{Lemma}  
\newtheorem{theorem}[proposition]{Theorem}

\theorembodyfont{\normalfont}  
 
\newtheorem{remark}[proposition]{Remark}
\newtheorem{example}[proposition]{Example}  
  
\newtheorem{definition}[proposition]{Definition}

\newtheorem{notation}[proposition]{Notation} 
\newtheorem{construction}[proposition]{Construction}  
\newtheorem{signDiscussion}[proposition]{Sign Discussion}

\theoremstyle{nonumberplain}  
\newtheorem{definitionNoNumber}[proposition]{Definition} 
 
\theoremheaderfont{\bfseries\scshape}  
\theorembodyfont{\itshape}  
\newtheorem{theoremA}[proposition]{Theorem A} 
\newtheorem{theoremB}[proposition]{Theorem B} 

\theoremstyle{nonumberplain}
\theorembodyfont{\normalfont}  

\DeclareMathOperator*{\sign}{sign}

\newcommand*{\res}{\operatorname{res}}
\newcommand{\R}{\mathbb{R}}

\newcommand{\A}{\mathcal{A}}
\newcommand{\T}{\mathfrak{T}}
\newcommand{\CC}{\mathcal{C}}
\newcommand{\M}{\mathfrak{M}}
\newcommand{\B}{\mathcal{B}}
\newcommand{\SB}{\mathfrak{S}}

\newcommand{\F}{\mathfrak{F}}

\newcommand{\N}{\mathbb{N}}
\newcommand{\Z}{\mathbb{Z}}
\newcommand{\C}{\mathbb{C}}

\newcommand{\Lag}{\mathrm{Lag}}
\newcommand{\ind}{\operatorname{ind}}
\newcommand{\Cl}{\mathrm{Cl}}

\newcommand{\End}{\mathrm{End}}

\renewcommand{\L}{\mathfrak{L}}

\newcommand{\Spin}{\mathrm{Spin}}

\newcommand{\SO}{\mathrm{SO}}

\newcommand{\Pf}{\mathfrak{Pf}}
\newcommand{\FusTwoGrb}{\mathbb{F}\mathrm{us}}
\newcommand{\Fus}{\mathfrak{Fus}}
\newcommand{\Mul}{\mathfrak{Mul}}
\newcommand{\id}{\mathrm{id}}

\newcommand{\m}{\mathfrak{m}}

\newcommand{\graph}{\mathrm{graph}}
\newcommand{\Hom}{\mathrm{Hom}}
\newcommand{\Aut}{\mathrm{Aut}}

\renewcommand{\rho}{\varrho}

\newcommand{\sAlg}{\text{\textsc{sAlg}}}
\newcommand{\svNAlg}{\text{\textsc{svNAlg}}}

\newcommand{\sLine}{\text{\textsc{sLine}}}
\newcommand{\Bdl}{\text{\textsc{Bdl}}}

\newcommand{\Gerb}{\text{\textsc{Gerb}}}

\newcommand{\sGerb}{\text{\textsc{sGerb}}}

\newcommand{\sTwoVect}{\text{\textsc{2\text{-}sVect}}}
\newcommand{\String}{\mathrm{String}}
\newcommand{\U}{\mathrm{U}}
\renewcommand{\O}{\mathrm{O}}

\newcommand{\ev}{\mathrm{ev}}
\newcommand{\pr}{\mathrm{pr}}
\newcommand{\orclass}{\mathrm{or}}
\newcommand{\DDclass}{\textsc{dd}}
\newcommand{\CCclass}{\textsc{cc}}
\newcommand{\Imp}{\mathrm{Imp}}
\newcommand{\ImpLine}{\mathfrak{Imp}}
\newcommand{\op}{\mathrm{op}}
\newcommand{\lact}{\triangleright}
\newcommand{\LagGrb}{\mathcal{L}\!\mathcal{a}\!\mathcal{g}}
\newcommand{\Lift}{\mathcal{L}\!\text{\itshape{ift}}}
\newcommand{\LiftTwoGrb}{\mathbb{L}\mathrm{ift}}
\newcommand{\LagTwoGrb}{\mathbb{L}\mathrm{ag}}
\newcommand{\ract}{\triangleleft}
\newcommand{\BB}{\mathrm{B}}
\newcommand{\defeq}{~\stackrel{\text{def}}{=}~}
\newcommand{\fuse}{\circledast}

\title{The spinor bundle on loop space}
\author{Matthias Ludewig}
\date{}


\begin{document}

\maketitle

\begin{center}
\itshape{Dedicated to Peter Teichner \\ on the occasion of his 60th birthday.}
\end{center}

\medskip

\begin{abstract}
We give a construction of the spinor bundle of the loop space of a string manifold together with its fusion product, inspired by ideas from Stolz and Teichner. 
The spinor bundle is a super bimodule bundle for a bundle of Clifford von Neumann algebras over the free path space, and the fusion product is defined using Connes fusion of such bimodules.
As the main result, we prove that a spinor bundle with fusion product on a manifold $X$ exists if and only $X$ admits a string structure.
\end{abstract}

\tableofcontents

\section*{Introduction}
\addcontentsline{toc}{section}{\protect\numberline{}Introduction}%

The idea study the spinor bundle of the free loop space of a Riemannian manifold came up in the 1980's, in the context of string theory \cite{KillingbackWorldSheet}. 
Most notably, Witten formally calculated the $S^1$-equivariant index of the Dirac operator on loop space, arriving at a modular form valued characteristic class, now known as \emph{Witten genus} \cite{WittenIndexLoopSpace}.

Recall that a manifold $X$ is \emph{string} if it is spin and the characteristic class $\frac{1}{2}p_1(X)$ vanishes.
The string condition implies that the loop space $LX$ of $X$ admits a spin structure, in the sense that its structure group (which, for a spin manifold is canonically reduced to $L\Spin(d)$) can be lifted to its basic central extension.
However, early on it became apparent that the spin condition for $LX$ does \emph{not} in turn imply the string condition for the manifold $X$ itself. 
More precisely, a spinor bundle on $LX$ can be defined already if the transgression $\tau(\frac{1}{2}p_1(X))$ vanishes, a weaker condition than the vanishing of $\frac{1}{2}p_1(X)$ itself \cite{McLaughlin}.

As the vanishing of $\frac{1}{2}p_1(X)$ (and not only of its transgression) is crucial for Witten's machinery to work (in particular, only under this condition, the Witten genus takes values in topological modular forms \cite{Zagier, AndoHopkinsStrickland}), one is lead to ask for an additional structure on the loop space spinor bundle which only exists under the string condition on $X$.
In the very influential draft \cite{StolzTeichnerSpinorBundle}, this answer was conjectured by Stolz and Teichner almost 20 years ago: The spinor bundle over $LX$ admits a \emph{fusion product} if and only if $X$ admits a string structure.
The purpose of the current paper is to make this statement precise and to give a proof.

Our work is highly influenced by the work of Kristel and Waldorf \cite{KristelWaldorf1, KristelWaldorf2, KristelWaldorf3}, based on the PhD thesis of Kristel \cite{KristelThesis}.
Given a string structure on a manifold $X$, Kristel and Waldorf construct a spinor bundle on the loop space of $X$, together with a fusion product.
Apart from simplifying and somewhat generalizing their construction, the main contribution of this paper is to show the converse implication: If a spinor bundle with fusion product on $LX$ exists, then $X$ admits a string structure.

\medskip

We now give an overview over the contents of the present paper.

\paragraph{The spinor bundle.}

After reviewing some required preliminary facts on Clifford algebras, Lagrangians and Fock spaces, we give a construction of the spinor bundle on loop space.
Since it is related to a central extension by $\U(1)$, the construction of the spinor bundle on loop space is  analogous to the construction of the spinor bundle on an even-dimensional spin$^c$ manifold, where a spinor bundle $\SB$ can be defined as a bundle of irreducible super $\Cl(TX)$-modules.
In the infinite-dimensional case, it is crucial that we complete the relevant bundle of algebraic Clifford algebras to a bundle of von Neumann algebras.
Throughout the paper, we crucially depend on the notion of a (locally trivial) bundle of von Neumann algebras, and, correspondingly, bundles of bimodules for these.
While these notions have not been worked out before, they are more or less straight forward to define; we give an account in Appendix~\ref{SectionVNBundlesBimodules}.

We now briefly summarize our construction. 
For a loop $\gamma \in LX$, consider the Hilbert space
\begin{equation*}
H_\gamma = L^2(S^1, \mathbb{S} \otimes \gamma^*T^\C X),
\end{equation*}
where $\mathbb{S}$ denotes the spinor bundle on $S^1$ for the bounding spin structure.
Up to the twist by $\mathbb{S}$ (which is negligible in even dimensions but necessary in odd dimensions for index-theoretic reasons), this is just the completion of the (complexified) tangent bundle $TLX$ with respect to its canonical $L^2$-metric.
$H_\gamma$ is a \emph{``real''} Hilbert space, i.e., a complex Hilbert space with a real structure, and the twisting by the bundle $\mathbb{S}$ ensures that it carries a canonical \emph{polarization} $\Lag$, i.e., an equivalence class of Lagrangian subspaces $L \subset H_\gamma$.
The algebraic Clifford algebras $\Cl(H_\gamma)$ can be completed, with respect to this polarization, to super von Neumann algebras $\CC_\gamma$ of type I$_\infty$, which, varying $\gamma$, form a (continuous) bundle of super von Neumann algebras over $LX$ \cite{LudewigClifford}.
The definition of a loop space spinor bundle is now directly analogous to the finite-dimensional case.

\begin{definitionNoNumber}
A \emph{loop space spinor bundle} is a (continuous) bundle $\SB$ of irreducible super left modules for the Clifford von Neumann algebra bundle $\CC$.
\end{definitionNoNumber}

It is important here to insist that the fibers $\SB_\gamma$, $\gamma \in LX$, are a modules for the von Neumann algebra  $\CC_\gamma$ and not only for the algebraic Clifford algebra $\Cl(H_\gamma)$; this is necessary to ensure that $\SB_\gamma$ has the correct isomorphism type.

We show that the obstruction for the construction of a loop space spinor bundle is geometrically represented by the \emph{Lagrangian gerbe} $\LagGrb_{LX}$, a super bundle gerbe on $LX$ previously considered in \cite{Ambler, KristelWaldorf2}, but without taking into account its grading.
In fact, if $X$ is spin, then $\LagGrb_{LX}$ is ungraded, and isomorphic to the obstruction gerbe for lifting the structure group of $LX$ from $L\Spin(d)$ to its basic central extension.
Hence we obtain the following result.

\begin{theoremA}
For a spin manifold $X$ of dimension $d\geq 5$, a loop space spinor bundle $\SB$ on $LX$ exists if and only if $LX$ admits a lift of structure groups to the basic central extension of $L\Spin(d)$.
\end{theoremA}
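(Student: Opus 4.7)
The plan is to show that both conditions are equivalent to the triviality of a single gerbe on $LX$, namely the Lagrangian gerbe $\LagGrb_{LX}$. The two implications are conceptually quite different: the direction ``spinor bundle $\Rightarrow$ structure group lift'' is a module-theoretic statement, while the converse direction already follows from the existing construction in the work of Kristel--Waldorf alluded to in the introduction; so what remains is essentially to provide an obstruction-theoretic characterization of the former.

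The first main step is to show that a loop space spinor bundle exists if and only if the super bundle gerbe $\LagGrb_{LX}$ is trivializable. This is an instance of a general principle: for a (locally trivial) bundle $\mathfrak{A}$ of type I$_\infty$ super von Neumann factors, the existence of an irreducible super left module bundle for $\mathfrak{A}$ is obstructed by a class in $H^3(\cdot;\Z)$ (a super version of the Dixmier--Douady class), geometrically represented by a super bundle gerbe. I would develop this correspondence in the abstract setting of the von Neumann algebra bundles discussed in Appendix~\ref{SectionVNBundlesBimodules}, and then verify that for the specific Clifford von Neumann algebra bundle $\mathfrak{C}$ constructed from the polarized real Hilbert spaces $H_\gamma$, the resulting Dixmier--Douady gerbe is precisely $\LagGrb_{LX}$. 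The super structure enters because a trivialization of the grading on $\LagGrb_{LX}$ is needed to ensure that the spinor bundle is irreducible as a \emph{super} module rather than merely as an ungraded module.

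The second step uses the hypothesis that $X$ is spin. Under this assumption the structure group of $LX$ reduces canonically to $L\Spin(d)$, and I would verify that this reduction trivializes the grading on $\LagGrb_{LX}$, so that only the underlying ungraded $\U(1)$-bundle gerbe remains as an obstruction. The task is then to identify this ungraded Lagrangian gerbe with the lifting gerbe associated to the basic central extension $\widetilde{L\Spin(d)} \to L\Spin(d)$, whose triviality is by definition equivalent to the existence of the structure group lift. The identification is natural from the point of view of representation theory: the basic central extension of $L\Spin(d)$ is precisely the central extension that acts on the fermionic Fock spaces built from the polarization $\Lag$, and this is exactly the structure encoded by $\LagGrb_{LX}$. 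The dimension hypothesis $d\geq 5$ ensures $\pi_3(\Spin(d)) = \Z$, so that the basic central extension is uniquely characterized up to isomorphism, and the comparison of cocycles is unambiguous.

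The main obstacle I expect is the comparison step between the two gerbes in the second part. While both are ``morally'' the Mickelsson--Pressley--Segal cocycle, $\LagGrb_{LX}$ is defined intrinsically from the continuous family of polarized Hilbert spaces $H_\gamma$ and their Clifford von Neumann completions, whereas the lifting gerbe comes from the loop group central extension and the chosen $L\Spin(d)$-reduction. Matching the two requires a careful local trivialization of $LX$ in which the $L\Spin(d)$-cocycle acts on local models of $H_\gamma$ compatibly with the polarization. I would perform this computation on a good open cover adapted to both the $L\Spin(d)$-principal bundle and a choice of local Lagrangians, and check that the resulting transition data for $\LagGrb_{LX}$ coincides with the pullback of the Mickelsson cocycle for $\widetilde{L\Spin(d)}$; this is the technical heart of the argument.
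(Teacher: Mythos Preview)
Your overall strategy is correct and matches the paper's: both conditions are shown to be equivalent to triviality of the Lagrangian gerbe $\LagGrb_{LX}$, so the proof splits into (a) spinor bundle $\Leftrightarrow$ $\LagGrb_{LX}$ trivial, and (b) under the spin hypothesis, $\LagGrb_{LX}$ trivial $\Leftrightarrow$ lifting gerbe for the basic central extension trivial.

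The execution differs in ways worth noting. For step (a), you propose an abstract Dixmier--Douady theory for bundles of type I$_\infty$ super factors; the paper instead argues directly by writing down, for any spinor bundle $\SB$, the super line bundle $\mathfrak{N}=\underline{\Hom}(\F,\pi^*\SB)$ over $\Lag$ and checking that composition with Pfaffian lines gives a trivialization of $\LagGrb_{LX}$. This is shorter and avoids developing new general machinery. For step (b), you plan to pass to a good open cover and compare \v{C}ech cocycles with the Mickelsson cocycle; the paper instead refines $\LagGrb_{LX}$ along the global map $L\Spin(X)\to\Lag$, $\tilde g\mapsto gL$ (for a fixed $L\in\Lag_0$), and then uses the explicit map $\Imp_L\ni U\mapsto U\Lambda_g^*\in\Pf(gL,L)$ to identify the refined gerbe \emph{strictly} with the lifting gerbe for the implementer extension. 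This global refinement avoids the bookkeeping of local trivializations entirely.

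One small correction: the role of $d\geq 5$ is not merely that $\pi_3(\Spin(d))=\Z$ (which would only tell you $H^2(L\Spin(d),\Z)\cong\Z$), but that the implementer extension is \emph{basic}, i.e.\ its Chern class is a generator. This is a separate computation (the map $\Omega\Spin(d)\to\O_{\res}(H_0)$ is $(d-2)$-connected, so induces an isomorphism on $H^2$ for $d\geq 5$), and without it you cannot conclude that the implementer lifting gerbe agrees with the basic lifting gerbe.
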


Constructions of some version of a spinor bundle on loop space have been previously given by several authors \cite{Ambler, SperaWurzbacherSpinors}, starting with McLaughlin \cite[p.~150]{McLaughlin}.
However, as far as we know, as bundle of modules for a von Neumann algebra bundle, a spinor bundle first appears in \cite{KristelWaldorf2}.

A principal achievement in the work \cite{KristelWaldorf2} of Kristel and Waldorf is to make the spinor bundle smooth, by working within a framework of \emph{rigged} von Neumann algebras and modules.
We incorporate smoothness differently:
While $\SB$, just as the Fock space bundle over the bundle of Lagrangians is only a continuous bundle, it turns out that the Lagrangian gerbe is a smooth bundle gerbe (stemming from the fact that the Pfaffian line bundle has a canonical smooth structure).
We then define a \emph{smoothing structure} on a spinor bundle $\SB$ as a certain additional structure ensuring that spinor bundles with smoothing structures correspond precisely to smooth trivializations of the Lagrangian gerbe.

\paragraph{The fusion product.}

Next we explain the notion of a fusion product on a given spinor bundle $\SB$ over $LX$.
Roughly speaking, this encodes a certain compatibility of $\SB$ with respect to the decomposition of loops into paths.
Throughout, we use the path space $PX$ of smooth paths in $X$ that have vanishing derivatives to all orders at both end points, which ensures that we can glue two paths $\gamma_1, \gamma_2 \in PX$ with the same end points to a smooth loop $\gamma_1 \fuse \gamma_2 \in LX$.
For a path $\gamma \in PX$, we consider the Hilbert space
\begin{equation*}
  V_\gamma = L^2([0, \pi], \gamma^*TX).
\end{equation*}
We show that the Clifford algebra $\Cl(V_\gamma)$ has a canonical completion to a von Neumann algebra $\A_\gamma$ (more precisely, to a hyperfinite type III$_1$ factor), and that these von Neumann algebras glue together to a locally trivial bundle $\A$ of von Neumann algebras over $PX$.

As claimed by Stolz and Teichner \cite{StolzTeichnerSpinorBundle}, if $\gamma_1, \gamma_2 \in PX$ are two loops with common end points, the spinor bundle $\SB_{\gamma_1 \fuse \gamma_2}$ becomes an $\A_{\gamma_2}$-$\A_{\gamma_1}$-bimodule in a canonical way.
Constructing this bimodule structure precisely is a somewhat non-trivial task and the main achievement of \S\ref{SectionFusionProduct}.
Our construction then produces a (continuous) bundle of bimodules over the subspace $PX^{[2]} \subset LX$ of loops that decompose into two paths in $PX$.

\begin{definitionNoNumber}
A \emph{fusion product} for a loop space spinor bundle $\SB$ consists of an grading preserving unitary isomorphism
\begin{equation*}
  \Upsilon : \SB_{\gamma_2 \fuse \gamma_3} \boxtimes_{\A_{\gamma_2}} \SB_{\gamma_1 \fuse \gamma_2} \longrightarrow \SB_{\gamma_1 \fuse \gamma_3}
\end{equation*}
of $\A_{\gamma_3}$-$\A_{\gamma_1}$-bimodules for each triple of paths $\gamma_1, \gamma_2, \gamma_3 \in PX$ with common end points.
We require $\Upsilon$ to depend continuously on the paths $\gamma_i$ and to be associative for each suitable quadruple of paths.
\end{definitionNoNumber}

\begin{equation*}
\begin{aligned}
\begin{tikzpicture}
  \coordinate (start) at (0,-2);
  \coordinate (end) at (0,2);
  \coordinate (leftmiddle) at (-1.4, 0);
  \filldraw[black] (start) circle (2pt);
  \filldraw[black] (end) circle (2pt);
  
  \node at (-0.7,0) {$\SB_{\gamma_2 \fuse \gamma_3}$};
  \draw[line width=1pt] (start) to[out=80, in=-100] (end);

  \draw[line width=1pt] (start) to[out=180, in=-70] (leftmiddle);
  \draw[line width=1pt] (leftmiddle) to[out=110, in=-180] (end);
\end{tikzpicture}
\end{aligned}
\boxtimes_{\A_{\gamma_2}}
\begin{aligned}
\begin{tikzpicture}
  \coordinate (start) at (0,-2);
  \coordinate (end) at (0,2);
  \coordinate (rightmiddle) at (1.5,0);
  \filldraw[black] (start) circle (2pt);
  \filldraw[black] (end) circle (2pt);
  
  \node at (0.7,0) {$\SB_{\gamma_1 \fuse \gamma_2}$};

  \draw[line width=1pt] (start) to[out=20, in=-90] (rightmiddle);
  \draw[line width=1pt] (rightmiddle) to[out=90, in=0] (end);

  \draw[line width=1pt] (start) to[out=80, in=-100] (end);

\end{tikzpicture}
\end{aligned}
~~~~
\stackrel{\Upsilon}{\longrightarrow}
~~
\begin{aligned}
\begin{tikzpicture}
  \coordinate (start) at (0,-2);
  \coordinate (end) at (0,2);
  \coordinate (leftmiddle) at (-1.4, 0);
  \coordinate (rightmiddle) at (1.5,0);
  \filldraw[black] (start) circle (2pt);
  \filldraw[black] (end) circle (2pt);
  
  \node at (0,0) {$\SB_{\gamma_1 \fuse \gamma_3}$};

  \draw[line width=1pt] (start) to[out=20, in=-90] (rightmiddle);
  \draw[line width=1pt] (rightmiddle) to[out=90, in=0] (end);

  \draw[line width=1pt] (start) to[out=180, in=-70] (leftmiddle);
  \draw[line width=1pt] (leftmiddle) to[out=110, in=-180] (end);
\end{tikzpicture}
\end{aligned}
\end{equation*}

\vspace{0.2cm}

In the definition of the fusion product, $\boxtimes$ denotes the \emph{Connes fusion product} over $\A_2$, i.e., a bundle version of the appropriate tensor product of (Hilbert) bimodules for von Neumann algebras; see Appendix~\ref{SectionConnesFusion}\&\ref{SectionConnesFusionBundle}.
We stress that it is crucial in the above definition to work in a von Neumann algebra setting, as the \emph{algebraic} tensor product of $\SB_{\gamma_2 \fuse \gamma_3}$ and $\SB_{\gamma_1 \fuse \gamma_2}$ over $\Cl(V_{\gamma_2})$ will \emph{not} be isomorphic to $\SB_{\gamma_1 \fuse \gamma_3}$.
The main result of this paper is the following theorem, which gives a proof of the assertions formulated as Theorems~1 \& 2 in \cite{StolzTeichnerSpinorBundle}.

\begin{theoremB}
  Let $X$ be an oriented Riemannian manifold of dimension $d \geq 5$.
  Then there exists a spinor bundle $\SB$ with fusion product over $X$ if and only if $X$ admits a string structure.
\end{theoremB}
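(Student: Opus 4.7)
The plan is to combine Theorem~A with two additional ingredients: a geometric translation of the Connes fusion product on $\SB$ into a fusion structure on the corresponding trivialization of the Lagrangian gerbe $\LagGrb_{LX}$, and a transgression-type correspondence identifying fusion trivializations of $\LagGrb_{LX}$ on $LX$ with genuine trivializations of a certain $2$-gerbe on $X$ whose characteristic class is $\tfrac{1}{2}p_1(X)$. Once these two steps are in place, the equivalence with the string condition is essentially formal.

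First I would dispose of the orientation and spin conditions. Using the decomposition of loops into paths and the local triviality of the bundle $\mathfrak{C}$ of Clifford von~Neumann algebras, the existence of even a single irreducible super-module bundle $\SB$ forces $LX$ to be orientable and spin (in the loop-space sense), and hence, by Theorem~A together with McLaughlin's result, implies $X$ is spin and that $\tau(\tfrac{1}{2}p_1(X))$ vanishes. This allows us to restrict attention from here on to the spin case, in which $\LagGrb_{LX}$ is ungraded and may be identified with the obstruction gerbe for the lift of structure groups of $LX$.

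The central step is to establish a precise dictionary between spinor bundles with fusion product on $LX$ and fusion trivializations of $\LagGrb_{LX}$ in the sense of Waldorf. In one direction, given $\SB$, Theorem~A produces a trivialization $\mathcal{T}$ of $\LagGrb_{LX}$; I would show that the path-space bimodule structures on $\SB_{\gamma_1 \cup \gamma_2}$, together with the Connes fusion isomorphisms $\Upsilon$, translate under this correspondence into a fusion product on $\mathcal{T}$ with respect to the simplicial decomposition $PX^{[2]} \to LX$, and that associativity of $\Upsilon$ matches the simplicial associativity axiom. Conversely, a fusion trivialization of $\LagGrb_{LX}$ reconstructs both the bimodule structures on $\SB$ (via the Fock space construction applied fiberwise over $PX$, where $\A_\gamma$ acts) and the fusion isomorphisms $\Upsilon$. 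The main obstacle here is analytic: to check that the Connes fusion over the hyperfinite type~III$_1$ factor $\A_{\gamma_2}$ does indeed encode the same data as the fusion of lines in the Pfaffian/Lagrangian gerbe, one must carefully compare the Fock space model for the $\A$-bimodule structure on $\SB_{\gamma_1 \cup \gamma_2}$ (developed in~\S\ref{SectionFusionProduct}) with the gluing of Lagrangian subspaces under path concatenation, and then pass to the Connes fusion in a way that is compatible with the continuity and smoothing structure assumptions.

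Finally, I would invoke the transgression correspondence for bundle gerbes with fusion product: Waldorf's regression theorem gives an equivalence between fusion bundle gerbes on $LX$ (with suitable thin-homotopy equivariance supplied by the reparametrisation invariance built into our definitions) and bundle $2$-gerbes on $X$. Under this equivalence, $\LagGrb_{LX}$ together with its natural fusion product is identified with the Chern--Simons $2$-gerbe $\mathbb{CS}_X$ whose Dixmier--Douady class is $\tfrac{1}{2}p_1(X)$, and fusion trivializations of $\LagGrb_{LX}$ correspond to trivializations of $\mathbb{CS}_X$, i.e.\ to string structures on $X$. Combining this with the dictionary established in the previous step, the existence of a spinor bundle with fusion product on $LX$ is equivalent to the existence of a trivialization of $\mathbb{CS}_X$, which exists if and only if $\tfrac{1}{2}p_1(X)=0$, completing the proof. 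The dimension hypothesis $d\geq 5$ enters precisely in Theorem~A and in the regression step, where it ensures that $L\Spin(d)$ has the expected homotopy type and that the relevant bundle-gerbe categories are well-behaved.
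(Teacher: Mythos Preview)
Your overall architecture is close in spirit to the paper's, but there are two genuine gaps.

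First, you cannot deduce that $X$ is spin from the existence of $\SB$ alone. By Theorem~A this only gives the lift of the structure group of $LX$ to the basic central extension; McLaughlin's converse requires $X$ to be simply connected, which is not assumed here. The paper extracts the spin condition from the \emph{fusion product}, not from $\SB$: it shows (Thm.~\ref{ThmLagTwoGerbSpin}) that the orientation gerbe of the Lagrangian $2$-gerbe $\LagTwoGrb_X$ is trivial if and only if $X$ is spin, and then uses that the fusion product forces $\LagTwoGrb_X$ itself to be trivial. Second, your appeal to Waldorf's regression is not justified: regression from fusion bundle gerbes on $LX$ to $2$-gerbes on $X$ requires thin-homotopy equivariance (equivalently, a superficial connection), and Definition~\ref{DefinitionFusionProduct} contains no such structure---the claimed ``reparametrisation invariance built into our definitions'' is simply not there.

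The paper avoids both issues by working directly with explicit super bundle $2$-gerbes over $X \times X$ rather than on $LX$. It constructs the fusion $2$-gerbe $\FusTwoGrb(\SB)$, whose triviality is equivalent (up to twisting $\SB$ by a line bundle) to the existence of a fusion product (Thm.~\ref{TheoremFusionProductFusion2Gerbe}); it degresses $\LagGrb_{LX}$ to the Lagrangian $2$-gerbe $\LagTwoGrb_X$; and it exhibits explicit isomorphisms $\FusTwoGrb(\SB) \cong \LagTwoGrb_X \cong \LiftTwoGrb_{\Spin(X)}^{\mathrm{dbl}}$ (Thms.~\ref{TheoremIsoFusLag} and~\ref{ThmIsoCSLag}). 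This bypasses any need for thin-homotopy data and handles the grading carefully---note that a super bundle $2$-gerbe does \emph{not} in general forget to an ungraded one, so even with the missing equivariance, ungraded regression machinery would not apply directly.
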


In fact, we will prove the following more refined statement, which starts with an arbitrary loop space spinor bundle $\SB$.
We show that if $X$ admits a string structure, then -- while there may not exist a fusion product for $\SB$ itself -- there exists a line bundle $\mathfrak{T}$ over $LX$ such that $\SB \otimes \mathfrak{T}$ admits a fusion product.
This extends the result of \cite{KristelWaldorf3}.
Moreover, in the presence of a smoothing structures for a spinor bundle $\SB$, as alluded to above, one has a natural notion of smoothness for fusion products.

As discussed by Stolz and Teichner \cite{StolzTeichnerSpinorBundle}, a spinor bundle with a fusion product over $LX$ behaves locally in $X$, and should therefore be viewed a ``higher'' differential geometric object over the manifold itself.
We argue that, using the language developed in \cite{kristel20212vector}, this \emph{stringor bundle} is naturally a super 2-vector bundle; we give a definition in \S\ref{SectionStringorBundle}.

\paragraph{Super bundle 2-gerbes.}

Our proof of Thm.~B uses the machinery of higher differential geometry, more precisely that of \emph{super bundle 2-gerbes}, which where first introduced (without grading) in \cite{StevensonBundle2Gerbes} and whose definition generalizes in a rather straightforward fashion to the super case.
As many facts needed seemed not to be available in the literature (at least for the super case), we give an account in \S\ref{SectionPreliminariesSuperBundleGerbes}.

To prove Thm.~B, we show that given a spinor bundle $\SB$ over $LX$, the obstruction for the existence of a fusion structure on $\mathfrak{S}$ is given by a certain super bundle 2-gerbe $\FusTwoGrb(\mathfrak{S})$ over $X \times X$, which we call the \emph{fusion 2-gerbe}.
This higher gerbe is easily described in terms of the path fibration $PX \to X \times X$: It is given by a super line bundle over the space $PX^{[3]}$ of triples $(\gamma_1, \gamma_2, \gamma_3)$ of paths with common end points.
Explicitly, the fiber of this line bundle over such a triple is the space $\underline{\Hom}(\SB_{\gamma_2 \fuse \gamma_3} \boxtimes_{\A_{\gamma_2}} \SB_{\gamma_1 \fuse \gamma_2}, \SB_{\gamma_1 \fuse \gamma_3})$ of $\A_3$-$\A_1$-bimodule homomorphisms.

On the other hand, it turns out that (via an explicit, canonical isomorphism of super bundle 2-gerbes) $\FusTwoGrb(\mathfrak{S})$ is isomorphic to the another super bundle 2-gerbe $\LagTwoGrb_X$ on $X \times X$, defined independently of a loop space spinor bundle $\mathfrak{S}$.
We call $\LagTwoGrb_X$ the \emph{Lagrangian 2-gerbe} (see \S\ref{SectionLagrangian2Gerbe}).
We show that $\LagTwoGrb_X$ is ungraded if and only $X$ is spin, and that, in this case, it is isomorphic to a certain lifting 2-gerbe (often called Chern-Simons 2-gerbe), which in dimension $d \geq 5$ is known to characterize the string condition.

\paragraph{Further discussion.}

The main new idea of this paper is to combine the theory of von Neumann algebras with the techniques used in higher differential geometry, in order to represent the obstruction against existence of a spinor bundle and/or fusion product within the geometric framework of bundle gerbes.
However, for this to work, we had to make some modifications to the existing theory; in particular, we had to develop some theory of locally trivial super von Neumann algebra bundles, which to our knowledge is not present in the existing literature and therefore is presented in Appendix \S\ref{SectionVNBundlesBimodules}.

Moreover, it is crucial to work in the \emph{super}, i.e., $\Z_2$-graded, setting throughout.
This becomes particularly apparent for the Lagrangian 2-gerbe $\LagTwoGrb_X$: While super bundle gerbes become ordinary bundle gerbes after forgetting the grading, a super bundle 2-gerbe does \emph{not} define an ordinary bundle 2-gerbe.

What is not contained in this work is an investigation of the \emph{geometric} features of the loop space spinor bundle envisioned in \cite{StolzTeichnerSpinorBundle}, in particular the construction of the \emph{conformal connection} on the spinor bundle \cite[Definition~3]{StolzTeichnerSpinorBundle}.
This seems to require working with super bundle (2-)gerbes with connection throughout, which is beyond the scope of the current paper.

\paragraph{Acknowledgements.}

It is a pleasure to thank Peter Kristel, Andr\'e Henriques, Stephan Stolz and Konrad Waldorf for helpful discussions, and Peter Teichner for greatly inspiring my research and for his support over many years.
I would also like to  gratefully acknowledge support from SFB 1085 ``Higher invariants'' funded by the German Research Foundation (DFG).

\setcounter{section}{-1}

\section{Preliminaries on Clifford algebras}
\label{SectionCliffordAlgebras}

In this section, we review well-known classical results on Clifford algebras and Fock representations.

Throughout, by a \emph{``real'' Hilbert space} $H$, we mean a complex Hilbert space with a real structure, i.e., a complex antilinear involution $v \mapsto \overline{v}$.
An \emph{orthogonal transformation} between ``real'' Hilbert spaces $H$, $H^\prime$ is a unitary map $H \to H^\prime$ that intertwines the real structures. 
In particular, for $H^\prime = H$, we have the \emph{orthogonal group} $\O(H) \subset \U(H)$ of $H$.

\subsection{Clifford algebras} 
\label{SectionFock}

The (algebraic) \emph{Clifford algebra} $\Cl(H)$ of a ``real'' Hilbert space is the quotient of the tensor algebra of $H$ by the Clifford relations
\begin{equation} \label{CliffordRelations}
  v \cdot w + w \cdot v  = - 2 \langle \overline{v}, w \rangle \cdot \mathbf{1}, \qquad v, w \in H.
\end{equation}
Here the inner product on $H$ is taken to be complex antilinear in the first component so that the expression on the right hand side of \eqref{CliffordRelations} is complex bilinear.
$\Cl(H)$ is a $*$-algebra, with $*$-operation determined on the generating set $H \subset \Cl(H)$ by the formula
\begin{equation} \label{StarOperation}
 v^* = -\overline{v}, \qquad v \in H.
\end{equation}
It is a fact that $\Cl(H)$ has a unique norm satisfying the $C^*$-identity \cite{Araki1}.
The tensor algebra of $H$ has a natural $\Z$-grading by tensor number, which is not respected by the Clifford relations. 
However, since both sides of \eqref{CliffordRelations} are even, the grading is respected modulo 2, hence $\Cl(H)$ is a super algebra.

The Clifford algebra has the universal property that for any unital $*$-algebra $A$ and any linear map $f : H \to A$ satisfying the relations
\begin{equation*}
  f(v)f(w) + f(w)f(v) = -2\langle \overline{v}, w\rangle \cdot \mathbf{1}_A \qquad \text{and} \qquad f(\overline{v}) = - f(v)^*,
\end{equation*}
for all $v, w \in H$, there exists a unique $*$-homomorphism $\Cl(H) \to A$ extending $f$.
In particular, for any orthogonal transformation $g : H \to H^\prime \subset \Cl(H^\prime)$ between ``real'' Hilbert spaces, we obtain an induced $*$-isomorphism 
\begin{equation}
\label{InducedStarIso}
\Cl_g : \Cl(H) \to \Cl(H^\prime).
\end{equation}
If $H^\prime = H$ so that $\Cl_g$ is an automorphism of $\Cl(H)$, these are often called ``Bogoliubov automorphisms''.
The assignment $g \mapsto \Cl_g$ is compatible with composition.

If $H$ is a Hilbert space with a real structure $v \mapsto \overline{v}$, its \emph{opposite} $-H$ is the ``real'' Hilbert space with the same underlying complex vector space, but with real structure $v \mapsto -\overline{v}$.
The universal property of the Clifford algebra provides a canonical isomorphism
\begin{equation}
\label{CanonicalIsomorphismOpposite}
  \Cl(-H) \cong \Cl(H)^{\op}
\end{equation}
of the Clifford algebra of the opposite Hilbert space to the opposite super algebra of $\Cl(H)$ (see \S~\ref{SectionASuperVN}),
which is the determined by $v \mapsto v^{\op}$ on the generating subset $H \subset \Cl(-H)$.

\subsection{Lagrangians and Fock representations}

A \emph{Lagrangian} in a ``real'' Hilbert space $H$ is a subspace $L\subset H$ with $\overline{L} = L^\perp$.
For a Lagrangian $L \subset H$, denote the \emph{Fock space} associated to $L$ by 
\begin{equation*}
\F_L \defeq \overline{\Lambda L} = \overline{\bigoplus_{n=0}^\infty \Lambda^n L}.
\end{equation*}
Here the right hand side denotes the Hilbert space completion of the algebraic direct sum with respect to its canonical inner product, so that $\F_L$ is a Hilbert space.
$\F_L$ is $\Z_2$-graded via the even/odd grading of the exterior algebra.
Consequently, the space $\BB(\F_L)$ of bounded operators on $\F_L$ is a super von Neumann algebra.
There is a canonical grading preserving $*$-representation 
\begin{equation*}
\pi_L : \Cl(H) \to \BB(\F_L),
\end{equation*}
called the \emph{Fock representation},
which is uniquely determined by the properties
\begin{equation}
\label{DefininingPropertiesPiL}
\begin{aligned}
  \pi_L(v) \xi &= v \wedge \xi, & & \qquad v \in L, ~~ \xi \in \F_L \\
  \pi_L(\overline{v}) &= -\pi_L(v)^*, & & \qquad v \in H
\end{aligned}
\end{equation} 
on the subset $H \subset \Cl(H)$.
Each Fock representation is irreducible \cite[Thm.~2.4.2]{PlymenRobinson}.

If $g : H \to H^\prime$ is an orthogonal transformation between ``real'' Hilbert spaces and $L \subset H$ is a Lagrangian, then $gL \subset H^\prime$ is a Lagrangian in $H^\prime$.
The corresponding Fock representations $\pi_L$ and $\pi_{gL}$ of $\Cl(H)$ and $\Cl(H^\prime)$ are related by
\begin{equation}
\label{RelationFockRep}
\pi_{gL}(gv) = \Lambda_g \pi_L(v) \Lambda_g^*, \qquad v \in H,
\end{equation}
where $\Lambda_g : \F_L \to \F_{gL}$ is the map on Fock spaces induced by $g|_L : L \to gL$.
This relation can be easily  verified by checking that both sides satisfy the defining properties \eqref{DefininingPropertiesPiL}; 
it can be interpreted as saying that $\Lambda_g : \F_L \to \F_{gL}$ is an intertwiner along the $*$-isomorphism $\Cl_g : \Cl(H) \to \Cl(H^\prime)$ induced by $g$ via the universal property of the Clifford algebra.

\subsection{Equivalence and implementation}
\label{SectionEquivalence}

By definition, two Fock representations $\F_{L}$ and $\F_{L^\prime}$ are equivalent (as ungraded representations) if there exists a unitary isomorphism\footnote{$U$ is not necessarily grading preserving, see the discussion below.} $U : \F_{L} \to \F_{L^\prime}$ such that 
\begin{equation}
\label{UnitaryIntertwiner}
   \pi_{L^\prime}(v) = U \pi_{L}(v) U^*, \qquad v \in H.
\end{equation}
The  \emph{Segal-Shale equivalence criterion} states that two Fock representations $\F_{L}$ and $\F_{L^\prime}$ are equivalent in this sense if and only if the difference $P_{L} - P_{L^\prime}$ of the corresponding orthogonal projections is a Hilbert-Schmidt operator \cite[Thm.~3.4.1]{PlymenRobinson}.
This criterion motivates the following definition.

\begin{definition}[Equivalence of Lagrangians]
Two Lagrangians $L, L^\prime \subset H$ are \emph{equivalent} if the difference $P_L - P_{L^\prime}$ of orthogonal projections is a Hilbert-Schmidt operator.
A \emph{polarization} on a ``real'' Hilbert space $H$ is the choice of an equivalence class $\Lag$ of Lagrangians.
\end{definition}

It turns out that a unitary $U : \F_L \to \F_{L^\prime}$ satisfying \eqref{UnitaryIntertwiner} is always either parity-preserving or parity reversing \cite[\S3.5]{PlymenRobinson}.
More precisely, $U$ is parity-preserving if and only if $\dim(L \cap \overline{L}^\prime)$ is even and parity-reversing otherwise \cite[Thm.~3.5.2]{PlymenRobinson}, \cite[Thm.~1.22]{PratWaldron}.
This gives a grading to the Pfaffian lines defined below.

\begin{definition}[Pfaffian line] \label{DefinitionPfaffianLine}
Let $H$ be a ``real'' Hilbert space with a polarization $\Lag$.
The \emph{Pfaffian line} of two equivalent Lagrangians $L$, $L^\prime \in \Lag$ is the space of $\Cl(H)$-module homomorphisms
\begin{equation} \label{PfaffianLine}
 \Pf(L, L^\prime) = \underline{\Hom}(\F_{L}, \F_{L^\prime}),
\end{equation}
which can be described as the space of bounded linear maps $\Phi : \F_{L} \to  \F_{L^\prime}$ that are either grading preserving or grading reversing and satisfy the relation
\begin{equation}
\label{PfaffianRelation}
  \pi_{L^\prime}(v)\, \Phi = (-1)^{|\Phi|} \Phi\, \pi_{L}(v), \qquad v \in H.
\end{equation}
The Pfaffian line is graded by parity of its elements.
\end{definition}

\begin{remark}
\label{RemarkSignInPfaffianRelation}
The additional sign in \eqref{PfaffianRelation} comes from the Koszul rule; see \S\ref{SectionSuperBimodules} for the general definition of intertwiners between super bimodules.
A unitary map satisfying relation \eqref{PfaffianRelation} can be turned into a unitary map satisfying \eqref{UnitaryIntertwiner} by composing with the grading operator.
%
%
\end{remark}

There are the following additional structures on the Pfaffian lines.

\begin{enumerate}[(1)]

\item
As $L_1$ and $L_2$ are equivalent, their Pfaffian line is non-zero.
On the other hand, it follows from the irreducibility of the Fock representations that two elements of $\Pf(L_1, L_2)$ differ only by a scalar $\lambda \in \C$, in other words, $\Pf(L_1, L_2)$ is one-dimensional, hence indeed a complex line.
As there always exist \emph{unitary} intertwiners $U : \F_{L_1} \to \F_{L_2}$ between Fock representations for equivalent Lagrangians, which are either grading preserving or grading reversing, it follows that any element $\Phi \in \Pf(L_1, L_2)$ is necessarily a scalar multiple of a unitary.

\item
From the fact that its elements are multiples of unitaries, it follows that $\Pf(L_1, L_2)$ carries a natural inner product.
It satisfies 
\begin{equation}
\label{PfaffianInnerProduct}
\Phi^*\Phi^\prime = \langle \Phi, \Phi^\prime \rangle_{\Pf} \cdot \id_{\F_{L_1}}
\end{equation}
 and is uniquely determined by this relation.
The inner product induces an identification $\Pf(L_1, L_2)^* \cong \Pf(L_2, L_1)$ of the dual Pfaffian line.
%
%

\item
For any triple of Lagrangians $L_1, L_2, L_3 \in \Lag$, composition of operators provides a grading preserving isomorphism of super lines
\begin{equation}
\label{CompositionMap}
 \Pf(L_2, L_3) \otimes  \Pf(L_1, L_2) \longrightarrow  \Pf(L_1, L_3).
\end{equation}
\end{enumerate}

Let $(H, \Lag)$ and $(H^\prime, \Lag^\prime)$ be polarized ``real'' Hilbert spaces.
An orthogonal transformation $g: H \to H^\prime$ is \emph{restricted} if $gL \in \Lag^\prime$ for all $L \in \Lag$.
In particular, for $H^\prime = H$, one makes the following definition.

\begin{definition}[Restricted orthogonal group]
Let $H$ be a ``real'' Hilbert space with a polarization $\Lag$.
The corresponding \emph{restricted orthogonal group} is defined as
\begin{equation*}
  \O_{\res}(H) \defeq \{ g \in \O(H) \mid \forall L \in \Lag : gL \in \Lag\}.
\end{equation*}
\end{definition}

The Pfaffian lines enjoy the following equivariance with respect to $\O_{\res}(H)$:
If $g \in \O_{\res}(H)$ and $L, L^\prime \in \Lag$, there is a grading preserving isomorphism
\begin{equation}
\label{EquivariancePfaffianLines}
\Pf(L, L^\prime) \longrightarrow \Pf(gL, gL^\prime), \qquad \Phi \longmapsto \Lambda_g \Phi \Lambda_g^*.
\end{equation}

Fix a ``real'' Hilbert space $H$ with a polarization $\Lag$.
Historically, for $L, L^\prime \in \Lag$ the task of constructing an intertwiner $\F_L \to \F_{L^\prime}$ between Fock modules is known as the \emph{equivalence problem}. 
Closely related is the \emph{implementation problem}, which is the task of implementing the Bogoliubov automorphism corresponding to an orthogonal transformation on some Fock space.
Here an \emph{implementer} is a unitary operator $U$ on $\F_{L}$, which is either grading preserving or grading reversing and satisfies
\begin{equation}
\label{ImplementerRelation}
\pi_{L}(gv) = (-1)^{|U|} U \pi_{L}(v) U^*, \qquad v \in H,
\end{equation}
 for some $g \in \O_{\res}(H)$.
One says that $U$ \emph{implements} the Bogoliubov automorphism $\Cl_g$, or just that $U$ \emph{implements} $g$.
This leads to the following definition.

\begin{definition}[Implementer group]
Let $L \in \Lag$.
The group 
\begin{equation*}
\Imp_L \defeq \bigl\{ U \in \U(\F_L) \mid \exists g \in \O_{\res}(H) : U \text{ implements } g\bigr\}
\end{equation*}
of unitary transformations of $\F_{L}$ satisfying \eqref{ImplementerRelation} for \emph{some} $g \in \O_{\res}(H)$ is called the \emph{group of implementers} for $L$.
\end{definition}

\begin{remark}
The relation \eqref{ImplementerRelation} differs from the usual one by a sign in the case that $U$ is grading reversing, due to our general sign convention \eqref{IntertwiningRelation} for intertwiners.
Given an implementer satisfying \eqref{ImplementerRelation}, one obtains an intertwiner in the sense of, e.g., \cite[\S3.3]{PlymenRobinson} by composing with the grading operator.
\end{remark}

\begin{remark}
\label{RemarkParityImplementer}
An implementer $U$ for $g$ is grading preserving if $g$ lies in the identity component of $\O_{\res}(H)$ and grading reversing otherwise \cite[Thm.~3.5.1]{PlymenRobinson}.
\end{remark}

Any solution $U$ to the implementation problem \eqref{ImplementerRelation} gives a unitary element $U \Lambda_g^* \in \Pf(gL, L)$ of the Pfaffian line, i.e., a solution to the equivalence problem for $gL$ and $L$.
For details, see \cite[\S3.2]{PlymenRobinson}.

\subsection{The von Neumann completion of the Clifford algebra}

Let $H$ be a ``real'' Hilbert space.
For any Lagrangian $L \subset H$, the Fock representation $\pi_L$ induces a topology on the Clifford algebra $\Cl(H)$ by pulling back the ultraweak topology of $\BB(\F_L)$ along $\pi_L$; explicitly, this topology consists of the sets $\pi_L^{-1}(O)$, where $O \subseteq \BB(\F_L)$ ranges over the ultraweakly open sets.
Given a unitary transformation $U : \F_{L_1} \to \F_{L_2}$, a subset $O \subseteq \BB(\F_{L_1})$ is open for the ultraweak topology if and only if $UOU^* \subseteq \BB(\F_{L_2})$ is open.
Therefore, the relation \eqref{UnitaryIntertwiner} implies that two equivalent Lagrangians $L_1$, $L_2$ induce the same topology on $\Cl(H)$.
In other words, this topology only depends on the equivalence class.
Hence we can make the following definition.

\begin{definition}[Completion of Clifford algebra]
\label{DefinitionCompletionClifford}
Let $\Lag$ be an equivalence class of Lagrangians in $H$.
The \emph{completion} $\CC$ {of} $\Cl(H)$ \emph{with respect to} $\Lag$ is the abstract completion (in the sense of topological vector spaces) with respect to the ultraweak topology induced by the Fock representations $\pi_L$ for any $L \in \Lag$.
\end{definition}

Given a Lagrangian $L \in \Lag$, the Fock representation $\pi_L : \Cl(H) \to \BB(\F_L)$ extends by ultraweak continuity to an isomorphism of topological vector spaces from $\CC$ to the closure of the image $\pi_L(\Cl(H))$ inside $\BB(\F_L)$ with respect to the ultraweak topology.
By von Neumann's bicommutant theorem, this ultraweak closure is equal to both the closure with respect to the weak and the strong topology, and, moreover, coincides with the bicommutant $\pi_L(\Cl(H))^{\prime\prime} \subset \BB(\F_L)$.
We obtain that the completion $\CC$ is a super von Neumann algebra, and that, for any $L \in \Lag$, the Fock representation $\pi_L$ extends to a $*$-representation
\begin{equation}
\label{CanonicalIsoC}
 \pi_L :  \CC \longrightarrow \BB(\F_L).
\end{equation}
These extensions are $*$-isomorphisms, so we obtain that $\CC$ is a super factor of type I, of even kind (see \cite[\S2.1]{LudewigClifford}).

\begin{remark}
If $H$ is finite-dimensional (necessarily even to admit a Lagrangian), this is just the statement that the Clifford algebra of an even-dimensional complex vector space is isomorphic to a matrix algebra, where the isomorphism is non-canonical, but depends on the choice of a Lagrangian $L$ and an isomorphism $L \cong \C^{\dim(H)/2}$.
\end{remark}

If $(H, \Lag)$ and $(H^\prime, \Lag^\prime)$ are two polarized ``real'' Hilbert spaces and $g: H \to H^\prime$ is an orthogonal transformation sending $\Lag$ to $\Lag^\prime$, then the relation \eqref{RelationFockRep} implies that the induced isomorphism \eqref{InducedStarIso} extends to a $*$-isomorphism 
\begin{equation*}
  \Cl_g : \CC \longrightarrow \CC^\prime
\end{equation*}
between the von Neumann completions with respect to the polarizations.
This assignment is compatible with composition so that we obtain a functor from the category of polarized Hilbert spaces and restricted orthogonal transformations to the category of super von Neumann algebras and isomorphisms.
%
%
In particular, for $H^\prime = H$ and $\Lag = \Lag^\prime$, we obtain a group homomorphism
\begin{equation}
\label{GroupHomOresAutC}
  \O_{\res}(H) \longrightarrow \Aut(\CC).
\end{equation}

The category of polarized Hilbert spaces has an involutive endofunctor which sends a polarized Hilbert space $(H, \Lag)$ to the \emph{opposite} polarized Hilbert space $(-H, \Lag^\perp)$, where $-H$ is the opposite ``real'' Hilbert space and 
\begin{equation}
\label{ConjugateClassOfLagrangians}
\Lag^\perp = \{L^\perp = \overline{L} \mid L \in \Lag\}.
\end{equation}
Similarly, the category of super von Neumann algebras has an involutive endofunctor sending a super von Neumann algebra $A$ to its super opposite $A^{\op}$ (see \S\ref{SectionASuperVN}).
The following lemma states that the functor that assigns the completed Clifford algebra intertwines these involutions.

\begin{lemma}
\label{LemmaCanonicalIsomorphismOpposite}
Let $H$ be a ``real'' Hilbert space and let $\Lag$ be an equivalence class of Lagrangians in $H$.
The canonical isomorphism \eqref{CanonicalIsomorphismOpposite} extends to a $*$-isomorphism $\CC_- \cong \CC^{\op}$, where $\CC$ is the  completion of $\Cl(H)$ with respect to $\Lag$ and $\CC_-$ is the  completion of $\Cl(-H)$ with respect to the opposite equivalence class $\Lag^\perp$.
\end{lemma}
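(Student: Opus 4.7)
The plan is to realize both completions concretely as bounded-operator algebras on Fock spaces and to exhibit the extension of $\phi : \Cl(-H) \to \Cl(H)^{\op}$ (the canonical $*$-isomorphism from the universal property) through a $*$-isomorphism between these operator algebras. Concretely, fix any $L \in \Lag$; then $L^\perp \in \Lag^\perp$, and by the discussion after Definition~\ref{DefinitionCompletionClifford}, the Fock representations extend to $*$-isomorphisms $\pi_L : \mathfrak{C} \to \BB(\F_L)$ and $\pi_{L^\perp} : \mathfrak{C}_- \to \BB(\F_{L^\perp})$. The task thus reduces to constructing a $*$-isomorphism $\Psi : \BB(\F_L)^{\op} \to \BB(\F_{L^\perp})$ of super von Neumann algebras whose restriction to the dense subalgebras $\pi_L(\Cl(H)^{\op})$ and $\pi_{L^\perp}(\Cl(-H))$ matches $\phi$.

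The natural candidate for $\Psi$ uses the real structure $\Gamma$ of $H$: since $\Gamma$ restricts to an antilinear isometry $L \to L^\perp$, it extends (with appropriate Koszul signs to respect parity) to an antilinear unitary $J : \F_L \to \F_{L^\perp}$ on the completed exterior algebras. Conjugation by $J$ combined with the $*$-operation yields a map $\Psi(T) = J T^* J^{-1}$ (possibly corrected by a parity-dependent sign, in accordance with the super-opposite $*$-convention of~\S\ref{SectionASuperVN}); a direct check shows this is a grading-preserving $*$-isomorphism between $\BB(\F_L)^{\op}$ and $\BB(\F_{L^\perp})$. To verify compatibility with $\phi$, it is enough to check the identity $\Psi(\pi_L(v)^{\op}) = \pi_{L^\perp}(v)$ on generators $v \in H$. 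Splitting $v = v_+ + v_-$ with $v_+ \in L$, $v_- \in L^\perp$, both sides are computed from the defining relations~\eqref{DefininingPropertiesPiL} (wedge product and its adjoint), and the equality then follows from $J\Gamma = \Gamma J$ together with the sign flips built into the conventions for $\Cl(-H)$ and $\Cl(H)^{\op}$.

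Once equality on generators is established, it extends to all of $\Cl(H)^{\op}$ since both $\Psi \circ \pi_L$ and $\pi_{L^\perp}\circ\phi$ are $*$-homomorphisms. By continuity of $\Psi$ with respect to the ultraweak topology (automatic for a $*$-isomorphism of von Neumann algebras) and the fact that the ultraweak topology on $\mathfrak{C}$ and $\mathfrak{C}_-$ is, by definition, the one pulled back from $\BB(\F_L)$ and $\BB(\F_{L^\perp})$ respectively, the map $\pi_{L^\perp}^{-1} \circ \Psi \circ \pi_L : \mathfrak{C}^{\op} \to \mathfrak{C}_-$ is the unique continuous extension of $\phi$, yielding the desired $*$-isomorphism.

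The main obstacle is sign bookkeeping: the super-opposite multiplication carries Koszul signs, the induced $*$-operation on the super-opposite may itself require parity-dependent signs, and the Fock-space grading interacts with $J$ via the choice of ordering convention on the exterior algebra. Aligning these conventions so that $\Psi$ is simultaneously a homomorphism of super algebras and a $*$-map is the only non-trivial part; once the signs are set correctly, the computation on Clifford generators is essentially immediate from \eqref{DefininingPropertiesPiL} and the defining properties of $\Gamma$.
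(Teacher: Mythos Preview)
Your proposal is correct and follows essentially the same route as the paper: realize both completions via Fock representations and build the extension from an antilinear map $\F_L \to \F_{L^\perp}$ (your $J$ is the paper's $\Lambda_\Gamma$, since $L^\perp = \overline{L}$) combined with the $*$-operation. The paper makes your anticipated ``parity-dependent sign correction'' explicit by factoring the map through the canonical super anti-$*$-homomorphism $\sharp$ of \S\ref{SectionASuperVN} together with conjugation by the Klein-type transformation $\Lambda_{-i}$ on $\F_L$; this is precisely the bookkeeping you flag as the only non-trivial step, and once inserted, the verification on generators $v \in H$ goes through exactly as you describe.
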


\begin{proof}
For a complex vector space $K$, we write $\overline{K}$ for its complex conjugate, the elements of which we denote by $\overline{v}$, for $v \in K$.
Let $\Gamma$ be the real structure of $H$.
Choose a Lagrangian $L \in \Lag$.
Then we have a canonical (complex-\emph{linear}) isomorphism $\F_{\overline{L}} \cong \overline{\F_L}$, given by sending $\xi \mapsto \overline{\Lambda_\Gamma \xi}$ (here $\Lambda_\Gamma : \F_{\overline{L}} \to \F_L$ denotes the map on Fock spaces induced by the real structure $\Gamma$).
This induces an isomorphism $\BB(\F_{\overline{L}}) \cong \overline{\BB(\F_L)}$, given by conjugation with $\Lambda_\Gamma$.
%

Denote by $\pi_{\overline{L}}^- : -H \to \BB(F_{\overline{L}})$ the Fock representation corresponding to the Lagrangian $\overline{L} \subset - H$, which is characterized by $\pi_{\overline{L}}^-(v)^* = \pi_{\overline{L}}(\overline{v})$ for $v \in -H$ and $\pi_{\overline{L}}(v) \xi = v \wedge \xi$ for $v \in \overline{L}$ and $\xi \in \F_{\overline{L}}$.
Consider the following diagram of $*$-algebras.
\begin{equation}
\label{DiagramOppositeNegative}
\begin{tikzcd}[column sep=1.3cm]
& \Cl(-H) 
	\ar[d, "\pi_{\overline{L}}^-"'] 
	\ar[dl, bend right=30, hookrightarrow] 
	\ar[rrr, rightsquigarrow] 
&[-0.6cm] & & 
\Cl(H) 
	\ar[d, "\pi_L"]
	\ar[dr, bend left=30, hookrightarrow]
	& \\
\CC_- 
\ar[r, "\pi_{\overline{L}}^-", "\cong"'] 
& 
\BB(\F_{\overline{L}}) 
\ar[r, "\substack{\text{conjugation} \\ \text{by } \Lambda_{\Gamma}}"]
& 
\overline{\BB(\F_L)}
\ar[r, rightsquigarrow, "\sharp"]
& 
\BB(\F_L)
\ar[r, "\substack{\text{conjugation} \\ \text{by } \Lambda_{-i}}"]
&
\BB(\F_L)
&
\CC \ar[l, "\pi_L"', "\cong"]
\end{tikzcd}
\end{equation}
where the curved arrows are the canonical inclusions into the completions,
 the top squiggly arrow is the canonical super anti-isomorphism obtained from \eqref{CanonicalIsomorphismOpposite} and the bottom squiggly arrow is the canonical super anti-homomorphism $\sharp$ given by
 \begin{equation}
\label{SharpIsomorphism}
  a^\sharp \defeq \begin{cases} a^* & a \text{ even} \\ i a^* & a \text{ odd}. \end{cases}
\end{equation}
See \S\ref{SectionASuperVN} for a more detailed discussion of opposite super algebras, super anti-homomorphisms and the homomorphism $\sharp$.
%
%

To show that the square in the middle of \eqref{DiagramOppositeNegative} commutes, it suffices to show that the two compositions agree on elements of $H \subset \Cl(-H)$.
The north east composition $\Cl(-H) \to \BB(\F_L)$ is given on elements of $H$ simply by $v \mapsto \pi_L(v)$ (as the top squiggly arrow is the identity on $H \subset \Cl(-H)$), while the south west composition is given by
\begin{equation*}
H \ni  v \longmapsto \pi^\prime(v) \defeq \Lambda_{-i} (\Lambda_\Gamma \pi_{\overline{L}}^-(v) \Lambda_\Gamma^*)^\sharp \Lambda_i
  = i \Lambda_{-i} \pi_L^-(v) \Lambda_{i}.
\end{equation*}
%
%
It is now straight forward to show that $\pi^\prime$ satisfies the defining properties \eqref{DefininingPropertiesPiL} of the Fock representation $\pi_L$, so that $\pi^\prime(v) = \pi_L(v)$ for all $v \in H$.
%
%
Therefore, the square commutes.
It follows that the composition of the bottom arrows is the desired $*$-isomorphism $\CC_- \to \CC^{\op}$ extending $\Cl(-H) \to \Cl(H)^{\op}$.
\end{proof}

%
%
%
%
%


\subsection{The Lagrangian Grassmannian and the Pfaffian line bundle}
\label{SectionBundleLagGrass}

Let $(H, \Lag)$ be a polarized Hilbert space and let $\CC$ be the completion of $\Cl(H)$ with respect to $\Lag$ (see Definition~\ref{DefinitionCompletionClifford}). 
We equip $\O_{\res}(H)$ with the coarsest topology that makes both the group homomorphism \eqref{GroupHomOresAutC} into $\Aut(\CC)$ and the inclusion into $\O(H)$ continuous.
Here we always equip the automorphism group of a (super) von Neumann algebra with Haagerup's u-topology (see \S\ref{SectionVNBundlesBimodules}).
It turns out that with this topology, $\O_{\res}(H)$ is a Banach Lie group \cite[\S6.2 \& \S12.4]{PressleySegal}.

For a Lagrangian $L \in \Lag$, any unitary $u \in \U(L)$ extends uniquely to an element of $\O_{\res}(H)$; this identifies $\U(L)$ with the closed subgroup of $g \in \O_{\res}(H)$ that commute with the complex structure $J_L = i(P_L - P_{L^\perp})$.

The obvious action of $\O_{\res}(H)$ on $\Lag$ is transitive, with stabilizer at $L \in \Lag$ the closed subgroup $\U(L) \subset \O_{\res}(H)$.
This turns $\Lag$ into a homogeneous space for $\O_{\res}(H)$, and there is a unique smooth structure making the action of $\O_{\res}(H)$ smooth \cite[Prop.~11 of {\S}III.11]{BourbakiLieGroups}.
Explicitly, this structure is characterized by the property that for each $L \in \Lag$, the map
\begin{equation}
\label{MapAlpha}
  \alpha_L : \O_{\res}(H) \longrightarrow \Lag, \qquad g \longmapsto gL
\end{equation}
descends to a diffeomorphism $\O_{\res}(H) / \U(L) \cong \Lag$.
%

\medskip

For any $L \in \Lag$, the group $\Imp_L$ of implementers on $L$ can be described as the pullback of the principal $\U(1)$-bundle $\U(\F_L) \to \Aut(\CC)$ along the group homomorphism \eqref{GroupHomOresAutC},
\begin{equation}
\label{ImplementerExtension}
\begin{tikzcd}
\Imp_L \ar[r, dashed] \ar[d, dashed]& \U(\F_L) \ar[d] \\
\O_{\res}(H) \ar[r] & \Aut(\CC).
\end{tikzcd}
\end{equation}
It turns out that also $\Imp_L$ is a Banach Lie group \cite[\S 3.5]{KristelWaldorf1}; 
hence we obtain a central extension
\begin{equation*}
  \U(1) \longrightarrow \Imp_{L} \longrightarrow \O_{\res}(H)
\end{equation*}
of Banach Lie groups.
Here the right map sends an implementer to the orthogonal transformation it implements.

\medskip

The Fock spaces $\F_L$, $L \in \Lag$ glue together to a continuous bundle $\F$ of super $\Cl(H)$-modules, with the property that the action of $\O_{\res}(H)$ given by
\begin{equation}
\label{FockSpaceAction}
\Lambda_g : \F_L \to \F_{gL}
\end{equation}
is continuous.
In other words, $\F$ is an equivariant bundle of Hilbert spaces for the $\O_{\res}(H)$-action on $\Lag$.
In fact, this determines the bundle structure completely. 
Observe here that for each $L \in \Lag$, the restriction of \eqref{FockSpaceAction} to the stabilizer $\U(L)$ is a continuous group action on $\F_L$, which is, however, not smooth in infinite dimensions (for the norm topology of $\U(\F_L); see $\cite[\S2.3]{Ottesen}). 
%
%

Notice that this equivariant structure does \emph{not} intertwine the Clifford actions (hence $\F$ is not equivariant as a $\Cl(H)$-module bundle); 
instead, by \eqref{RelationFockRep}, $\Lambda_g$ is an intertwiner along the ``Bogoliubov'' automorphism $\Cl_g$ of $\Cl(H)$.
All statements above stay true when $\Cl(H)$ is replaced by its completion $\CC$ with respect to $\Lag$.

\medskip

Over $\Lag \times \Lag$, we have the Pfaffian line bundle
\begin{equation*}
\Pf \defeq \underline{\Hom}(\F_1, \F_2),
\end{equation*}
where $\F_i$ denotes the pullback of $\F$ along the projection onto the $i$-th factor.
In infinite-dimensions, as $\F$ is only a continuous bundle, this only gives a continuous bundle structure at first.
However, it is straight forward to show that there is a unique smooth structure on the Pfaffian line bundle such that composition map \eqref{CompositionMap} is smooth and such that $\Pf$ is smoothly $\O_{\res}(H)$-equivariant for the action given by \eqref{EquivariancePfaffianLines}.
It has the property that for each $L \in \Lag$, the map
\begin{equation}
\label{IsoImpLPfL}
  \Imp_L \longrightarrow \Pf_L, \qquad U \longmapsto U \Lambda_g^* \in \Pf(gL, L),
\end{equation}
is smooth and descends to a diffeomorphism on the quotient of the left hand side by the subgroup $\U(L)$ when the right hand side is restricted to the subbundle of unitary intertwiners between Fock spaces.
In \eqref{IsoImpLPfL}, $U$ is an implementer for $g$. 

\begin{remark}
\label{RemarkParityPf}
The Lagrangian Grassmannian $\Lag$ has two connected components.
Indeed, since $\Lag$ is a homogenous space for $\O_{\res}(H)$ with isotropy $\U(L)$, it follows from the long exact sequence of homotopy groups and the contractibility of $\U(L)$ that $\Lag$ is homotopy equivalent to $\O_{\res}(H)$ (compare Prop.~12.4.2 in \cite{PressleySegal}).
That the restricted orthogonal group has two connected components is part of Thm.~6.3 in \cite{Araki1}.
Given $L, L^\prime \in \Lag$, the Pfaffian line $\Pf(L, L^\prime)$ is even if and only if $L, L^\prime$ lie in the same connected component and odd otherwise. 
This follows from the (grading preserving) isomorphism \eqref{IsoImpLPfL} and the corresponding fact for $\Imp_L$ \cite[Thm.~3.5.2]{PlymenRobinson}.
\end{remark}

\section{The spinor bundle on loop space}

In this section, we first define spinor bundles corresponding to bundles of polarized Hilbert spaces on arbitrary (infinite-dimensional) manifolds and then specialize to the example of the loop space.

\subsection{Spinor bundles and the Lagrangian gerbe}
\label{SectionLagrangianGerbe}

Let $M$ be a (possibly infinite-dimensional) manifold.

\begin{definition}[Bundle of polarized Hilbert spaces]
\label{DefinitionPolarizedHilbertSpace}
A \emph{smooth bundle of polarized Hilbert spaces} is a bundle of ``real'' Hilbert spaces $H$ over $M$ (see \S\ref{SectionvNbimodbundles}) such that each fiber $H_p$ carries a polarization $\Lag_p$.
We require that there exists a collection of local trivializations $\{\tau_i : H|_{O_i} \to O_i \times H_0\}_{i \in I}$ such that the following holds:
\begin{enumerate}[(i)]
\item 
The domains $O_i$, $i \in I$, form an open cover of $M$.
\item 
The typical fiber $H_0$ has a polarization $\Lag_0$ such that for each $i \in I$, the trivialization $\tau_i$ is fiberwise a restricted orthogonal transformation.
\item
By the previous requirement, the corresponding transition functions defined over two-fold overlaps of the cover $\{O_i\}_{i \in I}$ are fiberwise restricted orthogonal transformations.
We now require that the corresponding functions $O_i \cap O_j \to \O_{\res}(H_0)$ are smooth.
\end{enumerate}
\end{definition}

\begin{remark}
As bundles of Hilbert spaces are often not smooth, it may be useful in some situations to relax the above definition by dropping the smoothness requirement in (ii) and instead require continuity of the functions $O_i \cap O_j \to \O_{\res}(H)$ for the coarsest topology on $\O_{\res}(H_0)$ that makes the inclusion into $\Aut(\CC_0)$ continuous, where $\CC_0$ is the completion of $\Cl(H_0)$ with respect to $\Lag_0$ (see Definition~\ref{DefinitionCompletionClifford}).
This topology is coarser than the Lie group topology of $\O_{\res}(H_0)$ considered here, but finer than the strong topology induced from $\O(H)$.
With this weakened notion, topological versions of the statements below still hold; for example, the Lagrangian gerbe $\Lag$ is still defined as a topological bundle gerbe.
However, it turns out that the main example for a polarized Hilbert space bundle in this paper (which is essentially the completed tangent bundle of the smooth loop space of a Riemannian manifold) is smooth in the above sense, which allows to stay in the smooth setting.
\end{remark}

Given a bundle $H$ of polarized Hilbert spaces over $M$, the fiberwise completions $\CC_p$ of the Clifford algebras $\Cl(H_p)$, $p \in M$, glue together to a (continuous) bundle $\CC$ of super von Neumann algebras over $M$.
Local trivializations for $\CC$ are immediately obtained from those of $H$, using continuity of the group homomorphism \eqref{GroupHomOresAutC}.
%

\begin{definition}[Spinor bundle]
\label{DefinitionAbstractSpinorBundle}
Let $H$ be a bundle of polarized Hilbert spaces on $M$ and let $\CC$ be corresponding bundle of super von Neumann algebras.
A \emph{spinor bundle} for $H$ is a (continuous) bundle $\SB$ of irreducible super left modules over $M$ for the super von Neumann algebra bundle $\CC$.
\end{definition}

See Definition~\ref{DefinitionBimoduleBundle} for the general notion of a bundle of super left modules for a super von Neumann algebra bundle.
In particular, the definition entails that over each $p \in M$, the fiber $\SB_p$ is a super Hilbert space together with a grading preserving $*$-isomorphism $\CC_p \cong \BB(\SB_p)$.

\medskip

We now construct a geometric obstruction for the existence of a spinor bundle for a bundle $H$ of polarized Hilbert spaces, which is a super bundle gerbe.
Given a bundle of polarized Hilbert spaces, the fiber polarizations $\Lag_p$, $p \in M$, glue together to a smooth fiber bundle $\Lag \to M$, whose transition functions are obtained from those of $H$, using the smoothness of the action of $\O_{\res}(H_0)$ on $\Lag_0$ (here $(H_0, \Lag_0)$ is the typical fiber of $H$).
The Fock spaces $\F_L$, $L \in \Lag$, form a (continuous) bundle $\mathfrak{F}$ over $\Lag$, the fibers of which are super left modules for (the pullback to $\Lag$ of) the Clifford von Neumann algebra bundle $\CC$.
The Pfaffian line bundle
\begin{equation*}
  \Pf \defeq \underline{\Hom}(\F_1, \F_2)
\end{equation*}
is then a smooth super line bundle over the 2-fold fiber product $\Lag^{[2]}$.
Here, following Notation~\ref{NotationSubmersion}, $\F_i$ denotes the pullback of $\F$ along the $i$-th projection map $\Lag^{[2]} \to \Lag$.
(We remark here that since $\F$ is only a continuous bundle, this only gives a continuous line bundle at first.
However the smooth structure of the Pfaffian line bundle $\Pf^0$ over the homogeneous space $\Lag_0$ carries over to $\Pf$ by smoothness of the $\O_{\res}(H_0)$-action on $\Pf^0$.)
The composition map \eqref{CompositionMap} lifts to a smooth isomorphism of super line bundles
\begin{equation}
\label{MultiplicationBundleMorphism}
  \lambda : \Pf_{23} \otimes \Pf_{12} \longrightarrow \Pf_{13}
\end{equation}
over the three-fold fiber product $\Lag^{[3]}$ (see Notation~\ref{NotationSubmersion}).
These data form the Lagrangian gerbe for $H$, defined as follows.

\begin{definition}[Lagrangian gerbe]
\label{DefinitionLagrangianGerbe}
Let $H$ be a bundle of polarized Hilbert spaces over $M$.
The \emph{Lagrangian gerbe} for $H$ is the super bundle gerbe depicted as follows.
\begin{equation}
\label{LagrangianGerbe}
  \LagGrb_H =   \left[
\begin{tikzcd}
  & \Pf \ar[d,  dotted, -] & \substack{\text{composition} \\ \text{map}~\lambda} \ar[d, -, dotted] & \substack{\text{associativity} \\ \text{of composition}} \ar[d, -, dotted] 
  \\
  \Lag \ar[d]& \Lag^{[2]} \ar[l, shift left=1mm] \ar[l, shift right=1mm]&  \Lag^{[3]} \ar[l, shift left=2mm] \ar[l, shift right=2mm] \ar[l] & \Lag^{[4]} \ar[l, shift left=1mm] \ar[l, shift right=1mm] \ar[l, shift left=3mm] \ar[l, shift right=3mm]
  \\
  M
\end{tikzcd}
\right].
\end{equation}
Its cover is the bundle $\Lag$ of Lagrangians in $H$, its super line bundle is the Pfaffian line bundle $\Pf$ over $\Lag^{[2]}$ and its gerbe multiplication is the composition map \eqref{MultiplicationBundleMorphism}, which satisfies the necessary coherence \eqref{CoherenceGerbeMultiplication} over $\Lag^{[4]}$ by associativity of the composition of operators.
\end{definition}

This super bundle gerbe has been previously considered in \cite[\S5]{KristelWaldorf2} (in a somewhat different language), but without using the grading of its defining line bundle; see also \cite[Chapter 11]{Ambler}.

\begin{example}
\label{ExampleFiniteDimension}
Suppose that $M$ is a Riemannian manifold of finite dimension $d$.
Consider the vector bundle $H$ over $M$ given by
\begin{equation*}
  H = \begin{cases} T^\C M & \text{if $d$ is even} \\ T^\C M \oplus \C & \text{if $d$ is odd}. \end{cases}
\end{equation*}
It can be identified with the associated bundle $\O(M) \times_{\O(d)} H_0$, where $\O(M)$ is the principal $\O(d)$-bundle of orthogonal frames on $M$ and $H_0 = \C^d$ respectively $\C^{d+1}$.
Then a spinor bundle $\SB$ for $H$ exists if and only if $M$ has a $\Spin^c$ structure, and in that case, is just a usual complex spinor bundle.

In this situation, one can show that the characteristic classes of the Lagrangian gerbe are the first Stiefel-Whitney class $w_1(M) \in H^1(M, \Z_2)$ and the third integral Stiefel-Whitney class $W_3(M) \in H^3(M, \Z)$ of $M$, which are precisely the obstructions for the existence of a $\Spin^c$-structure.
\end{example}

\begin{example}
\label{ExampleLocalSectionsLag}
If the bundle $\Lag$ of Lagrangians in $H$ has a global section $L : M \to \Lag$, then a spinor bundle is just obtained by setting $\SB = F_L = L^*\F$.
However, this assumption is very restrictive: 
Indeed, the choice of a Lagrangian in $H_p$ is the same thing as the choice of an orthogonal complex structure in $H_p$ (where a Lagrangian $L$ corresponds to the complex structure $J_L = i(P_L-P_{\overline{L}})$).
Hence in the context of Example~\ref{ExampleFiniteDimension} with $d$ even, a global section of $\Lag$ is the same thing as an almost complex structure on $M$.
But many spin$^c$ manifolds do not admit an almost complex structure; for example, it is well-known that $M = S^{2n}$ only admits an almost complex structure when $n \in \{1, 2, 3\}$.
\end{example}

\begin{example}
\label{ExampleStrictTrivLag}
Compared to Example~\ref{ExampleLocalSectionsLag}, a less restrictive way to obtain a spinor bundle is to choose an open cover $(O_i)_{i \in I}$ such that $\Lag|_{O_i}$ admits sections $L_i$, together with grading preserving bundle isomorphisms $U_{ij} : \F_{L_i} \to \F_{L_j}$ defined over $O_i \cap O_j$.
These are then required to satisfy the cocycle condition
\begin{equation}
\label{CocycleConditionUij}
  U_{ik} = U_{jk}U_{ij},
\end{equation}
in order for the bundles $(\F_{L_i})_{i \in I}$ over the open cover to glue together to a spinor bundle $\SB$.
In fact, any spinor bundle on $M$ is isomorphic to one obtained this way.
\end{example}
%

\begin{example}
\label{ExampleSpinorBdlFromTrivialization}
The data $(O_i)_{i \in I}$ and $(U_{ij})_{ij \in I}$ from Example~\ref{ExampleStrictTrivLag} are a special case of a trivialization of the Lagrangian gerbe $\LagGrb_H$.
The above construction can be generalized as follows for an arbitrary trivialization $\mathfrak{t} = (\mathfrak{T}, \tau)$ of $\LagGrb_H$, where $\mathfrak{T}$ is a smooth super line bundle over $\Lag$ and $\tau : \mathfrak{T}_2 \otimes \Pf \longrightarrow \mathfrak{T}_1$ is an isomorphism of super line bundles over $\Lag^{[2]}$.
To obtain a spinor bundle $\SB$ from the trivialization $(\mathfrak{T}, \tau)$, we define a bundle $\tilde{\SB} \to \Lag$ of super left $\CC$-modules by the formula $\tilde{\SB} = \F \otimes \mathfrak{T}$.
Over $\Lag^{[2]}$, we have a canonical isomorphism of super vector bundles
\begin{equation*}
\begin{tikzcd}
\tilde{\SB}_2 =  \F_2 \otimes \mathfrak{T}_2 \cong \F_1  \otimes \Pf \otimes \mathfrak{T}_2 \cong \F_1 \otimes \mathfrak{T}_2 \otimes \Pf \ar[r, "\tau", "\cong"'] & \F_1 \otimes \mathfrak{T}_1 = \tilde{\SB}_1
\end{tikzcd}
\end{equation*}
given by $\tau$ and the canonical bundle isomorphism $\F_2 \cong \F_1 \otimes \Pf$ coming from the fiberwise application isomorphism
\begin{equation*}
\F_{L_1} \otimes \underline{\Hom}(\F_{L_1}, F_{L_2}) \otimes \F_{L_2}, \qquad (L_1, L_2) \in \Lag^{[2]}.
\end{equation*}
The three different pullbacks of this isomorphism to $\Lag^{[3]}$ satisfy the obvious cocycle condition, hence $\tilde{\SB}$ descends to a bundle $\SB$ of super left $\CC$-modules on $M$.
By construction, the fiber $\SB_p$ of $\SB$ is isomorphic to $\F_L$ for any $L \in \Lag_p$, hence irreducible as a $\CC$-module.
In other words, $\SB = \SB_{(\mathfrak{T}, \tau)}$ is a spinor bundle for $H$.
\end{example}

We may form the category $\textsc{SpinBdl}_H$ whose objects are spinor bundles $\SB$ for $H$ and whose morphisms are isomorphisms of super $\CC$-module bundles (see Definition~\ref{DefinitionBimoduleBundleIsomorphism}).
The construction from Example~\ref{ExampleSpinorBdlFromTrivialization} that associates to a trivialization of $\LagGrb_H$ a spinor bundle for $H$ can be easily upgraded to a functor
\begin{equation}
\label{TrivializationFunctor}
  \mathrm{Triv}(\LagGrb_H) \longrightarrow \textsc{SpinBdl}_H.
\end{equation}
%
%

\begin{theorem} 
\label{ThmEquivalenceFinite}
The above functor is essentially surjective and faithful. 
\end{theorem}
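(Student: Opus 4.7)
The plan is to produce, from a spinor bundle $\SB$, an explicit trivialization of $\LagGrb_H$ whose associated spinor bundle (via the construction of Example~\ref{ExampleSpinorBdlFromTrivialization}) is canonically isomorphic to $\SB$, and then to check faithfulness directly by a fiberwise argument.

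For essential surjectivity, let $\pi:\Lag\to M$ denote the projection. Given $\SB\in\textsc{SpinBdl}_H$, pull back to obtain $\pi^*\SB$, a bundle over $\Lag$ of irreducible super left modules for $\pi^*\mathfrak{C}$. The Fock bundle $\F$ is another such bundle, and since each fiber $\mathfrak{C}_p$ is a super factor of type $\mathrm{I}$ of even kind, the fiberwise $\underline{\Hom}$ spaces
\begin{equation*}
\mathfrak{T} \defeq \underline{\Hom}(\F,\pi^*\SB)
\end{equation*}
are one-dimensional (the super analogue of Schur's lemma), and assemble into a super line bundle on $\Lag$ whose local triviality is inherited from that of $\F$ and $\pi^*\SB$ via the local triviality of $\pi^*\mathfrak{C}$ (cf. Appendix~\ref{SectionVNBundlesBimodules}). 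Over $\Lag^{[2]}$ both projections produce the same bundle $\pi^*\SB$, so composition of intertwiners yields a grading preserving isomorphism
\begin{equation*}
\tau:\mathfrak{T}_2\otimes\Pf\longrightarrow \mathfrak{T}_1,\qquad \psi\otimes\phi\longmapsto \psi\circ\phi,
\end{equation*}
and the associativity of composition produces the coherence with the gerbe multiplication \eqref{MultiplicationBundleMorphism} over $\Lag^{[3]}$. Thus $(\mathfrak{T},\tau)$ is an object of $\mathrm{Triv}(\LagGrb_H)$.

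I would then identify the associated spinor bundle $\SB_{(\mathfrak{T},\tau)}$ with $\SB$. By definition, $\tilde{\SB}=\F\otimes\mathfrak{T}=\F\otimes\underline{\Hom}(\F,\pi^*\SB)$ carries a canonical evaluation map
\begin{equation*}
\ev:\F\otimes\underline{\Hom}(\F,\pi^*\SB)\longrightarrow \pi^*\SB,\qquad \xi\otimes\phi\longmapsto\phi(\xi),
\end{equation*}
which is a fiberwise isomorphism of super $\mathfrak{C}$-modules. The key compatibility to verify is that, over $\Lag^{[2]}$, the canonical identification $\tilde{\SB}_2\cong\tilde{\SB}_1$ used to descend $\tilde{\SB}$ to $M$ corresponds under $\ev$ to the identity on $\pi^*\SB$; this is a direct unwinding of the definitions, using that the isomorphism $\F_2\cong\F_1\otimes\Pf$ of Example~\ref{ExampleSpinorBdlFromTrivialization} is nothing but the fiberwise evaluation $\F_{L_1}\otimes\underline{\Hom}(\F_{L_1},\F_{L_2})\cong\F_{L_2}$. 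Consequently $\tilde{\SB}$ descends to $\SB$, giving the required isomorphism $\SB_{(\mathfrak{T},\tau)}\cong\SB$.

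For faithfulness, suppose $\phi,\phi':(\mathfrak{T},\tau)\to(\mathfrak{T}',\tau')$ induce the same morphism of spinor bundles. After pulling back to $\Lag$, the induced maps are $\id_\F\otimes\phi$ and $\id_\F\otimes\phi'$; equality of these as morphisms $\F\otimes\mathfrak{T}\to\F\otimes\mathfrak{T}'$ at each point forces $\phi=\phi'$ since $\F$ is fiberwise nonzero and $\mathfrak{T}$ is a line bundle. I expect the main technical obstacle to be the first step, namely verifying the local triviality and the appropriate regularity of $\mathfrak{T}=\underline{\Hom}(\F,\pi^*\SB)$ as a super line bundle, together with the local triviality of $\tau$; this requires the formalism of bimodule bundles over von Neumann algebra bundles developed in Appendix~\ref{SectionVNBundlesBimodules}, and a careful matching of grading conventions as recalled in Remark~\ref{RemarkParityPf}.
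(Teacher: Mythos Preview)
Your approach is essentially the same as the paper's: both construct the candidate trivialization as $\mathfrak{N}=\underline{\Hom}(\F,\pi^*\SB)$ with the composition map $\nu:\mathfrak{N}_2\otimes\Pf\to\mathfrak{N}_1$, and both identify $\SB_{(\mathfrak{N},\nu)}$ with $\SB$ via evaluation. Your faithfulness argument is likewise the same in spirit as the paper's (which simply says it ``follows directly from the construction''); you spell it out a bit more.

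There is one genuine gap, which you partially anticipate in your final paragraph but do not resolve. The domain category $\mathrm{Triv}(\LagGrb_H)$ consists of \emph{smooth} trivializations, whereas the pair $(\mathfrak{T},\tau)$ you build is, a priori, only a \emph{continuous} trivialization: $\F$ and $\SB$ are only continuous bundles, so $\mathfrak{T}=\underline{\Hom}(\F,\pi^*\SB)$ carries no canonical smooth structure, and you cannot simply declare it to be an object of $\mathrm{Triv}(\LagGrb_H)$. The paper addresses this explicitly: one must \emph{choose} a smooth structure on $\mathfrak{N}$ making $\nu$ smooth, and this is possible by the general fact that every continuous trivialization of a smooth bundle gerbe is isomorphic (in the category of continuous trivializations) to a smooth one. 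You should state and invoke this fact; the ``regularity'' issue you flag is not about local triviality of $\mathfrak{T}$ as a continuous line bundle (that part is fine), but about upgrading from continuous to smooth.
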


In particular, the above theorem shows that non-triviality of the Lagrangian gerbe $\LagGrb_H$ is the obstruction to the existence of a spinor bundle for $H$; compare also Theorem~5.3.7 of \cite{KristelWaldorf2}.

\begin{proof}
That the functor is faithful follows directly from the construction.

To see that the functor is essentially surjective, given a spinor bundle $\SB$ over $M$, consider the super line bundle 
\begin{equation}
\label{AssociatedSmoothnessLine}
  \mathfrak{N} = \underline{\Hom}(\F, \pi^*\SB),
\end{equation}
over $\Lag$, where $\pi^*\SB$ is the pullback along the footpoint projection $\pi: \Lag \to M$.
$\mathfrak{N}$ comes with a canonical grading preserving isomorphism of super line bundles
\begin{equation}
\label{CanonicalCompositionMap}
\nu : \mathfrak{N}_2 \otimes \Pf \rightarrow \mathfrak{N}_1,
\end{equation}
see Notation~\ref{NotationSubmersion}.
The tuple $(\mathfrak{N}, \nu)$ then forms a continuous trivialization of $\LagGrb_H$ and there is a canonical isomorphism $\SB \cong \SB_{(\mathfrak{N}, \nu)}$, where $\SB_{(\mathfrak{N}, \nu)}$ is the spinor bundle constructed in Example~\ref{ExampleSpinorBdlFromTrivialization}.

The category $\textsc{Triv}(\LagGrb_{LX})$ in the domain of \eqref{TrivializationFunctor} is the category of \emph{smooth} trivializations, but we may choose a smooth structure on $\mathfrak{N}$ such that the canonical composition map \eqref{CanonicalCompositionMap} is smooth.
This follows from the fact that every continuous trivialization of a bundle gerbe is (in the category of continuous trivializations) isomorphic to a smooth trivialization.
\end{proof}

\begin{remark}
Using the language of (super) 2-vector bundles (see \S\ref{SectionStringorBundle}), the above constructions can be phrased as follows.
Both $\LagGrb_H$ and the Clifford algebra bundle $\CC$ are examples of super 2-vector bundles over the manifold $M$ and the canonical isomorphism
\begin{equation*}
  \F_2 \otimes \Pf \longrightarrow \F_1
\end{equation*}
establishes that the Fock space bundle $\F$ over $\Lag$ defines an isomorphism of super 2-vector bundles $\mathfrak{f}: \LagGrb_H \to \CC$.
In the finite-dimensional case, this is discused in \S5 of \cite{kristel20212vector}.
The inverse of any trivialization is an isomorphism $\mathcal{I} \to \LagGrb_H$ (where $\mathcal{I}$ is the trivial super 2-vector bundle) and postcomposing with $\mathfrak{f}$ gives an isomorphism of super 2-vector bundles $\mathcal{I} \to \CC$, which is the same thing as an irreducible super left $\CC$-module.
\end{remark}

\subsection{Smoothing structures}
\label{SectionSmoothingStructures}

At this stage, we cannot upgrade the statement of Thm.~\ref{ThmEquivalenceFinite} to an equivalence of categories, because the functor constructed in the proof above is not full.
The problem is here that there is no smoothness requirement for isomorphisms in the category of spinor bundles.
This can be fixed as follows.

\begin{definition}[Smoothing structure]
\label{DefinitionSmoothingStructure}
Let $\SB$ be a spinor bundle for $H$.
A \emph{smoothing structure} on $\SB$ is a choice of smooth structure for the super line bundle $\mathfrak{N}$ over $\Lag$ defined in \eqref{AssociatedSmoothnessLine}, which makes the canonical composition map \eqref{CanonicalCompositionMap}  smooth.
\end{definition}

\begin{remark}
Following \cite{KristelWaldorf2}, a \emph{rigging} of $\SB$ is a smooth $\Cl(H)$-submodule bundle $\SB^\infty \subset \SB$ which is fiberwise isomorphic to the subspace $\F^\infty_L$ of smooth vectors for the action of $\U(L)$.
Given such a rigging, inclusion of intertwiners produces a line bundle isomorphism
\begin{equation*}
	\underline{\Hom}(\pi^*\SB^\infty, \F^\infty) \cong \underline{\Hom}(\pi^*\SB, \F) = \mathfrak{N}
\end{equation*}
Since the left hand side has a natural smooth structure (as both $\pi^*\SB^\infty$ and $\F^\infty$ are smooth vector bundles), we see that any rigging provides a canonical smoothing structure on $\SB$.
\end{remark}

We may form the category $\textsc{SpinBdl}_H^{\mathrm{sm}}$ of spinor bundles for $H$ with a smoothing structure.
Morphisms in this category consist of even bundle isomorphisms $\SB \to \SB^\prime$ intertwining the $\CC$-actions that make the induced line bundle homomorphism 
\begin{equation*}
\mathfrak{N} = \underline{\Hom}(\F, \pi^*\SB) \longrightarrow \underline{\Hom}(\F, \pi^* \SB^\prime) = \mathfrak{N}^\prime
\end{equation*}
smooth.

\begin{lemma}
\label{LemmaSmoothingStructureExistence}
Every spinor bundle $\SB$ admits a smoothing structure. 
Two spinor bundles with smoothing structures are isomorphic in $\textsc{SpinBdl}_H^{\mathrm{sm}}$ if and only if they are isomorphic in $\textsc{SpinBdl}_H$.
\end{lemma}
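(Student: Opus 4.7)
The plan is to address the two assertions separately, reducing each to standard smoothing facts for line bundles on paracompact manifolds.

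For \emph{existence of a smoothing structure}, I observe that the tuple $(\mathfrak{N}, \nu)$ consisting of the continuous line bundle $\mathfrak{N} = \underline{\Hom}(\F, \pi^*\SB)$ on $\Lag$ together with the canonical composition map $\nu$ from \eqref{CanonicalCompositionMap} is a continuous trivialization of the smooth super bundle gerbe $\LagGrb_H$, as was already verified in the proof of Theorem~\ref{ThmEquivalenceFinite}. The key input is the standard fact that every continuous trivialization of a smooth bundle gerbe on a paracompact manifold is isomorphic, in the category of continuous trivializations, to a smooth one; this reduces to the observation that a continuous line bundle on a manifold admits a smooth structure, unique up to smooth isomorphism (since both continuous and smooth line bundles are classified by homotopy classes of maps into $B\U(1) = \C P^\infty$, which agree). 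Picking such a smooth trivialization $(\mathfrak{N}^{\mathrm{sm}}, \nu^{\mathrm{sm}})$ together with a continuous iso of trivializations $\phi \colon \mathfrak{N}^{\mathrm{sm}} \to \mathfrak{N}$, I transport the smooth structure along $\phi$; since $\phi$ intertwines $\nu^{\mathrm{sm}}$ with $\nu$ and the former is smooth, $\nu$ automatically becomes smooth under the transported structure.

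For the \emph{comparison of isomorphism classes}, only the nontrivial direction needs proof. Suppose $(\SB, \mathfrak{N})$ and $(\SB^\prime, \mathfrak{N}^\prime)$ carry smoothing structures and $\phi \colon \SB \to \SB^\prime$ is a continuous isomorphism of $\mathfrak{C}$-module bundles. The induced continuous line bundle isomorphism $\phi_* \colon \mathfrak{N} \to \mathfrak{N}^\prime$ is a continuous unitary section of the smooth Hermitian line bundle $\underline{\Hom}(\mathfrak{N}, \mathfrak{N}^\prime)$ over $\Lag$; compatibility of $\phi_*$ with $\nu$ and $\nu^\prime$ provides descent data identifying this with the pullback of a smooth Hermitian line bundle $\tilde{\mathcal{L}}$ on $M$, together with a continuous unitary section $\tilde\psi$. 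Since $\tilde{\mathcal{L}}$ admits a continuous unitary section, it is continuously, hence smoothly, trivial, so it admits a smooth unitary section $\sigma$. The quotient $\mu = \sigma/\tilde\psi$ is then a continuous map $M \to \U(1)$, and $\tilde\phi = \mu \cdot \phi$ is a continuous isomorphism $\SB \to \SB^\prime$ whose induced map on the auxiliary line bundles equals $\pi^*\mu \cdot \phi_*$, descending to $\mu \cdot \tilde\psi = \sigma$, which is smooth. Hence $\tilde\phi$ is a morphism in $\textsc{SpinBdl}_H^{\mathrm{sm}}$.

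The \emph{main obstacle} I expect is to justify cleanly the descent step in both parts: namely, that the relevant auxiliary line bundle of iso's over $\Lag$, together with its continuous unit section coming from $\phi_*$, genuinely descends along $\pi \colon \Lag \to M$ to a smooth Hermitian line bundle with a continuous unit section on the base. This requires carefully unpacking the compatibility of $\phi_*$ with the gerbe trivialization data $(\nu, \nu^\prime)$ and observing that the resulting descent datum is precisely what identifies $\underline{\Hom}(\mathfrak{N}, \mathfrak{N}^\prime)$ with the pullback of a smooth line bundle on $M$. Once this geometric point is made, the rest of the argument amounts to routine smooth approximation on a manifold.
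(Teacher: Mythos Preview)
Your argument is correct. For existence, you and the paper do the same thing: observe that $(\mathfrak{N},\nu)$ is a continuous trivialization of the smooth gerbe $\LagGrb_H$ and smooth it (the paper phrases this as identifying $\SB$ with an object in the image of the essentially surjective functor~\eqref{TrivializationFunctor}, which carries a canonical smoothing structure). For the second statement you take a genuinely different route. The paper argues categorically via~\eqref{TrivializationFunctor}, identifying smoothing structures on $\SB$ with smooth trivializations $(\mathfrak{N},\nu)$ and then invoking (in effect) that smooth and continuous trivializations of a smooth bundle gerbe have the same isomorphism classes; the paper compresses this to ``follows from faithfulness'', which is rather terse. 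You instead work explicitly: given a continuous isomorphism $\phi\colon\SB\to\SB'$, the induced map $\phi_*\colon\mathfrak{N}\to\mathfrak{N}'$ descends along $\pi\colon\Lag\to M$ to a continuous unit section of a smooth line bundle on $M$; replacing it by a smooth section amounts to correcting $\phi$ by a continuous $\U(1)$-valued function $\mu$ on $M$, and $\mu\cdot\phi$ is then a morphism in $\textsc{SpinBdl}_H^{\mathrm{sm}}$. Your approach is more hands-on and self-contained; the descent step you flag as the main obstacle is indeed routine once one notes that $\nu$ and $\nu'$ are given by pre-composition with elements of $\Pf$ while $\phi_*$ is post-composition with $\pi^*\phi$, so compatibility is automatic.
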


\begin{proof}
A smoothing structure for a spinor bundle $\SB$ can be obtained by identifying it with an element in the image of the functor \eqref{TrivializationFunctor} (which has a canonical smoothing structure).
As the functor is essentially surjective, this is always possible. 
The second statement follows from the fact that \eqref{TrivializationFunctor} is faithful.
\end{proof}

The proof of the following refinement of Thm.~\ref{ThmEquivalenceFinite} is now straight forward.

\begin{theorem}
The functor \eqref{TrivializationFunctor} refines to an equivalence of categories
\begin{equation}
\label{FunctorTrivSpin}
  \mathrm{Triv}(\LagGrb_H) \longrightarrow \textsc{SpinBdl}_H^{\mathrm{sm}}.
\end{equation} 
\end{theorem}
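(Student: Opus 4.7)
The plan is to reduce this to the results already established. By Theorem \ref{ThmEquivalenceFinite} together with Lemma \ref{LemmaSmoothingStructureExistence}, only two things still have to be checked: that \eqref{TrivializationFunctor} remains essentially surjective after passing to the target category $\textsc{SpinBdl}_H^{\mathrm{sm}}$ of spinor bundles with smoothing structures, and that it becomes full (faithfulness is automatic, since forgetting the smoothing structure is faithful and \eqref{TrivializationFunctor} was already faithful at the level of $\textsc{SpinBdl}_H$).

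For essential surjectivity, given $\SB \in \textsc{SpinBdl}_H^{\mathrm{sm}}$, I would use the construction already carried out in the proof of Theorem \ref{ThmEquivalenceFinite}: form the super line bundle $\mathfrak{N} = \underline{\Hom}(\F, \pi^*\SB)$ over $\Lag$ together with its canonical composition map $\nu : \mathfrak{N}_2 \otimes \Pf \to \mathfrak{N}_1$. The smoothing structure on $\SB$ is by definition a smooth structure on $\mathfrak{N}$ making $\nu$ smooth, so $(\mathfrak{N}, \nu)$ is now a \emph{smooth} trivialization of $\LagGrb_H$, and the canonical isomorphism $\SB \cong \SB_{(\mathfrak{N},\nu)}$ is built into the construction. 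One only has to note that this isomorphism is itself compatible with the smoothing structures on both sides, which is immediate because the smoothing structure on $\SB_{(\mathfrak{N},\nu)}$ is, by the construction in Example \ref{ExampleSpinorBdlFromTrivialization}, obtained from exactly the same line bundle $\mathfrak{N}$.

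For fullness, suppose $\phi : \SB \to \SB^\prime$ is a morphism in $\textsc{SpinBdl}_H^{\mathrm{sm}}$. By definition of the category, post-composition induces a smooth even line bundle morphism
\begin{equation*}
\Phi : \mathfrak{N} = \underline{\Hom}(\F, \pi^*\SB) \longrightarrow \underline{\Hom}(\F, \pi^*\SB^\prime) = \mathfrak{N}^\prime,
\end{equation*}
and I would verify directly from the definition of the canonical composition maps $\nu, \nu^\prime$ (they are both given fiberwise by composition of intertwiners with an element of the Pfaffian line) that $\Phi$ intertwines $\nu$ with $\nu^\prime$. Thus $\Phi$ is a morphism of smooth trivializations, and by tracing through the functor \eqref{TrivializationFunctor} (applied to the isomorphisms $\SB \cong \SB_{(\mathfrak{N},\nu)}$, $\SB^\prime \cong \SB_{(\mathfrak{N}^\prime, \nu^\prime)}$ obtained in the previous step), one checks that the induced spinor bundle morphism is precisely $\phi$.

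The only genuine subtlety I anticipate is bookkeeping: one must be careful that the canonical isomorphism $\SB \cong \SB_{(\mathfrak{N},\nu)}$ really is an isomorphism in $\textsc{SpinBdl}_H^{\mathrm{sm}}$ rather than just in $\textsc{SpinBdl}_H$ — that is, that it induces a \emph{smooth} isomorphism of the associated $\mathfrak{N}$-bundles. But this is essentially a tautology, because the $\mathfrak{N}$ computed from $\SB_{(\mathfrak{N},\nu)}$ is canonically $\mathfrak{N}$ itself. No further analytic input is required beyond the results already in hand.
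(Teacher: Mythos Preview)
Your proposal is correct and follows essentially the same approach as the paper's proof: essential surjectivity via the $(\mathfrak{N},\nu)$ construction (which the paper packages as an appeal to Lemma~\ref{LemmaSmoothingStructureExistence}), faithfulness inherited from Theorem~\ref{ThmEquivalenceFinite}, and fullness from the fact that the smoothing-structure compatibility is precisely what is needed for the induced map $\mathfrak{N}\to\mathfrak{N}'$ to be a morphism of smooth trivializations. You spell out more of the bookkeeping than the paper does, but the argument is the same.
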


\begin{proof}
By Lemma~\ref{LemmaSmoothingStructureExistence}, the functor is still essentially surjective.
It is faithful by Thm.~\ref{ThmEquivalenceFinite}.
It is full as the compatibility with the smoothing structures of morphisms in $\textsc{SpinBdl}_H^{\mathrm{sm}}$ ensures that each isomorphism of spinor bundles with smoothing structures in the image of \eqref{FunctorTrivSpin} comes from an isomorphism of the corresponding trivializations of $\LagGrb_H$.
\end{proof}

\subsection{Lagrangians over the circle}
\label{SectionLagrangiansOverTheCircle}

When applying the general constructions from \S\ref{SectionLagrangianGerbe} to the loop space $M = LX$, where $X$ is some (oriented) Riemannian manifold, the Hilbert space bundle should $H$ should be a suitable completion of the tangent bundle $TLX$.
The crucial task, however, is to endow its fibers with a suitable polarization, i.e., a preferred equivalence class of Lagrangians.

To analyze this problem, consider the Hilbert space 
\begin{equation*}
H_E \defeq L^2(S^1, E), 
\end{equation*}
where $E$ is a ``real'' metric vector bundle $E$ on $S^1$.
$H_E$ acquires a real structure from the pointwise real structures in the fibers of $E$.

As $H_E$ is a Hilbert space of sections, it makes sense to consider Lagrangians $L \subset H_E$ whose orthogonal projection $P_L$ is a pseudodifferential operator (necessarily of order zero), at least up to a Hilbert-Schmidt perturbation.
Two equivalent spectral projections $P_L$ and $P_{L^\prime}$ of this type must necessarily have the same principal symbol $p$, which is a section of the bundle $\pi^*\End(E)$ over $T^*S^1$.
If we insist additionally that the principal symbol $p$ is  invariant under orthogonal transformations of the fibers of $E$, then (up to sign), $p$ must necessarily be given by
\begin{equation}
\label{SymbolOfProjection}
  p(t, \vartheta) \defeq \sign(\vartheta) \cdot \mathrm{id}_{E_t}, \qquad t \in S^1, ~\vartheta \in T^*_tS^1.
\end{equation}
It therefore seems natural to consider the equivalence class
\begin{equation}
\label{LagrangiansForE}
  \Lag_E \defeq \{ L \subset H_E \text{ Lagrangian}\mid P_L \text{ has principal symbol}~p\},
\end{equation}
consisting of those Lagrangians $L$ such that the corresponding orthogonal projection $P_L$ is (up to a Hilbert-Schmidt perturbation) a classical pseudodifferential operator with principal symbol given by \eqref{SymbolOfProjection}.
If $L$ and $L^\prime$ are two such Lagrangians, the difference $P_L - P_{L^\prime}$ 
has zero principal symbol which implies that it is a Hilbert-Schmidt operator, hence $L$ and $L^\prime$ are indeed equivalent.

However, there is an index-theoretic obstruction for this approach to work.
Recall that real vector bundles $E$ over $S^1$ are characterized by two invariants: Their dimension and their first Stiefel-Whitney class $w_1(E) \in \Z_2$. 
We now have the following.

\begin{theorem}
\label{TheoremConditionsOnE}
  $\Lag_E$ is non-empty if and only if $w_1(E) = \dim E \mod 2$.
\end{theorem}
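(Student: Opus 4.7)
The plan is to classify $E$ and then reduce the question to a finite-dimensional parity condition on the zero mode of $D = -i\partial_\theta$.

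Every real vector bundle $E \to S^1$ splits as $E \cong \underline{\R}^a \oplus M^b$, where $M$ is the Möbius line bundle, so $\dim E = a + b$ and $w_1(E) \equiv b \pmod 2$, and the stated condition is equivalent to $a$ being even. I use the spectral decomposition $H_E = V_+ \oplus V_0 \oplus V_-$ where $V_\pm$ are the positive/negative spectral subspaces of $D$ and $V_0 = \ker D$. On the trivial summand, $D$ has integer spectrum with multiplicity $a$, so $V_0 \cong \C^a$ (the constant sections); on the Möbius summands, sections satisfy $f(\theta + 2\pi) = -f(\theta)$, giving half-integer spectrum and no contribution to $V_0$. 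The real structure $c$ swaps $V_+ \leftrightarrow V_-$ and restricts on $V_0$ to the standard complex conjugation on $\C^a$. The projection $P_+ := \chi_{(0,\infty)}(D)$ is a classical order-$0$ pseudodifferential operator such that $2P_+ - 1$ has principal symbol $\sign(\vartheta)\,\id_E$ away from the zero section, so a Lagrangian $L \subset H_E$ belongs to $\Lag_E$ if and only if $P_L - P_+$ is Hilbert--Schmidt.

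For the existence direction, if $a$ is even I pick a Lagrangian $\ell \subset V_0 \cong \C^a$ -- equivalent data to an orthogonal complex structure on $\R^a$, which exists precisely because $a$ is even -- and set $L := V_+ \oplus \ell$. A direct computation gives $L^\perp = V_- \oplus \ell^{\perp, V_0} = V_- \oplus \overline\ell = \overline L$, so $L$ is Lagrangian, and $P_L - P_+ = P_\ell$ is finite-rank, hence $L \in \Lag_E$.

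The non-existence direction, when $a$ is odd, is the main obstacle. Suppose for contradiction $L \in \Lag_E$. Then $J_L := i(2P_L - 1)$ satisfies $J_L^2 = -\mathbf{1}$, $J_L^* = -J_L$, and is real with respect to $c$ (because the Lagrangian condition $\overline{L} = L^\perp$ is equivalent to $c P_L c = \mathbf{1} - P_L$), hence restricts to an orthogonal complex structure on the real Hilbert space $H_E^\R = L^2(S^1, E)$. Its principal symbol is $i\sign(\vartheta)\,\id_E$, which coincides with that of the natural complex structure $\partial_\theta/|\partial_\theta|$ defined on $(\ker \partial_\theta)^\perp \subset H_E^\R$. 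Thus $J_L$ extends this natural complex structure across $\ker \partial_\theta = V_0 \cap H_E^\R \cong \R^a$, which requires $\R^a$ to admit an orthogonal complex structure -- impossible when $a$ is odd.

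The technically delicate step of the non-existence direction is the reduction ``$J_L$ exists $\Rightarrow$ $\R^a$ admits a complex structure'', since $J_L$ need not a priori preserve the splitting $H_E^\R = \ker \partial_\theta \oplus (\ker \partial_\theta)^\perp$. The key lemma, which I would prove by exploiting the Hilbert--Schmidt symbol condition, is that $J_L$ preserves this splitting up to a compact (and hence, on the finite-dimensional $\R^a$-summand, finite-rank) perturbation; combining this with $J_L^2 = -\mathbf{1}$ restricted to $\R^a$ then produces an honest orthogonal complex structure there. Equivalently, the obstruction can be packaged as the mod-$2$ Atiyah--Singer index of the skew-adjoint operator $\partial_\theta$ on real sections of $E$, which equals $\dim_{\Z_2} \ker \partial_\theta = a \bmod 2$.
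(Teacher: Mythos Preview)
Your existence direction is essentially the paper's argument: split $H_E$ into positive/negative/zero spectral subspaces of a Dirac-type operator and supplement $V_+$ by a Lagrangian in the (even-dimensional) kernel. The paper phrases this with an arbitrary metric connection $\nabla^E$ rather than your explicit decomposition $E \cong \underline{\R}^a \oplus M^b$, but the content is the same.

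Your non-existence direction, however, has a genuine gap in the primary argument. The ``key lemma'' you isolate is in fact trivially true: since $V_0^\R \cong \R^a$ is finite-dimensional, the off-diagonal blocks of \emph{any} bounded operator with respect to $H_E^\R = V_0^\R \oplus (V_0^\R)^\perp$ are automatically finite-rank. But this does not produce a complex structure on $\R^a$. Writing $J_L = \begin{pmatrix} A & B \\ C & D \end{pmatrix}$ in this block form, the equation $J_L^2 = -\mathbf{1}$ gives $A^2 + BC = -\mathbf{1}_{V_0^\R}$, and there is no reason the finite-rank correction $BC$ should vanish; $A$ is skew-adjoint but need not square to $-\mathbf{1}$. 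So the step ``combining this with $J_L^2 = -\mathbf{1}$ restricted to $\R^a$ then produces an honest orthogonal complex structure there'' fails as written.

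What actually works is your closing remark, which is exactly the paper's approach: apply the Atiyah--Singer mod-$2$ index theorem for real skew-adjoint elliptic operators directly to $J_L$. The principal symbol $i\,\sign(\vartheta)\,\id_E$ determines $\ind_2 J_L = w_1(E) + \dim E \bmod 2$, while $\ker J_L = 0$ (since $J_L^2 = -\mathbf{1}$) forces $\ind_2 J_L = 0$. Your alternative computation via $\ind_2 \partial_\theta = a \bmod 2$ is also valid, but then you must argue that $J_L$ and $\partial_\theta$ (or its bounded transform) have the same mod-$2$ index, e.g.\ by noting that they lie in the same symbol class and invoking compact-perturbation invariance. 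Either way, the obstruction is index-theoretic, not a direct finite-dimensional reduction.
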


\begin{proof}
Suppose that $\Lag_E$ is non-empty.
Then given a Lagrangian $L\in \Lag_E$, the operator $J_L = i(P_L - P_{L^\perp})$ is real skew-adjoint and therefore has an index $\ind J_L = \dim \ker J_L \mod 2 \in \Z_2$, which is independent of $L$ (as the difference $J_L - J_{L^\prime}$ is Hilbert-Schmidt for $L, L^\prime \in \Lag_E$) .
However,  by \cite[Thm.~2.3]{EllipticOperatorsV}, this index can be computed in terms of the symbol $p$.
Carrying out this calculation gives
\begin{equation*}
  \ind J_L = w_1(E) + \dim E \mod 2.
\end{equation*}
On the other hand, since $L$ is a Lagrangian, we have $H = L \oplus L^\perp$, which implies $\ker J_L = 0$.
Hence we must have $w_1(E) = \dim E \mod 2$.

For the converse direction, we construct a Lagrangian $L \in \Lag_E$, assuming $w_1(E) = \dim E \mod 2$.
To this end, choose a connection $\nabla^E$ on $E$.
Then covariant differentiation with respect to the canonical coordinate on $S^1$ gives a real skew-adjoint operator $\nabla^E_t$, whose mod 2 index is $w_1(E) + \dim E \mod 2 = 0$.
This implies that its kernel is even-dimensional.
Define now
\begin{equation}
\label{LPlusMinus}
  L_\pm = \bigoplus_{ \lambda > 0} \mathrm{Eig}(D, \pm\lambda),
\end{equation}
the (Hilbert space) direct sum of eigenspaces to positive (negative) eigenvalues of the self-adjoint operator $D = i\nabla^E_t$.
Then $\overline{L}_- = L_+$ and the orthogonal projection onto $L_-$ is a pseudodifferential operator with principal symbol \eqref{SymbolOfProjection} (see Prop.~14.2 in \cite{BoosBavnbek}).
Now, for any choice of Lagrangian $K$ in the finite-dimensional ``real'' Hilbert space $\ker(D) = \ker(\nabla^E_t) \subset H_E$, the sum $L= L_- + K$ is a Lagrangian in $\Lag_E$.
Such a Lagrangian $K$ exists as $\dim\ker(D) = \ind \nabla_t^E \mod 2$, which is zero.
\end{proof}

A real vector bundle $E$ satisfying the condition \eqref{TheoremConditionsOnE} can be written in the form $E = \mathbb{S} \otimes \C^d$, where $\mathbb{S}$ is the bounding spinor bundle on the circle, i.e., the rank one ``real'' bundle with nontrivial $w_1(\mathbb{S})$ (observe here that $w_1(\mathbb{S} \otimes \C^d) = d \cdot w_1(\mathbb{S}) = d \mod 2$).
We therefore set
\begin{equation}
\label{Hnot}
  H_0 \defeq L^2(S^1, \mathbb{S} \otimes \C^d)
\end{equation}
and let $\Lag_0$ be the equivalence class of Lagrangians defined by \eqref{LagrangiansForE} for this choice of $E$, i.e., the class of Lagrangians $L \subset H_0$ such that the corresponding projection $P_L$ is, up to a Hilbert-Schmidt perturbation, a pseudodifferential operator with principal symbol \eqref{SymbolOfProjection}.
If $d$ is even, we simply have $\C^d \cong \mathbb{S} \otimes \C^d$ but if $d$ is odd, Thm.~\ref{TheoremConditionsOnE} shows that we need the twist by $\mathbb{S}$ in order for $\Lag_0$ to be non-empty.

\medskip

It will be important that the Hilbert space $H_0$ has a canonical action of the loop group $L\SO(d)$, given by pointwise multiplication in the $\C^d$ factor.
This action is by orthogonal transformations, and as conjugation by elements of $L\SO(d)$ leaves the principal symbol \eqref{SymbolOfProjection} invariant, these preserve the polarization $\Lag_0$.
Hence we obtain a group homomorphism
\begin{equation}
\label{HomLSOdOres}
L\SO(d) \longrightarrow \O_{\res}(H_0), 
\end{equation}
which turns out to be smooth (see \cite[Prop.~12.5.1]{PressleySegal}, \cite[Prop.~3.23]{KristelWaldorf1}).
Given a Lagrangian $L$, we can pull back the implementer extension $\Imp_L$ along the homomorphism \eqref{HomLSOdOres} to obtain a central extension of $L\SO(d)$.
This extension can then be pulled back further along the group homomorphism $L\Spin(d) \to L\SO(d)$, obtaining a central extension of $L\Spin(d)$.
We obtain these central extensions the \emph{implementer extension} of $L\SO(d)$, respectively $L\Spin(d)$.
\begin{equation}
\label{DiagramImplementerExtension}
\begin{tikzcd}
  \widetilde{L\Spin(d)} \ar[r, dashed] \ar[d, dashed] & \widetilde{L\SO(d)} \ar[r, dashed] \ar[d, dashed] & \Imp_L \ar[d] \\
  L \Spin(d) \ar[r] & L\SO(d) \ar[r] & \O_{\res}(H_0)
\end{tikzcd}
\end{equation}

\begin{theorem}
\label{ThmImpBasic}
{\normalfont  \cite[Theorem~3.26]{KristelWaldorf1}} ~
If $d \geq 5$, then the implementer extension is a \emph{basic} central extension of $L \Spin(d)$, meaning that its first Chern class is a generator for $H^2(L\Spin(d), \Z) \cong \Z$.
\end{theorem}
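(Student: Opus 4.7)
Since the result is attributed to \cite{KristelWaldorf1}, my plan is to outline the natural approach. For $d \geq 5$, the group $\Spin(d)$ is compact, simple, and simply connected. By the Pressley--Segal classification \cite{PressleySegal}, isomorphism classes of $\U(1)$-central extensions of $L\Spin(d)$ correspond bijectively to $H^2(L\Spin(d);\Z) \cong \Z$, and the integer attached to a given extension can be recovered from its real infinitesimal 2-cocycle on $L\mathfrak{spin}(d)$. The basic extension corresponds to the cocycle cohomologous to
\[
  \omega_{\mathrm{basic}}(X, Y) = \frac{1}{2\pi} \int_{S^1} \langle X(\theta), Y'(\theta) \rangle_{\mathrm{basic}} \, d\theta,
\]
where $\langle \cdot, \cdot \rangle_{\mathrm{basic}}$ is the basic invariant form on $\mathfrak{spin}(d)$, normalized so that long coroots have squared length $2$.

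First, I would compute the Lie algebra cocycle of the ambient extension $\Imp_L \to \O_{\res}(H_0)$. Differentiating \eqref{ImplementerRelation}, this cocycle takes the standard Schwinger form
\[
  \omega(A, B) = \tfrac{i}{4} \tr\bigl(\epsilon [\epsilon, A][\epsilon, B]\bigr), \qquad A, B \in \mathfrak{o}_{\res}(H_0),
\]
with $\epsilon = i(P_L - P_{L^\perp})$; the trace is finite because $[\epsilon, A]$ and $[\epsilon, B]$ are Hilbert--Schmidt by the definition of $\O_{\res}(H_0)$.

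Next, I would pull this back along the differentials in \eqref{DiagramImplementerExtension}. A loop $X \in L\mathfrak{spin}(d)$ acts on $H_0 = L^2(S^1, \mathbb{S} \otimes \C^d)$ by pointwise multiplication in the $\C^d$-factor via the vector representation $\rho : \mathfrak{spin}(d) \to \mathfrak{so}(d) \subset \End(\C^d)$. Expanding in the Fourier basis for $\mathbb{S}$, which diagonalizes $\epsilon$ via the sign of the momentum, a direct calculation yields, up to a coboundary,
\[
  \omega(X, Y) = \frac{c}{2\pi} \int_{S^1} \tr_{\C^d}\bigl(\rho(X(\theta)) \rho(Y'(\theta))\bigr) \, d\theta
\]
for a universal constant $c$ independent of $d$.

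The final and most delicate step is to show that the resulting class in $\Z$ is exactly $1$. This combines two normalizations: the constant $c$ from the Fourier computation and the Dynkin-index factor linking $\tr_{\C^d} \circ \rho$ with $\langle \cdot, \cdot \rangle_{\mathrm{basic}}$ on $\mathfrak{spin}(d)$. I would verify the combined normalization by restricting the extension to an embedded $SU(2) \hookrightarrow \Spin(d)$ corresponding to a long root, where the calculation reduces to the classical direct construction of the basic central extension of $LSU(2)$ from free fermions on a single complex line. Careful tracking of the factor of two introduced by the $\mathbb{S}$-twist, which is forced in odd dimensions by Thm.~\ref{TheoremConditionsOnE}, is the main technical subtlety.
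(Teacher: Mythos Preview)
Your approach is essentially the second of the two proof strategies the paper sketches: compute the Lie algebra cocycle of the implementer extension (the Schwinger/trace cocycle) and pull it back along \eqref{HomLSOdOres} to identify it with the basic cocycle on $L\mathfrak{spin}(d)$; this is exactly what \cite[Lemma~3.24]{KristelWaldorf1} and \cite[Thm.~8.3]{Borthwick} do, combined with \cite[Theorem~4.4.1(iv) \& Prop.~4.4.6]{PressleySegal} to pass from the cocycle to the isomorphism class.

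The paper also indicates a more topological alternative that bypasses the cocycle computation entirely: one uses that the first Chern class of $\Imp_L \to \O_{\res}(H_0)$ is already a generator of $H^2(\O_{\res}(H_0),\Z)\cong\Z$ \cite[Prop.~1.2]{SperaWurzbacherSpinors}, and that the map $\Omega\Spin(d)\to\O_{\res}(H_0)$ is $(d-2)$-connected \cite[Prop.~12.5.2]{PressleySegal}, so for $d\geq 5$ it induces an isomorphism on $H^2$. This avoids the normalization bookkeeping you flag as delicate. One small caution on your write-up: the $\mathbb{S}$-twist does not introduce a uniform ``factor of two''; in even dimensions $\mathbb{S}\otimes\C^d\cong\C^d$ and the twist is invisible, while in odd dimensions it shifts the Fourier modes by a half-integer but does not double anything.
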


This theorem is either proved by using that the first Chern class of $\Imp_L$ is a generator for $H^2(\O_{\res}(H), \Z) \cong \Z$ \cite[Prop.~1.2]{SperaWurzbacherSpinors} together with the fact that the map $\Omega \Spin(d) \to \O_{\res}(H_0)$ is $(d-2)$-connected \cite[Prop.~12.5.2]{PressleySegal}.
Alternatively, one can explicitly compute the group cocycle characterizing the extension (see \cite[Lemma~3.24]{KristelWaldorf1} or \cite[Thm.~8.3]{Borthwick}) and use \cite[Theorem 4.4.1 (iv) \& Proposition 4.4.6]{PressleySegal}.


\subsection{The loop space spinor bundle}
\label{SectionApplicationLoopSpace}

Let now $X$ be an oriented Riemannian manifold of dimension $d$ with loop space $LX = C^\infty(S^1, X)$.
The loop space has a canonical bundle of ``real'' Hilbert spaces $H_{LX}$, the fiber of which at a loop $\gamma \in LX$ is
\begin{equation}
\label{Hgamma}
 H_\gamma \defeq L^2(S^1, \mathbb{S} \otimes \gamma^*TX),
\end{equation}
with $\mathbb{S}$ the bounding spinor bundle on the circle.
The structure group of $H_{LX}$ is canonically reduced to $L\SO(d)$, in the sense that the looped frame bundle $L\SO(X)$ is a principal $L\SO(d)$-bundle with the property that the associated bundle $L\SO(X) \times_{L\SO(d)} H_0$ is canonically isomorphic to $H_{LX}$, where $H_0$ is the Hilbert space considered in \eqref{Hnot}.

As $X$ is oriented, we have $w_1(\gamma^*TX) = \gamma^* w_1(TX) = 0$, hence the canonical polarization 
\begin{equation}
\label{LagGamma}
  \Lag_\gamma \defeq \{ L \subset H_\gamma \text{ Lagrangian} \mid P_L \text{ has principal symbol } p\}
\end{equation}
of $H_\gamma$ described in \S\ref{SectionLagrangiansOverTheCircle} is non-empty.
We now provide $H_{LX}$ with the structure of a smooth bundle of polarized Hilbert spaces according to Definition~\ref{DefinitionPolarizedHilbertSpace}.
To this end, we remark that elements of the bundle $\SO(X)$ of orthogonal oriented frames can be viewed as oriented orthogonal transformations $\R^d \to T_xX$, hence by pointwise application, elements $g \in L\SO(X)$ of the looped frame bundle that lift a loop $\gamma \in LX$ give orthogonal transformations $g: H_0 \to H_\gamma$.
As such a $g$ intertwines the principal symbols \eqref{SymbolOfProjection} for the bundles $E = \mathbb{S} \otimes \C^d$ and $\mathbb{S} \otimes \gamma^*TX$, it is a restricted orthogonal transformation.
Therefore, a local section $g$ of $L\SO(X)$ over an open set $O \subseteq LX$ provides a local trivialization of $H_{LX}$.
If $g^\prime$ is another local section over an open set $O^\prime \subseteq LX$, then the transition function is obtained from the smooth function $g^\prime g^{-1} : O \cap O^\prime \longrightarrow L\SO(d)$ by postcomposing with the Lie group homomorphism \eqref{HomLSOdOres}.
Hence the transition functions are smooth.

As in \S\ref{SectionLagrangianGerbe}, the fiberwise completions $\CC_\gamma$ of the Clifford algebra $\Cl(H_\gamma)$ with respect to $\Lag_\gamma$ glue together to a continuous bundle $\CC$ of super von Neumann algebras over $LX$ (this bundle was previously considered in \cite{LudewigClifford}).
We repeat Definition~\ref{DefinitionAbstractSpinorBundle} for the present context.

\begin{definition}[Loop space spinor bundle]
\label{DefinitionLoopSpaceSpinorBundle}
A \emph{loop space spinor bundle} is a (continuous) bundle $\SB$ over $LX$ of irreducible super left modules for the super von Neumann algebra bundle $\CC$.
\end{definition}

The fiberwise polarizations $\Lag_\gamma$ glue together to a smooth fiber bundle $\Lag_{LX} \to LX$.
Over $\Lag$, we have the Fock bundle $\F$, a (continuous) bundle of super left modules for the (pullback to $\Lag_{LX}$ of the) Clifford algebra bundle $\Cl(H_{LX})$ and its completion $\CC$.

Over the 2-fold fiber product $\Lag^{[2]}$, we have the Pfaffian line bundle $\Pf$, which is the defining super line bundle for the Lagrangian gerbe $\LagGrb_{LX}$  for $H_{LX}$ (see Definition~\ref{DefinitionLagrangianGerbe}).
As seen in \S\ref{SectionLagrangianGerbe}, non-triviality of the Lagrangian gerbe is the obstruction to the existence of a loop space spinor bundle.
This statement is repeated in the following theorem.

\begin{theorem}
A loop space spinor bundle on $LX$ exists if and only if the Lagrangian gerbe $\LagGrb_{LX}$ admits a trivialization.
\end{theorem}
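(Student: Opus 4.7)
The plan is to recognize the statement as a direct application of Theorem~\ref{ThmEquivalenceFinite} (which treats the general case of a bundle $H$ of polarized Hilbert spaces on a manifold $M$) to the specific case $M = LX$ and $H = H_{LX}$. Since Theorem~\ref{ThmEquivalenceFinite} provides a functor from trivializations of the Lagrangian gerbe $\LagGrb_H$ to the category of spinor bundles for $H$ that is essentially surjective (and faithful), non-triviality of $\LagGrb_H$ is equivalent to non-existence of a spinor bundle for $H$.

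The main thing to check is that the preliminary setup in \S\ref{SectionApplicationLoopSpace} really does produce a smooth bundle of polarized Hilbert spaces in the sense of Definition~\ref{DefinitionPolarizedHilbertSpace}. Concretely, I would point to the following: the fibers $H_\gamma = L^2(S^1, \mathbb{S} \otimes \gamma^*TX)$ carry a canonical polarization $\Lag_\gamma$ by \eqref{LagGamma}, which is non-empty by Theorem~\ref{TheoremConditionsOnE} since $X$ is oriented; the typical fiber is the polarized Hilbert space $(H_0, \Lag_0)$ of \eqref{Hnot}; local sections $g$ of the looped oriented frame bundle $L\SO(X)$ yield fiberwise orthogonal transformations $H_0 \to H_\gamma$ which preserve the principal symbol \eqref{SymbolOfProjection} of the polarization projections, so they are fiberwise restricted orthogonal transformations; and the transition functions $g'g^{-1}$ take values in $L\SO(d)$ smoothly and hence in $\O_{\res}(H_0)$ smoothly via the smooth group homomorphism \eqref{HomLSOdOres}. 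This verifies all three clauses of Definition~\ref{DefinitionPolarizedHilbertSpace}.

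Once this verification is in place, the theorem follows by invoking Theorem~\ref{ThmEquivalenceFinite}: a trivialization of $\LagGrb_{LX} = \LagGrb_{H_{LX}}$ gives rise to the spinor bundle $\SB_{(\mathfrak{T},\tau)}$ constructed in Example~\ref{ExampleSpinorBdlFromTrivialization}, and conversely, any loop space spinor bundle $\SB$ (which, by Definition~\ref{DefinitionLoopSpaceSpinorBundle}, is precisely a spinor bundle for $H_{LX}$) produces the trivialization $(\mathfrak{N}, \nu)$ of $\LagGrb_{LX}$ built from $\mathfrak{N} = \underline{\Hom}(\F, \pi^*\SB)$ via the canonical composition map \eqref{CanonicalCompositionMap}, as in the proof of essential surjectivity of the functor \eqref{TrivializationFunctor}.

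Since the result is essentially a restatement of the previously-established theorem in the special context introduced in this subsection, there is no real obstacle to overcome; the proof would consist of a one- or two-sentence reduction, with the verification of Definition~\ref{DefinitionPolarizedHilbertSpace} already implicit in the construction of $H_{LX}$ given earlier.
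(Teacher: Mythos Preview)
Your proposal is correct and matches the paper's own treatment: the paper does not give a separate proof but simply states that this theorem is a repetition of the general obstruction result established in \S\ref{SectionLagrangianGerbe} (Theorem~\ref{ThmEquivalenceFinite}), specialized to $M = LX$ and $H = H_{LX}$. Your observation that the verification of Definition~\ref{DefinitionPolarizedHilbertSpace} for $H_{LX}$ is already carried out in the text preceding the theorem is exactly right.
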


In the present situation, non-triviality of the Lagrangian gerbe $\LagGrb_{LX}$ can be characterized in geometric terms as follows.

\begin{theorem}
\label{ThmOrientationLoopSpace}
The structure group of $L\SO(X)$ can be reduced to the identity component $L\SO(d)_0 \subset L\SO(d)$ if and only if the orientation line bundle $\orclass(\LagGrb_{LX})$ is non-trivial.
\end{theorem}

Recall here that every super bundle 2-gerbe $\mathcal{G}$ has an associated principal $\Z_2$-bundle $\orclass(\mathcal{G})$, see Definition~\ref{DefinitionOrientationBundle}.

A sufficient condition for the reduction of structure groups from Thm.~\ref{ThmOrientationLoopSpace} to exist is the existence of a spin structure $\Spin(X)$ of $X$.
Indeed, the image $P$ of the map $L\Spin(X) \to L\SO(X)$ is a principal $L\SO(d)_0$-bundle.
Conversely, it was shown by McLaughlin \cite[Prop.~2.1]{McLaughlin} that in the case that $X$ has dimension $d \geq 4$ and is simply connected, the existence of a reduction of the structure group to $L\SO(d)_0$ implies that $X$ is spin.

\begin{proof}
Let $P \subset L\SO(X)$ be a reduction of the structure group to $L\SO(d)_0$, i.e., $P$ is a subbundle on which the subgroup $L\SO(d)_0$ acts fiberwise freely and transitively.
Fix some Lagrangian $L \in \Lag_0$ and consider the map of covers $\rho : P \to \Lag$, $g \mapsto gL$.
Since the fibers of $P$ are connected, we obtain that the pullback $\rho^*\Pf$ is purely even (see Remark~\ref{RemarkParityPf}).
This implies the orientation bundle of the refinement of $\LagGrb_{LX}$ along $\rho$ is trivial.

Conversely, suppose that the orientation bundle $\orclass(\LagGrb_{LX})$ is trivial and choose a section $s$ of $\orclass(\LagGrb_{LX})$ and a Lagrangian $L \in \Lag_0$.
Then
\begin{equation*}
 P \defeq \bigl\{ g \in L\SO(X)_\gamma \mid s(\gamma) = [gL, 0] \bigr\} \subset L\SO(X).
\end{equation*}
is a principal $L\SO(d)_0$-bundle.
\end{proof}

For the next result, recall that a central extension $\widetilde{L\Spin(d)}$ of $L\Spin(d)$ by $\U(1)$ is \emph{basic} if its first Chern class is a generator of $H^2(L\Spin(d), \Z) \cong \Z$.

\begin{theorem}
\label{ThmLagLifting}
Suppose that $X$ is a spin manifold of dimension $d \geq 5$.
Then $\LagGrb_{LX}$ admits a trivialization if and only if there exists a lift of the structure group of $LX$ from $L\Spin(d)$ to its basic central extension.
\end{theorem}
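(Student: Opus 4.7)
My plan is to reduce the statement to the classical fact that a lifting gerbe for a central extension $\U(1)\to\widetilde{G}\to G$ over a principal $G$-bundle $P$ is trivializable if and only if $P$ lifts to a principal $\widetilde{G}$-bundle. Concretely, since $X$ is spin, $L\Spin(X)\to LX$ is a principal $L\Spin(d)$-bundle, and my goal is to identify (the pullback along a suitable cover morphism of) $\LagGrb_{LX}$ with the lifting gerbe associated to the implementer extension $\widetilde{L\Spin(d)}$ from \eqref{DiagramImplementerExtension}, which by Theorem~\ref{ThmImpBasic} is basic for $d\geq 5$.

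First, I would fix once and for all a Lagrangian $L_0\in\Lag_0$. Using that elements of $L\SO(X)_\gamma$ act fiberwise as restricted orthogonal transformations $H_0\to H_\gamma$ preserving the polarization, the assignment $g\mapsto g\cdot L_0$ defines a smooth map $\rho: L\Spin(X)\to\Lag$ of covers of $LX$ (composed with $L\Spin(X)\to L\SO(X)$). Refining $\LagGrb_{LX}$ along $\rho$ gives a super bundle gerbe $\rho^{*}\LagGrb_{LX}$ with cover $L\Spin(X)$, whose defining super line bundle over $L\Spin(X)^{[2]}$ has fiber at $(g_1,g_2)$ the Pfaffian line $\Pf(g_1L_0, g_2L_0)$.

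Next, writing $g_2=g_1 h$ with $h=g_1^{-1}g_2\in L\Spin(d)$, the equivariance isomorphism \eqref{EquivariancePfaffianLines} identifies $\Pf(g_1L_0, g_1hL_0)\cong\Pf(L_0, hL_0)$ canonically, and via \eqref{IsoImpLPfL} the latter is identified with the fiber at $h$ of the line bundle $\mathfrak{L}$ associated to the pullback of $\Imp_{L_0}\to\O_{\res}(H_0)$ along \eqref{HomLSOdOres}; by construction this pullback is precisely $\widetilde{L\Spin(d)}$. Pulling back along the division map $\delta:L\Spin(X)^{[2]}\to L\Spin(d)$, $(g_1,g_2)\mapsto g_1^{-1}g_2$, yields a smooth isomorphism of super line bundles $\rho^{[2],*}\Pf\cong\delta^{*}\mathfrak{L}$. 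In particular, since the refinement is performed over $L\Spin(X)$ (whose fibers are connected), Remark~\ref{RemarkParityPf} implies that this line bundle is purely even, consistent with Theorem~\ref{ThmOrientationLoopSpace}.

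The remaining step is to check that under this identification the gerbe multiplication \eqref{MultiplicationBundleMorphism} corresponds to the group multiplication on $\widetilde{L\Spin(d)}$, i.e.\ that composition of Pfaffian elements corresponds to composition of implementers in the sense of \eqref{IsoImpLPfL}. This is a fiberwise calculation: for $g_1,g_2,g_3\in L\Spin(X)_\gamma$ with $h_{ij}=g_i^{-1}g_j$, equivariance reduces the composition $\Pf(g_2L_0,g_3L_0)\otimes\Pf(g_1L_0,g_2L_0)\to\Pf(g_1L_0,g_3L_0)$ to the composition $\Pf(L_0,h_{23}L_0)\otimes\Pf(L_0,h_{12}L_0)\to\Pf(L_0,h_{13}L_0)$, which under \eqref{IsoImpLPfL} encodes multiplication of implementers of $h_{12}$ and $h_{23}$, hence multiplication in the central extension. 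Consequently $\rho^{*}\LagGrb_{LX}$ is isomorphic to the lifting gerbe of $L\Spin(X)$ with respect to $\widetilde{L\Spin(d)}\to L\Spin(d)$. Since $\rho$ is a surjective submersion, $\LagGrb_{LX}$ admits a trivialization if and only if $\rho^{*}\LagGrb_{LX}$ does, which happens if and only if the structure group of $L\Spin(X)$ lifts to $\widetilde{L\Spin(d)}$, proving the theorem.

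The main obstacle I expect is the careful bookkeeping in the third step: one must verify that the composition isomorphism \eqref{MultiplicationBundleMorphism} on Pfaffian lines intertwines, under the chain of canonical identifications above, with the multiplication on the implementer group, tracking the grading signs from \eqref{PfaffianRelation} and \eqref{ImplementerRelation}. Once this coherence is established, the equivalence of the refinement with the lifting gerbe, and hence the final statement, are essentially formal consequences of standard bundle gerbe theory.
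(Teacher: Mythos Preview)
Your proposal is correct and follows essentially the same route as the paper: fix a Lagrangian $L_0\in\Lag_0$, refine $\LagGrb_{LX}$ along the induced map $L\Spin(X)\to\Lag$, and identify the refined gerbe with the lifting gerbe for the implementer extension via \eqref{IsoImpLPfL} and \eqref{EquivariancePfaffianLines}, then invoke Theorems~\ref{ThmLiftingGerbe} and~\ref{ThmImpBasic}. One small point to watch: the map \eqref{IsoImpLPfL} lands in $\Pf(gL,L)$, not $\Pf(L,gL)$, so with your division convention $h=g_1^{-1}g_2$ you are actually matching $\Pf(L_0,hL_0)$ with the fiber of the \emph{dual} implementer extension---either switch to $\delta(g_1,g_2)=g_2^{-1}g_1$ as the paper does, or note that the dual of a basic extension is again basic.
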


\begin{proof}
Suppose that $X$ is spin and let $\Spin(X)$ be the corresponding principal $\Spin(d)$-bundle.
For a given Lagrangian $L \in \Lag_0$, consider the map $\rho : L\Spin(X) \to \Lag$ given by $\tilde{g} \mapsto gL$, where $g$ is the image of $\tilde{g}$ in $L\SO(X)$ under the loop map of the double covering map $\Spin(X) \to \SO(X)$.
By Thm.~\ref{ThmOrientationLoopSpace}, $\LagGrb_{LX}$ has trivial orientation bundle, and refining $\LagGrb_{LX}$ along the map $\rho$ gives a new purely even super bundle gerbe $\mathcal{G}_L$ isomorphic to $\LagGrb_{LX}$ with cover $L\Spin(X)$.

We claim that the underlying $\U(1)$-bundle gerbe of $\mathcal{G}_L$ is strictly isomorphic to the lifting gerbe $\Lift_{\Spin(X)}$ for the implementer extension \eqref{DiagramImplementerExtension} of $L\Spin(d)$; compare \eqref{EqLiftingGerbe}.
Recall here that the Pfaffian line bundle has a canonical metric given by \eqref{PfaffianInnerProduct} and hence a canonical underlying $\U(1)$-bundle gerbe, given by passing to the norm one (i.e., unitary) elements.
Now, the line bundle of $\mathcal{G}_L$ at $(\tilde{g}_1, \tilde{g}_2) \in L\Spin(X)^{[2]}$ is $\Pf(g_1L, g_2L)$.
Using \eqref{IsoImpLPfL} and \eqref{EquivariancePfaffianLines}, we obtain a map
\begin{equation}
\label{ImpVsPfaffian}
  (\Imp_L){ g_2^*g_1} \longrightarrow \Pf(g_2^*g_1 L, L) \longrightarrow \Pf(g_1 L, g_2 L).
\end{equation}
This gives an isomorphism of the subbundle of $\rho^*\Pf$ consisting of unitary intertwiners with the pullback of the principal $\U(1)$-bundle $\Imp_L \to \O_{\res}(H_0)$ along the map
\begin{equation*}
  L\Spin(X)^{[2]} \stackrel{\delta}{\longrightarrow} L\Spin(d) \longrightarrow L\SO(d) \longrightarrow \O_{\res}(H_0), \qquad (\tilde{g}_1, \tilde{g}_2) \longmapsto g_2^*g_1.
\end{equation*}
 But with a view on \eqref{DiagramImplementerExtension}, this pullback is just $\delta^* \widetilde{L\Spin(d)}$.
 The isomorphism \eqref{ImpVsPfaffian} moreover intertwines the composition map \eqref{CompositionMap} with group multiplication in $\Imp_L$, so that $\mathcal{G}_L$ is indeed isomorphic to $\Lift_{\Spin(X)}$.
 
 By Thm.~\ref{ThmLiftingGerbe}, we therefore obtain that $\mathcal{G}_L$ and hence also $\LagGrb_{LX}$ are trivial if and only if the structure group of $LX$ admits a lift from $L\Spin(d)$ to the implementer extension.
On the other hand, by Thm.~\ref{ThmImpBasic} the assumption $d \geq 5$ guarantees that the implementer extension is $L\Spin(d)$.
\end{proof}

Recall that on a spin manifold $X$, there exists a characteristic class $\frac{1}{2}p_1(X)$, which has the property that twice this class equals the first Pontrjagin class \cite[Lemma~2.2]{McLaughlin}.
It was shown by Waldorf \cite[\S5.2]{WaldorfSpinString} that the Dixmier-Douady class of the obstruction gerbe to lifting the structure group $L\Spin(d)$ to its basic central extension is $\tau(\frac{1}{2}p_1(X))$.
Conversely, McLaughlin showed that if $X$ is 2-connected and $\dim(X) \geq 5$, then vanishing of $\tau(\frac{1}{2}p_1(X))$ implies the vanishing of $\frac{1}{2}p_1(X)$ itself.
Examples of manifolds $X$ with $\frac{1}{2}p_1(X) \neq 0$ but $\tau(\frac{1}{2}p_1(X)) = 0$ have been provided in \cite{PilchWarner}.

\section{The fusion product}
\label{SectionFusionProduct}

Let $X$ be a Riemannian manifold of dimension $d$.
In this section, we discuss \emph{fusion products} on the loop space spinor bundle.
For this, it will be important to cup loops into paths.
We write 
\begin{equation*}
PX \subset C^\infty([0, \pi], X)
\end{equation*}
 for the subset of those paths $\gamma : [0, \pi] \to X$ that are \emph{flat} near the end points, i.e., in some coordinate chart (hence all coordinate charts) the derivatives of $\gamma$ vanish to all orders at the end points.
$PX$ is an infinite-dimensional Fr\'echet manifold.

\begin{remark}
In the literature, two variations of this path space are often considered: First, one could drop the condition that the paths be flat at the end points, which also gives a nice Fr\'echet manifold of paths but has the disadvantage that after gluing two paths with the same end points to a loop with the map \eqref{CupMap} below, one does not obtain a smooth loop.
This can be repaired by considering paths with \emph{sitting instants} instead, i.e., paths that are constant near the end points.
But this has the disadvantage that one leaves the realm of manifolds and has to work with diffeological spaces instead.
\end{remark}

For $k \geq 2$, denote by $PX^{[k]}$ the $k$-fold fiber product of $PX$ with itself over the endpoint evaluation map to $X \times X$.
Explicitly, elements of $PX^{[k]}$ consist of tuples $(\gamma_1, \dots, \gamma_k)$ of paths $\gamma_i \in PX$ that each share both the same starting point and the same end point.

For $(\gamma_1, \gamma_2) \in PX^{[2]}$, we set
\begin{equation} \label{CupMap}
  (\gamma_1 \fuse \gamma_2)(t) = \begin{cases} \gamma_2(t) & t \in [0, \pi] \\ \gamma_1(2\pi - t) & t \in [\pi, 2\pi].\end{cases}
\end{equation}
Since $\gamma_1$ and $\gamma_2$ are flat at the endpoints, $\gamma_1 \fuse \gamma_2$ is a smooth loop in $X$, and sending $(\gamma_1, \gamma_2)$ to $\gamma_1 \fuse \gamma_2$ yields a smooth inclusion map $PX^{[2]} \to LX$.
\begin{equation*}
\begin{aligned}
\begin{tikzpicture}


 \node at  (-1.3,0) {$\gamma_1$};
  \draw[->, line width=1pt] (0,-1.7) arc (-90:-270:1.7);
  \draw[dashed, color=gray] (0,-1.7) arc (-90:90:1.7);
  
  \coordinate (top) at (0,1.7);
    \node[above] at (top) {};

  \coordinate (bottom) at (0,-1.7);
  \filldraw[black] (bottom) circle (2pt);
  \node[below] at (bottom) {};

%

\end{tikzpicture}
\end{aligned}
~~\fuse~~
\begin{aligned}
\begin{tikzpicture}


 \node at  (1.3,0) {$\gamma_2$};
  \draw[->, line width=1pt] (0,-1.7) arc (-90:90:1.7);
  \draw[dashed, color=gray] (0,-1.7) arc (-90:-270:1.7);
  
  \coordinate (top) at (0,1.7);
    \node[above] at (top) {};

  \coordinate (bottom) at (0,-1.7);
  \filldraw[black] (bottom) circle (2pt);
  \node[below] at (bottom) {};

%

\end{tikzpicture}
\end{aligned}
~~= ~~
\begin{aligned}
\begin{tikzpicture}


 \node at  (0, 1.2) {$\gamma_1 \fuse \gamma_2$};
  \draw[->, line width=1pt] (0,-1.7) arc (-90:267.5:1.7);
  
  \coordinate (top) at (0,1.7);
    \node[above] at (top) {};

  \coordinate (bottom) at (0,-1.7);
  \filldraw[black] (bottom) circle (2pt);
  \node[below] at (bottom) {};

%

\end{tikzpicture}
\end{aligned}
\end{equation*}

\subsection{The von Neumann algebra bundle over the path space}
\label{SectionIntervalAlgebra}

In this section, we construct a bundle $\A$ of Clifford von Neumann algebras over the path space $PX$ of a Riemannian manifold $X$.
We first discuss the typical fiber of this bundle.
Consider the ``real'' Hilbert space
\begin{equation}
\label{Vnot}
V_0 \defeq L^2([0, \pi], \C^d).
\end{equation}
By choosing a metric preserving trivialization of the restriction $\mathbb{S}|_{[0, \pi]}$, we identify $V_0$ with a subspace of the ``real'' Hilbert space $H_0$ defined in \eqref{Hnot}.
We fix such a trivialization once and for all.
This induces an inclusion $*$-algebras $\Cl(V_0) \hookrightarrow \Cl(H_0) \subset \CC_0$, where $\CC_0$ is the completion of $\Cl(H_0)$ with respect to the polarization $\Lag_0$; see \eqref{LagrangiansForE}.
We define 
\begin{equation}
\label{DefinitionA0}
  \A_0 \defeq \Cl(V_0)^{\prime\prime} \subset \CC_0,
\end{equation}
as the completion of $\Cl(V_0)$ with respect to the ultraweak topology induced by this inclusion.

\begin{remark}
Observe that $\mathbb{S}|_{[0, \pi]}$ has precisely two trivializations (that preserve the fiber metrics), which differ by a sign. 
Hence the two inclusions $\Cl(V_0) \hookrightarrow \CC_0$ corresponding to these trivializations are intertwined by the grading automorphism of $\CC_0$.
We therefore see that both inclusion induce the same topology on $\Cl(V_0)$ and hence lead to the same completion $\A_0$.
\end{remark}

\begin{lemma}
\label{LemmaActionOnA0Cont}
The Bogoliubov automorphisms of $\Cl(V_0)$ induced by elements of $P\SO(d)$ extend to $\A_0$ by ultraweak continuity and the resulting group homomorphism
\begin{equation}
\label{PSOdToA0}
 P\SO(d) \longrightarrow \A_0
\end{equation} 
is continuous.
\end{lemma}

\begin{proof}
  Let $g \in P\SO(d)$, which induces a Bogoliubov automorphism $\Cl_g$ of $\Cl(V_0)$.
  To see that $\Cl_g$ is continuous, choose an element $g^\prime \in P\SO(d)$ with the same start and end point as $g$. 
  Then the element $g^\prime \fuse g \in \O_{\res}(H_0)$ induces a $*$-automorphism of $\CC_0$ restricting to $\Cl_g$ on $\Cl(V_0)$.
  So $\Cl_g$ is the restriction of a $*$-automorphism of $\CC_0$, hence ultraweakly continuous, therefore extends by continuity to $\A_0$.
  
  To see that \eqref{PSOdToA0} is continuous, we first observe that the composition 
  \begin{equation*}
  P\SO(d)^{[2]} \stackrel{\fuse}{\longrightarrow} L\SO(d) \longrightarrow \O_{\res}(H_0) \longrightarrow \Aut(\CC_0)
  \end{equation*}
  is continuous, where the middle one is \eqref{HomLSOdOres} and the last one is \eqref{GroupHomOresAutC}.
  Notice that the image of the composition is contained in the subgroup $\Aut(\CC_0)^\prime\subset\Aut(\CC_0)$ that consist of automorphisms of $\CC_0$ that preserve $\A_0$, hence we obtain a homomorphism $P\SO(d)^{[2]} \to \Aut^\prime(\CC_0)$.
  Precomposition with the diagonal map $P\SO(d) \to P\SO(d)^{[2]}$ and postcomposition with the map that restricts an automorphism of $\CC_0$ that preserves $\A_0$ to an automorphism of $\A_0$ gives a factorization of the homomorphism as the composition $P\SO(d) \to P\SO(d)^{[2]} \to \Aut(\CC_0)^\prime \to \Aut(\A_0)$ of continuous group homomorphisms.
\end{proof}

We now construct the desired bundle $\A$ with typical fiber $\A_0$ over the path space $PX$ of a Riemannian manifold $X$.
For $\gamma \in PX$, consider the Hilbert space
\begin{equation}
\label{Vgamma}
  V_\gamma \defeq L^2([0, \pi], \gamma^*TX).
\end{equation}
For $(\gamma_1, \gamma_2) \in PX^{[2]}$ we can form the loop $\gamma_1 \fuse \gamma_2$, and $V_{\gamma_2}$ is naturally a subspace of the Hilbert space $H_{\gamma_1 \fuse \gamma_2}$ defined in \eqref{Hgamma}.
$H_{\gamma_1 \fuse \gamma_2}$ supports the equivalence class $\Lag_{\gamma_1 \fuse \gamma_2}$ of Lagrangians defined in \eqref{LagGamma}, which gives rise to the  completion $\CC_{\gamma_1 \fuse \gamma_2}$ of the corresponding Clifford algebra.
We therefore have inclusions
\begin{equation}
\label{InclusionPathClifford}
  \Cl(V_{\gamma_2}) \subset \Cl(H_{\gamma_1 \fuse \gamma_2}) \subset \CC_{\gamma_1 \fuse \gamma_2}.
\end{equation}
To get rid of the dependence on $\gamma_1$, we use the following lemma.

\begin{lemma}
The ultraweak topology induced on $\Cl(V_{\gamma_2})$ by the inclusion \eqref{InclusionPathClifford} does not depend on the choice of $\gamma_1$.
\end{lemma}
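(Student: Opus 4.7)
The plan is to build, for any $\gamma_1, \gamma_1' \in PX$ sharing both endpoints with $\gamma_2$, a $*$-isomorphism of super von Neumann algebras $\mathfrak{C}_{\gamma_1 \cup \gamma_2} \to \mathfrak{C}_{\gamma_1' \cup \gamma_2}$ that restricts to the identity on $\Cl(V_{\gamma_2})$. Because any such $*$-isomorphism is automatically ultraweakly bicontinuous, this will force the two ultraweak topologies induced on $\Cl(V_{\gamma_2})$ to coincide. I will obtain the required $*$-isomorphism via the functoriality of the von Neumann completion (the functor from polarized Hilbert spaces and restricted orthogonal transformations to super von Neumann algebras discussed around \eqref{GroupHomOresAutC}), applied to a unitary restricted orthogonal transformation $U: H_{\gamma_1 \cup \gamma_2} \to H_{\gamma_1' \cup \gamma_2}$ which restricts to the identity on $V_{\gamma_2}$.

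To construct $U$, write $E = \mathbb{S} \otimes (\gamma_1 \cup \gamma_2)^*TX$ and $E' = \mathbb{S} \otimes (\gamma_1' \cup \gamma_2)^*TX$ as ``real'' rank-$d$ orthogonal bundles over $S^1$. Since the loops $\gamma_1 \cup \gamma_2$ and $\gamma_1' \cup \gamma_2$ agree on $[0, \pi]$, the bundles $E$ and $E'$ are canonically identified over $[0, \pi]$, and I would take $U = M_f$ for a smooth fibrewise orthogonal bundle isomorphism $f: E \to E'$ with $f|_{[0, \pi]} = \id$; this ensures $U|_{V_{\gamma_2}} = \id$. Over $[\pi, 2\pi]$, the restriction $\phi = f|_{[\pi, 2\pi]}$ must be a smooth orthogonal bundle isomorphism $E|_{[\pi, 2\pi]} \to E'|_{[\pi, 2\pi]}$ agreeing with the canonical identity to infinite order at $t = \pi$ and $t = 2\pi$, so that $f$ assembles to a smooth bundle map on all of $S^1$.

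Such $\phi$ exists by an orientation--connectedness argument. Both $E|_{[\pi, 2\pi]}$ and $E'|_{[\pi, 2\pi]}$ are trivialisable (rank-$d$ bundles over a contractible interval), and they inherit compatible orientations from the choice of an orientation of $\mathbb{S}|_{[\pi, 2\pi]}$ together with the orientation on the (reversed) pulled-back tangent bundles induced by the orientation of $X$. In these orientations, the canonical identities $E|_\pi = E'|_\pi$ and $E|_{2\pi} = E'|_{2\pi}$ are both orientation-preserving, so in any oriented trivialisations their coordinate values land in $\SO(d)$. Since $\SO(d)$ is path-connected, a smooth path between the two endpoint values exists; flattening it to $\id$ near the endpoints with a bump function produces a $\phi$ smoothly matching the identity to infinite order. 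Finally, $U = M_f$ is restricted: for $L \in \Lag_{\gamma_1 \cup \gamma_2}$, the projection $P_L$ is a classical pseudodifferential operator on $S^1$ with principal symbol $\sign(\vartheta) \id_E$ modulo Hilbert--Schmidt (see \eqref{LagrangiansForE}), and conjugation by the smooth multiplication $M_f$ preserves both the pseudodifferential class and the Hilbert--Schmidt ideal, transporting the principal symbol to $f(x)\sign(\vartheta)\id f(x)^{-1} = \sign(\vartheta)\id_{E'}$ since the symbol is a scalar multiple of the identity.

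The main obstacle is the orientation--connectedness argument for the existence of $\phi$, which crucially uses that $X$ is oriented (so that the pulled-back tangent bundles carry canonical orientations) and the contractibility of $[\pi, 2\pi]$; the remaining pseudodifferential symbol computation is routine once $f$ is in hand.
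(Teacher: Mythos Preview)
Your proof is correct and follows the same core strategy as the paper: build a restricted orthogonal transformation $H_{\gamma_1\cup\gamma_2}\to H_{\gamma_1'\cup\gamma_2}$ that is the identity on $V_{\gamma_2}$, then use that the induced $*$-isomorphism of von Neumann completions is ultraweakly bicontinuous. The paper's execution is a bit more economical---instead of constructing the bundle map $f$ by hand and verifying the symbol condition, it chooses lifts $g\in L\SO(X)_{\gamma_1\cup\gamma_2}$ and $g'\in L\SO(X)_{\gamma_1'\cup\gamma_2}$ that agree on $[0,\pi]$ and takes $U=g'g^{-1}$, so that the ``restricted'' property comes for free from the already-established homomorphism \eqref{HomLSOdOres}; your $\SO(d)$-connectedness argument and symbol computation are essentially a direct reproof of this special case.
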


\begin{proof}
Let $\gamma_1^\prime \in PX$ be another path with the same start and end points as $\gamma_2$.
Choose lifts $g \in L\SO(X)_{\gamma_1\fuse\gamma_2}$ and $g^\prime \in L\SO(X)_{\gamma_1^\prime \fuse\gamma_2}$ that agree on $[0, \pi]$ (i.e, over $\gamma_2$). We view these as orthogonal transformations 
\begin{equation*}
  g: H_0 \longrightarrow H_{\gamma_1\fuse\gamma_2}, \qquad g^\prime : H_0 \longrightarrow H_{\gamma_1^\prime \fuse\gamma_2}.
\end{equation*}
of polarized ``real'' Hilbert spaces.
The composition $g^\prime g^{-1} : H_{\gamma_1\fuse\gamma_2} \to H_{\gamma_1^\prime \fuse\gamma_2}$ is then also an orthogonal transformation of polarized ``real'' Hilbert spaces, hence induces an isomorphism $\Cl_{g^\prime g^{-1}}$ on the corresponding Clifford algebras which by construction intertwines the inclusions of $\Cl(V_{\gamma_2})$.
\begin{equation*}
\begin{tikzcd}[row sep=0.2cm]
  & \Cl(H_{\gamma_1\fuse\gamma_2}) \ar[r, hookrightarrow] \ar[dd, "\Cl_{g^\prime g^{-1}}"]& \CC_{\gamma_1\fuse\gamma_2} \ar[dd, "\Cl_{g^\prime g^{-1}}"] \\
   \Cl(V_{\gamma_2}) \ar[ur, hookrightarrow] \ar[dr, hookrightarrow] & & \\
  & \Cl(H_{\gamma_1^\prime\fuse\gamma_2}) \ar[r, hookrightarrow] & \CC_{\gamma_1^\prime\fuse\gamma_2} 
\end{tikzcd}
\end{equation*}
On the other hand, $\Cl_{g^\prime g^{-1}}$ extends by ultraweak continuity to the von Neumann algebra completions, which shows that both inclusions induce the same ultraweak topology.
\end{proof}

\begin{notation}
\label{NotationAgamma}
  For $\gamma \in PX$, we denote by $\A_\gamma$ the completion of $\Cl(V_\gamma)$ with respect to the ultraweak topology induced by any of the inclusions \eqref{InclusionPathClifford}.
\end{notation}

Each $\A_\gamma$, $\gamma \in PX$, is a super von Neumann algebra, and by construction, for any $(\gamma_1, \gamma_2) \in PX^{[2]}$, we have $*$-homomorphisms 
\begin{equation*}
\A_{\gamma_2} \hookrightarrow \CC_{\gamma_1\fuse\gamma_2}.
\end{equation*}
The $\A_\gamma$, $\gamma \in PX$, glue together to a bundle of von Neumann algebras (see Definition~\ref{DefinitionvNBundle}).
Indeed, if $g : O \to P\SO(X)$ is a local section of $P\SO(X)$ over an open set $O \subseteq PX$, we obtain a local trivialization $\Cl_g : \A|_O \longrightarrow \A_0 \times O$.
That the transition functions are continuous follows from Lemma~\ref{LemmaActionOnA0Cont}.

\subsection{Fock spaces as bimodules}
\label{SectionAbstractConnesFusion}

Let $H$ be a ``real'' Hilbert space together with a ``real'' subspace $V \subset H$, i.e., a complex subspace of $H$ that is preserved by the real structure.
Suppose now that there exists an orthogonal involution
\begin{equation*}
\sigma : H \to H
\end{equation*}
that exchanges $V$ with $V^\perp$.
The composition $i\sigma$ can be viewed as an orthogonal transformation $-H \to H$, hence we obtain an induced $*$-isomorphism 
\begin{equation}
\label{StarIsoClisigma}
\begin{tikzcd}
\Cl(V)^{\op} \cong \Cl(-H) \ar[r, "\Cl_{i\sigma}"] &
 \Cl(H),
 \end{tikzcd}
\end{equation}
 which sends $\Cl(-V)$ to $\Cl(V^\perp)$, the super commutant of $\Cl(V)$ inside $\Cl(H)$.
 
Given a Lagrangian $L \subset \Lag$, the corresponding Fock space $\F_L$ is a left super $\Cl(H)$-module via the Fock representation $\pi_L$. 
It also has an action of $\Cl(V) \subset \Cl(H)$ obtained from restricting $\pi_L$.
But the $*$-isomorphism \eqref{StarIsoClisigma} determined by $\sigma$ also induces a $*$-representation $\pi^\sigma_L$ of $\Cl(V)^{\op}$ on $\F_L$, 
explicitly given by
\begin{equation}
\label{RepresentationPiSigma}
  \pi_L^\sigma(v) \defeq \pi_L(i \sigma v), \qquad v \in V.
\end{equation}
As a Hilbert space with super commuting actions of $\Cl(V)$ and $\Cl(V)^{\op}$, the Fock space $\F_L$ acquires the structure of a super $\Cl(V)$-$\Cl(V)$-bimodule, where the right action is given on $V \subset \Cl(V)$ by
\begin{equation}
\label{DefinitionRightActionSigma}
  \xi \ract v  \defeq (-1)^{|\xi|} \pi_L^\sigma(v) \xi, \qquad v \in V.
\end{equation}

\medskip

We now lift the above construction to the von Neumann algebra completions.
Suppose we are given a polarization $\Lag$ on $H$ and let $\CC$ be the  completion of the Clifford algebra $\Cl(H)$ with respect to $\Lag$ (see Definition~\ref{DefinitionCompletionClifford}).
Let moreover
\begin{equation*}
\A = \Cl(V)^{\prime\prime} \subset \CC
\end{equation*}
be the ultraweak closure of the Clifford subalgebra $\Cl(V) \subset \Cl(H)$ in $\CC$.
We assume additionally that $\sigma$ sends the equivalence class $\Lag$ to the opposite equivalence class $\Lag^\perp$; see \eqref{ConjugateClassOfLagrangians}.
Then $\Cl_{i \sigma}$ extends by ultraweak continuity to a $*$-isomorphism $\CC_- \to \CC$, where $\CC_-$ denotes the  completion of $\Cl(-H)$ with respect to $\Lag^\perp$.
By Lemma~\ref{LemmaCanonicalIsomorphismOpposite}, also the canonical isomorphism $\Cl(H)^{\op} \cong \Cl(-H)$ extends continuously to a $*$-isomorphism $\CC^{\op} \cong \CC_-$.
Combining these two extensions, we obtain that \eqref{StarIsoClisigma} extends by ultraweak continuity to a $*$-isomorphism
\begin{equation}
\label{IsoWithOp}
\begin{tikzcd}
  \CC^{\op} \cong \CC_- \ar[r, "\Cl_{i\sigma}"] & \CC.
\end{tikzcd}
\end{equation}
As \eqref{StarIsoClisigma} sends $\Cl(V)^{\op}$ to $\Cl(V^\perp)$, the super commutant of $\Cl(V)$ in $\Cl(H)$, its continuous extension sends $\A^{\op}$ to the super commutant of $\A$ in $\CC$.

By the above, for any Lagrangian $L \in \Lag$,  the representations $\pi_L$ and $\pi_L^\sigma$ extend to supercommuting $*$-representations of $\A$, respectively $\A^{\op}$, on $\F_L$.
Turning the left $\A^{\op}$-action into a right $\A$-action using \eqref{DefinitionRightActionSigma} then turns the Fock space $\F_L$ into a super $\A$-$\A$-bimodule.
It is a non-trivial fact that the image of $\A^{\op}$ under $\pi_L^\sigma$ is actually \emph{equal} to the super commutant of $\pi_L(\A)$; this is the so-called ``twisted duality'' of the Clifford algebra, see \cite{TwistedDuality} or \cite[Thm.~13(iii)]{Wassermann}.
We obtain that the images of $\A$ under $\pi_L$ and $\pi_L^{\sigma}$ are each other's super commutant; 
this implies that the super $\A$-$\A$-bimodule $\F_L$ is invertible with respect to Connes fusion \cite[Prop.~5.5]{LandsmannBicategories}).

\begin{remark}
\label{RemarkDifferentSpaces}
The above constructions extend to the slightly more general setup.
Let $\tilde{V} \subset \tilde{H}$ be another subspace of a polarized Hilbert space $(\tilde{H}, \tilde{\Lag})$ and let $\tilde{\A} = \Cl(\tilde{V})^{\prime\prime} \subset \tilde{\CC}$ be the completion of the corresponding Clifford algebra in the  completion $\tilde{\CC}$ of $\Cl(\tilde{H})$ with respect to $\tilde{\Lag}$.
Then given an orthogonal transformation $\sigma : \tilde{H} \to H$ sending $\tilde{V}$ to $V^\perp$ and $\tilde{\Lag}{}^\perp$ to $\Lag$, the $*$-isomorphism 
\begin{equation*}
\begin{tikzcd}
\Cl(\tilde{H})^{\op} \cong \Cl(-\tilde{H}) \ar[r, "\Cl_{i \sigma}"] & \Cl(H)
\end{tikzcd}
\end{equation*}
 extends by ultraweak continuity to a $*$-isomorphism $\tilde{\CC}^{\op} \to \CC$ that takes $\tilde{\A}^{\op}$ to the super commutant of $\A$ in $\CC$.
 
Moreover, for any $L \in \Lag$, the representation $\pi^\sigma_L$ of $\Cl(\tilde{V})$ given by the formula \eqref{RepresentationPiSigma} extends continuously to a $*$-representation of $\tilde{\A}^{\op}$ on $\F_L$ such that $\pi_L^\sigma(\tilde{\A})$ is the super commutant of $\pi_L(\A)$.
This turns $\F_L$ into an invertible super $\A$-$\tilde{\A}$-bimodule.
\end{remark}

\medskip

Any Lagrangian $L$ in \emph{general position} with respect to $V$ (meaning that the intersections $L \cap V$ and $L \cap V^\perp$ are trivial) determines an orthogonal involution $\sigma = \sigma_L$ as above.
Namely, since $L$ is in general position, we have 
\begin{equation*}
L = \graph(T_L)
\end{equation*}
 for some (possibly unbounded) densely defined, invertible operator $T_L : V \to V^\perp$.
That $L$ is a Lagrangian entails that $T_L^* = -\overline{T}_L^{-1}$.
%
%
This relation implies that the operator $\sigma_L$, defined with respect to the direct sum decomposition $H = V \oplus V^\perp$ by the matrix
\begin{equation}
\label{DefinitionSigmaL}
  \sigma_L = 
  \begin{pmatrix} 
  0 & -i(T_L^*T_L)^{-1/2} T_L^* \\ 
  iT_L (T_L^*T_L)^{-1/2} 
  \end{pmatrix},
\end{equation}
is an orthogonal involution of $H$ that exchanges $V$ with $V^\perp$ and sends $L$ to $\overline{L}$ (and hence, in particular, sends $\Lag$ to $\Lag^\perp$).
%
%

\begin{theorem}
\label{TheoremFLStandard}
Let $L$ be a Lagrangian in general position and equip the Fock space $\F_L$ with the right action \eqref{DefinitionRightActionSigma} determined by the involution $\sigma_L$ given by \eqref{DefinitionSigmaL}.
Then as a super $\A$-$\A$-bimodule, $\F_L$ is isomorphic to the standard bimodule $L^2(\A)$, via a grading preserving unitary transformation.
\end{theorem}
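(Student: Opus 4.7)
The plan is to use the vacuum vector $\Omega_L \in \F_L$ (the generator of $\Lambda^0 L \subset \overline{\Lambda L}$) as a cyclic and separating vector for the left $\A$-action on $\F_L$, and then invoke the uniqueness of the standard form of the hyperfinite type III$_1$ factor $\A$.

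First I would establish cyclicity. The general position hypothesis means $L = \graph(T_L)$ for a densely defined invertible $T_L : V \to V^\perp$ (the same operator entering \eqref{DefinitionSigmaL}). Decomposing $v \in V$ as $v = v_L + v_{\overline L}$ along $H = L \oplus \overline L$, a short computation yields $v_L = \bigl((1+T_L^*T_L)^{-1}v,\,T_L(1+T_L^*T_L)^{-1}v\bigr)$, so $v \mapsto v_L$ is a continuous linear bijection $V \to L$. Since $v_{\overline L} \in \overline L$ acts by annihilation on the vacuum, $\pi_L(v)\Omega_L = v_L$, and therefore $\pi_L(V)\Omega_L$ is dense in $L \subset \F_L$. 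Using the Clifford relations \eqref{CliffordRelations} to put $\pi_L(v_1)\cdots\pi_L(v_n)\Omega_L$ in normal order, one inductively obtains that $\pi_L(\Cl(V))\Omega_L$ is dense in $\overline{\Lambda L} = \F_L$, so $\Omega_L$ is cyclic for $\A$. Since $\A$ is a factor, cyclicity forces the separating property as well; equivalently, by the twisted duality cited before the theorem, $\Omega_L$ is also cyclic for the super commutant $\pi_L^\sigma(\A)$.

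The cyclic-separating vector $\Omega_L$ yields a faithful normal state $\omega(a) = \langle \Omega_L, \pi_L(a)\Omega_L\rangle$ on $\A$, and $\pi_L(a)\Omega_L \mapsto a\Omega_\omega$ extends to a unitary isomorphism $\Psi : \F_L \to L^2(\A,\omega)$ intertwining the left $\A$-actions. Because $\A$ is the unique hyperfinite type III$_1$ factor and its standard form is unique up to bimodule isomorphism, $L^2(\A,\omega)$ is canonically identified with the standard form $L^2(\A)$; this gives a grading preserving unitary $\F_L \to L^2(\A)$ of left $\A$-modules (grading is preserved because $\Omega_L$ is even and $\pi_L$ preserves parity).

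The remaining, and most delicate, step is to check that $\Psi$ also intertwines the right action on $\F_L$ defined by \eqref{DefinitionRightActionSigma} with the canonical right action on $L^2(\A)$, which is the one implemented by the Tomita-Takesaki modular conjugation $J_\omega$. The formula \eqref{DefinitionSigmaL} for $\sigma_L$ is engineered precisely so that this succeeds: computing the polar decomposition of the closable Tomita operator $S : \pi_L(a)\Omega_L \mapsto \pi_L(a^*)\Omega_L$ -- a quasi-free calculation controlled by the covariance $(1+T_L^*T_L)^{-1}$ of $\omega$ on the generating Clifford subalgebra -- one identifies the antilinear factor $J_\omega$ with the operator on $\F_L$ induced by $i\sigma_L$, up to the Koszul sign $(-1)^{|\xi|}$ appearing in \eqref{DefinitionRightActionSigma}. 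This matching of the modular conjugation with the geometrically defined involution $\sigma_L$ is the main obstacle, and is where the specific form of \eqref{DefinitionSigmaL} enters essentially.
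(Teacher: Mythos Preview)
Your approach is essentially the paper's: vacuum as cyclic and separating, uniqueness of the standard form, then match the right actions by identifying the modular conjugation. The paper outsources the two nontrivial ingredients to \cite{TwistedDuality}: cyclicity and separation of $\Omega$ are Lemma~3.3 and Prop.~3.4 there, and the modular conjugation is computed in Thm.~5.6 there as $J_\Omega = \mathrm{k}^*\Lambda_{\Gamma\sigma_L}$, where $\mathrm{k}$ is the Klein transformation and $\Gamma$ the real structure. With this formula in hand one checks $J_\Omega\pi_L(v)^*J_\Omega\xi = (-1)^{|\xi|}\pi_L(i\sigma_L v)\xi$ for $v \in V$, which is exactly \eqref{DefinitionRightActionSigma}.

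Two points in your sketch need correction. First, the map $v \mapsto v_L$ is not a bijection $V \to L$ when $T_L$ is unbounded: its range is only $\{(w,T_Lw): w \in \mathrm{dom}(T_L^*T_L)\}$, which is dense in $L$ but not all of it. Dense range is enough for cyclicity, so your conclusion survives. Second, the assertion ``since $\A$ is a factor, cyclicity forces the separating property'' is false in general---cyclicity for $\A$ only gives separation for the commutant, not for $\A$ itself. You need $\Omega_L$ cyclic for $\pi_L^\sigma(\A)$ as well, which you mention but do not argue; it follows by the same density argument applied to $V^\perp$ (using $L \cap V^\perp = 0$ from general position). Finally, your description of $J_\omega$ as ``induced by $i\sigma_L$'' omits the antilinear part: $J_\Omega$ must be conjugate-linear, and the correct formula involves $\Lambda_{\Gamma\sigma_L}$ together with the Klein twist, not $\Lambda_{i\sigma_L}$.
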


\begin{proof}
By Lemma~3.3 and Prop.~3.4 in \cite{TwistedDuality}, the vacuum vector $\Omega \in \F_L$ is a cyclic and separating vector for $\pi_L(\A)$.
By Remark~\ref{RemarkCyclicVector}, there exists a canonical grading preserving unitary isomorphism $L^2(\A) \cong \F_L$, which intertwines the left $\A$-action on $L^2(\A)$ with $\pi_L$ and the right $\A$-action on $L^2(\A)$ with the right action on $\F_L$ given by
\begin{equation}
\label{OtherRightAction}
  \xi \ract a = J_\Omega \pi_L(a)^*J_\Omega \xi,
\end{equation}
where $J_\Omega$ the modular conjugation determined by $\Omega$ (see Remark~\ref{RemarkCyclicVector}).
By Thm.~5.6 of \cite{TwistedDuality}, the modular conjugation is given in this situation by
\begin{equation}
  J_\Omega = \mathrm{k}^* \Lambda_{\Gamma \sigma_L}, \qquad \text{where} \qquad \mathrm{k}\xi = \begin{cases} \xi & |\xi| \text{ even} \\ i\xi & |\xi| \text{ odd} \end{cases}
\end{equation}
is the Klein transformation and $\Gamma$ is the real structure of $H$.
%
%
With a view on \ref{RelationFockRep}, we find that on elements of $v \in V \subset \A$, the right action \eqref{OtherRightAction} is given by
\begin{equation*}
  \xi \ract v = (-1)^{|\xi|} \pi_L(i\sigma_L v),
\end{equation*}
which coincides with the right action \eqref{DefinitionRightActionSigma} given by $\sigma_L$.
%
%
As the right action is determined by $V \subset \A$, this shows that the two right actions agree.
We obtain that the grading preserving isomorphism $L^2(\F) \cong \F_L$ is in fact a bimodule homomorphism.
\end{proof}

We see that for any Lagrangian $L \in \Lag$ in general position with respect to $V$, there is even a \emph{canonical} unitary isomorphism $\F_L \cong L^2(\A)$, namely the canonical isomorphism determined by the vacuum vector $\Omega$, see Remark~\ref{RemarkCyclicVector}.
It is a bimodule isomorphism if $\F_L$ carries the right action determined by the involution $\sigma_L$ 
For general Lagrangians, we have the following result.

\begin{corollary}
\label{CorollaryBimoduleIso}
Let $\sigma$ be an orthogonal involution of $H$ such that $\sigma = \sigma_{L}$ for some Lagrangian ${L} \in \Lag$ in general position with respect to $V$.
Then for any other Lagrangian $L^\prime \in \Lag$, the $\A$-$\A$-bimodule $\F_{L^\prime}$, with the right action induced by $\sigma$, is  isomorphic to the standard bimodule $L^2(\A)$, via a unitary intertwiner that is either grading preserving or grading reversing.
\end{corollary}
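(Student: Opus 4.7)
The plan is to leverage the Pfaffian line $\Pf(L, L^\prime)$ to transport the standard form isomorphism from $\F_L$ to $\F_{L^\prime}$. Since $L$ and $L^\prime$ both lie in the polarization $\Lag$, they are equivalent, so the Pfaffian line is one-dimensional, nonzero, and contains unitary elements that are either grading preserving or grading reversing (as discussed in \S\ref{SectionEquivalence}).

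First I would pick any unitary $\Phi \in \Pf(L, L^\prime)$ of definite parity. By \eqref{PfaffianRelation}, $\Phi$ satisfies $\pi_{L^\prime}(v)\Phi = (-1)^{|\Phi|}\Phi\pi_L(v)$ for all $v \in H$, and this graded intertwining property extends by ultraweak continuity to $\mathfrak{C}$ and hence to $\A \subset \mathfrak{C}$. Thus $\Phi$ is automatically a super left $\A$-module homomorphism of parity $|\Phi|$.

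The key remaining step is to verify that $\Phi$ also intertwines the right $\A$-actions on $\F_L$ and $\F_{L^\prime}$ determined by the common involution $\sigma = \sigma_L$. This is a direct check using the definition \eqref{DefinitionRightActionSigma}: moving $\Phi$ across $\pi_\bullet(i\sigma v)$ introduces a sign $(-1)^{|\Phi|}$ by \eqref{PfaffianRelation}, which is precisely canceled by the Koszul sign arising from the fact that the right action has a built-in factor $(-1)^{|\xi|}$ while $|\Phi\xi|=|\Phi|+|\xi|$. Hence $\Phi$ is an $\A$-$\A$-bimodule homomorphism of parity $|\Phi|$.

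Finally, composing the inverse $\Phi^{-1} : \F_{L^\prime} \to \F_L$ with the grading preserving bimodule isomorphism $\F_L \cong L^2(\A)$ provided by Theorem~\ref{TheoremFLStandard} would yield the desired unitary bimodule isomorphism $\F_{L^\prime} \cong L^2(\A)$, whose parity matches that of $\Phi$. The only subtlety I anticipate is the careful bookkeeping of Koszul signs in the right-action computation; once those are seen to cancel, the rest of the argument is formal.
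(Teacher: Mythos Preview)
Your argument is correct and follows the same route as the paper: pick a unitary element of the Pfaffian line between $L$ and $L'$, observe that as a left $\mathfrak{C}$-intertwiner it automatically intertwines both the left and right $\A$-actions induced by $\sigma$, and then compose with the standard form isomorphism from Theorem~\ref{TheoremFLStandard}. The paper compresses your explicit Koszul sign check into the one-line observation that the convention \eqref{IntertwiningRelation} guarantees any such $\mathfrak{C}$-intertwiner is already a super $\A$-$\A$-bimodule isomorphism (since both actions factor through $\mathfrak{C}$ via $\pi_\bullet$ and $\pi_\bullet^\sigma$), but the content is identical.
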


\begin{proof}
As $L^\prime$ and $L$ are equivalent, there exists a unitary intertwiner $U : \F_{L^\prime} \to \F_{L}$ for the left action of $\Cl(H)$ (equivalently $\CC$) that is either grading preserving or grading reversing, depending on the parity of $\dim(L^\prime \cap \overline{L})$.
Our convention \eqref{IntertwiningRelation} is such that any such intertwiner is at the same time an isomorphism of super $\A$-$\A$-bimodules.
%
%
But by Thm.~\ref{TheoremFLStandard}, $\F_{L}$ is isomorphic to the standard bimodule.
\end{proof}

We now apply the above general results to the circle Hilbert space $H_0$, given in \eqref{Hnot}, with the equivalence class $\Lag_0$ as in \eqref{LagrangiansForE}.
Let $V_0$ be the ``real'' Hilbert space given in \eqref{Vnot}, viewed as a subspace of $H_0$ using the fixed trivialization of $\mathbb{S}|_{[0, \pi]}$.

\begin{definition}[Spin involution]
\label{DefinitionSpinInvolution}
A \emph{spin involution}  is a ``real'' metric-preserving bundle automorphism $s$ of $\mathbb{S}$ covering the flip diffeomorphism $t \mapsto -t$ of $S^1$.
\end{definition}

A spin involution exists, because the pullback of $\mathbb{S}$ under the flip is isomorphic to $\mathbb{S}$ (notice both $\mathbb{S}$ and its pullback have the same dimension and Stiefel-Whitney class), and is unique up to sign.
As the map $t \mapsto -t$ is orientation-reversing, $s$ sends the principal symbol \eqref{SymbolOfProjection} to its negative; hence a spin involution $s$ of $H_0$ exchanges $\Lag_0$ with its opposite class $\Lag_0^\perp$.

A spin involution $s$ induces an orthogonal involution $\sigma$ of $H_0$, given by 
\begin{equation}
\label{InducedInvolution}
(\sigma f)(t) = (s \otimes \mathrm{id}_{\C^d})f(-t).
\end{equation}

\begin{theorem}
\label{ThmSpinvolution}
There exists a Lagrangian $L \in \Lag_0$ in general position with respect to $V_0$ such that the orthogonal involution $\sigma_L$ given by \eqref{DefinitionSigmaL} is induced by a spin involution as in \eqref{InducedInvolution}.
\end{theorem}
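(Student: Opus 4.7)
I take $L = H^2_+ \subseteq H_0$, the positive spectral subspace of $D = d/dt$ --- equivalently, the closed span of the antiperiodic Fourier modes $e_n(t) = e^{i(n+1/2)t}$ for $n \geq 0$ (tensored with $\C^d$). By the construction in the proof of Theorem~\ref{TheoremConditionsOnE}, $L \in \Lag_0$. Writing a general $f \in L$ in the form $f(t) = e^{it/2} g(e^{it})$ with $g$ in the standard disk Hardy space identifies the restrictions $L \to V_0$ and $L \to V_0^\perp$ with the boundary values of $g$ on the two semi-arcs; by the uniqueness theorem for $H^2$-boundary values, these are injective, so $L$ is in general position with respect to $V_0$. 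Consequently $L = \graph(T_0)$ for a densely defined, closed, invertible operator $T_0 : V_0 \to V_0^\perp$.

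Next, I fix a spin involution $s$ of $\mathbb{S}$ and write $\sigma = \bigl(\begin{smallmatrix} 0 & A^* \\ A & 0 \end{smallmatrix}\bigr)$ with $A = \sigma|_{V_0}$, a ``real'' unitary $V_0 \to V_0^\perp$. Since $\sigma D = -D\sigma$, $\sigma$ sends the positive spectral subspace $L$ to the negative spectral subspace $\overline{L} = L^\perp$. Demanding that $\sigma(v, T_0 v) = (A^* T_0 v, A v)$ lies in $\overline{L} = \graph(\overline{T_0})$ and using the Lagrangian relation $\overline{T_0} = -(T_0^*)^{-1}$ yields the identity
\begin{equation*}
  T_0^* A + A^* T_0 = 0,
\end{equation*}
so $iA^* T_0$ is self-adjoint on $V_0$ with $(iA^* T_0)^2 = A^* T_0 T_0^* A$ unitarily conjugate to $T_0^* T_0$. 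Comparing with formula~\eqref{DefinitionSigmaL}, $\sigma_L = \sigma$ is equivalent to $-iA$ being the polar isometry of $T_0$, i.e.\ to the strict positivity $iA^* T_0 = (T_0^* T_0)^{1/2}$.

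The crux of the proof is therefore this positivity. Expanding $v = \sum_{m \geq 0} c_m \phi_m$ in the Riesz basis $\phi_m = e_m|_{[0,\pi]}$ of $V_0$ (with coefficients $c_m \in \C^d$), computing $T_0 v \in V_0^\perp$ as the restriction of the antiperiodic extension of $\sum c_m e_m$ to $[\pi, 2\pi]$, and applying $A^* = \sigma|_{V_0^\perp}$ explicitly, one obtains
\begin{equation*}
  \langle v,\, iA^* T_0 v\rangle \;=\; 2 \sum_{\substack{m, n \geq 0 \\ m + n\ \text{even}}} \frac{\langle c_m, c_n\rangle_{\C^d}}{m + n + 1}.
\end{equation*}
Using $\tfrac{1}{2k + 2\ell + 1} = \int_0^1 x^{2k + 2\ell}\, dx$, this sum splits into even- and odd-index contributions
\begin{equation*}
  2 \int_0^1 \Bigl\| \sum_{k \geq 0} c_{2k}\, x^{2k} \Bigr\|_{\C^d}^{2}\, dx \;+\; 2 \int_0^1 x^2 \Bigl\| \sum_{k \geq 0} c_{2k+1}\, x^{2k} \Bigr\|_{\C^d}^{2}\, dx,
\end{equation*}
each manifestly nonnegative, with strict positivity following by a density argument. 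Thus $\sigma_L = \sigma$. (The other spin involution on $\mathbb{S}$ flips $A$ to $-A$ and would produce $\sigma_L = -\sigma$.) The main obstacle is this positivity step: self-adjointness of $iA^* T_0$ is a formal consequence of the anticommutation $\sigma D + D \sigma = 0$, but positivity itself rests on the explicit integral representation above --- equivalently, on the positivity of a Szeg\H{o}-type kernel associated with boundary values of Hardy space functions.
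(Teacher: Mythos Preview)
Your approach coincides with the paper's: you choose the same Lagrangian (the span of the nonnegative antiperiodic Fourier modes, which is the paper's $L_-$ from \eqref{StandardLagrangian}), and reduce $\sigma_L = \sigma$ to the positivity of $iA^*T_0$. The difference is in level of detail. The paper merely asserts that $L_-$ is in general position (``as is not hard to see'') and cites the positivity computation to \cite[Lemma~8]{JanssensDraft}, \cite[pp.~57--58]{HenriquesNotes}, \cite[Appendix~B]{KristelWaldorf1}, and \cite[Thm.~14(c)]{Wassermann}. You supply both: general position via the boundary-uniqueness theorem for $H^2$, and positivity via the integral representation $\tfrac{1}{m+n+1} = \int_0^1 x^{m+n}\,dx$ with the even/odd parity split. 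That is precisely the argument in the cited references, so your proof is more self-contained but not a different strategy.

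One small tightening of your reduction step: you write that $(iA^*T_0)^2 = A^*T_0T_0^*A$ is only \emph{unitarily conjugate} to $T_0^*T_0$. In fact the anticommutation $T_0^*A = -A^*T_0$ gives $(A^*T_0)^2 = (-T_0^*A)(A^*T_0) = -T_0^*(AA^*)T_0 = -T_0^*T_0$ directly, so $(iA^*T_0)^2 = T_0^*T_0$ on the nose; positivity then forces $iA^*T_0 = (T_0^*T_0)^{1/2}$ by uniqueness of positive square roots, without any appeal to unitary conjugacy.
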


\begin{proof}
Such a Lagrangian is explicitly given as follows.
Let $D = i \nabla_t$ be the covariant derivative with respect to the canonical ``real'' connection on $\mathbb{S} \otimes \C^d$.
The involution $\sigma$ induced by a spin involution exchanges the positive and negative spectral subspaces $L_+$ and $L_-$ of $D$, defined in \eqref{LPlusMinus}.
Combining this with the fact that $D$ has trivial kernel, we obtain that the subspace 
\begin{equation}
\label{StandardLagrangian}
 L_- = \bigoplus_{\lambda < 0} \mathrm{Eig}(D, \lambda)
\end{equation}
 is a Lagrangian.
$L_-$ is in general position with respect to $V_0$, as is not hard to see.
By Prop.~14.2 in \cite{BoosBavnbek}, the corresponding orthogonal projection $P_{L_-}$ has principal symbol \eqref{SymbolOfProjection}.
It is then a non-trivial calculation that the corresponding orthogonal involution $\sigma_{L_-}$ is then induced by a spin involution; 
this result can be found in \cite[Lemma~8]{JanssensDraft}, \cite[pp.~57-58]{HenriquesNotes} or \cite[Appendix B]{KristelWaldorf1} and is a variation on a result of Wassermann, see \cite[Thm.~14(c)]{Wassermann}.
\end{proof}

\begin{signDiscussion}
\label{SignDiscussion}
There are two spin involutions for $\mathbb{S}$, which differ by a sign.
By Thm.~\ref{ThmSpinvolution} and Corollary~\ref{CorollaryBimoduleIso}, there exists a spin involution such that with the right action induced by the corresponding orthogonal involution $\sigma$ of $H_0$, all Fock spaces are isomorphic to $L^2(\A_0)$.
Replacing the spin involution (and hence $\sigma$) by its negative, this modifies the bimodule structure by twisting the right action with the grading operator of $\A_0$.
As one can show that the grading isomorphism is not inner for $\A_0$, the Fock spaces with this modified bimodule structure will \emph{not} be isomorphic to $L^2(\A_0)$, neither with a grading preserving not with a grading reversing intertwiner.
Hence Thm.~\ref{ThmSpinvolution} singles out a spin involution $s$, such that the corresponding orthogonal involution makes all Fock bimodules isomorphic to $L^2(\A_0)$.
\end{signDiscussion}

According to Sign Discussion~\ref{SignDiscussion}, we fix, once and for all, the orthogonal involution $\sigma$ of $H_0$ induced by a spin involution singled out by Thm.~\ref{ThmSpinvolution}.
Then with the right action determined by $\sigma$, the Fock spaces $\F_L$, $L \in \Lag_0$, become super $\A_0$-$\A_0$-bimodules, where $\A_0$ is the completion of $\Cl(V_0)$ defined in \eqref{DefinitionA0}.
Combining Thm.~\ref{ThmSpinvolution} with Corollary~\ref{CorollaryBimoduleIso}, we obtain the following result.

\begin{corollary}
\label{CorollaryFockSpacesIsomorphic}
With the bimodule structure described above, the Fock spaces $\F_L$, $L \in \Lag_0$ are all isomorphic, as $\A_0$-$\A_0$-bimodules, to the standard bimodule $L^2(\A_0)$, via a unitary intertwiner that is either grading preserving or grading reversing.
\end{corollary}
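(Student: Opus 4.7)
The plan is to assemble the corollary from three pieces that are already in place, essentially by matching definitions. First I would invoke Thm.~\ref{ThmSpinvolution} to produce a distinguished Lagrangian $L_0 \in \Lag_0$, in general position with respect to $V_0$, whose associated orthogonal involution $\sigma_{L_0}$ (defined by the matrix formula~\eqref{DefinitionSigmaL}) is induced by a spin involution via~\eqref{InducedInvolution}. The fixed $\sigma$ used throughout this subsection is selected precisely so that $\sigma = \sigma_{L_0}$ for such an $L_0$, after possibly replacing the spin involution by its negative; this is the content of Sign Discussion~\ref{SignDiscussion}. In particular, the right $\A_0$-action on each Fock space $\F_L$ defined by~\eqref{DefinitionRightActionSigma} is the one coming from $\sigma_{L_0}$.

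Next I would observe that Thm.~\ref{TheoremFLStandard}, applied to this distinguished $L_0$, directly gives a grading preserving unitary isomorphism $\F_{L_0} \cong L^2(\A_0)$ of super $\A_0$-$\A_0$-bimodules, handling the distinguished case. For an arbitrary $L \in \Lag_0$, I would then apply Corollary~\ref{CorollaryBimoduleIso}, whose hypothesis is satisfied exactly because $\sigma = \sigma_{L_0}$ with $L_0 \in \Lag_0$ in general position with respect to $V_0$. The conclusion of that corollary is verbatim the statement we want: a unitary intertwiner $\F_L \to L^2(\A_0)$ of super $\A_0$-$\A_0$-bimodules, either grading preserving or grading reversing depending on the parity of $\dim(L \cap \overline{L}_0)$ (cf.\ Remark~\ref{RemarkParityPf}).

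Since all three ingredients are already established upstream, there is no genuine obstacle; the only subtle point is to verify that the fixed $\sigma$ of this subsection really is of the form $\sigma_{L_0}$ for some $L_0 \in \Lag_0$ in general position with respect to $V_0$, and this compatibility is precisely what the combination of Thm.~\ref{ThmSpinvolution} and Sign Discussion~\ref{SignDiscussion} records. The proof therefore reduces to a short bookkeeping argument chaining these references together.
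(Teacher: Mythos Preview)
Your proposal is correct and matches the paper's own argument: the paper simply states that the corollary follows by ``Combining Thm.~\ref{ThmSpinvolution} with Corollary~\ref{CorollaryBimoduleIso},'' which is exactly the chain of references you spell out (your explicit mention of Thm.~\ref{TheoremFLStandard} for the base case is already absorbed into the proof of Corollary~\ref{CorollaryBimoduleIso}).
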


\begin{remark}
\label{RemarkPreferredConnectedComponent}
The above construction determines a connected component $\Lag^+_0$ of $\Lag_0$ defined by the property that for each $L \in \Lag^+_0$, the super $\A$-$\A$-bimodule isomorphism $\F_L \cong L^2(\A_0)$ is grading preserving.
In particular, the Lagrangian $L_-$ defined in \eqref{StandardLagrangian} is contained in $\Lag_0^+$.
The other connected component consists of those $L \in \Lag_0$ such that the isomorphism $\F_L \cong L^2(\A_0)$ is grading reversing.
\end{remark}

\subsection{Fusion of Fock modules on loop space}
\label{SectionFusionFockLoop}

Let $X$ be an oriented Riemannian manifold.
For a loop $\gamma_1 \fuse \gamma_2$ in LX coming from $(\gamma_1, \gamma_2) \in PX^{[2]}$, we can consider the Hilbert space $H_{\gamma_1 \fuse \gamma_2}$ defined in \eqref{Hgamma} and its subspace $V_{\gamma_2}$ given in \eqref{Vgamma}.
We fix a spin involution as in Sign Discussion~\ref{SignDiscussion}.
Together with this spin involution, the flip diffeomorphism of $S^1$ induces an orthogonal transformation $\sigma$ sending $H_{\gamma_2 \fuse \gamma_1}$ to $H_{\gamma_1 \fuse \gamma_2}$. 
Composing with multiplication by $i$ (which for any ``real'' Hilbert space $H$ is an orthogonal transformation $H \to -H$), we obtain an orthogonal transformation
\begin{equation*}
i\sigma : -H_{\gamma_2 \fuse \gamma_1} \longrightarrow H_{\gamma_1 \fuse \gamma_2}, 
\end{equation*}
which restricts to an isomorphism $-V_{\gamma_1} \to V_{\gamma_2}^\perp$ and sends the equivalence class $\Lag_{\gamma_2 \fuse \gamma_1}$ to $\Lag_{\gamma_1 \fuse \gamma_2}^\perp$ (recall that we also fixed, once and for all, a trivialization of $\mathbb{S}|_{[0, \pi]}$ that realizes $V_{\gamma_2}$ as a subspace of $H_{\gamma_1 \fuse \gamma_2}$).
As explained in \S\ref{SectionAbstractConnesFusion} (see Remark~\ref{RemarkDifferentSpaces}), the corresponding isomorphism of Clifford algebras
\begin{equation*}
\begin{tikzcd}
  \Cl(H_{\gamma_2\fuse\gamma_1})^{\op} \cong\Cl(- H_{\gamma_2\fuse\gamma_1}) \ar[r, "\Cl_{i\sigma}"] & \Cl(H_{\gamma_1 \fuse \gamma_2})
\end{tikzcd}
\end{equation*}
sends $\Cl(V_{\gamma_1})^{\op} \subset \Cl(H_{\gamma_2 \fuse \gamma_1})^{\op}$ to $\Cl(V_{\gamma_2}^\perp) \subset \Cl(H_{\gamma_1 \fuse \gamma_2})$ and
extends to a $*$-isomorphism 
\begin{equation*}
  \CC_{\gamma_2 \fuse \gamma_1}^{\op} \longrightarrow \CC_{\gamma_1 \fuse \gamma_2}
\end{equation*}
that sends $\A_{\gamma_1}^{\op}$ to the super commutant of $\A_{\gamma_2}$ in $\CC_{\gamma_1 \fuse \gamma_2}$ (see Notation~\ref{NotationAgamma}).
For each $L \in \Lag_{\gamma_1 \fuse \gamma_2}$, this equips the Fock space $\F_L$ with the structure of an invertible super $\A_{\gamma_2}$-$\A_{\gamma_1}$-bimodule.
The following result is crucial.

\begin{theorem}
\label{ThmFockIso}
Let $(\gamma_1, \gamma_2, \gamma_3) \in PX^{[3]}$ and let $L_{ij} \in \Lag_{\gamma_i \fuse \gamma_j}$, $i<j$.
Then there exists a unitary isomorphism of $\A_{\gamma_3}$-$\A_{\gamma_1}$-bimodules
\begin{equation*}
\F_{L_{23}} \boxtimes_{\A_{\gamma_2}} \F_{L_{12}}  \cong
 \F_{L_{13}},
\end{equation*}
which is either grading preserving or grading reversing, and this isomorphism is unique up to a scalar.
\end{theorem}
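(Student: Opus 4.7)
The plan is to reduce the statement to the standard-form fusion identity $L^2(\A_0) \boxtimes_{\A_0} L^2(\A_0) \cong L^2(\A_0)$, by trivializing all three algebras and all three Fock modules simultaneously using lifts of the paths to the frame bundle $P\SO(X)$.

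First I would choose, for each $i \in \{1, 2, 3\}$, a lift $\tilde{\gamma}_i \in P\SO(X)$ of $\gamma_i$. Each $\tilde{\gamma}_i$ provides an identification $\A_{\gamma_i} \cong \A_0$ by ultraweak extension of the Bogoliubov isomorphism $\Cl_{\tilde{\gamma}_i}$, as in the construction of the bundle $\A \to PX$ in \S\ref{SectionIntervalAlgebra}. For $i \neq j$, the concatenation $\tilde{\gamma}_i \cup \tilde{\gamma}_j : H_0 \to H_{\gamma_i \cup \gamma_j}$ is a restricted orthogonal transformation sending $\Lag_0$ to $\Lag_{\gamma_i \cup \gamma_j}$. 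Setting $L'_{ij} \defeq (\tilde{\gamma}_i \cup \tilde{\gamma}_j)^{-1} L_{ij} \in \Lag_0$, the relation \eqref{RelationFockRep} promotes this to a grading preserving unitary $\Lambda_{\tilde{\gamma}_i \cup \tilde{\gamma}_j} : \F_{L'_{ij}} \to \F_{L_{ij}}$ intertwining the left $\A_0$-action on the former with the left $\A_{\gamma_j}$-action on the latter.

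Next, I would verify that this unitary also intertwines the right actions. The right action on $\F_{L_{ij}}$ uses the orthogonal transformation $i\sigma : -H_{\gamma_j \cup \gamma_i} \to H_{\gamma_i \cup \gamma_j}$ built from the fixed spin involution of $\mathbb{S}$ and the flip of $S^1$. Since the spin involution lives entirely on the $\mathbb{S}$-factor (independent of the frames) and the flip of $S^1$ interchanges $\tilde{\gamma}_i \cup \tilde{\gamma}_j$ with $\tilde{\gamma}_j \cup \tilde{\gamma}_i$, a naturality check shows that $i\sigma$ corresponds, under the lifts, to the fixed orthogonal transformation $i\sigma_0$ of $H_0$ from Sign Discussion~\ref{SignDiscussion}. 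Under these identifications, $\Lambda_{\tilde{\gamma}_i \cup \tilde{\gamma}_j}$ becomes a grading preserving isomorphism of super $\A_0$-$\A_0$-bimodules, where $\F_{L_{ij}}$ is viewed through the identifications $\A_{\gamma_j} \cong \A_0 \cong \A_{\gamma_i}$.

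Having thus trivialized, it suffices to exhibit an $\A_0$-$\A_0$-bimodule isomorphism
\begin{equation*}
\F_{L'_{23}} \boxtimes_{\A_0} \F_{L'_{12}} \cong \F_{L'_{13}},
\end{equation*}
either grading preserving or grading reversing. By Corollary~\ref{CorollaryFockSpacesIsomorphic}, each $\F_{L'_{ij}}$ is unitarily isomorphic, as an $\A_0$-$\A_0$-bimodule, to the standard form $L^2(\A_0)$ via an intertwiner of definite parity; combined with the fundamental identity $L^2(\A_0) \boxtimes_{\A_0} L^2(\A_0) \cong L^2(\A_0)$ expressing $L^2(\A_0)$ as the unit for Connes fusion, this yields the claimed isomorphism, and the composition of parity-(preserving-or-reversing) maps is again of the same type.

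The main obstacle I anticipate is the naturality verification of the second paragraph: one must track carefully how the flip of $S^1$, the spin involution on $\mathbb{S}$, and the concatenation $\tilde{\gamma}_i \cup \tilde{\gamma}_j$ interact, in order to confirm that the ``intrinsic'' right action on $\F_{L_{ij}}$ pulls back along $\Lambda_{\tilde{\gamma}_i \cup \tilde{\gamma}_j}$ to the ``model'' right action on $\F_{L'_{ij}}$ from \S\ref{SectionAbstractConnesFusion}. Once this compatibility is established, the remainder of the argument is formal.
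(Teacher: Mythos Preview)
Your approach is essentially identical to the paper's: trivialize via frame lifts, reduce to $\A_0$-$\A_0$-bimodules, then invoke Corollary~\ref{CorollaryFockSpacesIsomorphic} and the unit property of $L^2(\A_0)$ under Connes fusion. One small correction: you must choose the lifts $\tilde{\gamma}_i$ so that they share common endpoints in $\SO(X)$, i.e., pick $(\tilde{\gamma}_1,\tilde{\gamma}_2,\tilde{\gamma}_3)\in P\SO(X)^{[3]}$ rather than three independent elements of $P\SO(X)$; otherwise $\tilde{\gamma}_i\cup\tilde{\gamma}_j$ is not a smooth loop in $\SO(X)$ and does not define a restricted orthogonal transformation $H_0\to H_{\gamma_i\cup\gamma_j}$. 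With that adjustment your argument matches the paper's proof, and your careful discussion of why $\Lambda_{\tilde{\gamma}_i\cup\tilde{\gamma}_j}$ intertwines the \emph{right} actions (via naturality of the spin involution) makes explicit a point the paper treats tersely.
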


\begin{proof}
Choose a lift $(g_1, g_2, g_3) \in P\SO(X)^{[3]}$ of $(\gamma_1, \gamma_2, \gamma_3)$ and consider the orthogonal transformations $g_{ij} = g_i \fuse g_j : H_0 \to H_{\gamma_i \fuse \gamma_j}$.
Then $L_{ij}^\prime = g_{ij}^*L_{ij}$ are Lagrangians in $H_0$, contained in the equivalence class $\Lag_0$.
We obtain $*$-isomorphisms $\Cl_{g_i} : \A_0 \to \A_{\gamma_i}$ and unitary isomorphisms $\Lambda_{g_{ij}}: \F_{L_{ij}^\prime} \to \F_{L_{ij}}$, which are intertwining along $\Cl_{g_j}$ and $\Cl_{g_i}$.
Moreover, the fusion product of $\Lambda_{g_{23}}$ and $\Lambda_{g_{12}}$ provides a unitary isomorphism
\begin{equation*}
  \Lambda_{g_{23}} \boxtimes \Lambda_{g_{12}} : \F_{L_{23}^\prime} \boxtimes_{\A_0} \F_{L_{12}^\prime} \longrightarrow \F_{L_{23}} \boxtimes_{\A_{\gamma_2}} \F_{L_{12}}
\end{equation*}
that intertwines the bimodule actions along $\Cl_{g_3}$ and $\Cl_{g_1}$.
This reduces claim to the statement that there exists an isomorphism
\begin{equation*}
  \F_{L_{23}^\prime} \boxtimes_{\A_0} \F_{L_{12}^\prime} \cong \F_{L_{13}^\prime}
\end{equation*}
of $\A_0$-$\A_0$-bimodules.
But by Corollary~\ref{CorollaryBimoduleIso}, each of the super $\A_0$-$\A_0$-bimodules $\F_{L_{ij}^\prime}$ is isomorphic to the standard bimodule $L^2(\A_0)$ (although possibly via a grading reversing intertwiner), so the existence of the isomorphism follows from the fact that the standard bimodule is the identity with respect to Connes fusion (see Example~\ref{ExampleModifiedActionConnesFusion}).
The statement about uniqueness follows from the fact that for any von Neumann algebra $A$, the only bimodule automorphisms of $L^2(A)$ are multiples of the identity.
\end{proof}

\subsection{Fusion for the loop space spinor bundle}
\label{SectionFusionSpinorBundle}

Let $X$ be an oriented Riemannian manifold of dimension $d$ and assume that there exists a spinor bundle $\SB$ on the loop space $LX$, according to Definition~\ref{DefinitionLoopSpaceSpinorBundle}.
At every loop $\gamma \in LX$, $\SB_\gamma$ is a super left module for the  completion $\CC_\gamma$ of $\Cl(H_\gamma)$.

For $(\gamma_1, \gamma_2) \in PX^{[2]}$, we have the von Neumann subalgebra $\A_{\gamma_2} \subset \Cl_{\gamma_1 \fuse \gamma_2}$ and, as seen in \S\ref{SectionFusionFockLoop}, its super commutant is identified with $\A_{\gamma_1}^{\op}$.
Hence $\SB_{\gamma_1 \fuse \gamma_2}$ attains the structure of a super $\A_{\gamma_2}$-$\A_{\gamma_1}$-bimodule.
That $\SB_{\gamma_1 \fuse \gamma_2}$ is irreducible as a left $\Cl_{\gamma_1 \fuse \gamma_2}$-module implies that this bimodule is invertible with respect to Connes fusion.
Recall from \S\ref{SectionIntervalAlgebra} that over $PX$, the von Neumann algebras $\A_\gamma$ glue together to a super von Neumann algebra bundle $\A$.
Varying fibers, we obtain that the restriction of $\SB$ to $PX^{[2]}$ (or, more formally, its pullback along the cup map \eqref{CupMap}) is a super $\A_2$-$\A_1$-bimodule bundle, where $\A_i$ denotes the pullback of the von Neumann algebra bundle $\A$ along the projection $PX^{[2]} \stackrel{i}{\to} PX$.
This is according to Notation~\ref{NotationSubmersion}, which we use throughout from now on.

\begin{definition}[Fusion product]
\label{DefinitionFusionProduct}
A \emph{(continuous) fusion product} for a spinor bundle $\SB$ on $LX$ is a grading preserving unitary isomorphism
\begin{equation*}
 \Upsilon :  \SB_{23} \boxtimes_{\A_2} \SB_{12} \longrightarrow \SB_{13}
\end{equation*}
of super $\A_3$-$\A_1$-bimodule bundles over $PX^{[3]}$.
This isomorphism is required to make the diagram
\begin{equation*}
  \begin{tikzcd}[column sep=2cm]
     \SB_{34} \boxtimes_{\A_3} \SB_{23} \boxtimes_{\A_2} \SB_{12} \ar[r, "\Upsilon_{234} \otimes \id"] \ar[d, "\id\otimes \Upsilon_{123}"'] &
  \SB_{24} \boxtimes_{\A_2} \SB_{12} \ar[d, "\Upsilon_{124}"] \\
  \SB_{34} \boxtimes_{\A_3} \SB_{13}\ar[r, "\Upsilon_{134}"] &
  \SB_{14}
  \end{tikzcd}
\end{equation*}
over $PX^{[4]}$ commutative.
\end{definition}

\begin{remark}
 As a spinor bundle $\SB$ is locally isomorphic to a Fock space bundle $\F_L$ for a section $L$ of $\Lag$, it follows from Thm.~\ref{ThmFockIso} that fusion products always exist locally.
 However, as we will see below, their global existence is obstructed unless $X$ admits a string structure.
\end{remark}

The rest of this section will discuss how to refine this definition to build in a notion of smoothness of the fusion product.
The super von Neumann algebra bundles $\A_1$ and $\A_2$ over $PX^{[2]}$ can be further pulled back to bundles over the restriction to $PX^{[2]}$ of the Lagrangian fibration $\Lag$ over $LX$ (again denoted by the same symbols).

It is straightforward to show the following lemma.

\begin{lemma}
The bimodule structure defined above turns the bundle of Fock spaces $\F$ over $\Lag$ into a super $\A_2$-$\A_1$-bimodule bundle in the sense of Definition~\ref{DefinitionBimoduleBundle}.
\end{lemma}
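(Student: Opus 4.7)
The plan is to verify the axioms of Definition~\ref{DefinitionBimoduleBundle} for $\F$ as a bimodule bundle over $\Lag|_{PX^{[2]}}$ by constructing, around each point, coherent local trivializations of $\F$, $\A_1$, and $\A_2$ in which the fiberwise bimodule action becomes identified with the fixed model bimodule structure on the typical fiber Fock space built in \S\ref{SectionAbstractConnesFusion}.

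Given $(\gamma_1,\gamma_2) \in PX^{[2]}$, I would first choose local sections $g_i : O_i \to P\SO(X)$ with $\gamma_i \in O_i \subseteq PX$. Each $g_i$ induces, as in \S\ref{SectionIntervalAlgebra}, a local trivialization $\Cl_{g_i} : \A|_{O_i} \to O_i \times \A_0$ of the path space von Neumann algebra bundle, and hence a trivialization of the pullback $\A_i$ over $(O_1 \times O_2) \cap PX^{[2]}$. The cup $g_1 \cup g_2$ is a local section of $L\SO(X)$ over a neighborhood of $\gamma_1 \cup \gamma_2$ in $LX$; as in \S\ref{SectionApplicationLoopSpace} it provides a restricted orthogonal trivialization of $H_{LX}$, from which we obtain compatible trivializations of $\Lag$, of $\mathfrak{C}$, and (after pullback to $\Lag$) of the Fock bundle $\F$.

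The key observation is that the orthogonal transformation $i\sigma : -H_{\gamma_2\cup\gamma_1} \to H_{\gamma_1\cup\gamma_2}$ used to set up the right action acts only on the $\mathbb{S}$ factor and via the flip of $S^1$, whereas the trivializations $g_1 \cup g_2$ and $g_2 \cup g_1$ act only in the tangent factor. Hence $i\sigma$ intertwines with the fixed model involution $i\sigma_0 : -H_0 \to H_0$, and by ultraweak extension the induced identification of $\mathfrak{C}^{\op}_{\gamma_2\cup\gamma_1}$ with $\mathfrak{C}_{\gamma_1\cup\gamma_2}$ is intertwined with the analogous fixed isomorphism $\mathfrak{C}_0^{\op}\to\mathfrak{C}_0$. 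Restricted to the subalgebra $\A^{\op}_{\gamma_1}$, this matches the trivialization of $\A_1$ already obtained from $g_1$. Consequently, in the chosen trivializations the $\A_2$-$\A_1$-bimodule structure on a fiber $\F_L$ becomes the standard $\A_0$-$\A_0$-bimodule structure on $\F_{L'}$ for $L' = (g_1 \cup g_2)^*L \in \Lag_0$, varying continuously with $(\gamma_1,\gamma_2)$ and $L$.

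The lemma thereby reduces to showing that the fiberwise bimodule structure on the Fock bundle $\F \to \Lag_0$ assembles to a (continuous) bimodule bundle for the constant von Neumann algebra bundles with fiber $\A_0$. Continuity of the left action is immediate from the inclusion $\A_0 \subset \mathfrak{C}_0$ and the already-established fact that $\F \to \Lag_0$ is a continuous $\mathfrak{C}_0$-module bundle. Continuity of the right action is then immediate from \eqref{DefinitionRightActionSigma}, which expresses the right action as the composition of the fixed $*$-isomorphism $\Cl_{i\sigma_0}$ with the left action and the grading operator. I expect the only real bookkeeping to lie in keeping track of the signs and grading operators when passing between the left $\A_0^{\op}$-action and the right $\A_0$-action; no genuinely new analytic input beyond what was developed in \S\ref{SectionAbstractConnesFusion} should be needed.
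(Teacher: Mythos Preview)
Your approach is correct and matches the paper's (brief, commented-out) proof: choose a local section $(g_1,g_2)$ of $P\SO(X)^{[2]}$, use $\Cl_{g_i}$ to trivialize $\A_i$, and use $g_1\cup g_2$ to transport everything to the model over $\Lag_0$, where the spin involution is intertwined because it acts on the $\mathbb{S}$-factor while the frame acts on the tangent factor. The only place the paper is slightly more explicit is in the final step: Definition~\ref{DefinitionBimoduleBundle} asks for a local trivialization of $\F$ to a \emph{fixed} typical fiber, not merely continuity of the actions, so one still has to trivialize $\F\to\Lag_0$ locally as a bimodule bundle; the paper does this by fixing $L_0\in\Lag_0$ and choosing a unitary section of the Pfaffian line $\Pf(-,(g_1\cup g_2)L_0)$, which (being a $\mathfrak{C}$-module isomorphism) is automatically an $\A_0$-$\A_0$-bimodule isomorphism. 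Your argument implicitly uses the same mechanism via ``$\F\to\Lag_0$ is already a continuous $\mathfrak{C}_0$-module bundle''; just be aware that this is precisely what furnishes the required local bimodule trivialization, not a separate continuity statement about the actions.
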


We denote the pullback of the Lagrangian fibration over $LX$ to $PX^{[2]}$ along the cup map again by $\Lag$.
Let $\Lag_{ij}$ be its pullback along the projection map $PX^{[3]} \to PX^{[2]}$ on the indicated factors.
Denote by
\begin{equation*}
  \Lag^{[1, 3]} \defeq \Lag_{23} \times_{PX^{[2]}} \Lag_{12} \times_{PX^{[2]}} \Lag_{13}
\end{equation*}
the fiber product of all these covers.
We have three von Neumann algebra bundles $\A_1$, $\A_2$, $\A_3$ over $\Lag^{[1, 3]}$, obtained as the pullback of the von Neumann algebra bundle $\A$ along the map $\Lag^{[1, 3]} \to PX^{[3]} \stackrel{i}{\to} PX$.
For $1 \leq i < j \leq 3$, we have the $\A_j$-$\A_i$-bimodule bundle $\F_{ij}$, which is the pullback of the Fock space bundle over $\Lag_{ij}$ to $\Lag^{[1, 3]}$.

\begin{definition}[Fusion line bundle]
\label{DefinitionFusionLineBundle}
The \emph{fusion line bundle} is the line bundle of bimodule homomorphisms
\begin{equation}
\label{FusionLineBundle}
  \Fus \defeq \underline{\Hom}(\F_{23} \boxtimes_{\A_2} \F_{12}, \F_{13})
\end{equation}
over $\Lag^{[1, 3]}$.
It is graded by parity of its elements.
\end{definition}

Explicitly, $(\gamma_1, \gamma_2, \gamma_3) \in PX^{[3]}$, then the fiber of $\Fus$ at a triple $(L_{23}, L_{12}, L_{13})$ of Lagrangians such that $L_{ij} \in \Lag_{\gamma_i \fuse \gamma_j}$ is the super line
\begin{equation*}
  \Fus(L_{23}, L_{12}, L_{13}) = \underline{\Hom}(\F_{L_{23}} \boxtimes_{\A_{\gamma_2}} \F_{L_{12}},
 \F_{L_{13}}).
\end{equation*}
graded according to whether its elements are grading preserving or grading reversing.
By irreducibility of the Fock representations, it is one-dimensional.
As both $\F_{23} \boxtimes_{\A_2} \F_{12}$ and $\F_{13}$ are continuous super $\A_3$-$\A_1$-bimodule bundles, the corresponding homomorphism space the structure of a continuous super line bundle.
It turns out that $\Fus$ has a canonical smooth structure.
We will construct this smooth structure at the end of the section, but for the moment take it for granted in order to define what a \emph{smooth} fusion product is.

\medskip 

Suppose that we are given a smoothing structure on our spinor bundle $\SB$, which, according to Definition~\ref{DefinitionSmoothingStructure} is a smooth structure on the associated super line bundle $\mathfrak{N} = \underline{\Hom}(\F, \pi^*\SB)$ over $\Lag$, satisfying a compatibility condition.
Then a fusion product $\Upsilon$ for $\SB$ induces an isomorphism of line bundles
\begin{equation}
\label{IsomorphismFusion}
\begin{aligned}
 \mathfrak{N}_{13}^{-1} \otimes  \mathfrak{N}_{23} \otimes \mathfrak{N}_{12}  
  &\longrightarrow \Fus , \\
  \Phi_{13} \otimes \Phi_{23} \otimes \Phi_{13} &\longmapsto \Phi_{13} \circ \Upsilon \circ (\Phi_{23} \boxtimes \Phi_{13})
\end{aligned}
\end{equation}
over $\Lag^{[1, 3]}$, which in general is only continuous. 
We can now formulate the following definition.

\begin{definition}[Smooth fusion product]
\label{DefinitionSmoothFusionProduct}
Suppose that the loop space spinor bundle $\SB$ carries a smoothing structure.
We say that a fusion product $\Upsilon$ for $\SB$ is \emph{smooth} if the canonical isomorphism \eqref{IsomorphismFusion} is smooth.
\end{definition}

We shall turn to the question of existence of smooth fusion products for a given spinor bundle in \S\ref{SectionFusion2Gerbe}, see in particular Thm.~\ref{TheoremFusionProductFusion2Gerbe}.

\medskip

We will now construct a smooth structure on $\Fus$ by writing it as an associated bundle.
To this end, consider the fusion line bundle over $\Lag_0 \times \Lag_0 \times \Lag_0$ (also denoted by $\Fus$), whose fiber at $(L_{23}, L_{12}, L_{13})$ is the space of bimodule homomorphisms $\F_{L_{23}} \boxtimes_{\A_0} \F_{L_{12}} \to \F_{L_{13}}$.
Here $\Lag_0$ is the canonical polarization of the Hilbert space $H_0$ defined in \eqref{Hnot}.

\begin{lemma}
\label{LemmaSmoothStructureOnFus0}
There exists a unique smooth structure on the fusion line bundle $\Fus$ over  $\Lag_0 \times \Lag_0 \times \Lag_0$ that such that the composition map
\begin{equation*}
  \Pf(L_{13}, L_{13}^\prime) \otimes \Fus(L_{23}, L_{12}, L_{13}) \otimes \Pf(L_{23}^\prime, L_{23}) \otimes \Pf(L_{12}^\prime, L_{12})\longrightarrow\Fus(L_{23}^\prime, L_{12}^\prime, L_{13}^\prime)
\end{equation*}
is smooth in all arguments.
\end{lemma}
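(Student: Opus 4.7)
The plan is to construct the smooth structure by transport from the (already smooth) Pfaffian line bundle via composition with a fixed reference element, and to deduce uniqueness from the existence of non-vanishing local smooth sections. First I would fix a reference triple $(L_{23}^0, L_{12}^0, L_{13}^0) \in \Lag_0^3$ and, invoking Theorem~\ref{ThmFockIso}, pick a non-zero reference element $\Upsilon^0 \in \Fus(L_{23}^0, L_{12}^0, L_{13}^0)$.  Then I would consider the map of continuous line bundles over $\Lag_0^3$
\begin{equation*}
\Theta_{\Upsilon^0} : \Pf(L_{13}^0, L_{13}) \otimes \Pf(L_{23}, L_{23}^0) \otimes \Pf(L_{12}, L_{12}^0) \longrightarrow \Fus(L_{23}, L_{12}, L_{13}),
\end{equation*}
sending $\Phi \otimes \Psi \otimes \Psi' \longmapsto \Phi \circ \Upsilon^0 \circ (\Psi \boxtimes \Psi')$.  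Each tensor factor on the left is the pullback of the smooth Pfaffian line bundle over $\Lag_0 \times \Lag_0$ along a smooth map (namely the inclusion of one coordinate together with a constant $L_{ij}^0$), hence the domain is a smooth line bundle; since $\Upsilon^0 \ne 0$, the map $\Theta_{\Upsilon^0}$ is a fiberwise isomorphism of one-dimensional lines.  Declaring it to be a smooth isomorphism defines a smooth structure on $\Fus$.

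\textbf{Verification of the composition property.}  With this structure, the required composition with further Pfaffian elements unfolds by associativity: given $\Phi' \in \Pf(L_{13}, L_{13}^\prime)$, $\Psi'' \in \Pf(L_{23}^\prime, L_{23})$, $\Psi''' \in \Pf(L_{12}^\prime, L_{12})$ and $\Upsilon = \Theta_{\Upsilon^0}(\Phi \otimes \Psi \otimes \Psi')$, one has
\begin{equation*}
  \Phi' \circ \Upsilon \circ (\Psi'' \boxtimes \Psi''') = \Theta_{\Upsilon^0}\bigl((\Phi' \circ \Phi) \otimes (\Psi \circ \Psi'') \otimes (\Psi' \circ \Psi''')\bigr).
\end{equation*}
By smoothness of the Pfaffian composition map~\eqref{MultiplicationBundleMorphism}, the right-hand side depends smoothly on all arguments, so the required composition is smooth.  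In particular, changing the reference triple and the reference element $\Upsilon^0$ to another $\Upsilon^1$ identifies the two resulting smooth structures via the same formula, since $\Upsilon^1 = \Theta_{\Upsilon^0}$ applied to a fixed Pfaffian triple, up to a nonzero scalar.

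\textbf{Uniqueness and main obstacle.}  For uniqueness, suppose $S_1, S_2$ are two smooth structures on $\Fus$ making the composition map smooth.  Around any point, choose local smooth sections $\sigma_{ij}$ of the relevant pullbacks of $\Pf$ (this is possible because $\Pf$ is smooth), and consider the section $(L_{23}, L_{12}, L_{13}) \mapsto \sigma_{13}(L_{13}) \circ \Upsilon^0 \circ (\sigma_{23}(L_{23}) \boxtimes \sigma_{12}(L_{12}))$ of $\Fus$.  This is a non-vanishing local section, smooth in both $S_1$ and $S_2$ by the smoothness-of-composition hypothesis; since a smooth line bundle structure is determined by its non-vanishing local smooth sections, $S_1 = S_2$.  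The main obstacle I anticipate is purely bookkeeping, namely confirming that $\Theta_{\Upsilon^0}$ is fiberwise non-zero on the whole of $\Lag_0^3$: this follows because every element of a Pfaffian line is a scalar multiple of a unitary intertwiner, so the composition of a non-zero $\Upsilon^0$ with non-zero $\Phi, \Psi, \Psi'$ cannot vanish.
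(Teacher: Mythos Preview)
Your proof is correct and follows essentially the same approach as the paper: fix a reference triple, transport the smooth structure from the Pfaffian line bundle via composition with a fixed reference element, and use associativity of composition together with smoothness of the Pfaffian composition map~\eqref{CompositionMap} to verify both the required property and independence of the reference. The paper's version is more terse, simply noting that each fixed reference triple determines a unique smooth structure and that these coincide by smoothness of Pfaffian composition; your version spells out the same idea in more detail. One small remark: the citation of Theorem~\ref{ThmFockIso} for the existence of a nonzero $\Upsilon^0$ is slightly indirect, since that theorem is stated over $PX^{[3]}$ rather than $\Lag_0^3$; the more direct reference would be Corollary~\ref{CorollaryFockSpacesIsomorphic}.
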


\begin{proof}
  Clearly, for each fixed choice of $L_{12}$, $L_{23}$, $L_{13}$, there is a unique smooth structure on $\Fus$ (as a line bundle in the variables $L_{12}^\prime$, $L_{23}^\prime$, $L_{13}^\prime$) such that the above map is smooth, using the smooth structure on the Pfaffian line bundle $\Pf$ over $\Lag_0 \times \Lag_0$.
  That all these smooth structures coincide follows from the smoothness of the composition map for $\Pf$ over $\Lag_0 \times \Lag_0 \times \Lag_0$.
\end{proof}
%

There is an action of $P\SO(d)^{[3]}$ on $\Lag_0 \times \Lag_0 \times \Lag_0$ where 
\begin{equation*}
(q_1, q_2, q_3) \lact (L_{23}, L_{12}, L_{23}) \defeq (q_{23}L_{23}, q_{12}L_{12}, q_{13} L_{13}), \qquad q_{ij} = q_i \fuse q_j.
\end{equation*}
The fusion line bundle $\Fus$ is equivariant for this action, with the action lifted by the formula
\begin{equation}
\label{PSO(d)ActsOnFus}
  (q_1, q_2, q_3) \lact \Upsilon \defeq \Lambda_{q_{13}} \circ \Upsilon \circ (\Lambda_{q_{23}} \boxtimes \Lambda_{q_{12}})^*.
\end{equation}

\begin{proposition}
\label{PropActionOnFusSmooth}
The action \eqref{PSO(d)ActsOnFus} is smooth with respect to the smooth structure on $\Fus$ from Lemma~\ref{LemmaSmoothStructureOnFus0}.
\end{proposition}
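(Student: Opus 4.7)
The plan is to leverage the characterization of the smooth structure on $\Fus$ from Lemma~\ref{LemmaSmoothStructureOnFus0} together with the already-established smooth equivariance \eqref{EquivariancePfaffianLines} of the Pfaffian line bundle under $\O_{\res}(H_0)$. The base action of $P\SO(d)^{[3]}$ on $\Lag_0 \times \Lag_0 \times \Lag_0$ is smooth, being the restriction of the $\O_{\res}(H_0)^{\times 3}$-action along the smooth homomorphism $P\SO(d)^{[3]} \to \O_{\res}(H_0)^{\times 3}$ given by $(q_1,q_2,q_3)\mapsto(q_{23},q_{12},q_{13})$. The issue is to promote this to smoothness of the lift \eqref{PSO(d)ActsOnFus}.

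First I would establish an equivariance identity for the composition map appearing in Lemma~\ref{LemmaSmoothStructureOnFus0}. Inserting $\Lambda_{q_{ij}}^{-1}\Lambda_{q_{ij}} = \mathbf{1}$ between each pair of adjacent factors, a direct computation yields
\begin{equation*}
q\lact\bigl(\Phi_{13}\circ\Upsilon\circ(\Phi_{23}^{-1}\boxtimes\Phi_{12}^{-1})\bigr)
= (q_{13}\cdot\Phi_{13})\circ(q\lact\Upsilon)\circ\bigl((q_{23}\cdot\Phi_{23})^{-1}\boxtimes(q_{12}\cdot\Phi_{12})^{-1}\bigr),
\end{equation*}
where $q_{ij}\cdot\Phi \defeq \Lambda_{q_{ij}}\Phi\Lambda_{q_{ij}}^{-1}$ is the Pfaffian line action from \eqref{EquivariancePfaffianLines}. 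This shows that the composition map is $P\SO(d)^{[3]}$-equivariant, with the Pfaffian factors acted on via \eqref{EquivariancePfaffianLines} pulled back along $P\SO(d)\to\O_{\res}(H_0)$.

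Given the equivariance identity, smoothness of the action on $\Fus$ reduces to smoothness on a single fiber. Concretely, fix base Lagrangians $L_{ij}^0$ and an element $\Upsilon_0\in\Fus(L_{23}^0,L_{12}^0,L_{13}^0)$. Any local smooth section of $\Fus$ can, by Lemma~\ref{LemmaSmoothStructureOnFus0}, be written as $\Phi_{13}\circ\Upsilon_0\circ(\Phi_{23}^{-1}\boxtimes\Phi_{12}^{-1})$ for smooth local Pfaffian sections $\Phi_{ij}$; applying the equivariance identity together with the smooth equivariance \eqref{EquivariancePfaffianLines} reduces the problem to showing that $q\mapsto q\lact\Upsilon_0$ is smooth as a section of $\Fus$ over a neighborhood of any point $q^0\in P\SO(d)^{[3]}$. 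To verify this, I would pick smooth local sections $s_{ij}$ of the principal $\U(1)$-bundles $\Imp_{L_{ij}^0}\to\O_{\res}(H_0)$ near the image of $q^0$ (these exist by \eqref{ImplementerExtension}, since it is a smooth principal bundle). By the diffeomorphism \eqref{IsoImpLPfL}, the expressions $\Psi_{ij}(q)\defeq s_{ij}(q_{ij})\Lambda_{q_{ij}}^{-1}\in\Pf(q_{ij}L_{ij}^0,L_{ij}^0)$ depend smoothly on $q$. Writing $\Lambda_{q_{ij}} = \Psi_{ij}(q)^{-1}\circ s_{ij}(q_{ij})$ and substituting into \eqref{PSO(d)ActsOnFus} decomposes $q\lact\Upsilon_0$ as a composition of the smooth Pfaffian factors $\Psi_{ij}(q)^{\pm 1}$ with the intertwiner $s_{13}(q_{13})\circ\Upsilon_0\circ(s_{23}(q_{23})\boxtimes s_{12}(q_{12}))^{-1}$ on the fixed Fock bimodules at $(L_{23}^0,L_{12}^0,L_{13}^0)$, which by Lemma~\ref{LemmaSmoothStructureOnFus0} suffices to identify the result as a smooth section.

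The main obstacle is this last step: verifying that the "inner" intertwiner depending on $s_{ij}(q_{ij})$ actually yields a smooth section of $\Fus$ at the fixed base Lagrangians, even though the Fock bundle $\F$ itself is only continuous and the action of $\U(L)$ on $\F_L$ is non-smooth in the norm topology. The resolution is that, just as for $\Pf$, the smooth structure on $\Fus$ at a fixed triple of base Lagrangians is the one inherited from the smooth $\U(1)$-bundle structure carried over from the implementers via \eqref{IsoImpLPfL}, so that the composition of $s_{ij}$-intertwiners lands in this smooth fiber by construction. Once this is in place, combining with the equivariance identity gives the result; one must also check that parities match (the action is grading preserving when restricted to the identity component, by Remark~\ref{RemarkParityImplementer}), which is automatic because conjugation by $\Lambda_{q_{ij}}$ preserves the grading of intertwiners.
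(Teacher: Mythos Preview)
Your reduction strategy is structurally close to the paper's: you use the equivariance of the composition map and the smooth $\O_{\res}(H_0)$-action on $\Pf$ to reduce to checking smoothness of the map $q \mapsto q\lact\Upsilon_0$ at a fixed fiber, and you decompose this via smooth implementer sections $s_{ij}$ into outer Pfaffian factors $\Psi_{ij}(q)^{\pm1}$ and an ``inner'' intertwiner
\[
q \longmapsto s_{13}(q_{13}) \circ \Upsilon_0 \circ \bigl(s_{23}(q_{23}) \boxtimes s_{12}(q_{12})\bigr)^{*}
\]
taking values in the fixed line $\Fus(L_{23}^0,L_{12}^0,L_{13}^0)$. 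Up to here this matches the paper.

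The gap is in your treatment of this inner term. Your ``resolution'' asserts that it is smooth because ``the smooth structure on $\Fus$ at a fixed triple of base Lagrangians is the one inherited from the smooth $\U(1)$-bundle structure carried over from the implementers via \eqref{IsoImpLPfL}''. This is not a valid argument. The fixed fiber is just a complex line; the question is whether a specific $\C$-valued function of $q$ is smooth. The smooth structure of Lemma~\ref{LemmaSmoothStructureOnFus0} is characterized by the composition map with elements of $\Pf$, which are $\Cl(H_0)$-module intertwiners. The implementers $s_{ij}(q_{ij})$ are \emph{not} such intertwiners --- they implement nontrivial Bogoliubov automorphisms --- so nothing in Lemma~\ref{LemmaSmoothStructureOnFus0} automatically implies smoothness of the inner term.

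The paper confronts this directly: it specializes to the triple $(L,L,L)$ with $L\in\Lag_0^+$, fixes an even isomorphism $\F_L \cong L^2(\A_0)$, takes $\Upsilon$ to be the canonical unitor, and then \emph{computes} the inner term explicitly, obtaining
\[
\Upsilon \circ (U_{23} \boxtimes U_{12}) \circ \Upsilon^* = U_{23}\, L^2(\Cl_{q_2})^*\, U_{12}.
\]
This is recognized as the fusion product \eqref{FusionProductImplementers} of the implementer extension of $L\Spin(d)$, whose smoothness is a known (and non-trivial) fact from the theory of fusion extensions of loop groups. Without this identification, your argument does not close.
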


Given this proposition, the fusion line bundle $\Fus$ on the fiber bundle $\Lag^{[1, 3]}$ over $PX^{[3]}$ now acquires a smooth structure from the local trivializations of $\Lag$, just as the Pfaffian line bundle over $\Lag^{[2, 2]}$.
Here Prop.~\ref{PropActionOnFusSmooth} is used to check that the corresponding transition functions are smooth.

\begin{proof}[of Prop.~\ref{PropActionOnFusSmooth}]
By the characterizing property of the smooth structure and the smoothness of the action of $\O_{\res}(H)$ on the Pfaffian line bundle $\Pf$, it suffices to show smoothness of the action restricted to the orbit of the triple $(L, L, L)$ for a fixed Lagrangian $L \in \Lag_0$.
Here we have have canonical isomorphisms
\begin{equation*}
\begin{aligned}
  \Fus(q_{23}L, q_{12}L, q_{13}L) &\cong
   \Pf(L, q_{13}L) \otimes \Fus(L, L, L )  \otimes \Pf(q_{23}L, L) \otimes \Pf(q_{12}L, L),
\end{aligned}
\end{equation*}
which are smooth in $q_1, q_2, q_3$.
If now $\Upsilon \in \Fus(L, L, L)$ is a unitary and $U_{ij} \in \Imp_L$ are implementers for $q_{ij}$, then under this isomorphism, the element $\Lambda_{q_{13}} \circ \Upsilon \circ (\Lambda_{q_{23}} \boxtimes \Lambda_{q_{12}})^*$ is sent to
\begin{equation*}
 (-1)^{|U_{23}||U_{12}|} (\Lambda_{q_{13}}U_{13}^*) \otimes \bigl( U_{13} \circ \Upsilon \circ (U^*_{23} \boxtimes U^*_{12}) \bigr) \otimes (U_{23} \Lambda_{q_{23}}^*) \otimes (U_{12} \Lambda_{q_{12}}^*);
\end{equation*}
here we use the relation \eqref{FunctorialityConnesFusion}. 
By smoothness of the map \eqref{IsoImpLPfL}, the factors contained in the Pfaffian lines depend smoothly on $q_{i}$ and $U_{ij}$.
Hence the smoothness of the whole term is equivalent to smoothness of the middle term in $\Fus(L, L, L)$, which in turn is equivalent to smoothness of the element
\begin{equation*}
\Upsilon \circ (U_{23} \boxtimes U_{12}) \circ \Upsilon^* \in \Imp_L
\end{equation*}
 in $U_{ij}$ and $q_i$.
 Let us choose $L \in \Lag_0^+$, so that $\Fus(L, L, L)$ is even and pick an even isomorphism $\F_L \cong L^2(\A_0)$ (see Remark~\ref{RemarkPreferredConnectedComponent}).
We may choose $\Upsilon$ to correspond, under this isomorphism, to the canonical unitor from Example~\ref{ExampleModifiedActionConnesFusion}.
A straightforward calculation then gives the result
\begin{equation}
\label{IdentityFusion}
\Upsilon \circ (U_{23} \boxtimes U_{12}) \circ \Upsilon^*
= U_{23}L^2(\Cl_{q_2})^* U_{12},
\end{equation}
where $L^2(\Cl_{q_2})^*$ denotes the unitary on $L^2(\A_0)$ induced by the $*$-automorphism $\Cl_{q_2}$; see Remark~\ref{RemarkL2phi}.
The right hand side of \eqref{IdentityFusion} is just the fusion product of the implementer extension, see \eqref{FusionProductImplementers} below.
\end{proof}

\subsection{The stringor bundle}
\label{SectionStringorBundle}

In their preprint \cite{StolzTeichnerSpinorBundle}, Stolz and Teichner argue that a fusion product $\Upsilon$ makes a spinor bundle $\SB$ on the loop space $LX$ behave locally in $X$, and should therefore be called the \emph{stringor bundle} on $X$.
In this section, we make this idea rigorous using (a von Neumann algebra analog of) the language of 2-vector bundles developed in \cite{kristel20212vector}.

As a warm-up, we start by explaining the notion of finite-dimensional 2-vector bundles and then explain how to modify this notion to meet the requirements of the present context.
A \emph{super 2-vector bundle} $\mathcal{V}$ over a manifold $X$, given in terms of a cover $Y \to X$, consists of a super algebra bundle $\A$ over $Y$, a super $\A_2$-$\A_1$-bimodule bundle $\mathfrak{M}$ over $Y^{[2]}$ and a grading preserving isomorphism
\begin{equation*}
  \lambda : \mathfrak{M}_{23} \otimes_{\A_2} \mathfrak{M}_{12} \longrightarrow \mathfrak{M}_{13}
\end{equation*}
of super $\A_3$-$\A_1$-bimodule bundles over $Y^{[3]}$ that is associative over $Y^{[4]}$ (here $Y^{[k]}$ is the $k$-fold iterated fiber product of $Y \to X$; see Notation~\ref{NotationSubmersion}).
Similar to bundle gerbes, these data and conditions can be organized in a diagram as follows.
\begin{equation}
\label{General2-VectorDiagram}
  \mathcal{V} =   \left[
\begin{tikzcd}
  \A \ar[d,  dotted, -] & \mathfrak{M} \ar[d,  dotted, -] & \substack{\text{2-vector} \\ \text{bundle} \\ \text{product}~\lambda} \ar[d, -, dotted] & \substack{\text{associativity} \\ \text{of } \lambda} \ar[d, -, dotted] 
  \\
  Y \ar[d]& Y^{[2]} \ar[l, shift left=1mm] \ar[l, shift right=1mm]&  Y^{[3]} \ar[l, shift left=2mm] \ar[l, shift right=2mm] \ar[l] & Y^{[4]} \ar[l, shift left=1mm] \ar[l, shift right=1mm] \ar[l, shift left=3mm] \ar[l, shift right=3mm]
  \\
  X
\end{tikzcd}
\right]
\end{equation}
Observe that this notion is a common generalization of super algebra bundles over $X$ and bundle gerbes over $X$: Any algebra bundle over $X$ gives rise to a 2-vector bundle over $X$, given in terms of the trivial cover, and any super bundle gerbe over $X$ is a super 2-vector bundle for which the algebra bundle $\A$ is the trivial algebra bundle.

For each $X$, there is a bicategory $\sTwoVect(X)$ of 2-vector bundles over $X$;
varying the base manifold $X$, super 2-vector bundles form a 2-stack over the site of manifolds.
This 2-stack can be obtained by applying the plus construction of Nikolaus and Schweigert \cite{NikolausSchweigert} to the pre-sheaf of bicategories $\sAlg$ of super algebra bundles, super bimodule bundles and even intertwiners described in \cite{InsideousBicategory},
\begin{equation}
\label{PlusConstruction2Vect}
  \sTwoVect \defeq \sAlg^+.
\end{equation}

In \cite{kristel20212vector}, we described a 2-stack of super 2-vector bundles whose typical fiber $\A$ is a finite-dimensional super algebra.
In the present context, we have to allow super 2-vector bundles whose typical fiber is a super von Neumann algebra.
In view of the simple construction \eqref{PlusConstruction2Vect}, such a stack is easily obtained by applying the plus construction to the presheaf of bicategories $\svNAlg$ of super von Neumann algebra bundles, super (Hilbert) bimodule bundles and even unitary intertwiners sketched in Appendix~\ref{SectionVNBundlesBimodules}.
Explicitly, such super 2-vector bundles can be visualized also by the diagram \eqref{General2-VectorDiagram}, but now $\A$ is a super von Neumann algebra bundle (Definition~\ref{DefinitionvNBundle}) and $\M$ is a (Hilbert) super $\A_2$-$\A_1$-bimodule bundle (Definitions~\ref{DefinitionBimoduleBundle}) whose typical fiber is implementing (Definition~\ref{DefinitionImplementingBimodule}).
As the composition of morphisms in this bicategory is the fiberwise Connes fusion product, the 2-vector bundle product in this case is a grading preserving unitary intertwiner
\begin{equation*}
  \lambda : \mathfrak{M}_{23} \boxtimes_{\A_2} \mathfrak{M}_{12} \longrightarrow \mathfrak{M}_{13}.
\end{equation*}

Using this notion of super 2-vector bundle, it is completely straight forward to define the stringor bundle of Stolz and Teichner.
Let $X$ be an oriented Riemannian manifold and let $\SB$ be a loop space spinor bundle (Definition~\ref{DefinitionLoopSpaceSpinorBundle}), together with a fusion product $\Upsilon$ (Definition~\ref{DefinitionFusionProduct}).
Observe that by Thm.~\ref{TheoremFusionProduct} below, the existence of these structures is equivalent to the manifold $X$ being string.

\begin{definition}[Stringor bundle]
The \emph{stringor bundle} $\mathcal{S}$ corresponding to the data $(\SB, \Upsilon)$ is the super 2-vector bundle over $X \times X$ given in terms of the cover $PX$, with algebra bundle the von Neumann algebra bundle $\A$ described in \S\ref{SectionIntervalAlgebra}, super $\A_2$-$\A_1$-bundle over $PX^{[2]}$ the pullback of $\SB$ along the cup map \eqref{CupMap} and 2-vector bundle product over $PX^{[3]}$ given by the fusion product $\Upsilon$.
\begin{equation*}
  \mathcal{S} =   \left[
\begin{tikzcd}
  \A \ar[d, dotted, -] & \SB \ar[d,  dotted, -] & \substack{ \text{fusion} \\ \text{product}~\Upsilon} \ar[d, -, dotted] & \substack{\text{associativity of } \\ \text{fusion product}} \ar[d, -, dotted] 
  \\
  PX \ar[d]& PX^{[2]} \ar[l, shift left=1mm] \ar[l, shift right=1mm]&  PX^{[3]} \ar[l, shift left=2mm] \ar[l, shift right=2mm] \ar[l] & PX^{[4]} \ar[l, shift left=1mm] \ar[l, shift right=1mm] \ar[l, shift left=3mm] \ar[l, shift right=3mm]
  \\
  X \times X
\end{tikzcd}
\right]
\end{equation*}
\end{definition}

For any point $x \in X$, pullback along the inclusion $X \cong \{x\} \times X \hookrightarrow X \times X$ gives a stringor bundle over $X$.

\section{The Lagrangian 2-gerbe} \label{SectionLagrangian2Gerbe}

In this section, we discuss the existence of fusion products for loop space spinor bundles.
Our main player here is the \emph{Lagrangian 2-gerbe} $\LagTwoGrb_X$, a super bundle 2-gerbe obtained by degression of the Lagrangian gerbe on the loop space.
We show that $\LagTwoGrb_X$ is ungraded if and only if the manifold is spin.
In this case, we construct a further canonical isomorphism from the Lagrangian 2-gerbe to a certain lifting 2-gerbe, whose (non-)triviality is well known to be equivalent to the string condition.

On the other hand, we show show that the existence of a fusion product on a spinor bundle $\SB$ is obstructed by a certain other super bundle 2-gerbe $\FusTwoGrb(\SB)$ which we call the \emph{fusion 2-gerbe} of $\SB$.
We then show that $\FusTwoGrb(\SB)$ is isomorphic to $\LagTwoGrb_X$.

Throughout this section, we freely use the material of \S\ref{SectionBundle2Gerbes}, \S\ref{SectionIso2Gerbes} \& \S\ref{SectionClassificationBundle2Gerbes}.

\subsection{The Lagrangian 2-gerbe} \label{SubSectionLagrangian2Gerbe}

Let $X$ be an oriented Riemannian manifold and let $\LagGrb_{LX}$ be the Lagrangian gerbe over $LX$ introduced in \S\ref{SectionApplicationLoopSpace}.
In this section, we construct a super bundle 2-gerbe whose defining super bundle gerbe is $\LagGrb_{LX}$.
We call this super bundle gerbe the \emph{Lagrangian 2-gerbe} and denote it by $\LagTwoGrb_X$.

$\LagTwoGrb_X$ is most naturally constructed as a bundle gerbe over $X \times X$, defined in terms of the cover $Y = PX$, mapping to $X \times X$ via the end point evaluations; a super bundle 2-gerbe on $X$ can be obtained by pullback along the second factor inclusion $X \hookrightarrow X \times X$, given the choice of a basepoint.
As all these embeddings are homotopic, the super bundle 2-gerbes obtained this way for two different choices of basepoints are isomorphic.

\medskip


We will start by describing the partial semi-bisimplicial diagram for $\LagTwoGrb_X$, as in \eqref{CoverDiagram}, which consists of various pullbacks and fiber products taken of the restriction (or rather pullback along the cup map \eqref{CupMap}) of the Lagrangian fibration $\Lag$ over $LX$ to $PX^{[2]}$. 
Abusing notation, we will denote this restriction also by $\Lag$ throughout.

Let $n \geq 2$. 
In view of Notation~\ref{NotationSubmersion}, the pullback of $\Lag$ along the various projections $PX^{[n]} \to PX^{[2]}$ provides covers $\Lag_{ij}$ of $PX^{[n]}$.
Similarly, we can pull back the $m$-fold fiber products $\Lag^{[m]}$ of $\Lag$ with itself over $PX^{[2]}$, to obtain covers $\Lag^{[m]}_{ij}$.
We denote the fiber product of all these covers by
\begin{equation}
\label{DefinitionLagmn}
  \Lag^{[m, n]} ~~\defeq~~ \underbrace{\Lag_{12}^{[m]} \times_{PX^{[n]}} \cdots \times_{PX^{[n]}} \Lag_{ij}^{[m]} \times_{PX^{[n]}} \cdots \qquad}_{\substack{\text{fiber product over $PX^{[n]}$} \\ \text{of all $\Lag_{ij}^{[m]}$, where $1 \leq i < j \leq n$}}}.
\end{equation}
In particular, $\Lag^{[m, 2]} = \Lag^{[m]}$, the $m$-fold fiber product of $\Lag$ over $PX^{[2]}$.
These spaces fit into the following partial semi-bisimplicial diagram.
\begin{equation} 
\label{CoverDiagramLag}
\begin{aligned}
\begin{tikzcd}[column sep=0.7cm]
&
 	\vdots
	\ar[d]
 	\ar[d, shift left=2mm]
 	\ar[d, shift right=2mm]
	& 
	\vdots 
	\ar[d]
 	\ar[d, shift left=2mm]
 	\ar[d, shift right=2mm]
	&
	 	\vdots
	\ar[d]
 	\ar[d, shift left=2mm]
 	\ar[d, shift right=2mm]
			\\
&
 	\Lag^{[2, 2]}
 	\ar[d, shift left=1mm]
 	\ar[d, shift right=1mm]
	&
		\Lag^{[2, 3]}
			\ar[l]
 			\ar[l, shift left=2mm]
 			\ar[l, shift right=2mm]
 		\ar[d, shift left=1mm]
 		\ar[d, shift right=1mm]
		&
			\Lag^{[2, 4]}
 			\ar[d, shift left=1mm]
 			\ar[d, shift right=1mm]
			\ar[l, shift left=1mm] 
		\ar[l, shift right=1mm]
			\ar[l, shift left=3mm] 
		\ar[l, shift right=3mm]
		& 
		\ar[l]
			\ar[l, shift left=2mm]
 			\ar[l, shift right=2mm]
 			\ar[l, shift left=4mm]
 			\ar[l, shift right=4mm]
			\cdots
			\\
&
	\Lag^{[1, 2]} 
	\ar[d]
	& 
		\Lag^{[1, 3]}
			\ar[l]
 			\ar[l, shift left=2mm]
 			\ar[l, shift right=2mm]
		\ar[d]
		&
			\Lag^{[1, 4]}
 			\ar[d]
			\ar[l, shift left=1mm] 
		\ar[l, shift right=1mm]
			\ar[l, shift left=3mm] 
		\ar[l, shift right=3mm]
		&
		\ar[l]
			\ar[l, shift left=2mm]
 			\ar[l, shift right=2mm]
 			\ar[l, shift left=4mm]
 			\ar[l, shift right=4mm]
			\cdots
			\\
PX \ar[d]
 &
	PX^{[2]}
	\ar[l]
	&
		PX^{[3]}
			\ar[l]
 			\ar[l, shift left=2mm]
 			\ar[l, shift right=2mm]
			&
			PX^{[4]}
 			\ar[l, shift left=1mm]
 			\ar[l, shift right=1mm]
 			\ar[l, shift left=3mm]
 			\ar[l, shift right=3mm]
			&
			\ar[l]
			\ar[l, shift left=2mm]
 			\ar[l, shift right=2mm]
 			\ar[l, shift left=4mm]
 			\ar[l, shift right=4mm]
			\cdots
			\\
X \times X
\end{tikzcd}
\end{aligned}
\end{equation}
Explicitly, elements of $\Lag^{[m, n]}$ over an element $(\gamma_1, \dots, \gamma_n)$ of $PX^{[n]}$ consists of collections $(L_{ij}^a)_{1 \leq i < j \leq n}^{1 \leq a \leq m}$ such that $L_{ij}^a \in \Lag_{\gamma_i \fuse \gamma_j}$.
We can identify $\Lag^{[m, n]} = (\Lag^{[1, n]})^{[m]}$, the exterior fiber product taken over $PX^{[n]}$, which gives maps 
\begin{equation*}
\pi^i : \Lag^{[m, n]} \to \Lag^{[m-1, n]}, \qquad i=1, \dots, m.
\end{equation*}
These are the vertical maps in \eqref{CoverDiagramLag}.
The horizontal maps in \eqref{CoverDiagramLag} are
\begin{equation*}
\pi_k : \Lag^{[m, n]} \to \Lag^{[m, n-1]}, \qquad k=1, \dots, n
\end{equation*} 
covering the $k$-th projection $PX^{[n]} \to PX^{[n-1]}$. 
Explicitly,
\begin{equation*}
 \pi_k\Bigl((L_{ij}^a)^{1 \leq a \leq m}_{1 \leq i < j \leq n}\Bigr) = (\tilde{L}_{ij}^a)^{1 \leq a \leq m}_{1 \leq i < j \leq n-1}, \qquad \text{where} \qquad \tilde{L}_{ij}^a = \begin{cases} L_{ij}^a & i< j < k \\ L_{i, j+1}^a & i < k \leq j \\ L_{i+1, j+1}^a & k \leq i < j.\end{cases}
\end{equation*}

In relation to \eqref{CoverDiagramLag}, the data defining $\LagTwoGrb_X$ can be arranged as follows.
\begin{equation} 
\label{LagTwoGrbBigDiagram}
\begin{aligned}
%
\LagTwoGrb_X =  \left[
\begin{tikzcd}
 & 
 	\lambda 
	\ar[d,  dotted, -]
	& 
			\\
 & 
 	\Pf
 	\ar[d,  dotted, -]
	& 
		\mu
			\ar[l,  dotted, -]
 		\ar[d,  dotted, -]
			\\
&
	{\color{white}. }
	\ar[d,  dotted, -]
	& 
		\Fus
			\ar[l,  dotted, -]
		\ar[d,  dotted, -]
		&
			\alpha
			\ar[d,  dotted, -]
			\ar[l,  dotted, -]
			\\
PX \ar[d]
&
	PX^{[2]}
		\ar[l, shift left=1mm] 
		\ar[l, shift right=1mm]
	&
		PX^{[3]}
			\ar[l]
 			\ar[l, shift left=2mm]
 			\ar[l, shift right=2mm]
		&
			PX^{[4]}
			\ar[l, shift left=1mm] 
		\ar[l, shift right=1mm]
			\ar[l, shift left=3mm] 
		\ar[l, shift right=3mm]
			\\
			X \times X
\end{tikzcd}
\right]
\end{aligned}
\end{equation}
Explicitly, the objects over $\Lag^{[i,j]}$ are the following.
\begin{enumerate}
\item[{[2, 2]}]
$\Pf$ is the Pfaffian line bundle, introduced in \S\ref{SectionApplicationLoopSpace}.
\item [{[3, 2]}]
$\lambda$ is the corresponding composition map \eqref{MultiplicationBundleMorphism}.
\item[{[2, 3]}]
$\Fus$ is the Fusion line bundle from Definition~\ref{DefinitionFusionLineBundle}.
\item[{[3, 3]}]
Denoting horizontal pullbacks by lower indices and vertical pullbacks by upper indices (following Notation~\ref{NotationBisimplicialPullbacks}), $\mu$ is the canonical isomorphism of super line bundles
\begin{equation}
\label{CanonicalIsomorphismNu}
\begin{aligned}
\mu : \Fus^2 \otimes \Pf_{23} \otimes \Pf_{12} &\longrightarrow \Pf_{13} \otimes \Fus^1. \\
\end{aligned}
\end{equation}
over $\Lag^{[2, 4]}$, which at two triples $(L_{12}, L_{23}, L_{13})$ and  $(L_{12}^\prime, L_{23}^\prime, L_{13}^\prime)$, both lying over $(\gamma_1, \gamma_2, \gamma_3)$, is defined as the cospan
\begin{equation*}
  \begin{tikzcd}[row sep=0.3cm]
  \Fus(L_{23}^\prime, L_{12}^\prime, L_{13}^\prime) \otimes \Pf(L_{23}, L_{23}^\prime) \otimes \Pf(L_{12}, L_{12}^\prime) \ar[rd, bend left=10] \ar[dd, dashed, "\mu"] & \\
  & \Fus(L_{23}, L_{12}, L_{13}^\prime).
  \\
  \Pf(L_{13}, L_{13}^\prime) \otimes \Fus(L_{23}, L_{12}, L_{13}) \ar[ur, bend right=10]
  \end{tikzcd}
\end{equation*}
The diagonal maps are the evident isomorphisms given by composition;
here we observe that a bounded linear map $\Phi : \F_{L_{ij}} \to \F_{L_{ij}^\prime}$ intertwines the left $\CC_{\gamma_i \fuse \gamma_j}$ actions if and only if it intertwines the $\A_{\gamma_j}$-$\A_{\gamma_i}$-bimodule actions.
Hence the elements of the Pfaffian line can be viewed as bimodule homomorphisms.

\item[{[1, 4]}]
To define the isomorphism $\alpha$, we define the \emph{triple fusion line bundle} over $\Lag^{[1, 4]}$ by
\begin{equation*}
  \Fus^{(3)} = \underline{\Hom}(\F_{34} \boxtimes_{\A_3} \F_{23} \boxtimes_{\A_2} \F_{12}, \F_{14}),
\end{equation*}
where, generalizing Notation~\ref{NotationSubmersion} in an obvious way, $\F_{ij}$ and $\A_i$ denote the pullbacks to $\Lag^{[1, 4]}$ of the Fock space bundle $\F$ and the von Neumann algebra bundle $\A$ along the appropriate maps in the diagram \eqref{CoverDiagramLag}.
We then have canonical isomorphisms
\begin{equation}
\label{FusLineToFusLine3}
\begin{aligned}
&\Fus_{124} \otimes \Fus_{234} \longrightarrow \Fus^{(3)}, \qquad & &\Phi \otimes \Psi \longmapsto \Phi \circ (\id_{\F_{12}} \boxtimes \Psi) \\
&\Fus_{134} \otimes \Fus_{123} \longrightarrow \Fus^{(3)}, \qquad & &\Phi \otimes \Psi \longmapsto \Phi \circ (\Psi\boxtimes\id_{\F_{34}}  ),\\
\end{aligned}
\end{equation}
over $\Lag^{[1, 4]}$ which enable defining $\alpha$ as the cospan
\begin{equation}
\label{DefinitionAlphaTilde}
  \begin{tikzcd}
    \Fus_{124} \otimes \Fus_{234} \ar[r, "\cong"'] \ar[rr, bend left=20, dashed, "\alpha"]&  \Fus^{(3)} & \Fus_{134} \otimes \Fus_{123}. \ar[l, "\cong"]
  \end{tikzcd}
\end{equation}
Here we implicitly use the associativity isomorphism of the Connes fusion product to identify $(\F_{34} \boxtimes_{\A_3} \F_{23}) \boxtimes_{\A_2} \F_{12} \cong \F_{34} \boxtimes_{\A_3} ( \F_{23} \boxtimes_{\A_2} \F_{12})$.
There is a unique smooth structure on $\Fus^{(3)}$ turning the isomorphisms \eqref{FusLineToFusLine3} into smooth bundle isomorphisms.
\end{enumerate}

\begin{theorem}
\label{TheoremCoherenceLagTwoGrb}
The data specified above constitute the structure of a super bundle 2-gerbe.
\end{theorem}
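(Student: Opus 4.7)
The plan is to verify, level by level in the partial semi-bisimplicial diagram \eqref{CoverDiagramLag}, each of the structural and coherence conditions defining a super bundle 2-gerbe (as spelled out in \S\ref{SectionBundle2Gerbes}). The data at the two lowest levels, namely $\Pf$ over $\Lag^{[2]}$ and $\lambda$ over $\Lag^{[3]}$, already form a super bundle gerbe over $PX^{[2]}$: they constitute the pullback of the Lagrangian gerbe $\LagGrb_{LX}$ along the cup map, and the associativity of $\lambda$ over $\Lag^{[4]}$ was established in Definition~\ref{DefinitionLagrangianGerbe}. The substantive work thus consists of verifying (i) that $(\Fus, \mu)$ defines a 1-morphism of super bundle gerbes over $PX^{[3]}$, and (ii) that $\alpha$ is a coherent 2-isomorphism over $PX^{[4]}$ and $PX^{[5]}$.

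For (i), well-definedness of $\mu$ as a grading-preserving smooth isomorphism is immediate: the two diagonal maps of the cospan \eqref{CanonicalIsomorphismNu} are given by composition with non-zero Pfaffian intertwiners, hence are invertible on the one-dimensional fibres; smoothness is guaranteed by Lemma~\ref{LemmaSmoothStructureOnFus0} and Prop.~\ref{PropActionOnFusSmooth} together with smoothness of composition on $\Pf$. The non-trivial coherence of $\mu$ with $\lambda$ one level up expresses compatibility of simultaneous changes of all three Lagrangians with both $\lambda$ and the cospan defining $\mu$; after unwinding all arrows it reduces cleanly to the strict associativity of composition of bounded operators between Fock spaces.

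For (ii), I would first check that $\alpha$ is a well-defined smooth isomorphism. The two canonical maps \eqref{FusLineToFusLine3} into the triple fusion line bundle $\Fus^{(3)}$ are isomorphisms, obtained by precomposing with identity bimodule intertwiners on invertible bimodules (compare Example~\ref{ExampleModifiedActionConnesFusion}); smoothness is automatic from the smooth structure on $\Fus^{(3)}$ induced via \eqref{FusLineToFusLine3} from that on $\Fus$. The coherence of $\alpha$ with $\mu$ is then a consequence of the naturality of the Connes associator with respect to bimodule intertwiners, together with strict associativity of composition.

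The main obstacle is the final pentagon-type coherence of $\alpha$ over $PX^{[5]}$. I would handle it by introducing the quadruple fusion line bundle $\Fus^{(4)} = \underline{\Hom}(\F_{45} \boxtimes_{\A_4} \F_{34} \boxtimes_{\A_3} \F_{23} \boxtimes_{\A_2} \F_{12}, \F_{15})$ over $\Lag^{[1,5]}$, together with its five canonical identifications with tensor products $\Fus_{ijk} \otimes \Fus_{i \ell m}$, one for each maximal bracketing of the quadruple Connes fusion. All five vertices of the pentagon factor through $\Fus^{(4)}$, and a careful diagram chase shows that the coherence of $\alpha$ reduces to Mac Lane's pentagon for the associator of Connes fusion in the bicategory $\svNAlg$. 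Since this pentagon is a standard part of the monoidal bicategorical structure on super von Neumann algebras with Connes fusion, the required coherence follows.
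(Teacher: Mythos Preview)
Your proposal is correct and follows essentially the same approach as the paper: both arguments reduce the coherences to associativity of operator composition together with the functoriality identity \eqref{FunctorialityConnesFusion}, introducing the triple and quadruple fusion line bundles $\Fus^{(3)}$ and $\Fus^{(4)}$ as common targets for the defining cospans. The paper is a bit more explicit at two points---for the compatibility \eqref{CoherenceAlphaTilde} of $\alpha$ with $\mu$ it introduces an auxiliary map $\mu^{(3)}$ and splits the diagram into two halves, and for the pentagon \eqref{CoherenceAlpha} it fills the hexagon with $\Fus^{(4)}$ at the center and checks the resulting tetragons and inner pentagon by hand rather than appealing to Mac Lane's pentagon as a single axiom---but these are refinements of exactly the outline you give.
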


\begin{proof}
We have to verify four coherence and compatibility conditions.
The first coherence condition is just the statement that $\LagGrb_{LX}$ is a super bundle gerbe and follows from associativity of composition.
The compatibility condition \eqref{CoherenceNu} for $\lambda$ and $\mu$ over $\Lag^{[2, 3]}$ is straight forward to check.

To see the compatibility condition \eqref{CoherenceAlphaTilde} for $\alpha$ and $\mu$, we observe that there is a canonical isomorphism
\begin{equation}
\label{NuThree}
\mu^{(3)} : \Fus^{(3), 2} \otimes \Pf_{34} \otimes \Pf_{23} \otimes \Pf_{12} \longrightarrow \Pf_{14} \otimes \Fus^{(3), 1}, 
\end{equation}
defined in a similar fashion as $\mu$.
Using the identity \eqref{FunctorialityConnesFusion} for the Connes fusion product, it is then straightforward to check that the two diagrams
\begin{equation*}
\begin{tikzcd}[column sep=1cm]
\Fus_{124}^2 \otimes \Fus_{234}^2 \otimes \Pf_{34} \otimes \Pf_{23} \otimes \Pf_{12} 
	\ar[r] 
	\ar[d, "\id \otimes \mu_{234} \otimes \id"']
& 
\Fus^{(3), 2} \otimes \Pf_{34} \otimes \Pf_{23} \otimes \Pf_{12}
	\ar[ddd, "\mu^{(3)}"]
\\
\Fus_{124}^2 \otimes \Pf_{24} \otimes \Fus^1_{234} \otimes  \Pf_{12}
	\ar[d, equal]
&
\\
\Fus_{124}^2 \otimes \Pf_{24} \otimes  \Pf_{12} \otimes \Fus^1_{234}
	\ar[d, "\id \otimes \mu_{124}"']
&
\\
 \Pf_{14} \otimes  \Fus_{124}^1 \otimes \Fus^1_{234}
 	\ar[r]
	&
	\Pf_{14} \otimes \Fus^{(3),1}
\end{tikzcd}
\end{equation*}
and
\begin{equation*}
\begin{tikzcd}[column sep=1cm]
\Fus^{(3), 2} \otimes \Pf_{34} \otimes \Pf_{23} \otimes \Pf_{12}  
	\ar[ddd, "\mu^{(3)}"']
& 
\Fus_{134}^2 \otimes\Fus_{123}^2 \otimes \Pf_{34} \otimes \Pf_{23} \otimes \Pf_{12}
	\ar[d, equal]
	\ar[l]
\\
&
\Fus_{134}^2 \otimes \Pf_{34} \otimes\Fus_{123}^2 \otimes \Pf_{23} \otimes \Pf_{12}
	\ar[d, "\id \otimes \id \otimes \mu_{123} "]
\\
&
\Fus_{134}^2 \otimes \Pf_{34} \otimes \Pf_{13}  \otimes\Fus_{123}^1 
	\ar[d, "\mu_{134} \otimes \id"]
\\
 \Pf_{14} \otimes  \Fus^{(3), 1}
	&
	\Pf_{14} \otimes \Fus_{134}^1 \otimes\Fus_{123}^1
	\ar[l]
\end{tikzcd}
\end{equation*}
over $\Lag^{[2, 4]}$ commute, where each of the horizontal arrows is one of the canonical isomorphisms \eqref{FusLineToFusLine3}.
Joining the first with the second along the common morphism $\mu^{(3)}$, we obtain \eqref{CoherenceAlphaTilde}, where the vertical maps give the defining cospan \eqref{DefinitionAlphaTilde} for $\alpha$.

To verify commutativity of the diagram \eqref{CoherenceAlpha} over $\Lag^{[1, 5]}$, we introduce the \emph{quadruple fusion line bundle}
\begin{equation*}
\Fus^{(4)} = \underline{\Hom}(\F_{45} \boxtimes_{\A_4} \F_{34} \boxtimes_{\A_3} \F_{23} \boxtimes_{\A_2} \F_{12}, \F_{15}),
\end{equation*}
a super line bundle over $\Lag^{[1, 5]}$.
There is a unique smooth structure on $\Fus^{(4)}$ such that each of the canonical isomorphisms
\begin{equation}
\label{CanonicalIsomorphismsFus4}
\begin{aligned}
&\Fus^{(3)}_{1235} \otimes \Fus_{345} \longrightarrow \Fus^{(4)},
& \qquad  &\Omega \otimes \Upsilon \longmapsto \Omega \circ (\Upsilon \boxtimes \id_{\F_{23} \boxtimes_{\A_2} \F_{12}})
\\
 &\Fus_{125} \otimes \Fus^{(3)}_{2345} \longrightarrow \Fus^{(4)},
 & & \Upsilon \otimes \Omega \longmapsto \Upsilon \circ (\Omega \boxtimes \id_{\F_{12}})
\\
&\Fus^{(3)}_{1245} \otimes \Fus_{234}  \longrightarrow \Fus^{(4)},
& & \Omega \otimes \Upsilon \longmapsto \Omega \circ (\id_{\F_{45}} \boxtimes \Upsilon \boxtimes \id_{\F_{12}})
\\
&\Fus_{145} \otimes \Fus_{1234}^{(3)} \longrightarrow \Fus^{(4)},
& & \Upsilon \otimes \Omega \longmapsto \Upsilon \circ (\id_{\F_{45}} \boxtimes \Omega)
\\
&\Fus^{(3)}_{1345} \otimes \Fus_{123} \longrightarrow \Fus^{(4)},
& & \Omega \otimes \Upsilon \longmapsto  \Omega \circ (\id_{\F_{23} \boxtimes_{\A_2} \F_{12}} \boxtimes \Upsilon )
\end{aligned}
\end{equation}
is smooth.
We now fill the diagram \eqref{CoherenceAlpha} as follows.
\begin{equation*}
\begin{footnotesize}
\begin{tikzcd}[column sep=-1.6cm]
&[+1cm] & &
\Fus_{125} \otimes \Fus_{235} \otimes \Fus_{345} 
\ar[dl] 
\ar[dr] 
\ar[dlll]
\ar[drrr]
&[-0.5cm] & &
\\
\Fus_{125} \otimes \Fus_{245} \otimes \Fus_{234} 
\ar[dd]
\ar[rr]
\ar[dr]
& &
 \Fus_{125} \otimes \Fus^{(3)}_{2345}
 \ar[dr]
& & 
\Fus^{(3)}_{1235} \otimes \Fus_{345}
\ar[dl]
& &
\Fus_{135} \otimes \Fus_{123} \otimes \Fus_{345} 
\ar[ll]
\ar[dd, equal] 
\\
&
\Fus^{(3)}_{1245} \otimes \Fus_{234} 
\ar[rr]
& &
\Fus^{(4)}
& &
{\color{white} ......................................}
&
\\
\Fus_{145} \otimes \Fus_{124} \otimes \Fus_{234} 
\ar[rr]
\ar[ur]
\ar[drrr]
& &
\Fus_{145} \otimes \Fus_{1234}^{(3)}
\ar[ur]
& &
\Fus^{(3)}_{1345} \otimes \Fus_{123}
\ar[ul]
& & 
\Fus_{135} \otimes \Fus_{345} \otimes \Fus_{123} 
\ar[dlll]
\ar[ll]
\\
& & &
\Fus_{145} \otimes \Fus_{134} \otimes \Fus_{123} 
\ar[ul]
\ar[ur]
\end{tikzcd}
\end{footnotesize}
\end{equation*}
Here the arrows going inward from the outer nodes are (pullbacks of) the canonical isomorphisms \eqref{FusLineToFusLine3}, while the arrows arriving at the middle node are the five canonical isomorphisms \eqref{CanonicalIsomorphismsFus4}.
The outer triangles of this hexagonal diagram are the defining triangles for $\alpha$, and commutativity of the tetragons and of the pentagon having one vertex $\Fus^{(4)}$ is straighforward to check case by case.
This shows commutativity of \eqref{CoherenceAlpha} and finishes the construction of the Lagrangian 2-gerbe $\LagTwoGrb_X$.
\end{proof}

\subsection{The fusion 2-gerbe} 
\label{SectionFusion2Gerbe}

Let $X$ be an oriented Riemannian manifold.
Assume that the Lagrangian gerbe $\LagGrb_{LX}$ over $LX$ is trival, so that we may construct a loop space spinor bundle; see Thm.~\ref{ThmEquivalenceFinite}.
Suppose we are given such a choice of loop space spinor bundle $\SB$, together with the choice of a smoothing structure in the sense of Definition~\ref{DefinitionSmoothingStructure}, in other words, a smooth structure on the line bundle $\mathfrak{N} = \underline{\Hom}(\F, \pi^*\SB)$ over $\Lag$.

In this situation the pullback of $\SB$ to $PX^{[2]}$ along the cup map \eqref{CupMap} is an $\A_2$-$\A_1$-bimodule bundle, where $\A_i$ denotes the pullback of the Clifford von Neumann algebra bundle $\A \to PX$ along the $i$-th projection $PX^{[2]} \to PX$ (recall Notation~\ref{NotationSubmersion}).
Over $PX^{[3]}$, we may therefore define the super line bundle
\begin{equation}
\label{DefinitionLFusionLineBundleS}
\M \defeq \underline{\Hom}(\SB_{23} \boxtimes_{\A_2}\SB_{12}, \SB_{13}).
\end{equation}
As pointwise, $\SB$ is isomorphic to a Fock space, Thm.~\ref{ThmFockIso} implies that $\M$ is indeed a complex line; as usual, it is graded according to whether its elements are parity-preserving or -reversing.
As $\SB$ is a continuous bundle of Hilbert spaces, $\M$ is a continuous line bundle over $PX^{[3]}$.
However, a smooth structure is specified by requiring that the canonical isomorphism $\beta$ over $\Lag^{[1, 3]}$, given as the cospan
\begin{equation}
\label{CanonicalIsomorphismBeta}
\begin{tikzcd}[column sep=2cm]
\pi^*\mathfrak{M} \otimes \mathfrak{N}_{23} \otimes \mathfrak{N}_{12} 
\ar[r, "\text{composition}"'] 
\ar[rr, bend left=15, "\beta", dashed]
& 
\underline{\Hom}(\F_{23} \boxtimes_{\A_2} \F_{12}, \pi_{13}^*\SB)
&\mathfrak{N}_{13} \otimes \Fus,
\ar[l, "\text{composition}"]
\end{tikzcd}
\end{equation}
is smooth.
%
%
We assume from now on that such a smoothing structure is given, in order to stay in the setting of smooth bundle gerbes.

\medskip

The line bundle $\mathfrak{M}$ determines a super bundle 2-gerbe $\FusTwoGrb(\SB)$ over $X \times X$, which we call the \emph{fusion 2-gerbe}.
Just as $\LagTwoGrb_X$, it is given in terms of the cover $PX$.
Its cover diagram \eqref{CoverDiagram} is trivial, in the sense that $V^{[m, n]} = PX^{[n]}$ for each $m$ and $n$.
In particular, its super bundle gerbe over $PX^{[2]}$ is trivial.
Using the notation of Definition~\ref{DefinitionSuper2Gerbe}, the other data are the following.
\begin{enumerate}
\item[{[1, 3]}]
Its super line bundle over $PX^{[3]}$ is $\M$, defined in \eqref{DefinitionLFusionLineBundleS}.
\item[{[2, 4]}]
Its morphism $\mu$ is the identity on $\M$.
\item[{[1, 4]}]
 The isomorphism
\begin{equation}
\label{AssociatorFusionGerbe}
\tilde{\alpha} : \mathfrak{M}_{124} \otimes \mathfrak{M}_{234} \longrightarrow \mathfrak{M}_{134} \otimes \mathfrak{M}_{123},
\end{equation}
over $PX^{[4]}$ is defined as the cospan
\begin{equation*}
  \begin{tikzcd}
    \M_{124} \otimes \M_{234} \ar[r] \ar[rr, bend left=20, dashed, "\tilde{\alpha}"]&  \M^{(3)} & \M_{134} \otimes \M_{123}, \ar[l]
  \end{tikzcd}
\end{equation*}
involving the line bundle
\begin{equation*}
  \M^{(3)} = \underline{\Hom}(\SB_{34} \boxtimes_{\A_3} \SB_{23} \boxtimes_{\A_2} \SB_{12}, \SB_{14})
\end{equation*}
over $PX^{[4]}$ and the canonical isomorphisms
\begin{equation*}
\begin{aligned}
&\M_{124} \otimes \M_{234} \longrightarrow \M^{(3)}, \qquad & &\Upsilon_{124} \otimes \Upsilon_{234} \longmapsto \Upsilon_{124} \circ (\id_{\SB_{12}} \boxtimes \Upsilon_{234}) \\
&\M_{134} \otimes \M_{123} \longrightarrow \M^{(3)}, \qquad & &\Upsilon_{134} \otimes \Upsilon_{123} \longmapsto \Upsilon_{134} \circ (\Upsilon_{123}\boxtimes\id_{\SB_{34}}).
\end{aligned}
\end{equation*}
\end{enumerate}

\begin{theorem}
 The above data constitute the structure of a super bundle 2-gerbe.
\end{theorem}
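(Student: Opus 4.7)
The verification follows the template of the proof of Theorem~\ref{TheoremCoherenceLagTwoGrb}, but is considerably simpler because the underlying super bundle gerbe on $PX^{[2]}$ is trivial and $\mu = \id_{\M}$. Consequently the first two coherence conditions (associativity of the gerbe multiplication, and the compatibility \eqref{CoherenceNu} between $\lambda$ and $\mu$) hold automatically: $\lambda$ reduces to the canonical isomorphism of the trivial line bundle with itself, and $\mu$ is the identity. The compatibility condition \eqref{CoherenceAlphaTilde} of $\tilde\alpha$ with $\mu$ likewise reduces to checking that $\tilde\alpha$ is well-defined independently of representatives, which is immediate since $\tilde\alpha$ was defined intrinsically over $PX^{[4]}$ and then pulled back trivially. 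Hence the only real work is the pentagon coherence \eqref{CoherenceAlpha} for $\tilde\alpha$ over $PX^{[5]}$.

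For this last step, I would mirror the strategy used for $\LagTwoGrb_X$. Introduce the \emph{quadruple fusion line bundle}
\begin{equation*}
\M^{(4)} \defeq \underline{\Hom}(\SB_{45} \boxtimes_{\A_4} \SB_{34} \boxtimes_{\A_3} \SB_{23} \boxtimes_{\A_2} \SB_{12},\ \SB_{15})
\end{equation*}
on $PX^{[5]}$, together with the five canonical isomorphisms into $\M^{(4)}$ from the tensor products
\begin{equation*}
\M_{125} \otimes \M^{(3)}_{2345},\quad \M^{(3)}_{1235} \otimes \M_{345},\quad \M^{(3)}_{1245} \otimes \M_{234},\quad \M_{145} \otimes \M^{(3)}_{1234},\quad \M^{(3)}_{1345} \otimes \M_{123},
\end{equation*}
each defined by composition after inserting identity operators in the appropriate slot of the iterated Connes fusion (analogous to \eqref{CanonicalIsomorphismsFus4}). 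A smooth structure on $\M^{(4)}$ is specified by requiring that these isomorphisms be smooth; this is well-defined exactly as in the Lagrangian case, ultimately because $\SB$ locally looks like a Fock space bundle and the analogous statement was already established for $\Fus^{(4)}$.

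The pentagon \eqref{CoherenceAlpha} then decomposes as a hexagon with $\M^{(4)}$ placed at the central vertex and the five outer vertices given by the tensor products $\M \otimes \M \otimes \M$ appearing at the corners of the pentagon. Each of the five triangles with $\M^{(4)}$ as a vertex is one of the defining cospans of $\tilde\alpha$ after rewriting through the canonical maps above, while each of the outer sub-regions reduces to associativity of composition of bimodule homomorphisms combined with the functoriality identity \eqref{FunctorialityConnesFusion} for Connes fusion.

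The main obstacle, as in the proof of Theorem~\ref{TheoremCoherenceLagTwoGrb}, is bookkeeping: one must keep careful track of which identity operator is inserted in which factor of $\SB_{45} \boxtimes \SB_{34} \boxtimes \SB_{23} \boxtimes \SB_{12}$ when comparing the two composites arising from the two routes around the pentagon. Once the canonical maps into $\M^{(4)}$ are fixed, however, the commutativity of each sub-diagram is a direct calculation with no sign subtleties beyond those already encoded in the Koszul rule for $\underline{\Hom}$. This yields the pentagon and completes the verification that $\FusTwoGrb(\SB)$ is a super bundle 2-gerbe.
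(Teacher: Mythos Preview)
Your proposal is correct and follows essentially the same approach as the paper. The paper's own proof is much terser: it simply notes that only the cocycle condition \eqref{CoherenceAlpha} needs verification and that this is checked entirely analogously to the corresponding argument for $\LagTwoGrb_X$ in Theorem~\ref{TheoremCoherenceLagTwoGrb}; your write-up spells out precisely that analogous argument with $\M^{(4)}$ playing the role of $\Fus^{(4)}$.
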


\begin{proof}
It is only necessary to verify the cocycle condition \eqref{CoherenceAlpha}.
This is checked entirely analogously to the case of the cocycle condition for the isomorphism of $\alpha$ of the Lagrangian 2-gerbe $\LagTwoGrb_X$; see the proof of Thm.~\ref{TheoremCoherenceLagTwoGrb}.
\end{proof}

The fusion 2-gerbe $\FusTwoGrb(\SB)$ relevant to answer the question whether a given loop space spinor bundle admits a (smooth) fusion product according to Definitions~\ref{DefinitionFusionProduct} and \ref{DefinitionSmoothFusionProduct}.

\begin{theorem}
\label{TheoremFusionProductFusion2Gerbe}
If the spinor bundle $\SB$ admits a fusion product, then the fusion 2-gerbe $\FusTwoGrb(\mathfrak{S})$ is trivializable.
  Conversely, if $\FusTwoGrb(\mathfrak{S})$ admits a trivialization, then there exists a line bundle $\T$ over $LX$ together with a smooth fusion product for the modified spinor bundle $\mathfrak{S}^\prime = \mathfrak{S} \otimes \T$.
\end{theorem}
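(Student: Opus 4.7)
The forward direction is essentially by definition. A fusion product $\Upsilon$ is a global unit-norm section of the super line bundle $\mathfrak{M}$ over $PX^{[3]}$, and the associativity condition from Definition~\ref{DefinitionFusionProduct} is precisely the statement that $\Upsilon$ intertwines the associator $\tilde{\alpha}$ of the fusion 2-gerbe. Hence $\Upsilon$ directly furnishes a trivialization of $\FusTwoGrb(\mathfrak{S})$ in which the auxiliary bundle gerbe on $PX$ and line bundle on $PX^{[2]}$ are taken to be trivial.

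For the converse direction, my first step is to unpack a trivialization of $\FusTwoGrb(\mathfrak{S})$ into concrete data. Since the super bundle gerbe component of $\FusTwoGrb(\mathfrak{S})$ over $PX^{[2]}$ is trivial, a trivialization (after absorbing any bundle gerbe on $PX$ into a refinement) yields a super line bundle $\mathcal{L}$ on $PX^{[2]}$ together with an even isomorphism
\begin{equation*}
  \eta \colon \mathcal{L}_{23} \otimes \mathcal{L}_{12} \longrightarrow \mathcal{L}_{13} \otimes \mathfrak{M}
\end{equation*}
of super line bundles over $PX^{[3]}$ which is compatible with $\tilde{\alpha}$ via a cocycle over $PX^{[4]}$.

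The heart of the argument, and the step I expect to be the main obstacle, is to descend $\mathcal{L}$ to a super line bundle $\mathfrak{T}$ on $LX$ whose pullback along the cup map $PX^{[2]} \hookrightarrow LX$ is canonically isomorphic to $\mathcal{L}$. The isomorphism $\eta$ endows $\mathcal{L}$ with an $\mathfrak{M}$-twisted fusion structure along the path fibration $PX \to X \times X$, and the cocycle over $PX^{[4]}$ makes this structure coherently associative. I would then appeal to the fusion-extension machinery in the twisted form required here (cf.\ \cite{WaldorfSpinString, KristelWaldorf3}) to produce $\mathfrak{T}$ together with the canonical isomorphism to the pullback of $\mathcal{L}$; this is the nonformal input that distinguishes the argument from a straight diagram chase.

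With $\mathfrak{T}$ at hand, the remainder is bookkeeping. The fusion line bundle for $\mathfrak{S}^\prime = \mathfrak{S} \otimes \mathfrak{T}$ is canonically isomorphic to $\mathfrak{M} \otimes \mathfrak{T}_{13} \otimes \mathfrak{T}_{23}^{-1} \otimes \mathfrak{T}_{12}^{-1}$, and under the identification of $\mathcal{L}$ with the pullback of $\mathfrak{T}$, the isomorphism $\eta$ transforms into a global nowhere-vanishing section of this bundle. The cocycle over $PX^{[4]}$ encoded by the compatibility of $\eta$ with $\tilde{\alpha}$ is exactly the associativity required by Definition~\ref{DefinitionFusionProduct}, so this section defines a fusion product for $\mathfrak{S}^\prime$. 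Smoothness in the sense of Definition~\ref{DefinitionSmoothFusionProduct} follows from the smoothness of $\eta$ together with the fixed smoothing structure on $\mathfrak{S}$.
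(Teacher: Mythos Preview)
Your forward direction matches the paper's exactly. For the converse, your outline is correct in spirit, but you misidentify the key input and thereby make the argument harder than it is.

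The paper does not invoke any fusion--extension machinery. Both reduction steps are handled by elementary homotopy equivalences. First, to get rid of the bundle gerbe $\mathcal{T}$ over $PX$ (your ``absorbing into a refinement''), the paper observes that the evaluation map $\mathrm{ev}_0 : PX \to X$ is a homotopy equivalence, so $\mathcal{T}$ is isomorphic to a pullback $\mathrm{ev}_0^*\mathcal{T}'$ from $X$. After modifying the trivialization accordingly, the two pullbacks $\mathcal{T}_1$ and $\mathcal{T}_2$ to $PX^{[2]}$ are literally \emph{equal}, so the morphism $\mathfrak{t}$ becomes an automorphism and hence a line bundle $\mathfrak{T}$ on $PX^{[2]}$, with the 2-morphism giving your isomorphism $\eta$ (equivalently $\tau : \mathfrak{M} \otimes \mathfrak{T}_{13} \to \mathfrak{T}_{23} \otimes \mathfrak{T}_{12}$). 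Second, to extend $\mathfrak{T}$ from $PX^{[2]}$ to $LX$, the paper simply uses that the cup map $PX^{[2]} \hookrightarrow LX$ is a smooth homotopy equivalence, so any line bundle on $PX^{[2]}$ is the restriction of one on $LX$. No twisted fusion--extension theorem is needed, and in fact the word ``descend'' is misleading here: you are extending along an inclusion that is a homotopy equivalence, not pushing down along the path fibration.

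Once $\mathfrak{T}$ lives on $LX$, your final paragraph is correct and agrees with the paper: $\tau$ is exactly an even section of the fusion line for $\mathfrak{S} \otimes \mathfrak{T}$, and the cocycle over $PX^{[4]}$ gives associativity.
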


\begin{proof}
By definition, a fusion product for $\mathfrak{S}$ is an even section $\Upsilon$ of $\M$ such that
\begin{equation*}
  \Upsilon_{134} \circ (\mathbf{1} \otimes \Upsilon_{123}) = \Upsilon_{124} \circ (\Upsilon_{234} \otimes \mathbf{1}).
\end{equation*}
This identity means precisely that $\Upsilon_{134} \otimes \Upsilon_{123}$ is sent to $\Upsilon_{124} \otimes \Upsilon_{234}$ under the associator \eqref{AssociatorFusionGerbe}, hence $\Upsilon$ determines a trivialization of $\FusTwoGrb(\mathfrak{S})$.
If $\Upsilon$ is not smooth, this is only a continuous trivialization, but any bundle 2-gerbe is continuously trivial if and only if it is smoothly trivial.

Conversely, suppose we are given a trivialization $\mathcal{t} = (\mathcal{T}, \mathfrak{t}, t)$ of $\FusTwoGrb(\SB)$, given by a bundle gerbe $\mathcal{T}$ over $PX$, a morphism of bundle gerbes $\mathfrak{t} : \mathcal{T}_2 \to \mathcal{T}_1$ and a 2-morphism
\begin{equation*}
\begin{tikzcd}[column sep=1.5cm]
  \mathcal{T}_3
   \ar[r, "\mathfrak{M}"] \ar[d, "\mathfrak{t}_{23} \otimes 1"'] 
  & \mathcal{T}_{3} 
    \ar[dd, " \mathfrak{t}_{13}"] 
    \ar[ddl, Rightarrow, "t"', shorten=0.8cm]
  \\
  \mathcal{T}_2 
  \ar[d, "1 \otimes \mathfrak{t}_{12}"']
  &
  \\
  \mathcal{T}_1 
  \ar[r, equal]
  & \mathcal{T}_1,
\end{tikzcd}
\end{equation*}
where the super line bundle $\mathfrak{M}$ is viewed as an automorphism of the super bundle gerbe $\mathcal{T}_3$ over $PX^{[3]}$.
(This uses the fact that for super bundle gerbes $\tilde{\mathcal{G}}$, $\mathcal{G}$ over a manifold $M$, the groupoid of isomorphisms $\textsc{Iso}(\tilde{\mathcal{G}}, \mathcal{G})$ is a torsor for the groupoid $\sLine(M)$ of super line bundles over $M$; moreover if $\tilde{\mathcal{G}} = \mathcal{G}$, then the identity isomorphism of $\mathcal{G}$ provides a base point, and hence an equivalence $\textsc{Iso}(\mathcal{G}, \mathcal{G}) = \sLine(M)$.)

Since the evaluation-at-zero map $\mathrm{ev}_0 : PX \to X$ is a homotopy equivalence, there is a super bundle gerbe $\mathcal{T}^\prime$ over $X$ such that $\mathcal{T} \cong \mathrm{ev}_0^* \mathcal{T}^\prime$.
Upon possibly modifying the isomorphism $\mathfrak{t}$ and the 2-isomorphism $t$ from the definition of the trivialization $\mathcal{t}$ (thus obtaining a new trivialization of $\FusTwoGrb(\SB)$), we may assume that $\mathcal{T}$ is of this form.
Under this assumption, the pullbacks $\mathcal{T}_1$ and $\mathcal{T}_2$ of $\mathcal{T}$ to $PX^{[2]}$ are actually \emph{equal}, hence $\mathfrak{t}$ is an \emph{auto}morphism of super bundle gerbes over $PX^{[2]}$ and hence may be identified with a line bundle $\mathfrak{T}$ over $PX^{[2]}$.
Under this identification, the 2-morphism $t$ is just a grading-preserving isomorphism of super line bundles
\begin{equation*}
  \tau : \mathfrak{M} \otimes \mathfrak{T}_{13} \longrightarrow \mathfrak{T}_{23} \otimes \mathfrak{T}_{12}.
\end{equation*}
Since $PX^{[2]}$ is smoothly homotopy equivalent to $LX$, $\mathfrak{T}$ is the restriction of a line bundle over $LX$, which we also denote by $\mathfrak{T}$.
%
%
Inserting the definition \eqref{DefinitionLFusionLineBundleS} of the line bundle $\mathfrak{M}$,
$\tau$ corresponds to an even element $\Upsilon$ of
\begin{equation*}
   \underline{\Hom}\bigl((\mathfrak{S}_{23} \otimes \mathfrak{T}_{23}) \boxtimes_{\A_2}(\mathfrak{S}_{12} \otimes \mathfrak{T}_{12}), \mathfrak{S}_{13} \otimes \mathfrak{T}_{13}\bigr);
\end{equation*}
see Example~\ref{ExampleTensorProduct}.
But this is precisely a fusion product for $\mathfrak{S} \otimes \T$, where the coherence of $\tau$ provides associativity of $\Upsilon$ over $P X^{[4]}$.
\end{proof}

\begin{theorem}
\label{TheoremIsoFusLag}
For any loop space spinor bundle $\SB$ with smoothing structure, there is a canonical isomorphism of super bundle 2-gerbes
\begin{equation*}
\mathcal{h}_{\SB} : \LagTwoGrb_X \longrightarrow \FusTwoGrb(\SB).
\end{equation*}
\end{theorem}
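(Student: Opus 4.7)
The plan is to construct $\mathcal{h}_{\SB}$ directly from the smoothing structure of $\SB$, whose central datum is the smooth super line bundle $\mathfrak{N} = \underline{\Hom}(\F, \pi^*\SB)$ over $\Lag$ together with the canonical isomorphism $\nu : \mathfrak{N}_2 \otimes \Pf \to \mathfrak{N}_1$ of \eqref{CanonicalCompositionMap}.

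First, I would pass both super bundle 2-gerbes to the common cover system $\Lag^{[m,n]}$ from \eqref{DefinitionLagmn}. The Lagrangian 2-gerbe is already defined with this bisimplicial cover, while the fusion 2-gerbe, whose vertical cover structure is trivial, is refined by pulling back along the projections $\Lag^{[m,n]} \to PX^{[n]}$; in particular, $\M$ is replaced by $\pi^*\M$ on $\Lag^{[1,3]}$ and $\tilde{\alpha}$ by its pullback to $\Lag^{[1,4]}$.

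Next, I would specify $\mathcal{h}_{\SB}$ by declaring its defining data at each bisimplicial level. At level $[1,2]$, use $\mathfrak{N}$ on $\Lag$ as the line bundle; together with $\nu$ at level $[2,2]$, this is exactly the trivialization of $\LagGrb_{LX}$ associated to $\SB$ via Example~\ref{ExampleSpinorBdlFromTrivialization}, which represents the required isomorphism between the defining super bundle gerbe of $\LagTwoGrb_X$ and the (trivial) defining gerbe of $\FusTwoGrb(\SB)$. At level $[1,3]$, use the canonical isomorphism $\beta$ from \eqref{CanonicalIsomorphismBeta}, which in the rearranged form
\[ \pi^*\M \otimes \mathfrak{N}_{23} \otimes \mathfrak{N}_{12} \longrightarrow \mathfrak{N}_{13} \otimes \Fus \]
encodes precisely the required relation between $\Fus$ and $\pi^*\M$ modulated by $\mathfrak{N}$.

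It then remains to verify the coherence conditions. Compatibility of $\beta$ with the gerbe multiplications $\lambda$ (of $\Pf$) and $\mu$ (of $\Fus$) over $\Lag^{[2,3]}$ is straightforward, since both $\beta$ and $\mu$ are defined as cospans of composition maps of bounded operators and the condition reduces to associativity of composition. The main obstacle is the cocycle condition over $\Lag^{[1,4]}$ relating $\beta$ to the two associators $\alpha$ and $\tilde{\alpha}$. My approach would mirror the construction in the proof of Theorem~\ref{TheoremCoherenceLagTwoGrb}: introduce an auxiliary triple-level isomorphism
\[ \beta^{(3)} : \pi^*\M^{(3)} \otimes \mathfrak{N}_{34} \otimes \mathfrak{N}_{23} \otimes \mathfrak{N}_{12} \longrightarrow \mathfrak{N}_{14} \otimes \Fus^{(3)} \]
defined by the analogous cospan through $\underline{\Hom}(\F_{34} \boxtimes_{\A_3} \F_{23} \boxtimes_{\A_2} \F_{12}, \SB_{14})$, and show that $\beta^{(3)}$ factors coherently through each of the two families of canonical isomorphisms \eqref{FusLineToFusLine3} and the analogous canonical isomorphisms for $\M^{(3)}$. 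This reduces the full associator coherence to associativity of composition together with the naturality of Connes fusion, both of which are available. Canonicity of $\mathcal{h}_{\SB}$ is then automatic, as all of its defining data are canonical composition and evaluation maps determined by $\SB$.
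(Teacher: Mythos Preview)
Your proposal is correct and follows essentially the same approach as the paper: both take the common cover diagram $\Lag^{[m,n]}$, use the trivial bundle gerbe over $PX$, supply $\mathfrak{N}$ at level $[1,2]$, $\nu$ at level $[2,2]$, and $\beta$ from \eqref{CanonicalIsomorphismBeta} at level $[1,3]$, and then verify the three coherences \eqref{CoherenceEta}, \eqref{CoherenceBetaTilde}, \eqref{CocycleConditionb}. The only minor difference is that for the associator coherence over $\Lag^{[1,4]}$ you propose routing through an auxiliary $\beta^{(3)}$ (mirroring the $\Fus^{(3)}$ argument in Theorem~\ref{TheoremCoherenceLagTwoGrb}), whereas the paper simply asserts commutativity of \eqref{CocycleConditionb} as a direct check; your route is a perfectly valid and arguably more transparent way to organize that same verification.
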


\begin{proof}
Both bundle gerbes are presented in terms of the cover $PX$ of $X\times X$, so that we can take $Z = PX$ as the common cover for $\mathcal{h}_{\SB}$. 
Moreover, as the cover diagram of $\FusTwoGrb(\SB)$ is trivial, we may take the cover diagram \eqref{CoverDiagramMorphism} of $\mathcal{h}$ to coincide with the cover diagram \eqref{CoverDiagramLag}.
The bundle gerbe $\mathcal{H}$ over $Z = PX$ is then trivial trivial; in other words, both $\mathfrak{H}$ and $\eta$ in \eqref{TwoMorphismDiagram} are trivial.

All remaining data of $\mathcal{h}_{\SB}$ have been already considered above; they are the following.
\begin{enumerate}
\item[{[1, 2]}]
The line bundle $\mathfrak{N}$ over $\Lag^{[1, 2]} = \Lag$ is just the line bundle \eqref{AssociatedSmoothnessLine}, which acquires its smooth structure from the smoothing structure of $\SB$.
\item[{[2, 2]}]
The bundle isomorphism $\nu$ over $\Lag^{[2, 2]}$ is the bundle homomorphism \eqref{CanonicalCompositionMap}.
Its smoothness is part of the assumption of the smoothing structure of $\SB$.
\item[{[2, 3]}]
The bundle isomorphism $\beta$ over $\Lag^{[2, 3]}$ is given by \eqref{CanonicalIsomorphismBeta}.
\end{enumerate}

We have to check three conditions.
The diagram \eqref{CoherenceEta} becomes
\begin{equation*}
\begin{tikzcd}
 \mathfrak{N}^3 \otimes \Pf^{23} \otimes \Pf^{12} 
 \ar[r, "1 \otimes \lambda"] 
 \ar[d, "\nu^{23} \otimes 1"']
 	&
	\mathfrak{N}^3 \otimes \Pf^{13}
	\ar[dd, "\nu^{13}"] 
	\\
\mathfrak{N}^2 \otimes \Pf^{12} 
\ar[d, "\nu^{12}"']
&
\\
\mathfrak{N}^1
\ar[r, equal]
&
\mathfrak{N}^1,
\end{tikzcd}
\end{equation*}
which is obviously commutative, as fiberwise, $\nu$ is just the composition homomorphism
\begin{equation*}
  \underline{\Hom}(\F_{L^\prime}, \SB_\gamma) \otimes \underline{\Hom}(\F_L, \F_{L^\prime}) \longrightarrow \underline{\Hom}(\F_L, \SB_\gamma).
\end{equation*}
The diagram \eqref{CoherenceBetaTilde} collapses to
\begin{equation*}
\begin{tikzcd}[column sep=1cm]
  \mathfrak{N}_{13}^2 \otimes \Fus^2 \otimes \Pf_{23} \otimes \Pf_{12}
  \ar[r, "\beta^2\otimes 1 \otimes 1"]
  \ar[d, "1 \otimes 1 \otimes \tilde{\mu}"']
  	&  
	\mathfrak{M}^2 \otimes \mathfrak{N}_{23}^2 \otimes \mathfrak{N}_{12}^2 \otimes \Pf_{23} \otimes \Pf_{12}
		\ar[d, equal]
  \\
  \mathfrak{N}_{13}^2 \otimes \Pf_{13} \otimes \Fus^1
  	\ar[d, "\nu_{13}\otimes 1"']
   &
     	\mathfrak{M}^2 \otimes \mathfrak{N}_{23}^2 \otimes \Pf_{23} \otimes \mathfrak{N}_{12}^2  \otimes \Pf_{12}
		\ar[d, "1\otimes \nu_{23} \otimes \nu_{12}"]
  \\
\mathfrak{N}_{13}^1  \otimes \Fus^1
	\ar[r, "1 \otimes 1 \otimes \beta^1"]
	 &
	  \mathfrak{M}^1  \otimes \mathfrak{N}_{23}^1 \otimes \mathfrak{N}_{12}^1.
\end{tikzcd}
\end{equation*}
Its commutativity is straight forward to check.
Finally, the commutativity of \eqref{CocycleConditionb} is easy to check as well.
\end{proof}

\subsection{The Lagrangian gerbe and the spin condition}
\label{SectionSpinCondition}

Let $X$ be an oriented Riemannian manifold and let $\LagTwoGrb_X$ be the corresponding Lagrangian 2-gerbe, as introduced in \S\ref{SubSectionLagrangian2Gerbe}.
As to any super bundle 2-gerbe, one may form a corresponding $\Z_2$-bundle gerbe $\orclass(\LagTwoGrb_X)$, called the \emph{orientation gerbe}, see Definition~\ref{DefinitionOrientationGerbe}.
The purpose of this section is to prove the following theorem.

\begin{theorem}
\label{ThmLagTwoGerbSpin}
The orientation gerbe $\orclass(\LagTwoGrb_X)$ admits a trivialization if and only if $X$ admits a spin structure.
\end{theorem}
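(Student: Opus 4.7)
The plan is to describe the orientation gerbe $\orclass(\LagTwoGrb_X)$ explicitly as a bundle gerbe on $X \times X$ and identify it, by a canonical strict isomorphism, with a lifting bundle gerbe for the central $\Z_2$-extension $\Spin(d) \to \SO(d)$. Since the triviality of the latter is classically equivalent to the frame bundle $\SO(X)$ admitting a lift to a principal $\Spin(d)$-bundle, i.e., to $X$ being spin, the theorem will follow.

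First I would unfold $\orclass(\LagTwoGrb_X)$ along the cover diagram~\eqref{CoverDiagramLag}: the relevant data are the $\Z_2$-torsor over $\Lag^{[2,2]}$ given by the parity of the Pfaffian line bundle $\Pf$, together with the bundle gerbe multiplication over $\Lag^{[1,3]}$ induced by the parity of the fusion line bundle $\Fus$, compatible via the parity part of $\mu$ and $\alpha$. By Remark~\ref{RemarkParityPf}, the parity of $\Pf$ is entirely governed by the connected component structure of the Lagrangian Grassmannian, and the analogous statement holds for $\Fus$ by construction of $\mu$ in \eqref{CanonicalIsomorphismNu}.

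Next I would refine the cover by passing to the principal $P\SO(d)$-bundle $P\SO(X) \to PX$. Using the smooth homomorphism $L\SO(d) \to \O_{\res}(H_0)$ from \eqref{HomLSOdOres}, whose image on the identity component $L\SO(d)_0$ lies in the identity component of $\O_{\res}(H_0)$, the parity of $\Pf$ at a pair $(g_1, g_2) \in P\SO(X)^{[2]}$ is identified with the $\Z_2$-monodromy, under the double cover $L\Spin(d) \to L\SO(d)$, of the loop obtained by cupping $g_1$ and $g_2$. Coherently assembling this identification with $\Fus$ and $\alpha$ exhibits $\orclass(\LagTwoGrb_X)$, up to the cover refinement along $P\SO(X) \to PX$, as strictly isomorphic to the lifting bundle gerbe for the central extension $\Spin(d) \to \SO(d)$ applied to the endpoint fibration $P\SO(X) \to \SO(X) \times \SO(X) \to X \times X$. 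The classical theory of lifting gerbes then gives that this bundle gerbe is trivializable if and only if $\SO(X)$ admits a $\Spin(d)$-reduction, i.e., if and only if $X$ is spin; both directions may also be phrased on $X$ by pullback along any basepoint inclusion $X \hookrightarrow X \times X$.

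The main obstacle will be the explicit strict identification in the second step: the sign data contributed by $\Fus$ and by the associator $\alpha$ over $\Lag^{[1,4]}$ must be matched with the $\Z_2$-valued 2-cocycle defining the extension $\Spin(d) \to \SO(d)$. This is analogous in spirit to the computation in the proof of Thm.~\ref{ThmLagLifting}, which identified $\LagGrb_{LX}$ with a lifting gerbe for the basic central extension, but is substantially simpler since only the discrete parity component of the Pfaffian/fusion data is involved and the preferred component $\Lag_0^+$ from Remark~\ref{RemarkPreferredConnectedComponent} gives a canonical reference. In particular, unlike Thm.~\ref{ThmLagLifting}, no dimension restriction is required here, since $L\SO(d) \to \O_{\res}(H_0)$ is surjective on $\pi_0$ for all $d \geq 2$.
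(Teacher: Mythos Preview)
Your strategy is essentially the same as the paper's: identify $\orclass(\LagTwoGrb_X)$, after refinement along $P\SO(X) \to PX$, with the spin lifting gerbe, and then invoke the lifting gerbe theorem. The paper organizes this into two separate lemmas by introducing an intermediate object: the \emph{loop space orientation bundle} $\hat{L}X = L\SO(X) \times_{L\SO(d)} \Z_2$, which carries a natural fusion product and so degresses to a $\Z_2$-bundle gerbe $\mathcal{O}_X$ on $X \times X$. One lemma shows $\orclass(\LagTwoGrb_X) \cong \mathcal{O}_X$ via the parity map $[g,\epsilon] \mapsto [gL, \epsilon + |L|]$ (using the preferred component $\Lag_0^+$ exactly as you suggest); the other shows $\mathcal{O}_X \cong \pr_2^*\Lift_{\SO(X)} \otimes \pr_1^*\Lift_{\SO(X)}^{-1}$ by an explicit computation with lifts to $P\Spin(d)$. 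Your single-step plan compresses these, which is fine in principle.

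One point needs correction: your target is not ``the lifting bundle gerbe for $\Spin(d) \to \SO(d)$ applied to the endpoint fibration $P\SO(X) \to \SO(X) \times \SO(X)$''. That fibration is not a principal $\SO(d)$-bundle, so the lifting gerbe construction does not apply to it directly. What actually appears is the tensor product $\pr_2^*\Lift_{\SO(X)} \otimes \pr_1^*\Lift_{\SO(X)}^{-1}$ of the two pullbacks of the spin lifting gerbe along the factor projections $X \times X \to X$; the two endpoints of a path each contribute one factor, with opposite signs. This is why the paper's explicit isomorphism (its map $\tilde{\eta}$) involves the expression $\tilde{q}_2(\pi)^{-1}\tilde{q}_1(\pi) \otimes_{\Z_2} \tilde{q}_2(0)^{-1}\tilde{q}_1(0)$, one tensor factor per endpoint. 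Once you have this, triviality of the tensor product is still equivalent to triviality of $\Lift_{\SO(X)}$ itself (e.g.\ by restricting to the diagonal or to a basepoint slice, as you note), so the conclusion goes through. Your remark that no dimension restriction is needed here is correct and matches the paper.
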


We start with the following definition, see \cite[\S2]{WaldorfSpinString}.

\begin{definition}[Loop space orientation bundle]
The \emph{orientation bundle} of the loop space $LX$ of a Riemannian manifold $X$ is
\begin{equation*}
\hat{L}X \defeq L\SO(X) \times_{L\SO(d)} \Z_2,
\end{equation*}
a principal $\Z_2$-bundle over $LX$. 
Here the homomorphism $L\SO(d) \to \Z_2$ is the one labeling the connected components.
\end{definition}

The orientation bundle $\hat{L}X$ comes with a \emph{fusion product} map
\begin{equation}
\label{FusionProductOrientationBundle}
\begin{aligned}
\hat{\Upsilon} : \hat{L}X_{23} \otimes_{\Z_2} \hat{L}X_{12} &\longrightarrow \hat{L}X_{13} \\
[g_2 \fuse g_3, \epsilon_{23}] \otimes [g_1 \fuse g_2, \epsilon_{12}] &\longmapsto [g_1 \fuse g_3, \epsilon_{23} + \epsilon_{12}]
\end{aligned}
\end{equation}
over $PX^{[3]}$, coming from the fusion of loops (here, as usual, we use Notation~\ref{NotationSubmersion}). 
The existence of the fusion product $\hat{\Upsilon}$ allows to degress $\hat{L}X$ to a $\Z_2$-bundle gerbe $\mathcal{O}_X$ on $X \times X$, which we call the \emph{loop space orientation gerbe}.
Explicitly, this gerbe is given in terms of the cover $PX$ (with the endpoint projections), its principal $\Z_2$-bundle over $PX^{[2]}$ is the pullback of $\hat{L}X$ along \eqref{CupMap} and its bundle gerbe product is $\hat{\Upsilon}$.
\begin{equation*}
  \mathcal{O}_X =   \left[
\begin{tikzcd}
&
\fuse^*\hat{L}X \ar[d]
  & \hat{\Upsilon} \ar[d, -, dotted]  
  \\
  PX \ar[d]& PX^{[2]}  \ar[l, shift left=1mm] \ar[l, shift right=1mm]&  PX^{[3]} \ar[l, shift left=2mm] \ar[l, shift right=2mm] \ar[l]
  \\
  X
\end{tikzcd}
\right]
\end{equation*}

Let $\Lift_{\SO(X)}$ be the spin lifting gerbe over $X$, i.e., the obstruction gerbe to lifting the structure group $\SO(d)$ of the frame bundle $\SO(X)$ to the spin group $\Spin(d)$, which can be depicted as follows, see \S\ref{SectionBundleGerbes}.
\begin{equation*}
  \Lift_{\SO(X)} =   \left[
\begin{tikzcd}
  \Spin(d) \ar[d] & 
  \\
   \SO(d) & \delta^*\Spin(d) \ar[d, dashed] \ar[ul, dashed] 
  & \substack{\text{group} \\ \text{multiplication}} \ar[d, -, dotted] 
  & \substack{\text{associativity} \\ \text{of group} \\ \text{multiplication} \\ \text{in } \Spin(d)} \ar[d, -, dotted] 
  \\
  \SO(X) \ar[d]& \SO(X)^{[2]}  \ar[ul, "\delta"] \ar[l, shift left=1mm] \ar[l, shift right=1mm]&  \SO(X)^{[3]} \ar[l, shift left=2mm] \ar[l, shift right=2mm] \ar[l] & \SO(X)^{[4]} \ar[l, shift left=1mm] \ar[l, shift right=1mm] \ar[l, shift left=3mm] \ar[l, shift right=3mm]
  \\
  X
\end{tikzcd}
\right]
\end{equation*}
We then have the following result.

\begin{lemma}
\label{LemmaOrGerbeLiftingGerbe}
There is a canonical isomorphism
\begin{equation*}
  \mathfrak{h} : \mathcal{O}_X \longrightarrow \pr_2^*\Lift_{\SO(X)} \otimes \mathrm{pr}_1^*\Lift_{\SO(X)}^{-1}.
\end{equation*}
of $\Z_2$-bundle gerbes over $X \times X$. 
In particular, the loop space orientation gerbe is trivial if and only if $X$ is spin.
\end{lemma}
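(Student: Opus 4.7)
The plan is to construct the isomorphism $\mathfrak{h}$ explicitly by introducing a common refinement of the covers of the two $\Z_2$-bundle gerbes. The gerbe $\mathcal{O}_X$ has cover $PX \to X\times X$ by endpoint evaluation, while $\pr_2^*\Lift_{\SO(X)} \otimes \pr_1^*\Lift_{\SO(X)}^{-1}$ has cover $\SO(X) \times \SO(X) \to X \times X$. I would take $Z := P\SO(X) \to X \times X$ as the common refinement, with maps $P\SO(X) \to PX$ given by footpoint projection and $P\SO(X) \to \SO(X)\times\SO(X)$, $g\mapsto (g(0), g(\pi))$, given by endpoint evaluation.

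Over $Z^{[2]}=P\SO(X)^{[2]}$, a point is a pair $(g_1,g_2)$ of paths in $\SO(X)$ whose footpoint paths $\gamma_i$ share endpoints. The pullback of the defining $\Z_2$-bundle of $\mathcal{O}_X$ to $Z^{[2]}$ is $\hat{L}X_{\gamma_1\cup\gamma_2}$, the component torsor of the loop $g_1\cup g_2\in L\SO(X)_{\gamma_1\cup\gamma_2}$. The pullback from the right-hand gerbe is $T_\pi\otimes T_0$, where $T_s := \delta^*\Spin(d)|_{(g_1(s),g_2(s))}$ is the $\Z_2$-torsor of $\Spin(d)$-lifts of $h_s := g_2(s)^{-1}g_1(s)\in\SO(d)$. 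To define the identification, note that $(\gamma_1\cup\gamma_2)^*\SO(X)$ is an $\SO(d)$-bundle on $S^1$ and therefore lifts to a $\Spin(d)$-bundle since $H^2(S^1,\Z_2)=0$. Fix such a spin lift; the paths $g_i$ lift (up to an overall sign on each) to paths $\tilde g_i$ in this spin cover, and the elements $\tilde h_s := \tilde g_2(s)^{-1}\tilde g_1(s)\in T_s$ are defined for $s=0,\pi$. Flipping the sign of either $\tilde g_i$ changes both $\tilde h_0$ and $\tilde h_\pi$ simultaneously, so $(\tilde h_0,\tilde h_\pi)$ determines a well-defined element of $T_0\otimes T_\pi$. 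The key topological fact---equivalent to $\pi_1\SO(d)=\Z_2$ for $d\geq 3$---is that $g_1\cup g_2$ represents the trivial $\Z_2$-class in $\hat{L}X_{\gamma_1\cup\gamma_2}$ if and only if $(\tilde h_0,\tilde h_\pi)$ may be chosen simultaneously to equal the identity in $\Spin(d)$. The map $\mathfrak{h}$ is the unique $\Z_2$-equivariant isomorphism matching these two trivializations.

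Independence of the choice of spin lift of $(\gamma_1\cup\gamma_2)^*\SO(X)$ (the two such lifts correspond to the two spin structures on $S^1$) must be verified: the two choices affect $(\tilde h_0,\tilde h_\pi)$ and the class $[g_1\cup g_2]$ in a compensating way, leaving the identification unchanged. The main obstacle will then be verifying the cocycle condition over $Z^{[3]}=P\SO(X)^{[3]}$: for triples $(g_1,g_2,g_3)$ one must show that $\mathfrak{h}$ intertwines the loop fusion product $\hat{\Upsilon}$ of \eqref{FusionProductOrientationBundle} with the bundle gerbe product of the right-hand side (group multiplication in $\Spin(d)$ at both endpoints). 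This reduces to the identity $\tilde h^{23}_s \cdot \tilde h^{12}_s = \tilde h^{13}_s$ in $\Spin(d)$, which is immediate from $\tilde h^{ij}_s = \tilde g_j(s)^{-1}\tilde g_i(s)$ once compatible spin lifts of $\SO(X)$ are chosen over the three loops $\gamma_i\cup\gamma_j$. The ``in particular'' statement then follows: $\pr_2^*\Lift_{\SO(X)} \otimes \pr_1^*\Lift_{\SO(X)}^{-1}$ is trivial on $X\times X$ if and only if $\Lift_{\SO(X)}$ is trivial on $X$ (by restriction to $\{x_0\}\times X$), which is equivalent to $X$ admitting a spin structure.
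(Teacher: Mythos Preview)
Your approach uses the same common refinement $Z = P\SO(X)$ as the paper, and the underlying idea is similar, but there is a genuine gap in the construction of the isomorphism over $Z^{[2]}$.

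The issue is that $Z^{[2]} = P\SO(X) \times_{X\times X} P\SO(X)$, so a point $(g_1,g_2)$ consists of paths whose \emph{footpoints} share endpoints in $X$, but $g_1$ and $g_2$ themselves need \emph{not} share endpoints in $\SO(X)$. Consequently $g_1\cup g_2$ is not an element of $L\SO(X)_{\gamma_1\cup\gamma_2}$, and the sentence ``$g_1\cup g_2$ represents the trivial $\Z_2$-class in $\hat{L}X_{\gamma_1\cup\gamma_2}$'' is meaningless. Your definition of $\mathfrak{h}$ therefore never gets off the ground: you have produced a well-defined element $\tilde h_0\otimes\tilde h_\pi$ of $T_0\otimes T_\pi$ (depending on the spin lift), but you have not specified to which element of $\hat{L}X_{\gamma_1\cup\gamma_2}$ it is being matched.

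The paper resolves this by separating the two roles you have conflated: the point $(h_1,h_2)\in Z^{[2]}$ is the cover point, while an \emph{auxiliary} pair $(g_1,g_2)$ with genuinely common endpoints in $\SO(X)$ is chosen to write down a representative $[g_1\cup g_2,\epsilon]$ of $\hat{L}X_{\gamma_1\cup\gamma_2}$. One then sets $q_i = g_i^{-1}h_i \in P\SO(d)$, lifts these to $\tilde q_i \in P\Spin(d)$, and maps $[g_1\cup g_2,\epsilon]\mapsto (-1)^\epsilon\,\tilde q_2(\pi)^{-1}\tilde q_1(\pi)\otimes \tilde q_2(0)^{-1}\tilde q_1(0)$. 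Your spin-cover idea could also be made to work, but then the auxiliary choice of spin cover of $(\gamma_1\cup\gamma_2)^*\SO(X)$ is what corresponds to an element of $\hat{L}X_{\gamma_1\cup\gamma_2}$ (via the standard bijection between spin structures on a bundle over $S^1$ and components of its section space), and you would need to make that identification explicit rather than invoking the ill-defined $[g_1\cup g_2]$.
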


A section $s$ of $\hat{L}X$ is called \emph{fusion preserving} if the identity $\hat{\Upsilon}(s_{23} \otimes s_{12}) = s_{13}$ holds over $PX^{[3]}$.
It was shown by Stolz and Teichner \cite[Thm.~9]{StolzTeichnerSpinorBundle} that fusion-preserving sections of $\hat{L}X$ are in 1-1 correspondence with spin structures on $X$; see also \cite[Thm.~2.5]{WaldorfSpinString}.
As isomorphism classes of trivializations of $\Lift_{\SO(X)}$ are in 1-1 correspondence with spin structures on $X$ (Thm.~\ref{ThmLiftingGerbe}), this follows directly from the above proposition, as a fusion preserving sections is essentially the same as trivializations of $\mathcal{O}_X$.
As in \cite{StolzTeichnerSpinorBundle} a different description of the orientation bundle $\hat{L}X$ is used, we give a proof of Lemma.~\ref{LemmaOrGerbeLiftingGerbe} in the current setting.

\begin{proof}[of Lemma~\ref{LemmaOrGerbeLiftingGerbe}]
The isomorphism $\mathfrak{h}$ will be a span of refinements, along the two legs of the common cover
\begin{equation}
\label{CommonRefinementfrakh}
\begin{tikzcd}[column sep=2cm]
 PX & \ar[l, "\substack{\text{footpoint} \\ \text{projection}}"'] \ar[r, "\substack{\text{endpoint} \\ \text{evaluation}}"] P\SO(X) & \SO(X) \times \SO(X).
\end{tikzcd}
\end{equation}
To begin with, we have to construct an isomorphism
\begin{equation}
\label{IsoVarphiOrBundleToSpin}
  \eta :  \hat{L}X \longrightarrow \pr_2^*\delta^*\Spin(d) \otimes \pr_1^*\delta^*\Spin(d)
\end{equation}
of principal $\Z_2$-bundles over $P\SO(X) \times_{X \times X} P\SO(X)$ (where we suppress in notation the pullback along the legs of \eqref{CommonRefinementfrakh}).

Let $(h_1, h_2) \in P\SO(X) \times_{X \times X} P\SO(X)$, with footpoint curves $(\gamma_1, \gamma_2) \in PX^{[2]}$ (we emphasize that $h_1$, $h_2$ do not necessarily have the same end points in $P\SO(X)$, but the end points of their footpoint curves coincide in $X$).
Let moreoever $(g_1, g_2) \in P\SO(X)^{[2]}$, where the fiber product is taken over $X\times X$, so that $g_1$ and $g_2$ have the same end points.
Then 
\begin{equation*}
q_i \defeq g_i^{-1}h_i , \qquad i =1, 2,
\end{equation*}
are two elements of $P \SO(d)$.
Let $\tilde{q}_i \in P\Spin(d)$, $i=1, 2$, be lifts of $q_1$ and $q_2$.
As $g_1$ and $g_2$ have matching end points, we have for $t \in \{0, \pi\}$ that
\begin{equation*}
q_2(t)^{-1} q_1(t) = h_2(t)^{-1} g_2(t)  g_1(t)^{-1} h_1(t) = h_2(t)^{-1} h_1(t) = \delta\bigl( \gamma_1(t), \gamma_2(t) \bigr),
\end{equation*}
where $\delta$ is the difference map \eqref{DifferenceMap}.
Hence $\tilde{q}_2(t)^{-1} \tilde{q}_1(t)$ defines an element of $\delta^*\Spin(d)$ at $(\gamma_1(t), \gamma_2(t))$.
For each $i=1, 2$, there are precisely two choices of lift $\tilde{q}_i$ of $q_i$, which differ by a sign.
Hence
\begin{equation*}
\tilde{\eta}(g_1, g_2, h_1, h_2) \defeq \tilde{q}_2(\pi)^{-1} \tilde{q}_1(\pi) \otimes_{\Z_2} \tilde{q}_2(0)^{-1} \tilde{q}_1(0)
\end{equation*}
is a well-defined element of $\pr_2^*\delta^*\Spin(d) \otimes_{\Z_2} \pr_1^*\delta^*\Spin(d)$ at $(h_1(0), h_2(0); h_2(\pi), h_2(\pi))$.
The map \eqref{IsoVarphiOrBundleToSpin} at a point $(h_1, h_2)$ is now given by
\begin{equation*}
\eta([g_1 \fuse g_2, \epsilon]) \defeq (-1)^{\epsilon} \tilde{\eta}(g_1, g_2, h_1, h_2).
\end{equation*}
As the fibers of the covering $\Spin(d) \to \SO(d)$ are discrete, $\tilde{\eta}$ only depends on the path component of $(h_1, h_2)$ in the fibers of $P\SO(X) \to \SO(X) \times \SO(X)$ and the path component of $g_1 \fuse g_2$ in the fibers of $P\SO(X)^{[2]} \to PX^{[2]}$.
Therefore $\eta$ is well defined.
It is moreover straight forward to verify that $\eta$ intertwines the bundle gerbe products.
\end{proof}

In view of Lemma~\ref{LemmaOrGerbeLiftingGerbe}, Thm.~\ref{ThmLagTwoGerbSpin} now follows directly from the following result.

\begin{lemma}
  The orientation gerbe $\orclass(\LagTwoGrb_X)$ of the Lagrangian 2-gerbe is canonically isomorphic to the loop space orientation gerbe $\mathcal{O}_{X}$.
\end{lemma}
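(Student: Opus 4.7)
The plan is to exhibit both $\Z_2$-bundle gerbes over $X \times X$ with the common cover $PX$, identify their principal $\Z_2$-bundles, and match their gerbe products. First I would unwind $\orclass(\LagTwoGrb_X)$: its cover is $PX$, its principal $\Z_2$-bundle over $PX^{[2]}$ is (the pullback along the cup map of) the orientation bundle $\orclass(\LagGrb_{LX})$, whose fiber at $\gamma \in LX$ is the $\Z_2$-torsor $\pi_0(\Lag_\gamma)$ (see Remark~\ref{RemarkParityPf}), and its gerbe product over $PX^{[3]}$ is the $\Z_2$-bundle map induced by the parity of the fusion line bundle $\Fus$ over $\Lag^{[1,3]}$; this is well-defined on $PX^{[3]}$ because $\mu$ is grading preserving and the composition isomorphisms $\lambda$ are as well.

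Next I would construct the key isomorphism of $\Z_2$-bundles $\hat LX \cong \orclass(\LagGrb_{LX})$ over $LX$, extracting it from the argument in the proof of Theorem~\ref{ThmOrientationLoopSpace}. Fix a Lagrangian $L_0 \in \Lag_0^+$ (see Remark~\ref{RemarkPreferredConnectedComponent}). The assignment $g \mapsto [gL_0]$ defines a surjection $L\SO(X)_\gamma \to \pi_0(\Lag_\gamma)$. Since the identity component of $\O_{\res}(H_0)$ preserves connected components of $\Lag_0$, and $L\SO(d)_0$ maps into this identity component under the smooth homomorphism~\eqref{HomLSOdOres}, this surjection descends to a $\Z_2$-equivariant bijection
\begin{equation*}
  \hat LX_\gamma = L\SO(X)_\gamma / L\SO(d)_0 \stackrel{\cong}{\longrightarrow} \pi_0(\Lag_\gamma) = \orclass(\LagGrb_{LX})_\gamma.
\end{equation*}
Pulled back to $PX^{[2]}$ along the cup map, this identifies the principal $\Z_2$-bundles underlying $\mathcal{O}_{LX}$ and $\orclass(\LagTwoGrb_X)$.

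Finally I would verify compatibility with the gerbe products. Working locally with a section $(g_1, g_2, g_3)$ of $P\SO(X)^{[3]} \to PX^{[3]}$, set $L_{ij} := (g_i \cup g_j) L_0 \in \Lag^+_{\gamma_i \cup \gamma_j}$; under Step~2, each $L_{ij}$ corresponds to $[g_i \cup g_j, 0] \in \hat LX$, for which $\hat\Upsilon$ returns parity $0$. On the other side, by Corollary~\ref{CorollaryFockSpacesIsomorphic} and the explicit formula~\eqref{IdentityFusion} from the proof of Proposition~\ref{PropActionOnFusSmooth}, the canonical intertwiner $\F_{L_{23}} \boxtimes_{\A_{\gamma_2}} \F_{L_{12}} \to \F_{L_{13}}$ is grading preserving, i.e., has parity $0$ in $\Fus$, matching $\hat\Upsilon$. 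For arbitrary $L_{ij}$, replacing one by a Lagrangian in the opposite connected component of its fiber flips the corresponding $\hat LX$-parity by $1$ (by Step~2), and simultaneously flips the $\Fus$-parity by $1$ via composition with a parity-reversing element of the Pfaffian line $\Pf$ (using the composition isomorphism~\eqref{MultiplicationBundleMorphism} together with Remark~\ref{RemarkParityPf}). Hence the compatibility holds in general.

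The main obstacle is this last matching step, which requires careful bookkeeping of the sign conventions singling out $\Lag^+$ (Sign Discussion~\ref{SignDiscussion}) and their interaction with the grading-preserving/reversing decomposition of $\Fus$, so that the two parities agree not merely up to an absolute sign but as trivializations of the same $\Z_2$-bundle. Once this is in place, the explicit cospan assembling $\mathcal{h}$ as a strict isomorphism over the cover $PX$ is straightforward, and associativity of both gerbe products is automatic.
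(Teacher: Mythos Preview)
Your proposal is correct and follows essentially the same approach as the paper: both construct the map $\hat{L}X \to \orclass(\LagGrb_{LX})$ via $[g,\epsilon] \mapsto [gL_0, \epsilon]$ for a fixed $L_0 \in \Lag_0^+$ and verify compatibility with the gerbe products by computing the parity of $\Fus(g_{23}L_0, g_{12}L_0, g_{13}L_0)$ using the grading-preserving action \eqref{PSO(d)ActsOnFus} together with the evenness of $\Fus(L_0,L_0,L_0)$ for $L_0 \in \Lag_0^+$. The only cosmetic difference is that the paper writes the map for an arbitrary $L \in \Lag_0$ with a correction term $|L|$, making independence of the choice of $L$ explicit, whereas you fix $L_0 \in \Lag_0^+$ from the outset.
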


\begin{proof}
Observe first that after fixing a given Lagrangian $L \in \Lag_0$, there is a canonical isomorphism
\begin{equation}
\label{DefinitionVarphi}
  \varphi : \hat{L} X \longrightarrow \orclass(\LagGrb_{LX}), \qquad [g, \epsilon] \longmapsto [gL, \epsilon + |L|]
\end{equation}
of principal $\Z_2$-bundles over $LX$, where $\orclass(\LagGrb_{LX})$ is the orientation bundle of $\LagGrb_{LX})$, see Definition~\ref{DefinitionOrientationBundle}.
Here $|L| = 0$ if $L \in \Lag_0^+$ and $|L| = 1$ otherwise (see Remark~\ref{RemarkPreferredConnectedComponent}).
This additional sign ensures that $\varphi$ does not depend on the choice of $L$.

We just need to verify that this isomorphism intertwines the fusion product maps over $PX^{[3]}$.
For $\orclass(\LagGrb_{LX})$, this is the map 
\begin{equation*}
\begin{tikzcd}[column sep=2cm]
  \orclass(\LagGrb_{23}) \otimes \orclass(\LagGrb_{12}) \cong \orclass(\LagGrb_{23} \otimes \LagGrb_{12}) 
  \ar[r, "\orclass(\mathfrak{m})"] 
  &
   \orclass(\LagGrb_{13}).
\end{tikzcd}
\end{equation*}
Given a lift $(g_1, g_2, g_3) \in P\SO(X)^{[3]}$ of $(\gamma_1, \gamma_2, \gamma_3) \in PX^{[3]}$, any element of the left hand side can be taken to be of the form $[g_{23}L, \epsilon_{23}] \otimes [g_{12}L, \epsilon_{12}]$, while any element of the right hand side can be taken to have the form $[g_{13}L, \epsilon_{13}]$ (here $g_{ij} = g_i \fuse g_j$).
Suppose that
\begin{equation*}
\orclass(\mathfrak{m}) \bigl([g_{23}L, \epsilon_{23}] \otimes [g_{12}L, \epsilon_{12}]\bigr) = [g_{13}L, \epsilon_{13}].
\end{equation*}
By definition of $\orclass(\mathfrak{m})$, this implies that the parity of the fusion line $\Fus(g_{23}L, g_{12}L, g_{13}L)$ equals $\epsilon_{23} + \epsilon_{12} + \epsilon_{13}$ (here, as usual, $g_{ij} = g_i \fuse g_j$).
As 
\begin{equation*}
\Fus(L, L, L) \longrightarrow \Fus(g_{23}L, g_{12}L, g_{13}L), \qquad  \Upsilon \longmapsto \Lambda_{g_{13}} \circ \Upsilon \circ (\Lambda_{g_{23}} \boxtimes \Lambda_{g_{12}})^*
\end{equation*}
is a grading preserving isomorphism, both fusion lines have the same parity.
As the parity of $\Fus(L, L, L)$ is even if $L \in \Lag_0^+$ and odd otherwise, we get the relation
\begin{equation}
\label{EpsilonRelation}
\epsilon_{23} + \epsilon_{12}  + \epsilon_{13} = |L|.
\end{equation}
A simple calculation shows now that the map $\varphi$ given in \eqref{DefinitionVarphi} intertwines the fusion products.
%
\end{proof}

\begin{remark}
Any loop space spinor bundle $\SB$ gives a section of $\orclass(\LagGrb_{LX})$ given by $s(\gamma) = [L, 0]$, where $L \in \Lag_\gamma$ is is such that the isomorphism $\F_L \cong \SB_\gamma$ is even.
This section is fusion-preserving if and only if the line bundle $\mathfrak{M}$ over $PX^{[3]}$ defined in \eqref{DefinitionLFusionLineBundleS} is even.
We therefore see that if a spinor bundle on loop space exists, then $X$ is spin if and only if a grading preserving fusion product exists projectively.
\end{remark}

\subsection{Lifting 2-gerbes}
\label{SectionCS2gerbes}

Let $G$ be a Lie group.
Recall that a \emph{multiplicative bundle gerbe} over $G$ is consists of a bundle gerbe $\mathcal{G}$ over $G$ together with an isomorphism
\begin{equation}
\label{MorphismMultGerbe}
  \mathfrak{m} : \mathrm{pr}_1^*\mathcal{G} \otimes \mathrm{pr}_2^*\mathcal{G} \longrightarrow \mathrm{m}^*\mathcal{G}
\end{equation}
of bundle gerbes over $G \times G$ (called the \emph{multiplicative structure}) and an associator 2-morphism over $G \times G \times G$ satisfying certain higher cocycle conditions over $G \times G \times G \times G$; see \cite[\S5]{CJMSW} or \cite{WaldorfMultiplicative}.
Here $\mathrm{m} : G \times G \to G$ is the group multiplication map of $G$.

\begin{construction}
\label{ConstructionMultGerbsFromCentExt}
One way multiplicative bundle gerbes over $G$ arise is from central extensions of the loop group $LG$.
Let
\begin{equation*}
\U(1) \longrightarrow \widetilde{LG} \longrightarrow LG
\end{equation*}
be a central extension by $\U(1)$, meaning that $\smash{\widetilde{LG}}$ is a central extension of Fr\'echet Lie groups which is locally trivial principal as $\U(1)$-bundle. 
Recall \cite[Def.~3.4]{WaldorfSpinString} that a \emph{(multiplicative) fusion product} on $\smash{\widetilde{LG}}$  is a group homomorphism
\begin{equation}
\label{FusionProduct}
 \mu : \widetilde{LG}_{23} \otimes_{\U(1)} \widetilde{LG}_{12} \longrightarrow \widetilde{LG}_{13}
\end{equation}
of Lie groups over $PG^{[3]}$ that is the identity on the copy of $\U(1)$ canonically contained on both sides and that is associative over $PG^{[4]}$.
Here $\smash{\widetilde{LG}_{ij}}$ denotes the pullback of the central extension along the map $PG^{[3]} \to PG^{[2]} \to LG$, where the second arrow is the cup map \eqref{CupMap} and the first map is the projection onto the $i$-th and $j$-th factor.

Any central extension with a fusion product yields a multiplicative bundle gerbe $\mathcal{G}$ over $G$, 
given in terms of the cover $P_e G \to G$, where $P_eG$ is the group of paths in $G$ starting at the identity element $e \in G$, together with the endpoint projection to $G$.
Its line bundle over $P_eG^{[2]}$ is the pullback of the line bundle $\widetilde{LG} \times_{\U(1)} \C$ over $LG$ associated to the central extension along the cup map $P_eG^{[2]} \to LG$, and its bundle gerbe product is given by the fusion product \eqref{FusionProduct}.
The multiplicative structure $\mathfrak{m}$ of $\mathcal{G}$ is the image under \eqref{InclusionRefinements} of the refinement of bundle gerbes along the map
\begin{equation}
\label{RefinementForMmultGerbe}
\begin{tikzcd}
  \pr_1^*PG \times_{G \times G} \pr_2^*P_eG \longrightarrow \mathrm{m}^*PG, \qquad (q_1, q_2) \mapsto q_1q_2,
\end{tikzcd}
\end{equation}
whose line bundle isomorphism is given by pointwise group multiplication in $\widetilde{LG}$.
By associativity of the group multiplication, the pullbacks of this refinement assemble to a commutative diagram in the category $\textsc{Gerb}_{\mathrm{ref}}(G \times G \times G)$ of bundle gerbes of $G \times G \times G$ with refinements, hence the associator is provided by the functor \eqref{InclusionRefinements} and the corresponding cocycle condition is automatic.
%
%
\end{construction}

\begin{remark}
Restricting a fusion product to the diagonal $\{(q, q, q) \mid q \in PX\} \subset PX^{[3]}$, we obtain a trivialization $LG_{q \fuse q} \cong \U(1)$ and hence a smooth group homomorphism
\begin{equation*}
  i : PG \longrightarrow \widetilde{LG},
\end{equation*}
called \emph{fusion factorization}.
Conversely, given any smooth group homomorphism $PG \to \smash{\widetilde{LG}}$ with the property that $i(q)$ lies over $q \fuse q$, we obtain a fusion product by the formula
\begin{equation}
\label{GeneralFormulaFusionProduct}
  \mu (U, V) = U i(q^\prime)^{-1}V,
\end{equation}
whenever $U, V \in \widetilde{LG}$ lie over $q^\prime \fuse q^{\prime\prime}$, respectively $q\fuse q^\prime$.
If $G$ is simply connected and semisimple, there exists a unique fusion factorization \cite[Thm.~3.3.5]{LudewigWaldorfLoopGroups}, and hence a unique fusion product.
\end{remark}

\begin{construction}
\label{ConstructionMultGerbsFromCentExtDbl}
As our investigations are centered around the free loop space instead of the based loop space, it will be relevant below that a central extension $\widetilde{LG}$ also defines a multiplicative bundle gerbe $\mathcal{G}^{\mathrm{dbl}}$ over $G \times G$ with cover $PG \to G \times G$, defined in a completely analogous fashion to Construction~\ref{ConstructionMultGerbsFromCentExt}.
We call $\mathcal{G}^{\mathrm{dbl}}$ the \emph{doubled} multiplicative bundle gerbe determined by $\smash{\widetilde{LG}}$.
If $G$ is semisimple and simply connected, there exists a canonical isomorphism
\begin{equation}
\label{DoubleIsoGrb}
  \mathcal{G}^{\mathrm{dbl}} \cong \mathrm{pr}_2^*\mathcal{G} \otimes \mathrm{pr}_1^*\mathcal{G}^{-1}
\end{equation}
of multiplicative bundle gerbes over $G \times G$; see \cite[\S5.2 and \S5.3]{Bigerbes}.
\end{construction}

\begin{example}
\label{ExampleImplementerGerbe}
Consider $G = \Spin(d)$, where $d \geq 5$.
As this is a simply connected and simple Lie group, any central extension of $L\Spin(d)$ admits a unique (multiplicative) fusion product \cite[Thm.~3.3.5]{LudewigWaldorfLoopGroups}.
A particular example of such a central extension is the \emph{implementer extension} \eqref{DiagramImplementerExtension}.
Applying Construction~\ref{ConstructionMultGerbsFromCentExt} to this central extension yields a multiplicative bundle gerbe $\mathcal{G}_{\Imp}$ over $\Spin(d)$.
Construction~\ref{ConstructionMultGerbsFromCentExtDbl} yields multiplicative bundle gerbe $\mathcal{G}_{\Imp}^{\mathrm{dbl}}$ over $\Spin(d) \times \Spin(d)$.

We now describe the fusion factorization and the fusion product explicitly for this example.
By the characterization \eqref{ImplementerExtension} of the  extension $\Imp_L$ of $\O_{\res}(H_0)$, the implementer extension of $L\Spin(d)$ can be written as the double pullback 
\begin{equation*}
  \begin{tikzcd}
    \widetilde{L\Spin(d)} \ar[r, dashed] \ar[d, dashed] & \Imp_L \ar[r] \ar[d] & \U(\F_L) \ar[d] \\
    L\Spin(d) \ar[r] & \O_{\res}(H_0) \ar[r] & \Aut(\CC_0).
  \end{tikzcd}
\end{equation*}
We choose a Lagrangian $L \in \Lag_0^+$ and fix an even isomorphism $\F_L \cong L^2(\A_0)$ (see Remark~\ref{RemarkPreferredConnectedComponent} and Corollary~\ref{CorollaryFockSpacesIsomorphic}).
Then under this isomorphism, the fusion factorization $i$ is given by Haagerups canonical implementation (see Remark~\ref{RemarkL2phi}); explicitly
\begin{equation*}
  i(\tilde{q}) = L^2(\Cl_q),
\end{equation*}
where $q \in P\SO(d)$ is the image of $\tilde{q} \in P\Spin(d)$.
Indeed, $i$ is characterized as the map
\begin{equation*}
  \begin{tikzcd}
  P\Spin(d) \ar[d, "\text{diagonal}"'] \ar[r]  \ar[dr, dashed, "i"]& \Aut(\A_0) \ar[dr, bend left=25, "\substack{\text{canonical} \\ \text{implementation}}"]\\
 P\Spin(d)^{[2]} \ar[dr, bend right=25, "\fuse"'] &  \widetilde{L\Spin(d)} \ar[r, ] \ar[d, ] & \U(L^2(\A_0)) \ar[d] \\
  &  L\Spin(d) \ar[r] & \Aut(\CC_0),
  \end{tikzcd}
\end{equation*}
provided by the universal property of the pullback, hence is continuous.
It then follows that, as a homomorphism between locally exponential Lie groups, $i$ is automatically smooth.
It must therefore be the unique fusion factorization (see \cite[Thm.~3.3.5]{LudewigWaldorfLoopGroups}).
In view of \eqref{GeneralFormulaFusionProduct}, the fusion product \eqref{GeneralFormulaFusionProduct} becomes
\begin{equation}
\label{FusionProductImplementers}
  \mu (U, V) = U L^2(\Cl_{q^\prime})^* V,
\end{equation}
where $U, V \in \Imp_L$ implement $q^\prime \fuse q^{\prime\prime}$, respectively $q \fuse q^{\prime}$.
Here and throughout, we use that the image of the map $P\Spin(d)^{[2]} \to P\SO(d)^{[2]} \to L\SO(d)$ is contained in the identity component of $L\SO(d)$.
Hence implementers for  $q_1 \fuse q_2$ are always even if $(q_1, q_2) \in P\SO(d)^{[2]}$ lifts to $(\tilde{q}_1, \tilde{q}_2) \in P\Spin(d)^{[2]}$.
\end{example}

The underlying $\U(1)$-bundle gerbe of a multiplicative bundle gerbes $\mathcal{G}$ over a Lie group $G$ can be viewed as a 2-group extension of $G$ by $B\U(1)$.
The obstruction for lifting the structure group of a principal $G$-bundle $P$ over a manifold to this 2-group is then a bundle 2-gerbe, defined as follows \cite{NikolausWaldorfLifting, WaldorfStringConnections}. 

\begin{definition}[Lifting 2-gerbe]
\label{DefinitionCS2gerbe}
Let $\mathcal{G}$ be a multiplicative bundle gerbe over $G$ and let $P$ be a principal $G$-bundle over a manifold $X$.
The corresponding \emph{lifting 2-gerbe} $\LiftTwoGrb_P$ is the bundle 2-gerbe over $X$ whose bundle over over $P^{[2]}$ is the pullback of $\mathcal{G}$ along the difference map 
\begin{equation*}
  \delta : P^{[2]} \longrightarrow G, \qquad (p_1, p_2) \longmapsto p_2^{-1} p_1,
\end{equation*}
and the bundle 2-gerbe product is given by the multiplicative structure of $\mathcal{G}$.
\end{definition}

\begin{remark}
The lifting 2-gerbe $\LiftTwoGrb_P$ is also often called \emph{Chern-Simons 2-gerbe} (see \cite{CJMSW, WaldorfStringConnections}, because its holonomy is related to the Chern-Simons action.
However, in this context, it seems conceptually more clear to call it lifting 2-gerbe.
\end{remark}

\begin{remark}
\label{RemarkStringGroup}
It follows from the results of \cite{LudewigWaldorf2Group} together with Thm.~\ref{ThmImpBasic} that the 2-group corresponding by the multiplicative bundle gerbe $\mathcal{G}_{\Imp}$ is precisely the string 2-group $\String(d)$, which is the central extension
\begin{equation*}
\begin{tikzcd}
	B\U(1) \ar[r] & \String(d) \ar[r] & \Spin(d).
\end{tikzcd}
\end{equation*}
Hence if $P$ is a principal $\Spin(d)$-bundle, the corresponding lifting gerbe $\LiftTwoGrb_P$ represents the geometric obstruction to lifting the structure group of $P$ to $\String(d)$.
The same is true when the central extension $\Imp$ is replaced by the dual extension $\Imp^*$.
The corresponding 2-groups are isomorphic via an isomorphism that is inversion on the center $B\U(1)$.
\end{remark}

Let $\widetilde{LG}$ be a central extension with a fusion product of the loop group $LG$ for a Lie group $G$.
Given a principal $G$-bundle $P$ over $X$, we can then consider its \emph{doubled} lifting 2-gerbe $\LiftTwoGrb^{\mathrm{dbl}}_P$, defined as the lifting 2-gerbe corresponding to $P \times P$, for the doubled multiplicative bundle gerbe $\mathcal{G}^{\mathrm{dbl}}$ determined by $\widetilde{LG}$ according to Construction~\ref{ConstructionMultGerbsFromCentExtDbl}.
It follows from \eqref{DoubleIsoGrb} that if $G$ is simply connected and semisimple, then we have an isomorphism
\begin{equation}
\label{DoublingIsomorphism}
\LiftTwoGrb_{P}^{\mathrm{dbl}} \cong \mathrm{pr}_2^*\LiftTwoGrb_P \otimes \mathrm{pr}_1^*\LiftTwoGrb_P^{-1},
\end{equation}
in particular $\LiftTwoGrb_{P}^{\mathrm{dbl}}$ is trivial if and only if $\LiftTwoGrb_P$ is trivial.

\medskip

For later use, we note that cover diagram of $\LiftTwoGrb_P^{\mathrm{dbl}}$ is
\begin{equation} 
\label{CoverDiagramCS}
\begin{aligned}
\begin{tikzcd}[column sep=0.7cm]
&
 	\vdots
	\ar[d]
 	\ar[d, shift left=2mm]
 	\ar[d, shift right=2mm]
	& 
	\vdots 
	\ar[d]
 	\ar[d, shift left=2mm]
 	\ar[d, shift right=2mm]
	&
	 	\vdots
	\ar[d]
 	\ar[d, shift left=2mm]
 	\ar[d, shift right=2mm]
			\\
&
 	W^{[2, 2]}
 	\ar[d, shift left=1mm]
 	\ar[d, shift right=1mm]
	&
		W^{[2, 3]}
			\ar[l]
 			\ar[l, shift left=2mm]
 			\ar[l, shift right=2mm]
 		\ar[d, shift left=1mm]
 		\ar[d, shift right=1mm]
		&
			W^{[2, 4]}
 			\ar[d, shift left=1mm]
 			\ar[d, shift right=1mm]
			\ar[l, shift left=1mm] 
		\ar[l, shift right=1mm]
			\ar[l, shift left=3mm] 
		\ar[l, shift right=3mm]
		& 
		\ar[l]
			\ar[l, shift left=2mm]
 			\ar[l, shift right=2mm]
 			\ar[l, shift left=4mm]
 			\ar[l, shift right=4mm]
			\cdots
			\\
&
	W^{[1, 2]} 
	\ar[d]
	& 
		W^{[1, 3]}
			\ar[l]
 			\ar[l, shift left=2mm]
 			\ar[l, shift right=2mm]
		\ar[d]
		&
			W^{[1, 4]}
 			\ar[d]
			\ar[l, shift left=1mm] 
		\ar[l, shift right=1mm]
			\ar[l, shift left=3mm] 
		\ar[l, shift right=3mm]
		&
		\ar[l]
			\ar[l, shift left=2mm]
 			\ar[l, shift right=2mm]
 			\ar[l, shift left=4mm]
 			\ar[l, shift right=4mm]
			\cdots
			\\
P \times P\ar[d]
 &
	(P\times P)^{[2]}
	\ar[l]
	&
		(P\times P)^{[3]}
			\ar[l]
 			\ar[l, shift left=2mm]
 			\ar[l, shift right=2mm]
			&
			(P\times P)^{[4]}
 			\ar[l, shift left=1mm]
 			\ar[l, shift right=1mm]
 			\ar[l, shift left=3mm]
 			\ar[l, shift right=3mm]
			&
			\ar[l]
			\ar[l, shift left=2mm]
 			\ar[l, shift right=2mm]
 			\ar[l, shift left=4mm]
 			\ar[l, shift right=4mm]
			\cdots
			\\
			X \times X
\end{tikzcd}
\end{aligned}
\end{equation}
where
\begin{equation*}
\begin{aligned}
W^{[m, n]} = \left\{ \left.
\left(
\begin{pmatrix} 
p_1 & \cdots & p_n \\ 
\tilde{p}_1 & \cdots & \tilde{p}_n
\end{pmatrix};
\begin{pmatrix} 
q^1_{ij} \\ \vdots \\ q^m_{ij}
\end{pmatrix}_{ i < j}
\right)
~\right|~ 
\begin{matrix}
q_{ij}^a(\pi) = \delta (p_i, p_j) \\
q_{ij}^a(0) = \delta (\tilde{p}_i, \tilde{p}_j) 
\end{matrix}
\right\}.\\
\text{with}\qquad
(\tilde{p}_1, \dots, \tilde{p}_n), (p_1, \dots, p_n) \in (P\times P)^{[n]}, 
\quad q^a_{ij} \in PG.
\end{aligned}
\end{equation*}
The line bundle over $W^{[2, 2]}$ is, essentially, the line bundle associated to central extension $\smash{\widetilde{L\Spin}}(d)$, the corresponding bundle gerbe product over $W^{[3, 2]}$ is the fusion product, and the third column is filled with the multiplicative structure $\mathfrak{m}$.

\subsection{Comparison of the Lagrangian gerbe and the lifting 2-gerbe}
\label{SectionComparisonLagCS}

Let $X$ be a spin manifold and let $\LagTwoGrb_X$ the corresponding Lagrangian 2-gerbe over $X \times X$, as discussed in \S\ref{SubSectionLagrangian2Gerbe}.
By Thm.~\ref{ThmLagTwoGerbSpin}, that $X$ is spin implies that $\LagTwoGrb_X$  has trivial orientation gerbe hence is isomorphic to an ordinary, ungraded bundle 2-gerbe.

On the other hand, consider the following lifting 2-gerbe:
Let $\mathcal{G}_{\Imp^*}$ be the multiplicative $\U(1)$-bundle gerbe over $\Spin(d)$ associated to the dual $\Imp^*$ of the implementer extension $\Imp$ of $L\Spin(d)$ through Construction~\ref{ConstructionMultGerbsFromCentExt}; see also Example~\ref{ExampleImplementerGerbe}.
According to Construction~\ref{ConstructionMultGerbsFromCentExtDbl}, this multiplicative bundle gerbe together with the principal $\Spin(d)$-bundle $\Spin(X)$ provided by the spin structure determines a bundle 2-gerbe $\LiftTwoGrb_{\Spin(X)}^{\mathrm{dbl}}$ over $X \times X$. 
In view of Remark~\ref{RemarkStringGroup}, this bundle 2-gerbe geometrically represents the obstruction to lift the structure group of $X$ to the string 2-group.
 
\begin{theorem}
\label{ThmIsoCSLag}
There exists a canonical isomorphism of bundle 2-gerbes 
\begin{equation}
\label{IsomorphismCSLag}
  \mathcal{h} : \LiftTwoGrb_{\Spin(X)}^{\mathrm{dbl}}  \longrightarrow \LagTwoGrb_X.
\end{equation}
\end{theorem}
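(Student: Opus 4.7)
The proof strategy parallels Thm.~\ref{ThmLagLifting}, which produces the corresponding statement at the level of the underlying super bundle gerbes on the loop space, and promotes it to the full 2-gerbe setting. I would use a common refinement built from the space $P\Spin(X)$, together with suitable choices of paths in $\Spin(d)$ interpolating endpoint differences; this cover maps to $PX$ by footpoint projection and to $\Spin(X) \times \Spin(X)$ by endpoint evaluation, compatibly over $X \times X$. Fix once and for all a Lagrangian $L \in \Lag_0^+$ together with a cyclic and separating vector identifying $\F_L \cong L^2(\A_0)$ as in Example~\ref{ExampleImplementerGerbe}. For each tuple $(\tilde g_1, \ldots, \tilde g_n)$ of paths in $\Spin(X)$ with common endpoint projections in $X$, writing $g_i$ for the image in $P\SO(X)$ and $g_{ij} = g_i \cup g_j$, the assignment $(g_{ij}L)_{i<j}$ is a canonical section of $\Lag^{[1, n]} \to PX^{[n]}$, along which the refinement of $\LagTwoGrb_X$ becomes entirely explicit.

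The line bundle data of the isomorphism $\mathcal{h}$ are constructed from a chain of canonical identifications. At level $[1, 2]$, the Pfaffian line $\Pf(g_{12}L, g_{13}L)$ is identified with the dual implementer line $\Imp_L(q_{12}^{-1}q_{13})^*$ via the equivariance \eqref{EquivariancePfaffianLines} and the isomorphism \eqref{IsoImpLPfL}, where $q_{ij}$ arise from the chosen spin lifts; composing with the fixed isomorphism $\F_L \cong L^2(\A_0)$ and the identification \eqref{IsoVdash} then produces $\ImpLine$. At level $[1, 3]$, the comparison between $\Fus(g_{23}L, g_{12}L, g_{13}L)$ and $\Mul(q_{23}, q_{12}, q_{13}) = \ImpLine(q_{23}q_{12}, q_{13})$ is provided directly by the identity \eqref{IdentityFusion} established in the proof of Prop.~\ref{PropActionOnFusSmooth}, which states exactly that the fusion intertwiner between Fock spaces corresponds, under $\F_L \cong L^2(\A_0)$, to composition with $L^2(\Cl_{q_2})^*$.

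Granting these identifications, compatibility with the bundle gerbe products follows because the formula \eqref{TransformedMultiplication} expressing the multiplicative structure of $\mathcal{G}^{\Imp^*}_{\mathrm{dbl}}$ is precisely what composition of Pfaffian intertwiners transports to under $\Imp_L^* \cong \ImpLine$, while compatibility with the morphisms $\mu$ and $\tilde\mu$ reduces to the same calculation performed one level higher on the simplicial cover. The main obstacle is the top coherence over the $[1, 4]$ level, namely verifying that the associators $\alpha$ of $\LagTwoGrb_X$ and $\tilde\alpha$ of $\LiftTwoGrb_{\Spin(X)}^{\mathrm{dbl}}$ agree under the identifications. Both are defined by cospans factoring through triple-fusion line bundles $\Fus^{(3)}$ respectively $\Mul^{(3)} = \underline{\Hom}(L^2(\A_0)_{q_{34}q_{23}q_{12}}, L^2(\A_0)_{q_{14}})$, and the required equality is an iterated version of \eqref{IdentityFusion} together with the functoriality of Connes fusion. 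Unpacking the resulting hexagonal diagram is tedious but mechanical, in the same spirit as the coherence verification in the proof of Thm.~\ref{TheoremCoherenceLagTwoGrb}.
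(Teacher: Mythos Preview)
Your approach has a genuine gap at its foundation. You write $g_{ij} = g_i \cup g_j$ and then $g_{ij}L$, treating $g_{ij}$ as an element of $L\SO(X)$ over $\gamma_i \cup \gamma_j$. But the paths $g_i \in P\Spin(X)$ (and their images in $P\SO(X)$) only share endpoint projections in $X$, not in $\Spin(X)$ or $\SO(X)$; the common cover $Z = P\Spin(X)$ sits over $X \times X$, not over $\Spin(X) \times \Spin(X)$. Hence $g_i \cup g_j$ is in general not a loop in $\SO(X)$, and $g_{ij}L$ is undefined. The whole ``section of $\Lag^{[1,n]}$'' strategy collapses. (This is also why your indexing at level $[1,2]$ looks odd: the object $\Pf(g_{12}L, g_{13}L)$ you write down would compare Lagrangians living over \emph{different} loops.)

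The paper's proof avoids this problem by not choosing any section at all. It keeps the Lagrangians $L_{ij}^a$ as free data in the cover $V^{[m,n]}$, together with the $q_{ij}^a \in P\Spin(d)$, and then builds the comparison out of an \emph{intertwiner line bundle}
\[
  \mathfrak{N}(g_1, g_2, q, L) \;=\; \underline{\Hom}_{g_2, g_1}\bigl(L^2(\A_0)_{q},\, \F_L\bigr),
\]
the space of unitaries $L^2(\A_0)_q \to \F_L$ intertwining the bimodule actions along $\Cl_{g_2}$ and $\Cl_{g_1}$. This line bundle is the bridge between the $\ImpLine$-side (encoded in the twist $L^2(\A_0)_q$) and the $\Pf/\Fus$-side (encoded in $\F_L$), and the maps $\nu$ and $\beta$ are then simple cospans built from composition and the canonical unitor $\chi_{q_{23}}$ of Example~\ref{ExampleModifiedActionConnesFusion}. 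The coherences \eqref{CoherenceEta}, \eqref{CoherenceBetaTilde}, \eqref{CocycleConditionb} then reduce to \eqref{FunctorialityConnesFusion} and \eqref{TwistFunctoriality}. The identity \eqref{IdentityFusion} you cite is indeed the germ of this comparison, but it cannot be promoted to a section-based refinement; it must be packaged as the line bundle $\mathfrak{N}$.
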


The proof of this theorem will be given at the end of this section.
We first employ this theorem to prove the main result of this article.
To this end, we use the following notion of a string structure; see \cite[Definition~1.1.5]{WaldorfStringConnections}.

\begin{definition}
 A \emph{string structure} on a spin manifold $X$ of dimension $d \geq 5$ is a trivialization of the lifting 2-gerbe $\LiftTwoGrb_{\Spin(X)}$ over $X$ for $\mathcal{G}_{\Imp^*}$.
\end{definition}

Recall from Example~\ref{ExampleImplementerGerbe} that the Dixmier-Douady class of $\mathcal{G}_{\Imp}$ (and hence also of its dual) is a generator for $H^3(\Spin(d), \Z) \cong \Z$.
By Thm.~1.1.3  in\cite{WaldorfStringConnections}, admits a string structure in the above sense if and only if the characteristic class $\frac{1}{2}p_1(X)$ vanishes.
The following theorem repeats Thm.~B from the introduction.

\begin{theorem}
\label{TheoremFusionProduct}
  Let $X$ be an oriented Riemannian manifold of dimension $d \geq 5$.
  Then there exists a spinor bundle $\SB$ with fusion product over $X$ if and only if $X$ admits a string structure.
\end{theorem}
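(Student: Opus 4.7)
The plan is to chain together the equivalences established in the previous sections. By Thm.~\ref{TheoremFusionProductFusion2Gerbe}, a loop space spinor bundle $\SB$ admits (after possibly twisting by a line bundle over $LX$) a smooth fusion product if and only if the fusion 2-gerbe $\FusTwoGrb(\SB)$ is trivializable. By Thm.~\ref{TheoremIsoFusLag}, whenever $\SB$ exists at all, $\FusTwoGrb(\SB) \cong \LagTwoGrb_X$, so the question reduces to triviality of the Lagrangian 2-gerbe $\LagTwoGrb_X$ on $X \times X$. When $X$ is spin, Thm.~\ref{ThmIsoCSLag} identifies $\LagTwoGrb_X$ with the doubled lifting 2-gerbe $\LiftTwoGrb^{\mathrm{dbl}}_{\Spin(X)}$ for the dual implementer extension, which by the doubling isomorphism \eqref{DoublingIsomorphism} is trivializable if and only if the (non-doubled) lifting 2-gerbe $\LiftTwoGrb_{\Spin(X)}$ on $X$ is. For $d \geq 5$, Thm.~\ref{ThmImpBasic} combined with Waldorf's identification of its Dixmier-Douady class as $\frac{1}{2}p_1(X)$ makes the latter the standard obstruction 2-gerbe to string structures.

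For the forward direction, assume $\SB$ is a loop space spinor bundle with fusion product. Then $\FusTwoGrb(\SB)$ is trivializable by Thm.~\ref{TheoremFusionProductFusion2Gerbe}, and transporting along the canonical isomorphism $\mathcal{h}_{\SB}$ of Thm.~\ref{TheoremIsoFusLag} yields a trivialization of $\LagTwoGrb_X$. Its orientation gerbe $\orclass(\LagTwoGrb_X)$ is therefore also trivializable, so $X$ is spin by Thm.~\ref{ThmLagTwoGerbSpin}. Fixing such a spin structure and applying Thm.~\ref{ThmIsoCSLag}, we obtain triviality of $\LiftTwoGrb_{\Spin(X)}^{\mathrm{dbl}}$, and hence of $\LiftTwoGrb_{\Spin(X)}$ itself by the doubling isomorphism \eqref{DoublingIsomorphism}. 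Since $d \geq 5$ guarantees the implementer extension is basic, this triviality is equivalent to $\frac{1}{2}p_1(X) = 0$, so $X$ is string.

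For the converse, assume $X$ is string, so $X$ is spin and $\frac{1}{2}p_1(X) = 0$. A fortiori the transgression $\tau(\frac{1}{2}p_1(X))$ vanishes, so Thm.~\ref{ThmLagLifting} yields a trivialization of $\LagGrb_{LX}$, and Thm.~\ref{ThmEquivalenceFinite} produces a loop space spinor bundle $\SB$, which we equip with a smoothing structure via Lemma~\ref{LemmaSmoothingStructureExistence}. On the other hand, $\frac{1}{2}p_1(X) = 0$ makes $\LiftTwoGrb_{\Spin(X)}$ trivializable; by \eqref{DoublingIsomorphism} so is $\LiftTwoGrb_{\Spin(X)}^{\mathrm{dbl}}$, by Thm.~\ref{ThmIsoCSLag} so is $\LagTwoGrb_X$, and by Thm.~\ref{TheoremIsoFusLag} so is $\FusTwoGrb(\SB)$. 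Finally, Thm.~\ref{TheoremFusionProductFusion2Gerbe} produces a line bundle $\T$ over $LX$ such that the twisted spinor bundle $\SB \otimes \T$ carries a smooth fusion product, as desired.

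At this stage the argument is essentially bookkeeping, since all substantial content has been isolated in the earlier theorems; the main point worth emphasizing is conceptual rather than technical. The fusion product detects the genuine string condition $\frac{1}{2}p_1(X) = 0$ rather than the weaker transgressed condition $\tau(\frac{1}{2}p_1(X)) = 0$ that merely ensures existence of a spinor bundle on $LX$. Geometrically this is because the fusion product lives naturally over the path space $PX$ and descends to an obstruction 2-gerbe on $X \times X$, which carries strictly more information than its loop transgression to $LX$. The doubling isomorphism \eqref{DoublingIsomorphism} is precisely the geometric mechanism converting this two-variable obstruction on $X \times X$ back into the characteristic class $\frac{1}{2}p_1(X)$ on $X$.
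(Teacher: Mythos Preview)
Your proof is correct and follows essentially the same route as the paper's own argument, chaining together Thm.~\ref{TheoremFusionProductFusion2Gerbe}, Thm.~\ref{TheoremIsoFusLag}, Thm.~\ref{ThmIsoCSLag}, and the doubling isomorphism \eqref{DoublingIsomorphism}. Your forward direction is in fact slightly more careful than the paper's: you explicitly invoke Thm.~\ref{ThmLagTwoGerbSpin} to deduce that $X$ is spin before applying Thm.~\ref{ThmIsoCSLag} (which requires the spin hypothesis), whereas the paper leaves this step implicit.
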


\begin{proof}
By \eqref{DoublingIsomorphism}, triviality of $\LiftTwoGrb_{\Spin(X)}$ is equivalent to triviality of the doubled lifting 2-gerbe $\smash{\LiftTwoGrb_{\Spin(X)}^{ \mathrm{dbl}}}$.
This implies that $X$ admits a string structure if and only if the doubled lifting 2-gerbe $\smash{\LiftTwoGrb_{\Spin(X)}^{\mathrm{dbl}}}$ for the dual implementer extension, given in \eqref{DataCSImp}, admits a trivialization.

Now, first suppose that there exists a spinor bundle $\SB$ with fusion product over $LX$.
Then, by Lemma~\ref{LemmaSmoothingStructureExistence}, there exists a smoothing structure for $\SB$ and a corresponding fusion 2-gerbe $\FusTwoGrb(\SB)$ (see \S\ref{SectionFusion2Gerbe}).
By Thm.~\ref{TheoremFusionProductFusion2Gerbe}, the existence of a fusion product implies that $\FusTwoGrb(\SB)$ admits a trivialization.
But by Thm.~\ref{TheoremIsoFusLag}, $\FusTwoGrb(\SB)$ is isomorphic to $\LagTwoGrb_X$.
Hence the existence of a spinor bundle with fusion product implies triviality of $\LagTwoGrb_X$, which in view of the isomorphism \eqref{IsomorphismCSLag} and the previous discussion implies that $X$ admits a string structure.

Conversely, suppose that $X$ admits a string structure.
In particular, $X$ is spin, and the string condition implies that the structure group $L\Spin(d)$ of the $LX$ can be lifted to its basic central extension \cite{KillingbackWorldSheet, WaldorfSpinString}. 
By Thm.~\ref{ThmLagLifting}, this implies that the Lagrangian gerbe $\LagGrb_{LX}$ of the loop space admits a trivialization, hence there exists a spinor bundle $\SB^\prime$ with a smoothing structure over $LX$ (Thm.~\ref{FunctorTrivSpin}).
But since $X$ admits a string structure, the doubled lifting 2-gerbe $\LiftTwoGrb_{\Spin(X)}^{\mathrm{dbl}}$ for the dual implementer extension and hence (by Theorems \ref{ThmIsoCSLag} \& \ref{TheoremIsoFusLag}) also the fusion 2-gerbe $\FusTwoGrb(\SB^\prime)$ are trivializable.
Therefore, by Thm.~\ref{TheoremFusionProductFusion2Gerbe}, there exists a line bundle $\mathfrak{T}$ over $LX$ such that $\SB = \SB^\prime \otimes \mathfrak{T}$ admits a smooth fusion product.
\end{proof}

We finish by proving Thm.~\ref{ThmIsoCSLag}.
Before we start the proof, we describe %
 the lifting 2-gerbe $\LiftTwoGrb_{P}^{\mathrm{dbl}}$ in more detail in the case that $G = \Spin(d)$ and the multiplicative bundle gerbe $\mathcal{G}$ is the bundle gerbe obtained by applying Construction~\ref{ConstructionMultGerbsFromCentExt} to the central extension of $L\Spin(d)$ \emph{dual} to the implementer extension, see Example~\ref{ExampleImplementerGerbe}.

We will make the following modification:
By definition, every loop group central extension with fusion product has two products: The group multiplication and the (partially defined) fusion product.
In relation to the cover diagram \eqref{CoverDiagramCS}, the fusion product lives over $W^{[3, 2]}$, while group multiplication lives over $W^{[2, 3]}$.
Below, we will replace the line bundle defining the central extension $\smash{\widetilde{L\Spin}}(d)$ by an isomorphic one, which roughly has the effect that fusion product and group multiplication swap places.
As in the case of the Lagrangian 2-gerbe, multiplication of implementers lives over $\Lag^{[3, 2]}$ while their fusion product lives over $\Lag^{[2, 3]}$, this will facilitate the construction of the isomorphism in \S\ref{SectionComparisonLagCS}.

Throughout, we fix a Lagrangian $L \in \Lag_0^+$, together with a cyclic and separating vector in $\F_L$, which allows to identify $\F_L \cong L^2(\A_0)$ (see Remark~\ref{RemarkPreferredConnectedComponent}).
For $(q, q^\prime) \in P\SO(d)^{[2]}$, we then set
\begin{equation*}
  \ImpLine(q, q^\prime) \defeq \underline{\Hom}\bigl(L^2(\A_0)_{q}, L^2(\A_0)_{q^\prime}\bigr),
\end{equation*}
where the subscript $q$ denotes a modification of the right action by the automorphism $\Cl_q$ of $\A_0$.
The adjoint operation is a $\U(1)$-anti-equivariant map identifying $\Imp(q \fuse q^\prime)^*$ with $\Imp(q^{-1} \fuse (q^\prime)^{-1})$, which implies that the $\U(1)$-equivariant map
\begin{equation}
\label{IsoVdash}
  \Imp_L(q \fuse q^\prime)^* \cong \Imp(q^{-1} \fuse (q^\prime)^{-1}) \longrightarrow \ImpLine(q, q^\prime), \qquad U \longmapsto L^2(\Cl_{q^\prime})U, 
\end{equation}
identifies $\ImpLine(q, q^\prime)$ with the complex line associated to the $\U(1)$-torsor dual to $\Imp_L(q \fuse q^\prime)$.
%
%
Replacing $\smash{\widetilde{L\Spin}}(d) \times_{\U(1)} \C$ by $\ImpLine$ in Example~\ref{ExampleImplementerGerbe}, we obtain a multiplicative line bundle gerbes $\mathcal{G}_{\Imp^*}$ and $\smash{\mathcal{G}_{\Imp^*}^{\mathrm{dbl}}}$ over $\Spin(d)$, respectively $\Spin(d) \times \Spin(d)$.
We now describe the data of the corresponding lifting 2-gerbe $\smash{\LiftTwoGrb_P^{\mathrm{dbl}}}$ explicitly.
Its cover diagram is given by \eqref{CoverDiagramCS}.
All data depend on $W^{[m, n]}$ only through the elements $q_{ij}^a$.

\begin{enumerate}
\item[{[2, 2]}]
The line bundle over $W^{[2, 2]}$ is the implementer bundle $\ImpLine$.
\item[{[3, 2]}]
It turns out that the isomorphism \eqref{IsoVdash} intertwines the fusion product of $\Imp_L^*$ with composition of operators.
The bundle gerbe product over $W^{[3, 2]}$ is therefore given by
\begin{equation*}
 \tilde{\lambda} : \ImpLine(q^\prime, q^{\prime\prime}) \otimes \ImpLine(q, q^\prime) \longrightarrow \ImpLine(q, q^{\prime\prime}), \qquad \Phi^\prime \otimes \Phi \longmapsto \Phi^\prime\Phi.
\end{equation*}
\end{enumerate}
The bundle isomorphism of the refinement along \eqref{RefinementForMmultGerbe}, given by group multiplication in $\Imp^*$, translates, via the isomorphism \eqref{IsoVdash}, to
\begin{equation}
\label{TransformedMultiplication}
\begin{aligned}
 \ImpLine(q_{23}, q_{23}^\prime) \otimes \ImpLine(q_{12}, q_{12}^\prime) &\longrightarrow \ImpLine(q_{23}q_{12}, q_{23}^\prime q_{12}^\prime), \\
  \Phi_{23} \otimes \Phi_{12} &\longmapsto L^2(\Cl_{q_{23}^\prime}) \Phi_{12} L^2(\Cl_{q_{23}^\prime})^* \Phi_{23}.
\end{aligned}
\end{equation}
%
%
To describe the data in the third column of the cover diagram \eqref{CoverDiagramCS}, one has to calculate the image $\mathfrak{m}$ of this refinement under the functor \eqref{InclusionRefinements} explicitly.
The result is the following.
\begin{enumerate}
\item[{[1, 3]}]
The line bundle over $W^{[1, 3]}$ is the \emph{multiplication line} $\Mul$, defined as the pullback of $\ImpLine$ along the map $W^{[1, 3]} \to W^{[1, 2]}$ given by
\begin{equation*}
  \left(
\begin{pmatrix} 
p_1 & p_2 & p_3 \\ 
\tilde{p}_1 & \tilde{p_2} & \tilde{p}_3
\end{pmatrix}; 
q_{23}, q_{12}, q_{13}
\right)
\longmapsto
  \left(
\begin{pmatrix} 
p_1  & p_3 \\ 
\tilde{p}_1 & \tilde{p}_3
\end{pmatrix};
 q_{23}q_{12}, q_{13}
\right).
\end{equation*}
Explicitly, the fibers of $\Mul$ are
\begin{equation*}
  \Mul(q_{23}, q_{12}, q_{13}) \defeq \ImpLine(q_{23}q_{12}, q_{13}).
\end{equation*}
\item[{[2, 3]}]
Over $W^{[2, 3]}$, we have an isomorphism
\begin{equation*}
 \tilde{\mu} :  \Mul^2 \otimes \ImpLine_{23} \otimes \ImpLine_{12} \longrightarrow \ImpLine_{13} \otimes \Mul^1,
\end{equation*}
fiberwise given as the cospan
\begin{equation*}
\begin{tikzcd}[row sep=0.1cm]
\Mul(q_{23}^\prime, q_{12}^\prime, q_{13}^\prime) \otimes \ImpLine(q_{23}, q_{23}^\prime) \otimes \ImpLine(q_{12}, q_{12}^\prime)
\ar[dr, bend left=10]
\ar[dd, dashed, "\tilde{\mu}"]
\\
&
\Mul(q_{23}, q_{12}, q_{13}^\prime)
\\
 \ar[ur, bend right=10, "\text{composition}"']
  \ImpLine(q_{13}, q_{13}^\prime) \otimes \Mul(q_{23}, q_{12}, q_{13}),
\end{tikzcd}
\end{equation*}
where the bottom right map is composition and the top right map, using formula \eqref{TransformedMultiplication}, is
\begin{equation*}
  \Phi_{123} \otimes \Phi_{23} \otimes \Phi_{12}  \longmapsto \Phi_{123} L^2(\Cl_{q_{23}^\prime}) \Phi_{12} L^2(\Cl_{q_{23}^\prime})^* \Phi_{23}.
\end{equation*}
\item[{[1, 4]}]
Over $W^{[1, 4]}$, the associator 2-morphism 
\begin{equation*}
  \tilde{\alpha} : \Mul_{124} \otimes \Mul_{234}
  \longrightarrow 
  \Mul_{134} \otimes \Mul_{123}
\end{equation*}
is canonically provided by the functor \eqref{InclusionRefinements}.
Explicitly, it is fiberwise given by the cospan
\begin{equation*}
\begin{tikzcd}[row sep=0.1cm, column sep=-1cm]
 \Mul(q_{24}, q_{12}, q_{14}) \otimes \Mul(q_{34}, q_{23}, q_{24})
\ar[dr, bend left=10, "\text{composition}"]
\ar[dd, dashed, "\tilde{\alpha}"]
\\
&
\Mul^{(3)}(q_{34}, q_{23}, q_{12}, q_{14}) \defeq \ImpLine(q_{34}q_{23}q_{12}, q_{14})
\\
 \ar[ur, bend right=10]
 \Mul(q_{34}, q_{13}, q_{14}) \otimes \Mul(q_{23}, q_{12}, q_{13}),
\end{tikzcd}
\end{equation*}
where the top right map is composition of operators and the bottom right map is given by
\begin{equation*}
\begin{aligned}
  \Phi_{134} \otimes \Phi_{123} &\longmapsto \Phi_{134} L^2(\Cl_{q_{34}}) \Phi_{123}L^2(\Cl_{q_{34}})^*.
\end{aligned}
\end{equation*}
%
%
\end{enumerate}

The data described above can be arranged with respect to the cover diagram \eqref{CoverDiagramCS} as follows.
\begin{equation} 
\label{DataCSImp}
%
\LiftTwoGrb_{P}^{\mathrm{dbl}} =  \left[
\begin{tikzcd}[column sep=0.7cm]
 & 
 	\tilde{\lambda}
	\ar[d,  dotted, -]
			\\
 & 
 	\ImpLine
 	\ar[d,  dotted, -]
	& 
		\tilde{\mu}
			\ar[l,  dotted, -]
 		\ar[d,  dotted, -]
			\\
&
	{\color{white}.}
	\ar[d,  dotted, -]
	& 
		\Mul
			\ar[l,  dotted, -]
		\ar[d,  dotted, -]
		&
			\tilde{\alpha}
			\ar[d,  dotted, -]
			\ar[l,  dotted, -]
			\\
(P \times P) \ar[d]
&
	(P \times P)^{[2]}
		\ar[l, shift left=1mm] 
		\ar[l, shift right=1mm]
	&
		(P \times P)^{[3]}
			\ar[l]
 			\ar[l, shift left=2mm]
 			\ar[l, shift right=2mm]
		&
			(P \times P)^{[4]}
			\ar[l, shift left=1mm] 
		\ar[l, shift right=1mm]
			\ar[l, shift left=3mm] 
		\ar[l, shift right=3mm]
			\\
			X \times X
\end{tikzcd}
\right]
\end{equation}

\begin{proof}[of Thm.~\ref{ThmIsoCSLag}]
The isomorphism $\mathcal{h}$ will be given in terms of the common cover
\begin{equation*}
\begin{tikzcd}[column sep=2cm]
 \Spin(X) \times \Spin(X)&  \ar[l, "\substack{\text{endpoint} \\ \text{evaluation}}"'] \ar[r, "\substack{\text{footpoint} \\ \text{projection}}"] P\Spin(X) & P X.
\end{tikzcd}
\end{equation*}
Its bundle gerbe over $P\Spin(X)$ is trivial, hence the first row of its cover diagram is trivial. The rest of its cover diagram is
\begin{equation} 
\label{CoverDiagramIsoCsLag}
\begin{tikzcd}
		& 
 	V^{[3, 2]}  
	\ar[d]
 	\ar[d, shift left=2mm]
 	\ar[d, shift right=2mm]
	& & 
		& 
			\\
	& 
 	V^{[2, 2]}  
 	\ar[d, shift left=1mm]
 	\ar[d, shift right=1mm]
	& 
		V^{[2, 3]}
			\ar[l]
 			\ar[l, shift left=2mm]
 			\ar[l, shift right=2mm]
 		\ar[d, shift left=1mm]
 		\ar[d, shift right=1mm]
		& & 
			\\
	&
	V^{[1, 2]} 
	\ar[d]
	& 
	   V^{[1, 3]}
			\ar[l]
 			\ar[l, shift left=2mm]
 			\ar[l, shift right=2mm]
		\ar[d]
		&
			V^{[1, 4]}
			\ar[d]
			\ar[l, shift left=1mm] 
		\ar[l, shift right=1mm]
			\ar[l, shift left=3mm] 
		\ar[l, shift right=3mm]
		& 
			\\
P\Spin(X) \ar[d]
&
	P\Spin(X)^{[2]}
		\ar[l, shift left=1mm] 
		\ar[l, shift right=1mm]
	&
		P\Spin(X)^{[3]}
			\ar[l]
 			\ar[l, shift left=2mm]
 			\ar[l, shift right=2mm]
		&
			P\Spin(X)^{[4]}
			\ar[l, shift left=1mm] 
		\ar[l, shift right=1mm]
			\ar[l, shift left=3mm] 
		\ar[l, shift right=3mm]
			\\
			X \times X
\end{tikzcd}
\end{equation}
with
\begin{equation*}
V^{[m, n]} = 
\left\{ 
\left.
\left(
 g_1, \dots, g_n;
\begin{pmatrix} q_{ij}^1 \\ \vdots \\ q_{ij}^m \end{pmatrix}_{i < j};
\begin{pmatrix} L_{ij}^1 \\ \vdots \\ L_{ij}^m \end{pmatrix}_{i < j}
\right)
~
\right| 
~
\begin{matrix}
q_{ij}^a(\pi) = g_j(\pi)^{-1} g_i(\pi) \\
  L_{ij}^a \in \Lag_{\gamma_i \fuse \gamma_j}
\end{matrix}
 \right\}.
\end{equation*}
Here $g_i \in P\Spin(X)$ lifts the path $\gamma_i$ and $q_{ij}^a \in P\Spin(d)$, $L_{ij}^a \in \Lag$.
The diagram \eqref{CoverDiagramIsoCsLag} maps to the cover diagrams \eqref{CoverDiagramCS} and \eqref{CoverDiagramLag} in the obvious way.

The data of $\mathcal{h}$ can be arranged with respect to the cover \eqref{CoverDiagramIsoCsLag} as follows.
\begin{equation} 
\label{CoverDiagramIsoCsLag2}
\begin{tikzcd}
		& 
 	\substack{\text{coherence} \\ \text{condition} \\ \text{for } \nu}  
	\ar[d, dotted, -]
	& & 
		& 
			\\
	& 
 	\nu 
 	\ar[d,  dotted, -]
	& 
		\substack{\text{coherence} \\ \text{condition} \\ \text{for } \beta} 
			\ar[l, dotted, -]
 		\ar[d,  dotted, -]
		& & 
			\\
	&
	\mathfrak{N} 
	\ar[d, dotted, -]
	& 
		\beta
			\ar[l,  dotted, -]
		\ar[d, dotted, -]
		&
			\substack{\text{coherence} \\ \text{condition} \\ \text{for } b} 
			\ar[d, dotted, -]
			\ar[l, dotted, -]
		& 
			\\
P\Spin(X) \ar[d]
&
	P\Spin(X)^{[2]}
		\ar[l, shift left=1mm] 
		\ar[l, shift right=1mm]
	&
		P\Spin(X)^{[3]}
			\ar[l]
 			\ar[l, shift left=2mm]
 			\ar[l, shift right=2mm]
		&
			P\Spin(X)^{[4]}
			\ar[l, shift left=1mm] 
		\ar[l, shift right=1mm]
			\ar[l, shift left=3mm] 
		\ar[l, shift right=3mm]
			\\
			X \times X
\end{tikzcd}
\end{equation}
Explicitly, these data are the following.
\begin{enumerate}
\item[{[1, 2]}]
The line bundle $\mathfrak{N}$ over $V^{[1, 2]}$ has fibers
\begin{equation*}
   \mathfrak{N}(g_1, g_2, q, L) = \underline{\Hom}_{g_2, g_1}(L^2(\A_0)_{q}, \F_L),
\end{equation*}
where the right hand side denotes the space of unitary intertwiners $L^2(\A_0)_{q} \to \F_L$ along $\Cl_{g_2}$ and $\Cl_{g_1}$ (recall that $L^2(\A_0)_q$ denotes the $\A_0$-$\A_0$-bimodule obtained from $L^2(\A_0)$ by modifying the right action via the Boguliubov automorphism $\Cl_q$).
%

\item[{[2, 2]}]
Over $V^{[2, 2]}$, we have a canonical isomorphism
\begin{equation*}
\nu : \mathfrak{N}^2 \otimes \ImpLine \longrightarrow \Pf \otimes \mathfrak{N}^1, 
\end{equation*}
fiberwise given by the cospan
\begin{equation*}
\begin{tikzcd}[row sep=0cm]
\overbrace{\underline{\Hom}_{g_2, g_1}(L^2(\A_0)_{q^\prime}, \F_{L^\prime})}^{\mathfrak{N}(g_1, g_2, q^\prime, L^\prime)} \otimes \overbrace{\underline{\Hom}(L^2(\A_0)_q, L^2(\A_0)_{q^\prime})}^{\ImpLine(q, q^\prime)} 
\ar[dd, dashed, "\nu"] 
\ar[dr, "\text{composition}", bend left=10]
 \\
 & \underline{\Hom}_{g_2, g_1}(L^2(\A_0)_q, \F_L).
 \\
 \ar[ur, "\text{composition}"', bend right=10]
 \underbrace{\underline{\Hom}(\F_L, \F_{L^\prime})}_{\Pf(L, L^\prime)} \otimes \underbrace{\underline{\Hom}_{g_2, g_1}(L^2(\A_0)_{q}, \F_{L})}_{\mathfrak{N}(g_1, g_2, q, L)}
\end{tikzcd}
\end{equation*}

\item[{[1, 3]}]
The isomorphism 
\begin{equation*}
  \beta : \mathfrak{N}_{13} \otimes \Mul \longrightarrow \Fus \otimes \mathfrak{N}_{23} \otimes \mathfrak{N}_{12},
\end{equation*}
over $V^{[1, 3]}$ is fiberwise given by the cospan
\begin{equation*}
\begin{tikzcd}[column sep=-5.5cm, row sep=0.8cm]
\overbrace{\underline{\Hom}_{g_3, g_1}(L^2(\A_0)_{q_{13}}, \F_{L_{13}})}^{\mathfrak{N}(g_1, g_3, q_{13}, L_{13})}
	\otimes 
\overbrace{\underline{\Hom}\bigl(L^2(\A_0)_{q_{23}q_{12}}, L^2(\A_0)_{q_{13}} \bigr)}^{\Mul(q_{23}, q_{12}, q_{13})}
\ar[dr, bend left=11, "\text{composition}", near end]
\ar[dd, dashed, "\beta"]
\\
& \underline{\Hom}_{g_3, g_1}(L^2(\A_0)_{q_{23}q_{12}}, \F_{L_{13}}) 
\\
\ar[ur, bend right=8]
\underbrace{\underline{\Hom}(\F_{L_{23}} \boxtimes_{\A_{\gamma_2}} \F_{L_{12}}, \F_{L_{13}})}_{\Fus(L_{23}, L_{12}, L_{13})} \otimes \underbrace{\underline{\Hom}_{g_3, g_2}(L^2(\A_0)_{q_{23}}, \F_{L_{23}})}_{\mathfrak{N}(g_2, g_3, q_{23}, L_{23})} \otimes \underbrace{\underline{\Hom}_{g_2, g_1}(L^2(\A_0)_{q_{12}}, \F_{L_{12}})}_{\mathfrak{N}(g_1, g_2, q_{12}, L_{12})}
\end{tikzcd}
\end{equation*}
where the top right arrow is composition, while the bottom right arrow is the isomorphism
\begin{equation*}
  \Upsilon \otimes \Psi_{23} \otimes \Psi_{12} \longmapsto \Upsilon (\Psi_{23} \boxtimes \Psi_{12}) \chi_{q_{23}, q_{12}}^*.
\end{equation*}
Here $\chi_{q_{23}, q_{12}} : L^2(\A_0)_{q_{23}} \boxtimes_{\A_0} L^2(\A_0)_{q_{12}} \to L^2(\A_0)_{q_{23}q_{12}}$ is the unitary isomorphism from Example~\ref{ExampleModifiedActionConnesFusion}.
\end{enumerate}

We have to check three coherences.
As the bundle gerbe $\mathcal{H}$ of $\mathcal{h}$ is trivial, the coherence diagram \eqref{CoherenceEta} over $V^{[3, 2]}$ collapses in our case to
\begin{equation*}
\begin{tikzcd}[column sep=2cm]
  \mathfrak{N}^3 \otimes \ImpLine^{23} \otimes \ImpLine^{12} 
 \ar[r, "1 \otimes \tilde{\lambda}"]
 \ar[d, "\nu^{23}  \otimes 1 \otimes  1"']
 &
    \mathfrak{N}^3 \otimes \ImpLine^{13}
     \ar[dd, "\nu^{13}"]
 \\
\Pf^{23} \otimes   \mathfrak{N}^2 \otimes  \ImpLine^{12}
	 \ar[d, " 1 \otimes  \nu^{12}"'] 
	  \\
\Pf^{23} \otimes \Pf^{12} \otimes \mathfrak{N}^1
	\ar[r, "\lambda \otimes 1"] 
	&
	\Pf^{13}  \otimes \mathfrak{N}^1,
\end{tikzcd}
\end{equation*}
which is clearly commutative.

The coherence diagram \eqref{CoherenceBetaTilde} for over $V^{[2, 3]}$ is
\begin{equation*}
\begin{tikzcd}[column sep=2cm]
  \mathfrak{N}_{13}^2 \otimes \Mul^2 \otimes \ImpLine_{23} \otimes \ImpLine_{12}
  \ar[r, "\beta^2\otimes 1 \otimes 1"]
  \ar[dd, "1 \otimes \tilde{\mu}"']
  	&
  	\Fus^2 \otimes \mathfrak{N}_{23}^2 \otimes \mathfrak{N}_{12}^2 \otimes \ImpLine_{23} \otimes \ImpLine_{12}
		\ar[d, equal]
   \\
   &
   \Fus^2 \otimes \mathfrak{N}_{23}^2  \otimes \ImpLine_{23} \otimes \mathfrak{N}_{12}^2 \otimes \ImpLine_{12}
		\ar[d, "1\otimes \nu_{23} \otimes 1\otimes 1"]
	\\
  \mathfrak{N}_{13}^2 \otimes \ImpLine_{13} \otimes \Mul^{1}
  	\ar[ddd, "\nu_{13} \otimes 1"']
	&
	\Fus^2 \otimes \Pf_{23}  \otimes  \mathfrak{N}_{23}^1\otimes \mathfrak{N}_{12}^2 \otimes \ImpLine_{12}
	\ar[d, "1 \otimes 1 \otimes 1\otimes \nu_{12} "]
	\\
	&
	\Fus^2 \otimes \Pf_{23}  \otimes  \mathfrak{N}_{23}^1\otimes \Pf_{12} \otimes \mathfrak{N}_{12}^1
	\ar[d, equal]	\\
	&
	\Fus^2\otimes \Pf_{23}  \otimes \Pf_{12}  \otimes  \mathfrak{N}_{23}^1 \otimes \mathfrak{N}_{12}^1
	\ar[d, "\mu \otimes 1\otimes  1"]
	 \\
\Pf_{13} \otimes \mathfrak{N}_{13}^1  \otimes \Mul^{1}
	\ar[r, "1 \otimes 1 \otimes \beta^1"]
	 &
	 \Pf_{13}   \otimes  \Fus^1  \otimes \mathfrak{N}_{23}^1 \otimes \mathfrak{N}_{12}^1
\end{tikzcd}
\end{equation*}
We consider this diagram over a point
\begin{equation*}
\left(
 g_1, g_2, g_3;
\begin{pmatrix} 
q_{23} & q_{12} & q_{13} \\ 
q_{23}^\prime & q_{12}^\prime & q_{13}^\prime 
\end{pmatrix}
;
\begin{pmatrix} 
L_{23} & L_{12} & L_{13} \\ 
L_{23}^\prime & L_{12}^\prime & L_{13}^\prime 
\end{pmatrix}
\right) 
\in V^{[2, 3]}.
\end{equation*}
Here, the bottom left line is, via composition, isomorphic to
\begin{equation}
\label{BottomLeftLine}
 \mathfrak{N}(g_1, g_3, q_{23}q_{12}, L_{13}^\prime) = \underline{\Hom}_{g_3, g_1}(L^2(\A_0)_{q_{23}q_{12}}, \F_{L_{13}}).
\end{equation}
A diagram chase starting with an element $\Upsilon \otimes \Psi_{23} \otimes \Psi_{12} \otimes \Phi_{23} \otimes \Phi_{12}$ of the top right line reveals that proving commutativity of the diagram boils down to showing the equality
\begin{equation*}
\Upsilon  (\Psi_{23} \boxtimes \Psi_{12})  \chi_{q_{23}^\prime, q_{12}^\prime}^* L^2(\Cl_{q_{23}^\prime}) \Phi_{12}   L^2(\Cl_{q_{23}^\prime})^*\Phi_{23} = 
\Upsilon (\Psi_{23} \Phi_{23} \boxtimes \Psi_{12} \Phi_{12}) \chi_{q_{23}}^*
\end{equation*}
inside \eqref{BottomLeftLine};
 the left hand side corresponds to the north-west composition, while the right hand side corresponds to the east-south composition.
 But this identity follows from \eqref{FunctorialityConnesFusion} and \eqref{TwistFunctoriality}.

Finally, it is convenient to split the diagram \eqref{CocycleConditionb} into two halves, 
\begin{equation*}
\begin{tikzcd}[column sep=1.8cm]
\mathfrak{N}_{14} \otimes \Mul_{124} \otimes \Mul_{234} 
	\ar[r]
	\ar[d, "\beta_{124} \otimes 1"']
	&
	\mathfrak{N}_{14} \otimes \Mul^{(3)}
	\ar[d, "\text{composition}"]
	\\
	\Fus_{124} \otimes \mathfrak{N}_{24}  \otimes \mathfrak{N}_{12} \otimes \Mul_{234}
	\ar[d, equal]
	&
	\mathfrak{NMul}^{(3)}
	\ar[d]
	\\
\Fus_{124} \otimes \mathfrak{N}_{24}  \otimes \Mul_{234} \otimes \mathfrak{N}_{12}
	\ar[d, "1 \otimes \beta_{234} \otimes 1"]
	&
	\mathfrak{FusN}^{(3)}
	\\
\Fus_{124} \otimes \Fus_{234} \otimes \mathfrak{N}_{34} \otimes \mathfrak{N}_{23}  \otimes \mathfrak{N}_{12}
	\ar[r]
	&
	\Fus^{(3)}  \otimes \mathfrak{N}_{34} \otimes \mathfrak{N}_{23}  \otimes \mathfrak{N}_{12}
	\ar[u, "\text{composition}"']
\end{tikzcd}
\end{equation*}
and
\begin{equation*}
\begin{tikzcd}[column sep=1.8cm]
\mathfrak{N}_{14} \otimes \Mul^{(3)}
	\ar[d, "\text{composition}"']
	&
	\mathfrak{N}_{14} \otimes \Mul_{134} \otimes \Mul_{123}
	\ar[d, "\beta_{134} \otimes 1"]
	\ar[l]
	\\
	\mathfrak{NMul}^{(3)}
	\ar[d]
	&
	\Fus_{134} \otimes \mathfrak{N}_{34} \otimes \mathfrak{N}_{13} \otimes \Mul_{123}
	\ar[d, "1 \otimes 1 \otimes \beta_{123}"]
	\\
	\mathfrak{FusN}^{(3)}
	&
	\Fus_{134} \otimes \mathfrak{N}_{34} \otimes \Fus_{123} \otimes \mathfrak{N}_{23} \otimes \mathfrak{N}_{12}
	\ar[d, equal]
	\\
\Fus^{(3)}  \otimes \mathfrak{N}_{34} \otimes \mathfrak{N}_{23}  \otimes \mathfrak{N}_{12}
\ar[u, "\text{composition}"]
	&
	\Fus_{134} \otimes \Fus_{123}  \otimes \mathfrak{N}_{34} \otimes \mathfrak{N}_{23}  \otimes \mathfrak{N}_{12},
	\ar[l]
\end{tikzcd}
\end{equation*}
where the horizontal arrows are the ones from the defining cospan of $\alpha$, respectively $\tilde{\alpha}$.
Joining those two diagrams at the common side gives \eqref{CocycleConditionb}, hence commutativity of the latter follows from commutativity of those individual diagrams.
Here 
\begin{align*}
\mathfrak{NMul}^{(3)}(g_1, g_4, L_{14}, q_{34}, q_{23}, q_{12}) &= \underline{\Hom}_{g_4, g_1}\bigl(L^2(\A_0)_{q_{34}q_{23}q_{12}}, \F_{L_{14}}\bigr) \\
\mathfrak{FusN}^{(3)}(g_1, g_4, L_{14}, q_{34}, q_{23}, q_{12}) &= \underline{\Hom}_{g_4, g_1}\bigl(L^2(\A_0)_{q_{34}} \boxtimes_{\A_0} L^2(\A_0)_{q_{23}}\boxtimes_{\A_0} L^2(\A_0)_{q_{12}}, \F_{L_{14}}\bigr)
\end{align*}
and the map $\mathfrak{NMul}^{(3)} \to \mathfrak{FusN}^{(3)}$ is  fiberwise given by
\begin{equation*}
\Psi \longmapsto \Psi \chi_{q_{34}q_{23}, q_{12}} (\chi_{q_{34}, q_{23}} \boxtimes \id_{L^2(\A_0)}) = \Psi \chi_{q_{34}, q_{23}q_{12}} (\id_{L^2(\A_0)} \boxtimes \chi_{q_{23}, q_{12}}),
\end{equation*}
where the equality on the right hand side follows from \eqref{AssociativityChi}.
Commutativity of both diagrams is easily checked using \eqref{FunctorialityConnesFusion} and \eqref{TwistFunctoriality} again.
\end{proof}

\appendix

\section{Super von Neumann algebras and Connes fusion}

In this appendix, we give some background to super von Neumann algebras and the Connes fusion product of super bimodules between them.
Standard references for the Connes fusion product in the ungraded case are \cite{thom11, BrouwerBicategorical}, but here we present its adaptation to the ungraded case, which we were unable to find in the literature.

\medskip

Throughout, all Hilbert spaces are separable and all $C^*$-algebras are assumed to have a faithful representation on a separable Hilbert space.
Recall that a \emph{von Neumann algebra} is a unital $*$-algebra $A$ that is isomorphic to a subalgebra of $\BB(H)$, $H$ a Hilbert space, which is closed with respect to the ultraweak operator topology (or, equivalently, the weak or strong topology).
Equivalently (by the theorem of Sakai), a von Neumann algebra is a $C^*$-algebra $A$ that admits a \emph{predual}, i.e., a Banach space $A_*$ such that $A$, as a Banach space, is the dual of $A_*$ (the predual is unique up to unique isomorphism).
The \emph{ultraweak topology} on a von Neumann algebra $A$ is the weak-$*$-topology induced by its predual; it coincides with the ultraweak topology induced by any faithful $*$-representation.

For von Neumann algebras $A$, $B$, a \emph{normal} $*$-homomorphism $\varphi: A \to B$  is a $*$-homomorphism that is continuous with respect to the ultraweak topologies.
This is equivalent to $\varphi$ being the dual of a bounded linear operator $\varphi_* : B_* \to A_*$ between the preduals.
It is a fact that every $*$-\emph{iso}morphism between von Neumann operators is automatically normal.
%

\subsection{Super von Neumann algebras}
\label{SectionASuperVN}

A \emph{super Hilbert space} is a Hilbert space $H$ with a direct sum decomposition $H = H^0 \oplus H^1$, where the summands are called the ``even'' and ``odd'' parts of $H$.
The \emph{grading operator} of a super Hilbert space $H$ is the operator that acts as multiplication with $(-1)^i$ on $H^i$.

A \emph{super von Neumann algebra} is a von Neumann algebra together with an involutive $*$-automorphism $\tau$.
The eigendecomposition $A = A^0 \oplus A^1$ of $\tau$ gives a decomposition into ``even'' and ``odd'' parts of $A$, which satisfy $A^i \cdot A^j \subseteq A^{i+j}$.
Equivalently, $A$ is isomorphic to an ultraweakly closed, graded subalgebra of $\BB(H)$ for some super Hilbert space $H$, where we say that a subalgebra is \emph{graded} if it is preserved by conjugation with the grading operator of $H$; 
the grading operator of $A$ is then given by conjugation with that of $H$.
$*$-homomorphisms between super von Neumann algebras will always be required to preserve the grading.

Let $A$ be a super von Neumann algebra.
The \emph{(super) opposite} of $A$ is the super von Neumann algebra $A^{\op}$ consisting of elements $a^{\op}$ for $a \in A$.
It is equipped with the super vector space structure such that $a \mapsto a^{\op}$ is an isomorphism, but carries the opposite multiplication and $*$-operation,
\begin{equation}
\label{OppositeMultiplication}
  a^{\op} \cdot b^{\op} = (-1)^{|a||b|} (ba)^{\op}, \qquad (a^{\op})^* = (-1)^{|a|} (a^*)^{\op}.
\end{equation}
 A \emph{(super) anti-$*$-homomorphism}  $\varphi: A \to B$ is a linear map satisfying
\begin{equation*}
  \varphi(ab) = (-1)^{|a||b|}\varphi(b)\varphi(a).
\end{equation*}
An anti-$*$-homomorphism $\varphi : A \to B$ is the same thing as a $*$-homomorphism $A^{\op} \to B$ or $A \to B^{\op}$.
If $A$ and $B$ are super $*$-algebras, then the $*$-operation is an anti-$*$-homomorphism $\overline{A} \to A$, but \emph{not} a \emph{super} anti-$*$-homomorphism unless $A$ is purely even.
However, the map $\sharp$ given by 
 \begin{equation*}
  a^\sharp \defeq \begin{cases} a^* & a \text{ even} \\ i a^* & a \text{ odd}. \end{cases}
\end{equation*}
is a super anti-$*$-homomorphism $\overline{A} \to A$, for any super $*$-algebra $A$.
%

\subsection{Super bimodules}
\label{SectionSuperBimodules}

Let $A$ and $B$ be super von Neumann algebras. 
Recall that a super \emph{$A$-$B$-bimodule} is a super Hilbert space $M$ together with grading preserving unital normal $*$-representations $\pi_A$ of $A$ and $\pi_B$ of $B^{\op}$ on $M$, whose images super commute.
Here $B^{\op}$ denotes the super opposite algebra; see \eqref{OppositeMultiplication}.
In terms of these representations, the bimodule action is defined by
\begin{equation}
\label{BimoduleStructureRepresentations}
      a\lact  \xi \ract b \defeq (-1)^{|\xi||b|}\pi_A(a)\pi_B(b)\xi, \qquad \xi \in M, ~~a \in A, ~b \in B.
\end{equation}
We remark that it is crucial that $\pi_B$ is a representation of the \emph{super} opposite algebra, and that $\pi_A$ and $\pi_B$ \emph{super} commute in order for formula \eqref{BimoduleStructureRepresentations} to indeed define a bimodule structure (where, of course, the left and right action are required to \emph{commute}, not super commute).

\medskip

Every super von Neumann algebra $A$ has a canonically associated super $A$-$A$-bimodule $L^2(A)$, called the \emph{standard bimodule} of $A$ \cite{HaagerupStandardForm, Kosaki}, \cite[IX]{Takesaki2}.
It is characterized (unique up to isomorphism) by the existence of an even, anti-linear, isometric involution $J : L^2(A) \to L^2(A)$ and a self-dual cone $P \subset L^2(A)$ with the following properties:
\begin{enumerate}[$(1)$]
\item The right action is given by $\xi \ract a = J\pi_\ell(a^*)J\xi$, $a \in A$, $\xi \in L^2(A)$, where $\pi_\ell$ denotes the representation of $A$ on $L^2(A)$ that induces the left action.
Equivalently, the right representation is given by $\pi_r(a) = (-1)^{|a||\xi|} J\pi_\ell(a^*)J\xi$.
\item $J \pi_\ell(A) J = \pi_\ell(A)^\prime$, the (ungraded) commutant in $\BB(L^2(A))$.
\item $J \xi = \xi$ for all $\xi \in P$.
\item $P$ is \emph{self-dual}, meaning that $P = \{ \eta \in H \mid \langle \forall \xi \in P : \xi, \eta \rangle \geq 0 \}$.
\item $a \lact \xi \ract a^* \in P$ for all $a \in A$, $\xi \in P$.
\item $J\pi_\ell(c)J = \pi_\ell(c)^*$ for each $c$ in the (ungraded) center of $A$.
\end{enumerate}

\begin{remark}
\label{RemarkCyclicVector}
If $H$ is a super Hilbert space with a faithful $*$-representation $\pi$ of $A$ and $\Omega \in H$ is an even cyclic and separating vector for $\pi$, then there exists a unique grading preserving unitary isomorphism $L^2(A) \cong H$ that intertwines the conjugation $J$ of $L^2(A)$ with the modular conjugation $J_\Omega$ for $\Omega$, obtained via Tomita-Takesaki theory, and that sends the cone $P$ to the self-dual cone given by
\begin{equation}
\label{ConeFromCyclicSeparatingVector}
  P_\Omega = \overline{\{ \pi(a) J \pi(a) J \Omega \mid a \in A\}}.
\end{equation}
The point here is that if $\Omega \in H$ and $\Omega^\prime \in H^\prime$ are two cyclic and separating vectors for $*$-representations $\pi$ and $\pi^\prime$, we get a canonical unitary isomorphism $H \cong L^2(A) \cong H^\prime$; however, this  isomorphism does in general not send $\Omega$ to $\Omega^\prime$, but only $P_\Omega$ to $P_{\Omega^\prime}$.
\end{remark}

\begin{remark}
\label{RemarkL2phi}
By uniqueness of the standard bimodule, an isomorphism $\phi : A \to B$ of super von Neumann algebras induces a grading preserving unitary isomorphism $L^2(\phi) : L^2(A) \to L^2(B)$.
In particular, if $B = A$ so that $\phi$ is an automorphism, then $L^2(A)$ is a unitary automorphism of $L^2(A)$ that intertwines both module actions along $\phi$, i.e., 
\begin{equation*}
  L^2(\phi) (a \lact \xi \ract b) = \phi(a) \lact L^2(\phi) \xi \ract \phi(b), \qquad a, b \in A, ~~ \xi \in L^2(A).
\end{equation*}
This is \emph{Haagerup's canonical implementation}.
The map $\phi \mapsto L^2(\phi)$ is continuous for the u-topology on $\Aut(A)$ (see \S\ref{SectionVNBundlesBimodules} below) and the strong topology on $\U(L^2(A))$ \cite[Prop.~3.5]{HaagerupStandardForm}, \cite[IX.1.15\&1.16]{Takesaki2}.
\end{remark}

We will use the following generalized notion of bimodule homomorphisms.
Let $M$ be a super $A$-$B$-bimodule, let $M^\prime$ be a super $A^\prime$-$B^\prime$-bimodule and let $\phi_A: A \to A^\prime$ and $\phi_B : B \to B^\prime$ be (grading preserving) $*$-isomorphisms. 
Then a unitary isomorphism $U : M \to M^\prime$, which we assume to be either grading preserving or grading reversing, is an \emph{intertwiner along} $\phi_A$ and $\phi_B$ if
\begin{equation}
\label{IntertwiningRelation}
  U(a \lact \xi \ract b) = (-1)^{|U||a|} \phi_A(a) \lact U(\xi) \ract \phi_B(b), \qquad a \in A, ~b\in B, ~\xi \in M.
\end{equation}
Intertwiners $U$ along $\phi_A$ and $\phi_B$ can be depicted as 2-cells,
\begin{equation*}
\begin{tikzcd}[column sep=1.8cm, row sep=1.4cm]
  A  \ar[d, "\theta_A"'] & B \ar[d, "\theta_B"] \ar[l, "M"']\ar[d, Rightarrow, shift right=1.3cm, shorten=0.2cm, "U"]\\
  A^\prime & B^\prime  \ar[l, "M^\prime"].
\end{tikzcd}
\end{equation*}
\begin{remark}
The additional sign in \eqref{IntertwiningRelation} involving the parity of $U$ is according to the Koszul sign rule.
It is necessary for the following to be true:
If $M$ is a super left $A$-module, then $M$ is also a super right $A^{\op}$-module, where
\begin{equation*}
  \xi \ract a^{\op} \defeq (-1)^{|\xi||a|} a \lact \xi, \qquad \xi \in M, ~a \in A.
\end{equation*}
Now, an even or odd unitary $U : M \to M^\prime$ intertwines the left $A$-actions if and only if it intertwines the corresponding right $A^{\op}$-actions.
%
%
\end{remark}

In the main body of the paper, we will often deal with bimodule isomorphisms that are, more generally, a scalar multiple of a unitary; this generalization causes no further complications and will be ignored in this appendix.

\subsection{Connes Fusion of super bimodules}
\label{SectionConnesFusion}

Let $B$ be a super von Neumann algebra.
If $M$ is a right super $B$-module and $N$ is a left super $B$-module, there is a notion of a relative tensor product over $B$, the \emph{Connes fusion product} \cite[Chapter~V]{Wassermann}; for details, see, e.g, \cite{thom11}.
To review its definition, fix a standard bimodule $L^2(B)$ for $B$ and denote by $\mathscr{D}(M) = \BB_B(L^2(B), M)$ the space of bounded linear maps that intertwine the right $B$-actions.
$\mathscr{D}(M)$ is graded by parity and has a graded right $B$-action, given by $(x \ract b)(\xi) = x(b \lact \xi)$.
%
%
The Connes fusion product $M \boxtimes_B N$ of $M$ with $N$ is then defined as the completion
\begin{equation*}
        M \boxtimes_B N = \overline{\mathscr{D}(M) \otimes N}
\end{equation*} 
of the algebraic tensor product with respect to the (degenerate) sesquilinear form
\begin{equation}\label{ConnesFusionSesquiForm}
        \langle x \otimes \xi, y \otimes \eta\rangle_{M \boxtimes N} \defeq \langle \xi, (x^* y) \lact \eta \rangle_N,
\end{equation}
where we use that $x^* y \in \BB_B(L^2(B), L^2(B)) = B$ (identifying $b \in B$ with its left multiplication operator on $L^2(B)$).
Observe that since $N$ is $\Z_2$-graded and the left $B$-action is compatible with this grading, elements of different grading in the algebraic tensor product $\mathscr{D}(M) \otimes N$ are orthogonal with respect to the inner product \eqref{ConnesFusionSesquiForm}, so $M \boxtimes_B N$ is a super Hilbert space.
Abusing notation, we use the same notation for elements of $M \boxtimes_B N$ as for their preimages in $\mathscr{D}(M) \otimes N$.

\begin{remark}
Elements of the form $(x \ract b) \otimes \xi - x \otimes (b \lact \xi)$ are in the kernel of the inner product \eqref{ConnesFusionSesquiForm}, hence vanish in the completion. 
Therefore, $M \boxtimes_B N$ can indeed be viewed as a tensor product \emph{over $B$}.
\end{remark}

\begin{example}
\label{ExampleTensorProduct}
Let $M$ be a super right $B$-module and let $N$ be a super left $B$-module, for a super von Neumann algebra $B$.
Let $V$ and $W$ be finite-dimensional super Hilbert spaces.
Then $M \otimes V$ and $N \otimes W$ are again $B$-modules and we have a canonical isomorphism
\begin{equation*}
  (M \boxtimes_B N) \otimes (V \otimes W) \longrightarrow  (M \otimes V) \boxtimes_B (N \otimes E),
\end{equation*}
given by
\begin{equation*}
  (x \otimes \eta) \otimes (v \otimes w) \longmapsto (-1)^{|v||\eta|}(x \otimes v) \otimes (\eta \otimes w),
\end{equation*} 
where for $x \in \mathscr{D}(M)$ and $v \in V$, $x \otimes v$ denotes the element of $\mathscr{D}(M \otimes V)$ given by $(x \otimes v)\xi = (-1)^{|\xi||v|}x\xi \otimes v$.
\end{example}

If $M$ is a super $A$-$B$-bimodule and $N$ is a super $B$-$C$-bimodule, then $M \boxtimes_B N$ naturally has the structure of a super $A$-$C$-bimodule, with the actions given by 
\begin{equation*}
a \lact (x \otimes \xi) \ract c = (a \lact x) \otimes (\xi \ract c), \qquad a \in A, ~c \in C, ~x \in \mathscr{D}(M),~ \xi \in N.
\end{equation*}
Here the left $A$-action on $\mathscr{D}(M)$ is defined pointwise, $(a \lact x)(\xi) = a \lact x(\xi)$.

Connes fusion is associative, in the sense that there are isomorphisms
\begin{equation}
\label{AssociatorsConnesFusion}
  M \boxtimes_B (N \boxtimes_C P) \cong (M \boxtimes_B N) \boxtimes_C P,
\end{equation}
called \emph{associators}, which are functorial in $M$, $N$ and $P$ and satisfy the pentagon axiom for any quadruple of bicategories \cite[Prop.~3.5.3]{BrouwerBicategorical}. 
We will therefore drop the brackets and just write $M \boxtimes_B N \boxtimes_C P$ for the tensor product of three super bimodules.

\medskip

We now discuss Connes fusion of intertwiners and its functoriality.
It will be important to define Connes fusion not only for ordinary unitary intertwiners but, more generally, for intertwiners along $*$-isomorphisms.
Explicitly, let $U : M \to M^\prime$ be a unitary intertwiner along $\phi_A$ and $\phi_B$, and let $V : N \to N^\prime$ be a unitary intertwiner along $\phi_B$ and $\phi_C$ (here it is necessary that $U$ and $V$ intertwine along the \emph{same} middle isomorphisms).
Connes fusion then provides a composition 2-cell
\begin{equation*}
\begin{tikzcd}[column sep=1.8cm, row sep=1.4cm]
 A \ar[d, "\varphi_A"'] & B  \ar[d, "\theta_B"'] \ar[l, "M"'] \ar[d, Rightarrow, shift right=1.3cm, shorten=0.2cm, "U"] & C \ar[d, "\theta_C"] \ar[l, "N"']\ar[d, Rightarrow, shift right=1.3cm, shorten=0.2cm, "V"]\\
 A^\prime & B^\prime \ar[l, "M^\prime"] & C^\prime  \ar[l, "N^\prime"],
\end{tikzcd}
\qquad = \qquad
\begin{tikzcd}[column sep=1.8cm, row sep=1.4cm]
  A \ar[d, "\theta_C"'] & C \ar[d, "\theta_A"] \ar[l, "M \boxtimes_{B} N"']\ar[d, Rightarrow, shift right=1.3cm, shorten=0.2cm, "U \boxtimes V"]\\
  A^\prime & C^\prime  \ar[l, "M^\prime \boxtimes_{B} N^\prime"],
\end{tikzcd}
\end{equation*}
in other words, a unitary intertwiner $U \boxtimes V : M \boxtimes_{B} N \to M^\prime \boxtimes_{B^\prime} N^\prime$, along $\theta_A$ and $\theta_C$, compare \cite[Prop.~A.2.3]{KristelWaldorf3}, \cite[Thm.~6.23]{DualizabilityBartelsDouglasHenriques}.
This intertwiner is defined on $\mathscr{D}(M) \otimes N$ by 
\begin{equation} \label{DefinitionFusionIntertwiners}
        (U \boxtimes V)(x \otimes \xi) = (-1)^{|V||x|} U x L^2({\phi}_B)^* \otimes V\xi, \qquad x \in \mathscr{D}(M),~ \xi \in N,
\end{equation}
and extends by continuity.
Here $L^2({\phi}_B) : L^2(B) \to L^2(B^\prime)$ is the unitary isomorphism of standard bimodules induced by the isomorphism $\phi_B : B \to B^\prime$ (see Remark~\ref{RemarkL2phi}).
The parity of $U \boxtimes V$ is the sum of the parities of $U$ and $V$.
%
%

\begin{example}
\label{ExampleModifiedActionConnesFusion}
For an automorphism $\theta \in \Aut(A)$, denote by $L^2(A)_\theta$ the $A$-$A$-bimodule where the right $A$-action is twisted by $\theta$.
Then given $\theta_1, \theta_2 \in \Aut(A)$, there are canonical even unitary isomorphisms
\begin{equation*}
 \chi_{\theta_1,\theta_2} : L^2(A)_{\theta_1} \boxtimes_A L^2(A)_{\theta_2} \longrightarrow L^2(A)_{\theta_1\theta_2}
\end{equation*}
given on elements of $\mathscr{D}(L^2(A)_{\theta_1}) \otimes L^2(A)_{\theta_2}$ by the formula
\begin{equation*}
 x \otimes \xi \longmapsto x \xi.
\end{equation*}
It is straightforward to check that these isomorphisms enjoy the following relation with respect to intertwiners: 
If $U_i : L^2(A)_{\theta_i} \to L^2(A)_{\theta_i^\prime}$, $i=1, 2$, are such that $U_1$ is right intertwining along $\varphi$ and $U_2$ is left intertwining along $\varphi$, then
\begin{equation*}
\chi_{\theta_1^\prime, \theta_2^\prime} \circ (U_1 \boxtimes U_2) = U_1 L^2(\theta_1)L^2(\varphi)^*U_2 L^2(\theta_1)^*.
\end{equation*}
If $U_1$ and $U_2$ are intertwiners along the identity on both sides (in particular, $\varphi = \id$), then this can be rewritten as
\begin{equation}
\label{TwistFunctoriality}
  \chi_{\theta_1^\prime,\theta_2^\prime}(U_1 \boxtimes U_2) = L^2(\theta_1^\prime) U_2 L^2(\theta_1^\prime)^* U_1 \chi_{\theta_1,\theta_2}.
\end{equation}
%
%
Moreover, these isomorphisms are associative, in the sense that we have the identity
\begin{equation}
\label{AssociativityChi}
  \chi_{\theta_1\theta_2,\theta_3} (\chi_{\theta_1,\theta_2} \boxtimes \id_{L^2(A)}) = \chi_{\theta_1,\theta_2\theta_3} (\id_{L^2(A)} \boxtimes \chi_{\theta_2,\theta_3}), 
\end{equation}
of isomorphisms $L^2(A)_{\theta_1} \boxtimes_A L^2(A)_{\theta_2} \boxtimes_A L^2(A)_{\theta_3} \to L^2(A)_{\theta_1\theta_2\theta_3}$, for any triple $\theta_1, \theta_2, \theta_3 \in \Aut(A)$.
Here the associator \eqref{AssociatorsConnesFusion} is implicit.
%
\end{example}

Connes fusion is functorial in the sense that if $U : M \to M^\prime$, $V: N \to N^\prime$ intertwine along $\phi_B$ and $U^\prime : M^\prime \to M^{\prime\prime}$, $V^\prime : N^\prime \to N^{\prime\prime}$ intertwine along $\phi_B^\prime$, then 
\begin{equation} 
\label{FunctorialityConnesFusion}
        (U^\prime \boxtimes V^\prime)(U \boxtimes V) = (-1)^{|V^\prime||U|}U^\prime U \boxtimes V^\prime V.
\end{equation}

\section{Bundles of von Neumann algebras and bimodules}
\label{SectionVNBundlesBimodules}

In this section, we explain the notion of (continuous, locally trivial) bundles of super von Neumann algebras and bimodules between them.

\subsection{Bundles of super von Neumann algebras}
\label{SectionVNBundles}

The \emph{automorphism group} of a super von Neumann algebra $A$ is the group $\Aut(A)$ of grading preserving, (normal) $*$-automorphisms of $A$.
The \emph{u-topology} on the automorphism group is the topology on $\Aut(A) \subset \BB(A)$ generated by the seminorms
\begin{equation}
\label{Definitionutopology}
    \theta \mapsto \sup_{\substack{a \in A \\ \|a\| = 1}} |\rho(\theta(a))| = \|\rho \circ \theta\|_{A_*},
\end{equation}
where $\rho$ ranges over all elements of the predual of $A$ (viewed as a linear functional on $A$).
The u-topology turns $\Aut(A)$ into a completely metrizable topological group.

\begin{lemma} \label{LemmaCharacterizationUTopology}
        Let $X$ be a topological space, let $A$ be a super von Neumann algebra and let $\theta : X \to \Aut(A)$ be a map.
        Then $\theta$ is continuous if and only if the corresponding map $\hat{\theta} : X \times A \to A$ is continuous on bounded sets, when $A$ carries its ultraweak topology and $\Aut(A)$ carries the u-topology.
\end{lemma}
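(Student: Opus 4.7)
The plan is to verify both directions of the equivalence by unpacking what the u-topology means concretely. A basic u-topology neighborhood of $\theta_0 \in \Aut(A)$ consists of $\theta$ with $\|\rho \circ \theta - \rho \circ \theta_0\|_{A_*} < \epsilon$ for finitely many $\rho \in A_*$; equivalently, $\sup_{\|a\|\le 1} |\rho((\theta - \theta_0)(a))| < \epsilon$. Thus u-convergence is exactly uniform ultraweak convergence on the unit ball. The task is to relate this to continuity of $\hat{\theta}$.

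For the forward direction, suppose $\theta$ is u-continuous. Fix a bounded set $B \subseteq A$ of norm bounded by some $C$, a point $(x_0, a_0) \in X \times B$, a functional $\rho \in A_*$, and $\epsilon > 0$. I would decompose
\begin{equation*}
 \rho\bigl(\theta(x)(a) - \theta(x_0)(a_0)\bigr) = (\rho \circ \theta(x_0))(a - a_0) + \bigl(\rho \circ \theta(x) - \rho \circ \theta(x_0)\bigr)(a).
\end{equation*}
The first summand is $<\epsilon/2$ whenever $a$ lies in a fixed ultraweak neighborhood of $a_0$ determined by the single predual element $\rho \circ \theta(x_0)$; the second is bounded by $C\cdot \|\rho\circ\theta(x) - \rho\circ\theta(x_0)\|_{A_*}$, which is $<\epsilon/2$ for $x$ in a suitable u-topology neighborhood of $x_0$. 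This gives continuity of $\hat{\theta}$ at $(x_0, a_0)$ for the ultraweak topology on the target and restricted to $X \times B$.

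For the converse, assume $\hat{\theta}$ is continuous on bounded sets and fix $x_0 \in X$, $\rho \in A_*$, $\epsilon > 0$. The goal is the uniform estimate $\sup_{\|a\|\le 1} |\rho(\theta(x)(a) - \theta(x_0)(a))| < \epsilon$ for $x$ in some neighborhood of $x_0$. The crucial input is that the closed unit ball $B_1 \subseteq A$ is compact and metrizable in the ultraweak topology, since our standing assumption that $A$ acts faithfully on a separable Hilbert space forces $A_*$ to be separable, so Banach--Alaoglu applies. The continuous map $F : X \times B_1 \to \C$, $(x, a) \mapsto \rho(\theta(x)(a))$, is then pointwise continuous on a product with a compact second factor. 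A standard tube-lemma style argument finishes the job: for each $a \in B_1$ pick open $U_a \ni x_0$ and $V_a \ni a$ with $|F(x, a') - F(x_0, a)| < \epsilon/2$ on $U_a \times V_a$, whence $|F(x, a') - F(x_0, a')| < \epsilon$ there; extract a finite subcover $V_{a_1}, \dots, V_{a_n}$ of $B_1$, and set $U = \bigcap_i U_{a_i}$.

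The main obstacle is this reverse direction: the u-topology encodes a uniformity over the unit ball that does not follow from pointwise ultraweak continuity alone. Everything hinges on the ultraweak compactness of $B_1$, i.e.\ on the separability hypothesis built into the preamble of the appendix; without it the tube-lemma compactness argument would fail, and the equivalence would not hold in the stated form.
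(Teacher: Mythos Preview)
Your proof is correct and the forward direction matches the paper's decomposition essentially line for line. For the converse you take a genuinely different route: the paper argues with nets, choosing for each $x_i$ a point $a_i \in A_1$ where the supremum $\|\rho \circ (\theta_{x_i} - \theta_x)\|_{A_*}$ is attained (possible since $A_1$ is ultraweakly compact and the functional is ultraweakly continuous), then uses compactness again to pass to a convergent sub-subnet and invokes continuity of $\hat{\theta}$ at the limit point. Your tube-lemma argument is more direct and arguably cleaner; both rest on the same key fact, namely ultraweak compactness of the closed unit ball.

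One small correction to your closing remark: compactness of $A_1$ is Banach--Alaoglu and holds for every von Neumann algebra, independently of any separability assumption; separability of $A_*$ only upgrades compactness to metrizable compactness, which neither your tube-lemma argument nor the paper's subnet argument actually needs. So the equivalence holds without the separability hypothesis in the preamble.
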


Here we say that a map $X \times A \to A$ is continuous on bounded if for one (hence for all) $r>0$, its restriction $X \times A_r \to A_r$ are continuous, where $A_r = \{a \in A \mid \|a\| \leq r\}$ denotes the $r$-ball of $A$. 
Recall here that by the uniform boundedness principle, the norm bounded sets of $A$ coincide with the ultraweakly bounded ones.

\begin{remark}
Similarly to Lemma~\ref{LemmaCharacterizationUTopology}, it was shown in \cite[Lemma~4.3]{PennigWStar} that if $A$ is a factor of type II$_1$, then a map $\theta : X \to \Aut(A)$ is continuous if and only if the corresponding map $\hat{\theta} : X \times A \to A$ is continuous for the pointwise 2-topology \cite[Lemma~4.3]{PennigWStar}.
\end{remark}

\begin{proof}[of Lemma~\ref{LemmaCharacterizationUTopology}]
Suppose that $\theta$ is continuous and let $(x_i, a_i)_{i \in I}$ be a net in $X \times A$ converging to $(x, a)$.
We have to show that for each $\rho \in A_*$, we have $\rho(\hat{\theta}(x_i, a_i)) \rightarrow \rho(\hat{\theta}(x, a))$.
To this end, we calculate
\begin{equation*} 
\begin{aligned}
|\rho(\hat{\theta}(x_i, a_i)) - \rho(\hat{\theta}(x, a))| 
&= |\rho(\theta_{x_i}(a_i)) - \rho(\theta_x(a))| \\
&\leq |\rho((\theta_{x_i} - \theta_x)(a_i-a))| + |\rho((\theta_{x_i}-\theta_x)(a))| + |\rho(\theta_x(a_i-a))|\\
&\leq (\|a_i - a\| + \|a\|) \|\rho \circ (\theta_{x_i} - \theta_x)\|_{A_*} + |\rho(\theta_x(a_i-a))|.
\end{aligned}
\end{equation*}
The second term converges to zero by weak convergence of the net $(a_i)_{i \in I}$ to $a$.
If the net $(a_i)_{i \in I}$ is norm-bouned, the first term converges to zero by continuity of the $\theta$ (see \eqref{Definitionutopology}).
The second term converges to zero by weak convergence of the net $(a_i)_{i \in I}$ to $a$.
\\
Conversely, suppose that $\hat{\theta}$ is continuous and let $(x_i)_{i \in I}$ be a net in $X$, convergent to $x$.
We have to show that $\theta_{x_i} \to \theta_x$ in $\Aut(A)$, meaning that 
\begin{equation*}
\|\rho \circ (\theta_{x_i} - \theta_x)\|_{A_*} = \sup_{a \in A_1} |\rho((\theta_{x_i}-\theta_x)(a))| \longrightarrow 0
\end{equation*}
for each $\rho \in A_*$.
Since the unit ball $A_1$ of $A$ is weakly compact and the functional $\rho \circ (\theta_{x_i} - \theta_x)$ is weakly continuous, the supremum is attained.
Hence for each $i \in I$, there exists $a_i \in A_1$ such that $\|\rho \circ (\theta_{x_i} - \theta_x)\|_{A_*} = |\rho((\theta_{x_i}-\theta_x)(a_i))|$.
This gives a net $(a_i)_{i \in I}$.
Let $(a_i)_{i \in I^\prime}$, $I^\prime \subset I$ be any subnet.
Since $A_1$ is ultraweakly compact, there exists a further subnet $(a_i)_{i \in I^{\prime\prime}}$, $I^{\prime\prime} \subset I^{\prime}$ that converges ultraweakly to some element $a \in A_1$.
Then
\begin{align*}
  \|\rho \circ (\theta_{x_i} - \theta_x)\|_{A_*} &= |\rho( (\theta_{x_i} - \theta_x)(a_i)| \\
  &= |\rho(\hat{\theta}(x_i, a_i) - \hat{\theta}(x, a_i))| \\
  &= |\rho(\hat{\theta}(x_i, a_i) - \hat{\theta}(x, a))| + |\rho(\hat{\theta}(x, a) - \hat{\theta}(x, a_i))|
\end{align*}
As net indexed by $I^{\prime\prime}$, both terms converge to zero, by continuity of $\hat{\theta}$ on $X \times A_1$.
We showed that any subnet of the net $(\|\rho \circ (\theta_{x_i} - \theta_x)\|_{A_*})_{i \in I}$ in $\R$ has a further subnet converging to zero. 
But this implies that the net itself converges to zero.
\end{proof}

We now explain the notion of von Neumann algebra bundle we use in this paper.
Let $X$ be a topological space together with a collection $\A=(\A_x)_{x \in X}$ of super von Neumann algebras.
We will also write $\A$ for the disjoint union
\begin{equation*}
  \A = \coprod_{x \in X} \A_x
\end{equation*}
and call this the \emph{total space} of $\A$.
It comes with a canonical footpoint projection $\pi : \A \to X$ and, as usual, for $O \subset X$, we write $\A|_O = \pi^{-1}(O)$.
By a  \emph{trivialization} of $\A$ over an open set $O \subset X$, we mean a map
\begin{equation*}
  \phi : \A|_O \longrightarrow O \times A,
\end{equation*} 
where $A$ is some super von Neumann algebra, such that the fibers $\phi_x : \A_x \to A$ are grading preserving $*$-isomorphisms.
A bundle structure on $\A$ consists of a suitable collection of \emph{local trivializations} $(\phi_{i})_{i\in I}$ whose transition functions
\begin{equation*}
  \phi_j \circ \phi_i^{-1} : (O_i \cap O_j) \times A \longrightarrow (O_i \cap O_j) \times A
\end{equation*}
are \emph{continous on bounded sets} when $A$ carries its ultraweak topology, in the sense that they are continuous when restricted to the sets $(O_i \cap O_j) \times A_r$, where $A_r$, $r>0$, denotes the $r$-ball of $A$.
By Lemma~\ref{LemmaCharacterizationUTopology}, this is equivalent to requiring that the corresponding maps $O_i \cap O_j \to \Aut(A)$ are continuous with respect to the u-topology.
Compatible collections of local trivializations are ordered by inclusion.

\begin{definition}[Von Neumann algebra bundle]
\label{DefinitionvNBundle}
A super von Neumann algebra bundle over a space $X$ is a collection $=(\A_x)_{x \in X}$ of super von Neumann algebras, together with a maximal compatible collection of local trivializations.
\end{definition}

Note that the total space $\A$ does \emph{not} canonically possess the structure of a topological space as the transition functions need not be continuous.
However, a compatible collection of local trivializations on $\A$ induces, for each fixed $r>0$, a continuous fiber bundle structure on the set 
\begin{equation*}
\A_r = \coprod_{x \in X} (\A_x)_r,
\end{equation*}
of fiberwise $r$-balls, each fiber carrying the ultraweak topology.
Note that since each $\A_x$, as a super von Neumann algebra, admits a predual, the Banach-Alaoglu theorem implies that the fibres of $\A_r$ are compact.

\begin{definition}[Bundle homomorphism]
\label{def:vonNeumannbundlehomomorphism}
A \emph{homomorphism} between super von Neumann algebra bundles $\A$ and $\B$ is a fibre-preserving map $\rho: \A \to \B$ whose fibers $\rho_x : \A_x \to \B_x$ are grading preserving normal $*$-homomorphisms and which is continuous on bounded sets with respect to the ultraweak topologies.
\end{definition}

Explicitly, the continuity requirement on $\rho$ means that for some (hence for all) $r>0$, the map $\rho_r : \A_r \to \B_r$, given over $x \in X$ by the restriction of $\rho_x$ to the $r$-ball in $\A_x$, is a continuous when $\A_r$ and $\B_r$ carry the topology explained above.

\begin{example}[Trivial bundles]
\label{ExampleTrivialvonNeumannAlgebraBundle}
If $A$ is a super von Neumann algebra, the trivial super von  Neumann algebra bundle over $X$ with typical fibre $A$ is $\underline{A} = X \times A$, whose collection of local trivializations is given by restrictions of the identity map  $\underline{A} \to X \times A$ to open subsets of $X$.  
\end{example}

\begin{example}[Associated bundles]
 \label{ExampleAssociatedBundlesAutA}
        Let $A$ be a super von Neumann algebra and let $G$ be a topological group with a continuous group homomorphism $\rho : G \to \Aut(A)$ (with respect to the u-topology).
        Then any principal $G$-bundle $P$ over $X$ gives rise to a von Neumann algebra bundle over $X$ with typical fiber $A$ by forming the associated bundle 
        \begin{equation*}
        \A \defeq P \times_{G} A.
        \end{equation*}
        Since $G$ acts via grading preserving $*$-automorphisms, the fibers $\A_x = P_x \times_G A$ carry induced structures of super von Neumann algebras.
        To obtain local trivializations, let $\{O_i\}_{i \in I}$ be an open cover of $X$ such that $P$ admits local sections $p_i: O_i \to P$, then defining 
        \begin{equation*}
        \phi_i: \A|_{O_i} \to O_i \times A, \qquad [p_i(x), a] \to (x, a)
        \end{equation*}
        defines a set of local trivializations.  
        Given $i, j \in I$, there are continuous functions $g_{ij} : O_i \cap O_j \to \Aut(A)$ such that 
        \begin{equation*}
        p_i(x) \cdot g_{ij}(x) = p_j(x), \qquad x \in O_i \cap O_j.
        \end{equation*}
        The transition functions for the collection of local trivializations are then
\begin{equation*}
(\phi_i \circ \phi_j^{-1})(x,a) = \rho(g_{ij}(x))(a)\text{,}
\end{equation*}
As the maps $g_{ij}$ and $\rho$ are continuous, it follows from Lemma~\ref{LemmaCharacterizationUTopology} that these transition functions are continuous on bounded sets.
\end{example}

\subsection{Super bimodule bundles}
\label{SectionvNbimodbundles}

(Super) Hilbert space bundles over a topological space  $X$ are unproblematic to define: they are fiber bundles with typical fiber a super Hilbert space $H$ (with its norm topology), equipped with fibrewise Hilbert space structures, such that the local trivializations are fiberwise grading preserving linear isometries.
It is a standard fact that the continuity of a transition function $O \cap O^\prime \times H \to H$ is equivalent to the continuity of the corresponding map $O \cap O^\prime \to \U(H)$.
%
%
In the ungraded case, since $\U(H)$ is contractible (provided that $H$ is infinite-dimensional), all such bundles are trivializable.
In the graded case, if we allow both even and odd unitary transformation as transition functions, the unitary group $\U(H)$ has two connected components, each of which is contractible (assuming that both the even and odd part of $H$ are infinite-dimensional).
Hence such bundles $\mathfrak{H}$ of super Hilbert spaces over a space $X$ are classified by a single characteristic class $\orclass(\mathfrak{H}) \in H^1(X, \Z_2)$, for which the terminology \emph{orientation class} seems sensible.

\begin{definition}[Bimodule bundle] 
\label{DefinitionBimoduleBundle}
        Let $\A$, $\B$ be super von Neumann algebra bundles over a space $X$.
        Let $\mathfrak{M}$ be a super Hilbert space bundle over $X$ such that each fiber $\mathfrak{M}_x$ carries the structure of a super $\A_x$-$\B_x$-bimodule.
        A \emph{local bimodule trivialization} of $\mathfrak{M}$ over an open set $O\subseteq X$ consists of local trivializations $\phi : \A|_O \to O \times A$, $\psi : \B|_O \to O \times B$ and $U: \mathfrak{M}|_O \to O \times M$ (the latter as a super Hilbert space bundle) such that for each $x \in O$, $U_x$ is an intertwiner along $\phi_x$ and $\psi_x$.
        $\mathfrak{M}$ is a \emph{super $\A$-$\B$-bimodule bundle} if there exists a local bimodule trivialization in a neighborhood of each point of $X$.
\end{definition}

\begin{definition}[Bimodule bundle isomorphism]
\label{DefinitionBimoduleBundleIsomorphism}
        An \emph{isomorphism} between super $\A$-$\B$-bimodule bundles $\mathfrak{M}$ and $\mathfrak{N}$ is a unitary isomorphism $u: \mathfrak{M} \to \mathfrak{N}$ of super Hilbert space bundles that fiberwise intertwines the bimodule actions.
        \end{definition}

\begin{remark}
The \emph{typical fiber} for super $\A$-$\B$-bimodule bundles $\mathfrak{M}$ is a somewhat tricky notion.
Naively, one might think that if if $(\phi, U, \psi)$ is a triple of local trivializations as in Definition~\ref{DefinitionBimoduleBundle}, then the typical fiber of $\mathfrak{M}$ would be the $A$-$B$-bimodule bundle $M$.
However, in this situation, we can form a new trivialization $\psi^\prime = \theta^{-1} \circ \phi$ of $\B$  for any choice of automorphism $\theta \in \Aut(B)$; then $U$ is intertwining when the target is taken to be the $A$-$B$-bimodule $M_\theta$, obtained from $M$ by modifying the $B$-action by $\theta$.
Similarly, we can modify $\phi$ by an automorphism of $A$ to modify the left action of $M$.
We conclude that the typical fiber $M$, as an $A$-$B$-bimodule, is only well defined up to replacing it with a module of the form ${}_\theta M_{\theta^\prime}$, where the left and right action are modified by automorphisms.
\end{remark}

\subsection{Connes Fusion of bimodule bundles}
\label{SectionConnesFusionBundle}

Let $\A$, $\B$, $\CC$ be super von Neumann algebra bundles over a topological space $X$, let $\mathfrak{M}$ be a super $\A$-$\B$-bimodule bundle and let $\mathfrak{N}$ be a super $\B$-$\CC$-bimodule bundle. 
In this situation, we can form the fiberwise Connes fusion products $\mathfrak{M}_x \boxtimes_{\B_x} \mathfrak{N}_x$.
The challenge is to endow the ``total space''
\begin{equation*}
\mathfrak{M} \boxtimes_{\B} \mathfrak{N} = \coprod_{x \in X} \mathfrak{M}_x \boxtimes_{\B_x} \mathfrak{N}_x
\end{equation*}
 with the structure of a continuous Hilbert space bundle. 
 We observe:
 \begin{enumerate}
 \item[$(\star)$]
If $(\phi, U, \psi)$ and $(\psi, V, \kappa)$ are local bimodule trivializations of $\mathfrak{M}$, respectively $\mathfrak{N}$, both defined over some open set $O \subseteq X$, then $(\phi, U \boxtimes V, \kappa)$ is a candidate for a local bimodule trivialization of $\mathfrak{M} \boxtimes_{\B} \mathfrak{N}$ over $O$, where 
\begin{equation*}
U \boxtimes V : \mathfrak{M} \boxtimes_{\B} \mathfrak{N}|_O \longrightarrow O \times (M \boxtimes_B N)
\end{equation*} 
denotes the pointwise fusion product along $\psi$.
 \end{enumerate}
However, for this  fusion product to exist, it is crucial that $U$ and $V$ intertwine along the \emph{same} trivialization $\psi$ of $\B$, and in general, $\mathfrak{M}$ and $\mathfrak{N}$ might not admit local bimodule trivializations with this property.

For finite-dimensional algebras and bimodules, the conditions for such a pair of trivializations to exist near every point of $X$ were investigated in \cite{InsideousBicategory}; to formulate this condition in the von Neumann algebra setting, we define, for a super $A$-$B$-bimodule $M$, the \emph{group of implementers on} $M$ as the subgroup
 \begin{equation*}
   I(M) \subseteq \Aut(A) \times \U(M) \times \Aut(B)
\end{equation*}
consisting of those triples $(\varphi, U, \psi)$ such that $U$ is an even intertwiner along $\varphi$ and $\psi$.
Endowed with the subspace topology, $I(M)$ is a topological group.
Projection onto the first, respectively third factor yields continuous group homomorphisms 
\begin{align*}
 p_\ell : I(M) &\longrightarrow \Aut(A), \\
 p_r : I(M) &\longrightarrow \Aut(B).
\end{align*}

\begin{remark}
\label{RemarkIMsubgroup}
If the actions of $A$ and $B$ on $M$ are faithful, then the projection map 
\begin{equation*}
I(M) \longrightarrow \U(M)
\end{equation*}
 is injective.
In fact, it is a homeomorphism onto its image, so that $I(M)$ can be identified with a subgroup of $\U(M)$.

To see latter claim, we have to show that whenever $(\varphi_n, U_n, \psi_n)_{n \in \N}$ is a sequence in $I(M)$ and $(\varphi, U, \psi) \in I(M)$ is such that $U_n$ converges against $U$ in the topology of $\U(M)$, then $\varphi_n \to \varphi$ and $\psi_n \to \psi$ with respect to the u-topology.

To this end, let $\varrho$ be an element of the predual of $A$.
It suffices to consider the case that $\varrho$ is positive. 
As $A$ acts faithfully, such an element can be written as 
\begin{equation*}
\varrho(a) = \sum_{k=1}^\infty \langle \xi_k, a \lact \xi_k \rangle, 
\end{equation*}
with $(\xi_k)_{k \in \N}$ a square-summable sequence in $M$.
%
%
Then given $a \in A$ with $\|a\| \leq 1$, we have
\begin{equation*}
\begin{aligned}
\bigl|\varrho (\varphi_n(a)) - \varrho (\varphi(a))\bigr|
&= \sum_{k=1}^\infty\bigl| \langle \xi_k, U_n (a \lact U_n^*\xi_k)\rangle - \langle \xi_k, U (a \lact U^*\xi_k)\rangle\bigr| \\
&= \sum_{k=1}^\infty\bigl| \langle a^*\lact U_n^* \xi_k, (U_n - U)^*\xi_k\rangle - \langle (U_n - U)\xi_k, a \lact U^*\xi_k)\rangle\bigr| \\
&= 2 \sum_{k=1}^\infty\|\xi_k\|\|(U_n - U)^*\xi_k\| .
\end{aligned}
\end{equation*}
Since $U_n \to U$ in the topology of $\U(H)$, the differences $U_n - U$ and $(U_n- U)^*$ converge strongly to zero (see \cite{EspinozaUribe}). Therefore the right hand side above converges to zero by the dominated convergence theorem.
As it is independent from $a$, this convergence is uniform in $a$, and we see that $\|\varrho \circ \varphi_n - \varrho \circ \varphi\|_{A_*}$.
Varying $\varrho$, we get that $\varphi_n \to \varphi$ in the u-topology of $\Aut(A)$.
Similarly, one shows that $\psi_n \to \psi$ in the u-topology of $\Aut(B)$.
\end{remark}

We first check that the trivializations of the form $(\star)$, if they exist, are indeed compatible.
We need the following lemma.

\begin{lemma} 
\label{LemmaContinuityConnesFusion}
The fusion map
\begin{equation*}
  I(M) \times_{\Aut(B)} I(N) \longrightarrow I(M \boxtimes_B N), \qquad (U, V) \longmapsto U\boxtimes V
\end{equation*}
is continuous.
\end{lemma}

\begin{proof}
$I(M)$ and $I(N)$ are closed subgroups of polish groups, hence polish; in particular, we just need to check sequential continuity.
Let $(\phi_n, U_n, \psi_n)_{n \in \N}$, $(\psi_n, V_n, \kappa_n)_{n \in \N}$, be sequences converging against $(\phi, U, \psi)$, respectively $(\psi, V, \kappa)$.
Then for any $x \boxtimes \xi \in M \otimes_B N$, we have
\begin{align*}
  &\bigl\|\bigl((U_n \boxtimes V_n) - (U \boxtimes V)\bigr) x \boxtimes \xi\bigr\| = \| U_n x \boxtimes V_n \xi - Ux \boxtimes V \xi\|\\
  &~~~~\leq \|(U_n-U)x \boxtimes V\xi \| + \|U_n x \boxtimes (V_n - V)\xi\|\\
  &~~~~= \langle V \xi, (x^*(U_n - U)^* (U_n - U) x) \lact V\xi\rangle^{1/2} 
   + \langle (V_n - V)\xi, ((U_nx)^*U_nx) \lact (V_n - V)\xi \rangle^{1/2}\\
  &~~~~\leq \|\xi\|^{1/2} \|(x^*(U_n - U)^* (U_n - U) x) \lact V \xi\|^{1/2} 
   + \|(x^*x)\|^{1/2}\|(V_n - V)\xi\|
\end{align*}
The second term converges to zero as $V_n \to V$  strongly.
To get a clean argument that the first term converges to zero as well, let $\varphi: B \to \mathrm{B}(N)$ be the normal unitary $*$-homomorphism inducing the action.
By the classification of normal unital $*$-homomorphisms (see e.g., \cite[\S IV, Thm.~5.5]{Takesaki1}), there exists a Hilbert space $H$ with $B \subset \mathrm{B}(H)$ and a (necessarily surjective) partial isometry $w: H \to N$ with $w^*w \in B^\prime$ such that $\varphi(b) = wbw^*$.
Then
\begin{equation*}
\begin{aligned}
(x^*(U_n - U)^* (U_n - U) x) \lact V \xi &= \varphi\bigl(x^*(U_n - U)^* (U_n - U) x\bigr)V \xi \\
&= wx^*(U_n - U)^* (U_n - U) xw^*V \xi.
\end{aligned}
\end{equation*}
Since $U_n \to U$ strongly, this converges to zero.
By linearity, this shows that
\begin{equation} \label{StrongConvergenceSubset}
(U_n \boxtimes V_n) X \to (U \boxtimes V) X.
\end{equation}
whenever $X$ is in the image of $\mathscr{D}(M) \otimes N \to M \boxtimes_B N$. 
This set is by definition dense and the sequence $(U_n \boxtimes V_n)_{n \in \N}$ is uniformly bounded in the operator norm, hence an elementary argument shows that \eqref{StrongConvergenceSubset} holds for all $X \in M \boxtimes_B N$.
%
%
\end{proof}

\begin{corollary}
\label{CorollaryCompatibilityOfTransitionFunctions}
The local trivializations of the form $(\star)$ are all compatible.
\end{corollary}

\begin{proof}
If $(\phi_i, U_i,  \psi_i)$, $(\psi_i, V_i, \kappa_i)$, $i=1, 2$, are two pairs of trivializations as in $(\star)$, defined over an open set $O$, we can form the transition functions
\begin{equation*}
  (\phi, U, \psi) : O \to I(M), \qquad (\psi, V, \kappa) : O \to I(N), 
\end{equation*}
where $\phi = \phi_1 \circ \phi_2^{-1}$, $U = U_1 U_2^*$, etc.
By functoriality of Connes fusion, the transition function between the ``fused'' trivializations $(\phi_i, U_i \boxtimes V_i, \kappa_i)$, $i=1, 2$, is then the pointwise fusion product $(\phi, U \boxtimes V, \kappa)$.
Therefore, continuity of the transition functions follows from Lemma~\ref{LemmaContinuityConnesFusion}.
\end{proof}

We now turn to the existence question of the trivializations of the form $(\star)$. 
To this end, we introduce the following notion.

\begin{definition}[Implementing bimodule]
\label{DefinitionImplementingBimodule}
We say that a super $A$-$B$-bimodule $M$ is \emph{right implementing} (\emph{left implementing}) if the projection map $p_r : I(M) \to \Aut(B)$ (respectively $p_\ell: I(M) \to \Aut(A)$) admits a unit-preserving, continuous section in a neighborhood of the identity.
We say that $M$ is \emph{implementing} if it is both left and right implementing.
\end{definition}

\begin{remark}
\label{RemarkLifting}
That $M$ is right implementing means, equivalently, that whenever $X$ is a topological space and $\varphi : X \to \Aut(B)$ is a continuous map such that $\varphi_x = \id_B$, there exists an open neighborhood $O \subset X$ of $x$ and a continuous map $(\psi, U, \varphi) : O \to I(M)$  such that $\psi_x = \id_A$ and $U_x = \id_M$.
This is the case, in particular, if $p_r : I(M) \to \Aut(B)$ is a fiber bundle.
\end{remark}

We now give several examples of implementing and non-implementing bimodules.
In the context of finite-dimensional algebras, examples of non-implementing bimodules can be found in \cite[Example~3.1.5]{InsideousBicategory}).

\begin{example}
Let $A = \BB(H)$, $H$ a super Hilbert space.
Then $H$ is implementing as a super $A$-$\C$-bimodule.
That $H$ is right implementing is obvious, as $\Aut(\C)$ is trivial and $p_r : I(H) \to \Aut(\C)$ is the trivial map.
To see that $H$ is left implementing, we note that $I(H) \cong \U^+(H)$ (in view of Remark~\ref{RemarkIMsubgroup}), which is a principal $\U(1)$-bundle over $\Aut(A) = P\U(H)_0$, the identity component of $P\U(H)$, and $p_{\ell}$ is the bundle projection.
\end{example}
 
 \begin{example}
 The standard bimodule $L^2(A)$ of a super von Neumann algebra $A$ is always implementing.
 In fact, both maps $p_\ell$ and and $p_r$ admit global sections by a group homomorphism, namely the canonical implementation, respectively the canonical implementation followed by conjugation by $J$; see Remark~\ref{RemarkL2phi}.
 \end{example}
 
 \begin{example}
\label{RemarkRightImplementingAgain}
If both $M$ and $N$ are right (left) implementing, then $M \boxtimes_{B} N$ is again right (left) implementing.
Indeed, if $N$ is right implementing, then for any space $X$ and any continuous map $\kappa : X \to \Aut(C)$ with $\kappa_x = \id_C$, there exists a continuous map $(\psi, V, \kappa) : O \to I(N)$ on some open neighborhood $O \subseteq X$ of $x$ such that $\psi_x = \id_B$; see Remark~\ref{RemarkLifting}.
Then if additionally $M$ is right implementing, after possibly passing to a smaller open neighborhood $\tilde{O} \subseteq O$ of $x$, the map $\psi : O \to \Aut(B)$ can be lifted to a map $(\phi, U, \psi) : \tilde{O} \to I(M)$.
Now, by Lemma~\ref{LemmaContinuityConnesFusion}, the fiberwise fusion product $U \boxtimes V : \tilde{O} \to \U(M)$ along $\psi$ is continuous, hence $(\phi, U \boxtimes V, \kappa) : \tilde{O} \to I(M \boxtimes_B N)$ is a continuous lift of $\kappa$.
Since $\kappa$ was arbitrary, this implies that $M \boxtimes_B N$ is right implementing.
The case that $M$ and $N$ are left implementing is similar.
\end{example}
 
 \begin{example}
 \label{ExampleMeasureSpace}
 We give an example of a bimodule that is neither right nor left implementing.
 Let $A = L^\infty([0, 1])$ and $I \subsetneq X$ be a proper subinterval. 
 Then the Hilbert space $M = L^2(I)$ is an $A$-$A$-bimodule with the bimodule structure given by restriction,
 \begin{equation*}
 a \lact \xi \ract b = a|_I \cdot \xi \cdot b|_I, \qquad a, b \in A, ~\xi \in M.
 \end{equation*}
 Any measure space automorphism $f$ of $[0, 1]$ induces an automorphism $\varphi_f$ of $A$ by pullback.
 If both $f$ and $f^{-1}$ preserve $I$ (up to possibly a set of measure zero), then the operator $U_f : L^2(I) \to L^2(I)$ given by
  \begin{equation}
 \label{FormulaUf}
   (U_f \xi)(t) = \xi(f(t)) \cdot \sqrt{f^\prime(t)}, 
 \end{equation}
 is well-defined, unitary and implements $\varphi_f$.
(Here $f^\prime$ denotes the Radon-Nikodym derivative of $f^*dt$ with respect to the Lebesgue measure $dt$ on $I$.)
%
%
 Otherwise, it is easy to check that $\varphi_f$ is not implemented by any unitary.
%
%
 Since automorphisms $f$ that do not essentially preserve $I$ can be found in any neighborhood of the identity in $\Aut(A)$, this shows that $M$ is not right (nor left) implementing.
 %
 \end{example}

 We have the following result (compare Prop.~4.2.3 of \cite{InsideousBicategory}).

\begin{lemma} \label{LemmaConnesFusionBundle}
Let $\mathfrak{M}$ be a super $\A$-$\B$-bimodule bundle and let $\mathfrak{N}$ be a super $\B$-$\CC$-bimodule bundle. 
If $\mathfrak{M}$ is right implementing or $\mathfrak{N}$ is left implementing (in the sense that each fiber has this property), then a pair of local bimodule trivializations as in $(\star)$ exists near each point of $X$.
\end{lemma}

\begin{proof}
We consider the case that $\mathfrak{M}$ is right implementing, the other case is similar.
Let $x \in X$ and let $(\phi^\prime, U^\prime, \psi^\prime)$ and $(\psi, V, \kappa)$ be local bimodule trivializations  of $\mathfrak{M}$, respectively $\mathfrak{N}$, both defined over some open neighborhood $O \subset X$ of $x$.
Explicitly, we have local trivializations
\begin{equation*}
\begin{aligned}
\phi^\prime : \A|_O &\longrightarrow O \times A,  \qquad
\psi, \psi^\prime : \B|_O &\longrightarrow O \times B, \qquad
 \kappa : \CC|_O &\longrightarrow O \times C
\end{aligned}
\end{equation*}
of von Neumann algebra bundles, as well as trivializations 
\begin{equation*}
U^\prime : \mathfrak{M}|_O \to O \times M^\prime, \qquad V : \mathfrak{N}|_O \to O \times N,
\end{equation*}
of Hilbert space bundles, which are intertwining along $\phi^\prime$ and $\psi^\prime$, respectively $\psi$ and $\kappa$.
Set $\psi_0 = \psi_x \circ (\psi_x^\prime)^{-1} \in \Aut(B)$, which we identify with the corresponding constant $\Aut(B)$-valued function on $O$.

That $\mathfrak{M}$ is right implementing implies that its typical fiber $M^\prime$ is right implementing.
Hence, after possibly passing to a small open neighborhood $\tilde{O} \subseteq O$ of $x$, the map $\psi_0 \circ \psi \circ (\psi^\prime)^{-1} : \tilde{O} \to \Aut(B)$ can be lifted to a map $(\alpha, W, \psi_0 \circ \psi \circ (\psi^\prime)^{-1}) : \tilde{O} \to I(M^\prime)$; see Remark~\ref{RemarkLifting}.
For $x \in \tilde{O}$, we view $W_x$ as an intertwiner $M^\prime \to M = M_{\psi_0}^\prime$ along $\alpha$ and $\psi \circ (\psi^\prime)^{-1}$, so that
\begin{equation*}
  U \defeq WU^\prime : \mathfrak{M}|_{\tilde{O}} \longrightarrow \tilde{O} \times M
\end{equation*}
is intertwining along $\phi \defeq \alpha \circ \phi^\prime$.
Then $(\phi, U, \psi)$ is a new local bimodule trivialization of $\mathfrak{M}$, on a neighborhood of $x$, which together with $(\psi, V, \kappa)$ forms a pair of local bimodule trivializations as in $(\star)$.
\end{proof}

Combining Lemma~\ref{LemmaConnesFusionBundle} and Corollary~\ref{CorollaryCompatibilityOfTransitionFunctions}, we obtain the following result.

\begin{proposition}
\label{PropositionExistenceOfFusionBundle}
Let $\mathfrak{M}$ be a super $\A$-$\B$-bimodule bundle and let $\mathfrak{N}$ be a super $\B$-$\CC$-bimodule bundle. 
If $\mathfrak{M}$ is right implementing or $\mathfrak{N}$ is left implementing, then there exists a unique structure of a super $\A$-$\CC$-bimodule on the fiberwise fusion product $\mathfrak{M} \boxtimes_{\B}\mathfrak{N}$ with local trivializations given by $(\star)$.
\end{proposition}

We finish this section with an example where the fiberwise Connes fusion product does \emph{not} admit the structure of a Hilbert space bundle (where, of course, the typical fibers do not satisfy the condition of Prop.~\ref{PropositionExistenceOfFusionBundle}).

\begin{example}
Consider the algebra $A = L^\infty([0, 1])$ and let $\mathfrak{M}$ and $\mathfrak{N}$ be the following bimodule bundles over $X = [0, \frac{1}{2}]$ (for the trivial von Neumann algebra bundle $\underline{A}$):
$\mathfrak{M}$ is the trivial bimodule bundle with $\mathfrak{M}_x = L^2([0, \frac{1}{2}])$ for every $x \in X$, while $\mathfrak{N}$ is the bimodule bundle with fibers $\mathfrak{N}_x = L^2([x, x+\frac{1}{2}])$.
The fibers carry the bimodule structure by restriction, discussed in Example~\ref{ExampleMeasureSpace}.
The bundle structure on $\mathfrak{N}$ is the following: Let $(f_x)_{x \in X}$ be given by
\begin{equation*}
  f_x(t) = \begin{cases} t-x & \text{if}~~x-t \geq 0 \\ t-x+1 & \text{if}~~t-x < 0. \end{cases}, \qquad t \in [0, 1], ~ x \in X.
\end{equation*}
Then $f_x$ sends $[x, x+\frac{1}{2}]$ to $[0, \frac{1}{2}]$ and $(\varphi_{f_x})_{x \in X}$ is a continuous family of automorphisms of $A$, which we view as a local trivialization of the trivial bundle $\underline{A}$.
%
%
The corresponding unitary $U_{f_x}$ given by \eqref{FormulaUf} is both left and right intertwining along $\varphi_{f_x}$ and sends $\mathfrak{N}_x$ to $L^2([0, \frac{1}{2}])$.
Hence the corresponding family $(U_{f_x})_{x \in X}$ is a global bimodule trivialization of $\mathfrak{N}$ and hence provides the structure of an $\underline{A}$-$\underline{A}$-bimodule bundle.
Now, it is not hard to see that we have canonical isomorphisms
\begin{equation*}
\mathfrak{M}_x \boxtimes_A \mathfrak{N}_x \cong L^2([x, \tfrac{1}{2}]), \qquad x \in X,
\end{equation*}
 of $\underline{A}$-$\underline{A}$-bimodules.
 The right hand side is infinite-dimensional if $x \in [0, \frac{1}{2})$ and zero if $x=\frac{1}{2}$, so the fiberwise Connes fusion products cannot assemble to a continuous bundle of Hilbert spaces.
\end{example}

\section{Super bundle gerbes and super bundle 2-gerbes}
\label{SectionPreliminariesSuperBundleGerbes}

In this section, we present some basic definitions and classification results regarding super bundle gerbes and 2-gerbes.
The material in this section is certainly well known in the ungraded case, but seems not to exist in the literature in quite the form needed.

Throughout, it is important that we work with \emph{super} line bundles instead of ungraded line bundles, by which we mean a line bundle together with a $\Z_2$-grading; explicitly, such a grading just determines, for each connected component of $Y$, whether $\L$ is even or odd on this component.

\subsection{Super bundle gerbes}
\label{SectionBundleGerbes}

Let $X$ be a (possibly infinite-dimensional) manifold.
Traditionally, a \emph{(super) bundle gerbe} is presented in terms of an open cover $\mathcal{U} = \{U_i\}_{i \in I}$ and consists of (super) line bundles $\mathfrak{L}_{ij}$ over the two-fold intersections $U_i \cap U_j$, together with (grading preserving) isomorphisms $\mathfrak{L}_{jk} \otimes \mathfrak{L}_{ij} \to \mathfrak{L}_{ik}$ over $U_i \cap U_j \cap U_k$, required to satisfy a cocycle condition over four-fold intersections.
This categorifies the notion of a (super) line bundle, which can be given by $\C^\times$-valued functions $g_{ij}$ over $U_i \cap U_j$ satisfying the obvious cocycle condition over triple intersections.
As a default in this paper, we take ``bundle gerbe'' to mean ``\emph{line} bundle gerbe'', which we take as a default in this paper; 
for an $A$-bundle gerbe (where $A$ is an abelian Lie group), one replaces the line bundles $\mathfrak{L}_{ij}$ by principal $A$-bundles.

While any bundle gerbe can be represented in this fashion, such a representation is often not canonical, but depends on the (usually rather unnatural) choice of the open cover $\mathcal{U}$.
It is therefore useful to replace the open cover $\mathcal{U}$ be a general surjective submersion $Y$.
Observe that if $Y = \coprod_{i \in I} U_i$ for a countable open cover $\mathcal{U} = \{U_i\}_{i \in I}$, then the $k$-fold fiber product $Y^{[k]}$ can be identified with the disjoint union of all $k$-fold intersections of elements of $\mathcal{U}$.
Throughout the paper, we will use the following notation.

\begin{notation}
\label{NotationSubmersion}
Let $Y \to X$ be a surjective submersion and let
\begin{equation*}
 Y^{[k]} \defeq \underbrace{Y \times_X \cdots \times_X Y}_{k~\text{times}}
\end{equation*}
be its $k$-fold iterated fiber product.
Given some geometric object $\mathfrak{G}$ over the $k$-fold fiber product $Y^{[k]}$ which can be pulled back along smooth maps (for example, a vector bundle, a bundle gerbe, or a map), we denote by
\begin{equation*}
  \mathfrak{G}_{i_1 \dots i_k} \defeq \pi_{i_1 \dots i_k}^* \mathfrak{G}, \qquad 1 \leq i_1, \dots, i_k \leq n
\end{equation*}
the object on $Y^{[n]}$ obtained by pullback along the projection map 
\begin{equation*}
\pi_{i_1\dots i_k} : Y^{[n]} \longrightarrow Y^{[k]}
\end{equation*}
given by projection on the indicated factors.
This is a slight abuse of notation as the number $n$ is implicit, but it is usually clear from the context.
%
%

\end{notation}

Using Notation~\ref{NotationSubmersion}, a \emph{super bundle gerbe} over $X$ is given in terms of a surjective submersion $Y \to X$ and consists of a super line bundle $\mathfrak{L}$ over $Y^{[2]}$, together with a \emph{bundle gerbe product}
\begin{equation*}
  \lambda : \mathfrak{L}_{23} \otimes \mathfrak{L}_{12} \longrightarrow \mathfrak{L}_{13}
\end{equation*}
over $Y^{[3]}$, a grading preserving isomorphism of super line bundles, which over $Y^{[4]}$ is required to satisfy the coherence condition 
\begin{equation}
\label{CoherenceGerbeMultiplication}
 \begin{tikzcd}[column sep=1.7cm]
    \mathfrak{L}_{34} \otimes \mathfrak{L}_{23} \otimes \mathfrak{L}_{12} \ar[r, "1\otimes \lambda_{123}"] \ar[d, "\lambda_{234} \otimes 1"'] 
    	& \mathfrak{L}_{34} \otimes \mathfrak{L}_{31} \ar[d, "\lambda_{134}"] \\
  \mathfrak{L}_{24} \otimes \mathfrak{L}_{12} \ar[r, "\lambda_{124}"] & \mathfrak{L}_{14}.
 \end{tikzcd}
\end{equation}
It is convenient to organize these data into a diagram as follows.
\begin{equation*}
  \mathcal{G} =   \left[
\begin{tikzcd}
  & \L \ar[d, dotted, -] & \substack{\text{bundle gerbe} \\ \text{product}~\lambda} \ar[d, -, dotted] & \substack{\text{associativity} \\ \text{of } \lambda} \ar[d, -, dotted] 
  \\
  Y \ar[d]& Y^{[2]} \ar[l, shift left=1mm] \ar[l, shift right=1mm]&  Y^{[3]} \ar[l, shift left=2mm] \ar[l, shift right=2mm] \ar[l] & Y^{[4]} \ar[l, shift left=1mm] \ar[l, shift right=1mm] \ar[l, shift left=3mm] \ar[l, shift right=3mm]
  \\
  X
\end{tikzcd}
\right]
\end{equation*}
An \emph{morphism} of bundle gerbes $\mathfrak{h} : \tilde{\mathcal{G}} \to \mathcal{G}$ is given in terms of a common refinement of covers $\tilde{Y} \leftarrow Z \to Y$, and consists of a super vector bundle $\mathfrak{H}$ over $Z$, together with a grading preserving isomorphism of super vector bundles
\begin{equation}
\label{CoherenceMorphismsBundleGerbes}
  \eta : \mathfrak{H}_2 \otimes \tilde{\L} \longrightarrow \L \otimes \mathfrak{H}_1
\end{equation}
over $Z^{[2]}$ (where we denote the pullbacks of $\tilde{\L}$ and $\L$ to $Z^{[2]}$ with the same letters again), which is compatible with the bundle gerbe products of $\tilde{\mathcal{G}}$ and $\mathcal{G}$ over $Z^{[3]}$.
$\mathfrak{h}$ is an isomorphism if $\mathfrak{H}$ is a line bundle.
In particular, a \emph{trivialization} of a bundle gerbe $\mathcal{G}$ is an isomorphism $\mathcal{G} \to \mathcal{I}$, where $\mathcal{I}$ denotes the trivial super gerbe.

Super bundle gerbes over $X$ form a bicategory $\sGerb(X)$, as first observed in \cite{StevensonBundleGerbes}.
A detailed description of the 1- and 2-morphisms is given in \cite{WaldorfDiploma, WaldorfMoreMorphisms}, which is adapted to the case of super bundle gerbes in a straightforward fashion (see also \S2.3 of \cite{kristel20212vector}). 
 The bicategory $\sGerb(X)$ is symmetric monoidal, where, if $\tilde{Y}$ and $Y$ are the covers for $\tilde{\mathcal{G}}$, respectively $\mathcal{G}$, then the tensor product gerbe $\tilde{\mathcal{G}} \otimes \mathcal{G}$ is given in terms of the fiber product cover $\tilde{Y} \times_X Y$.

\medskip

There is another notion of morphism between bundle gerbes, that of a \emph{refinement}.
A refinement $\tilde{\mathcal{G}} \to \mathcal{G}$ between super bundle gerbes consists of a map of covers $\tilde{Y} \to Y$ and a super line bundle isomorphism $\tilde{\L} \to \L$ covering the induced map $\tilde{Y}^{[2]} \to Y^{[2]}$ that intertwines the bundle gerbe products.
Super bundle gerbes over $X$ together with refinements form a symmetric monoidal (1-)category $\sGerb_{\mathrm{ref}}(X)$.
There is a canonical symmetric monoidal functor
\begin{equation}
\label{InclusionRefinements}
 \iota : \sGerb_{\mathrm{ref}}(X) \longrightarrow \sGerb(X)
\end{equation}
that sends refinements to isomorphisms.
This functor (respectively its generalization to 2-vector bundles) is explicitly described, e.g., in \cite[\S3.5]{kristel20212vector}.
It is essentially surjective on objects and any isomorphism in $\sGerb(X)$ is isomorphic to a span of images of refinements.

\medskip

Bundle gerbes often appear in the context of \emph{geometric lifting problems}.
Let $G$ be a (possibly infinite-dimensional) Lie group and let
\begin{equation}
\label{LieGroupExtension}
  \begin{tikzcd}
  A \ar[r] & \widetilde{G} \ar[r] & G
  \end{tikzcd}
\end{equation}
be a central extension of $G$ by an abelian Lie group $A$ (in this paper, we will always have $A = \Z_2$ or $\U(1)$).
Given a principal $G$-bundle $P$ over $X$, one can ask whether it is possible to lift the structure group of $P$ from $G$ to $\widetilde{G}$; in other words, whether there exists a principal $\widetilde{G}$-bundle $\widetilde{P}$ together with a map $\widetilde{P} \to P$ that is equivariant along the right map in \eqref{LieGroupExtension}.
The geometric obstruction to the existence of $\widetilde{P}$ is the corresponding \emph{lifting gerbe} $\Lift_P$.
This is a principal $A$-bundle gerbe, described in terms of the cover $P$.
Its principal $A$-bundle over $P^{[2]}$ is the pullback of the principal $A$-bundle $\widetilde{G}$ over $G$ along the difference map
\begin{equation}
\label{DifferenceMap}
\delta : P^{[2]} \longrightarrow G, \qquad (p_1, p_2) \longmapsto p_2^{-1} p_1
\end{equation}
of the principal bundle; here $p_2^{-1} p_1$ denotes the unique element $g \in G$ with $p_2 \cdot g = p_1$.
Finally, the bundle gerbe product of $\Lift_P$ is just group multiplication in $\widetilde{G}$.
The whole structure can be visualized as follows.
\begin{equation}
\label{EqLiftingGerbe}
  \Lift_P =   \left[
\begin{tikzcd}
   \widetilde{G} \ar[d]
  \\
   G
  & \delta^*\widetilde{G} \ar[d, dashed] \ar[ul, dashed] 
  & \substack{\text{group} \\ \text{multiplication} \\ \text{of } \widetilde{G}} \ar[d, -, dotted] 
  & \substack{\text{associativity} \\ \text{of group} \\ \text{multiplication}} \ar[d, -, dotted] 
  \\
  P \ar[d]& P^{[2]}  \ar[ul, "\delta"] \ar[l, shift left=1mm] \ar[l, shift right=1mm]&  P^{[3]} \ar[l, shift left=2mm] \ar[l, shift right=2mm] \ar[l] & P^{[4]} \ar[l, shift left=1mm] \ar[l, shift right=1mm] \ar[l, shift left=3mm] \ar[l, shift right=3mm]
  \\
  X
\end{tikzcd}
\right]
\end{equation}

\begin{theorem}
{\normalfont \cite{MurrayBundleGerbes}} ~
\label{ThmLiftingGerbe}
 The principal $G$-bundle $P$ admits a lift of the structure group from $G$ to $\widetilde{G}$ if and only if the lifting gerbe $\Lift_P$ admits a trivialization.
\end{theorem}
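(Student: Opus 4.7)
The plan is to exhibit mutually inverse constructions relating lifts of structure group to trivializations of $\Lift_P$.

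For the forward direction, suppose $\widetilde{P} \to P$ is a lift of the structure group, i.e., a principal $\widetilde{G}$-bundle over $X$ with an equivariant map to $P$ covering $\widetilde{G} \to G$. I would build a trivialization of $\Lift_P$ by taking the principal $A$-bundle $Q \defeq \widetilde{P}$ over $P$, where $A$ acts as the restriction of the $\widetilde{G}$-action. To obtain the required isomorphism on $P^{[2]}$, I would use the observation that if $\tilde{g} \in \widetilde{G}$ lies over $g = p_2^{-1} p_1 \in G$ and $\tilde{p}_1 \in Q_{p_1}$, then $\tilde{p}_1 \cdot \tilde{g}^{-1}$ lies over $p_1 \cdot g^{-1} = p_2$. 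This defines a map $\delta^*\widetilde{G} \otimes Q_1 \to Q_2$, and verifying that the analogous composition over $P^{[3]}$ is compatible with the bundle gerbe product reduces to the calculation $\tilde{p}_1 \cdot (\tilde{g}_{23}\tilde{g}_{12})^{-1} = (\tilde{p}_1 \cdot \tilde{g}_{12}^{-1}) \cdot \tilde{g}_{23}^{-1}$, i.e., associativity of the $\widetilde{G}$-action.

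For the converse, suppose $(Q, \tau)$ is a trivialization of $\Lift_P$, where $Q$ is a principal $A$-bundle over $P$ and $\tau$ is an isomorphism as above satisfying the cocycle over $P^{[3]}$. I would endow the total space of $Q \to P \to X$ with a principal $\widetilde{G}$-bundle structure over $X$. The $A$-action on fibers of $Q \to P$ is given; to promote it to a $\widetilde{G}$-action covering the $G$-action on $P$, for $\tilde{q} \in Q_p$ and $\tilde{g} \in \widetilde{G}$ over $g \in G$, I would define $\tilde{q} \cdot \tilde{g}$ to be the unique element of $Q_{pg}$ such that $\tau(\tilde{g} \otimes (\tilde{q} \cdot \tilde{g})) = \tilde{q}$, using that $\delta(p, pg) = g$. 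Associativity of this action follows directly from the cocycle relation for $\tau$, and freeness and transitivity on fibers of $X$ follow from the corresponding properties of the $A$-action on fibers of $Q$ and the $G$-action on fibers of $P$. One checks that these two constructions are mutually inverse up to canonical isomorphism, and that they descend to a bijection between isomorphism classes.

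The main obstacle is bookkeeping with conventions: the direction of $\delta$, the handedness of the $\widetilde{G}$-action, and the precise shape of a trivialization (whether $\L \otimes Q_1 \to Q_2$ or $Q_2 \otimes \L \to Q_1$) must be aligned with the conventions of Definition~\ref{DefinitionLagrangianGerbe} and the diagram \eqref{EqLiftingGerbe}. Once the conventions are fixed, both constructions and their mutual inversion are essentially tautological, and no analytic input beyond the existence of local sections of $\widetilde{G} \to G$ (needed to verify smoothness of the lifted bundle) is required.
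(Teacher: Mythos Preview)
The paper does not give a proof of this theorem; it is stated with a reference to Murray's original paper \cite{MurrayBundleGerbes} and followed by a remark pointing to \cite[Thm.~2.1]{WaldorfLifting} for the refined statement as an equivalence of categories. Your proposal is the standard argument and is correct in outline: the two constructions you describe are precisely the ones underlying the cited references, and the verification that they are mutually inverse is, as you say, essentially tautological once the conventions are fixed. One small slip: with the paper's convention $\delta(p_1,p_2)=p_2^{-1}p_1$, one has $\delta(p,pg)=g^{-1}$ rather than $g$, so the formula in your converse direction needs the obvious adjustment---but this is exactly the kind of bookkeeping you already flagged in your final paragraph.
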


This statement can be refined to saying that there exists an equivalence of categories between the category $\textsc{Triv}(\Lift_P)$ of trivializations of $\mathcal{G}_P$ and the category of structure group extensions $\widetilde{P}$, defined in the obvious way \cite[Thm.~2.1]{WaldorfLifting}.

\subsection{Classification of super bundle gerbes}

Let $X$ be a manifold.

\begin{definition}[orientation bundle]
\label{DefinitionOrientationBundle}
Given a super bundle gerbe $\mathcal{G}$ over $X$ with cover $Y$, its \emph{orientation bundle} $\orclass(\mathcal{G})$ is the principal $\Z_2$-bundle over $X$ with total space
\begin{equation*}
  \orclass(\mathcal{G}) \defeq (Y \times \Z_2) / \sim,
\end{equation*}
where $(y_1, \epsilon_1) \sim (y_2, \epsilon_2)$ if and only if $y_1$ and $y_2$ lie in the same fiber of $Y$ over $X$ and the parity of the super line $\L_{(y_1, y_2)}$ equals the parity of $\epsilon_1+\epsilon_2$.
The total space of $\orclass(\mathcal{G})$ carries the unique smooth structure that turns the projection map into a local diffeomorphism, and it carries the $\Z_2$-action given by $[y, \epsilon] \ract \epsilon^\prime = [y, \epsilon+\epsilon^\prime]$.
\end{definition}

Observe that the defining relation of the orientation bundle is an equivalence relation due to the existence of the (grading preserving) bundle gerbe product of $\mathcal{G}$.

Let $\mathfrak{h} : \tilde{\mathcal{G}} \to \mathcal{G}$ be an isomorphism of super bundle gerbes over $X$, given in given in terms of a line bundle $\mathfrak{H}$ over a common refinement $\tilde{Y} \leftarrow Z \to Y$. 
Then there is an isomorphism $\orclass(\mathfrak{h}) : \orclass(\tilde{\mathcal{G}}) \to \orclass(\mathcal{G})$, given by sending an element $[\tilde{y}, \tilde{\epsilon}]$ of $\orclass(\tilde{\mathcal{G}})$ to the element $[y, \epsilon]$ of $\orclass(\mathcal{G})$ if $y$ and $\tilde{y}$ have a common lift $z \in Z$ and the parity of $\mathfrak{H}_z$ equals $\tilde{\epsilon} + \epsilon \mod 2$.
%
%
This gives a symmetric monoidal functor
\begin{equation}
\label{OrientationFunctor}
 \orclass :  \sGerb^\times (X) \longrightarrow \Z_2\text{-}\Bdl(X)
\end{equation}
from the bigroupoid of super bundle gerbes and isomorphisms to the category of principal $\Z_2$-bundles over $X$.

\begin{lemma}
\label{LemmaTrivialOrientationBundle}
If the orientation bundle $\orclass(\mathcal{G})$ of a super bundle gerbe $\mathcal{G}$ admits a trivialization, then $\mathcal{G}$ can be refined to a purely even bundle gerbe.
\end{lemma}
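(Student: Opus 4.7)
The plan is to use a global section of the trivial orientation bundle to produce a locally constant ``parity-correcting'' function on the cover $Y$, and then to twist $\mathcal{G}$ by a super line bundle on $Y$ whose grading is precisely this function. Tensoring a bundle gerbe by a (super) line bundle on its cover yields an isomorphic bundle gerbe in $\sGerb(X)$, so the only thing to arrange is that the new super line bundle over $Y^{[2]}$ becomes purely even.

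First I would extract the combinatorial datum. By the definition of $\orclass(\mathcal{G})$, the quotient map $Y \times \Z_2 \to \orclass(\mathcal{G})$ is a local diffeomorphism; given a section $s : X \to \orclass(\mathcal{G})$, its pullback $s \circ \pi$ to $Y$ can be compared with the tautological section $y \mapsto [y, 0]$ to yield a smooth (hence locally constant) function $\epsilon_s : Y \to \Z_2$ characterized by
\[
s(\pi(y)) \;=\; [y, \epsilon_s(y)].
\]
For any $(y_1, y_2) \in Y^{[2]}$ the left hand sides agree, so by the very definition of the equivalence relation cutting out $\orclass(\mathcal{G})$ the parity of the fibre $\L_{(y_1, y_2)}$ equals $\epsilon_s(y_1) + \epsilon_s(y_2) \bmod 2$. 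This is the one and only place where the triviality hypothesis is used.

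Next I would build the purely even bundle gerbe. Let $\mathfrak{K}$ denote the super line bundle on $Y$ whose underlying line bundle is trivial and whose grading at $y$ is $\epsilon_s(y)$. Define $\tilde{\mathcal{G}}$ to have the same cover $Y$, super line bundle
\[
\tilde{\L} \;\defeq\; \L \otimes \mathfrak{K}_2^{-1} \otimes \mathfrak{K}_1
\]
over $Y^{[2]}$, and bundle gerbe product $\tilde{\lambda}$ induced from $\lambda$ via the canonical cancellation $\mathfrak{K}_2 \otimes \mathfrak{K}_2^{-1} \cong \underline{\C}$ over $Y^{[3]}$. The parity identity of the previous paragraph forces $|\tilde{\L}_{(y_1, y_2)}| = 0$, so $\tilde{\L}$ is purely even, and the coherence \eqref{CoherenceGerbeMultiplication} for $\tilde{\lambda}$ is inherited directly from that of $\lambda$.

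Finally, to see that $\tilde{\mathcal{G}}$ really is a replacement for $\mathcal{G}$, I would exhibit an isomorphism $\tilde{\mathcal{G}} \to \mathcal{G}$ in $\sGerb(X)$ given in terms of the common refinement $Z = Y$ by the super line bundle $\mathfrak{H} = \mathfrak{K}$ together with the tautological grading-preserving isomorphism
\[
\mathfrak{K}_2 \otimes \tilde{\L} \;=\; \mathfrak{K}_2 \otimes \L \otimes \mathfrak{K}_2^{-1} \otimes \mathfrak{K}_1 \;\xrightarrow{\;\cong\;}\; \L \otimes \mathfrak{K}_1
\]
over $Y^{[2]}$. The only point requiring genuine care is the Koszul-sign bookkeeping in verifying compatibility \eqref{CoherenceMorphismsBundleGerbes} of this isomorphism with $\lambda$ and $\tilde{\lambda}$ over $Y^{[3]}$; once the grading of $\mathfrak{K}$ has been fixed, however, this is a formal diagram chase with no real obstacle.
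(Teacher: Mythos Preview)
Your proof is correct and establishes the lemma, but it takes a different route from the paper's argument. The paper does not twist the line bundle; instead it passes to a smaller cover. Given a trivialization $\tau:\orclass(\mathcal{G})\to\underline{\Z}_2$, it sets
\[
\tilde{Y}\;=\;\bigl\{\,y\in Y \;\big|\; \tau([y,0])=0\,\bigr\}\;\subseteq\;Y,
\]
and observes that the restriction of $\L$ to $\tilde{Y}^{[2]}$ is purely even, since for $y_1,y_2\in\tilde{Y}$ over the same point of $X$ one has $[y_1,0]=[y_2,0]$ and hence $|\L_{(y_1,y_2)}|=0$. The inclusion $\tilde{Y}\hookrightarrow Y$ is then a genuine morphism of covers, so one obtains a \emph{refinement} in the strict sense of $\sGerb_{\mathrm{ref}}(X)$, not merely an isomorphism in $\sGerb(X)$.

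The trade-off is this. Your construction keeps the cover $Y$ unchanged and produces an isomorphism in $\sGerb(X)$, at the cost of tracking the Koszul signs in the braiding when verifying compatibility with the two gerbe products. The paper's construction is sign-free and yields an honest refinement, which is precisely what is recorded in Remark~\ref{RemarkTrivializationInducesRefinement} and then reused in the proof of Lemma~\ref{LemmaPurelyEven2Gerbe}. So while both arguments prove the lemma as stated, the paper's version feeds directly into the later argument where one needs the refinement~$r$ explicitly; your twist-by-$\mathfrak{K}$ approach would require a small reformulation there. Note also that your function $\epsilon_s$ and the paper's subset $\tilde{Y}$ encode exactly the same information: $\tilde{Y}=\epsilon_s^{-1}(0)$.
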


Here by a \emph{purely even} bundle gerbe $\mathcal{G}$, we mean one whose defining line bundle $\mathfrak{L}$ is purely even. 
Equivalently, $\mathcal{G}$ lies in the image of the inclusion functor
\begin{equation}
\label{InclusionFunctorGerbSGerb}
  \Gerb(X) \longrightarrow \sGerb(X),
\end{equation} 
mapping to the subcategory where all bundles are purely even.
Observe that bundle gerbes in the image of the functor \eqref{InclusionFunctorGerbSGerb} have \emph{canonically} trivial orientation bundle (with the canonical section given by $\sigma(x) = [y, 0]$ for any $y \in Y$ over $x$), while the lemma says that super bundle gerbes in the \emph{essential} image of \eqref{InclusionFunctorGerbSGerb} are those with trivial\emph{izable} orientation bundle.

\begin{proof}
Let $\mathcal{G}$ be a super bundle gerbe over $X$ with cover $Y$ and let $\tau : \orclass(\mathcal{G}) \to \Z_2$ be a trivialization.
We define $\tilde{Y}$ to be the pre-image of the canonical section of $\underline{\Z}_2$ under the map
\begin{equation*}
\begin{tikzcd}
  Y \ar[r] & \orclass(\mathcal{G}) \ar[r, "\tau"] & \Z_2,
\end{tikzcd}
\end{equation*}
where the first map sends $y \mapsto [y, 0]$.
Then $\tilde{Y}$ is a subcover of $Y$ and the refinement $\tilde{\mathcal{G}}$ of $\mathcal{G}$ along the inclusion $\tilde{Y} \to Y$ is a bundle gerbe isomorphic to $\mathcal{G}$ whose line bundle is purely even. 
%
%
Hence ${\mathcal{G}}$ is in the essential image of the functor \eqref{InclusionFunctorGerbSGerb}.
\end{proof}

\begin{remark}
\label{RemarkTrivializationInducesRefinement}
In fact, the lemma shows that every trivialialization of $\orclass(\mathcal{G})$ induces a refinement $r: \tilde{\mathcal{G}} \to \mathcal{G}$ in a canonical way, where $\tilde{\mathcal{G}}$ is a purely even bundle gerbe.
It is characterized by the property that the composition of the canonical trivialization $\Z_2 \to \orclass(\tilde{\mathcal{G}})$ with the induced map $\orclass(r) : \orclass(\tilde{\mathcal{G}}) \to \orclass(\mathcal{G})$ is the inverse of $\tau$.
\end{remark}

$\Z_2$-principal bundles $P$ are classified by a single characteristic class $w_1(P) \in H^1(X, \Z_2)$, the \emph{first Stieffel-Whitney class}, hence any super bundle gerbe has an associated $\Z_2$-valued cohomology class $w_1(\orclass(\mathcal{G}))$, which by Lemma~\ref{LemmaTrivialOrientationBundle} vanishes if and only if $\mathcal{G}$ is isomorphic in $\sGerb(X)$ to a purely even bundle gerbe.

\medskip

Any ordinary bundle gerbe $\mathcal{G}$ has an associated Dixmier-Douady class $\DDclass(\mathcal{G}) \in H^3(X, \Z)$.
As there exists a (non-monoidal) functor
\begin{equation}
\label{ForgetGradingFunctor}
 \sGerb(X) \longrightarrow \Gerb(X)
\end{equation}
which forgets the grading, the same is true for super bundle gerbes.
It turns out that the two characteristic classes  $w_1(\orclass(\mathcal{G}))$ and $\DDclass(\mathcal{G})$ characterize a super bundle gerbe $\mathcal{G}$ up to isomorphism (see \cite[\S4]{kristel20212vector}).
%

\subsection{Super bundle 2-gerbes}
\label{SectionBundle2Gerbes}

The following definition is the straightforward generalization of \cite[Def.~5.1.1]{WaldorfString} to the $\Z_2$-graded case;  see also \cite[Def.~5.3]{StevensonBundle2Gerbes}.
Throughout, we use Notation~\ref{NotationSubmersion}.

\begin{definition}[Super bundle 2-gerbe]
\label{DefinitionSuper2Gerbe}
A \emph{super bundle 2-gerbe} $\mathbb{G}$ on a manifold $X$ consists of a surjective submersion $Y \to X$, a super bundle gerbe $\mathcal{G}$ on the 2-fold fiber product $Y^{[2]}$, an isomorphism of super bundle gerbes 
\begin{equation*}
\m:  \mathcal{G}_{23} \otimes  \mathcal{G}_{12} \to \mathcal{G}_{13}
\end{equation*}
 over $Y^{[3]}$ and an invertible 2-morphism 
 \begin{equation} \label{Associator}
\begin{tikzcd}[column sep=1.5cm, row sep=1.3cm]
\mathcal{G}_{34} \otimes  \mathcal{G}_{23} \otimes  \mathcal{G}_{12} 
\ar[d, "\m_{234} \otimes 1"'] 
\ar[r, "1 \otimes  \m_{123}"]
	& 
	\mathcal{G}_{34} \otimes  \mathcal{G}_{13} 
		\ar[d, "\m_{134}"]
		\\
\mathcal{G}_{24} \otimes  \mathcal{G}_{12} \ar[r, " \m_{124}"'] 
\ar[ur, Rightarrow, shorten=0.5cm,  "a"] 
	&
	 \mathcal{G}_{14}
\end{tikzcd}
\end{equation}
over $Y^{[4]}$ satisfying a cocycle condition over $Y^{[5]}$ (see \cite[Figure 1]{StevensonBundle2Gerbes}).
\end{definition}


Bundle 2-gerbes contain quite a large amount of data, which can be best visualized in relation to their \emph{cover diagram}, the relevant part of which is
\begin{equation} 
\label{CoverDiagram}
\begin{tikzcd}
& W^{[4, 2]} 
\ar[d, shift left=1mm] 
\ar[d, shift left=3mm] 
\ar[d, shift right=1mm] 
\ar[d, shift right=3mm] 
 &  &  \\
 & 
 	W^{[3, 2]} 
	\ar[d]
 	\ar[d, shift left=2mm]
 	\ar[d, shift right=2mm]
	& 
		W^{[3, 3]}
			\ar[l]
 			\ar[l, shift left=2mm]
 			\ar[l, shift right=2mm]
		\ar[d]
 		\ar[d, shift left=2mm]
 		\ar[d, shift right=2mm]
		& 
		& 
			\\
 & 
 	W^{[2, 2]}  
 	\ar[d, shift left=1mm]
 	\ar[d, shift right=1mm]
	& 
		W^{[2, 3]}
			\ar[l]
 			\ar[l, shift left=2mm]
 			\ar[l, shift right=2mm]
 		\ar[d, shift left=1mm]
 		\ar[d, shift right=1mm]
		&
			W^{[2, 4]}
 			\ar[d, shift left=1mm]
 			\ar[d, shift right=1mm] 
			\ar[l, shift left=1mm] 
		\ar[l, shift right=1mm]
			\ar[l, shift left=3mm] 
		\ar[l, shift right=3mm]
		& 
			\\
&
	W^{[1, 2]} 
	\ar[d]
	& 
		W^{[1, 3]}
			\ar[l]
 			\ar[l, shift left=2mm]
 			\ar[l, shift right=2mm]
		\ar[d]
		&
			W^{[1, 4]}
			\ar[d]
			\ar[l, shift left=1mm] 
		\ar[l, shift right=1mm]
			\ar[l, shift left=3mm] 
		\ar[l, shift right=3mm]
		& 
			W^{[1, 5]}
		\ar[l, shift left=2mm] 
		\ar[l, shift right=2mm]
			\ar[l, shift left=4mm] 
		\ar[l, shift right=4mm]
		\ar[l]
		\ar[d]
			\\
Y \ar[d]
&
	Y^{[2]}
		\ar[l, shift left=1mm] 
		\ar[l, shift right=1mm]
	&
		Y^{[3]}
			\ar[l]
 			\ar[l, shift left=2mm]
 			\ar[l, shift right=2mm]
		&
			Y^{[4]}
			\ar[l, shift left=1mm] 
		\ar[l, shift right=1mm]
			\ar[l, shift left=3mm] 
		\ar[l, shift right=3mm]
		& 
		Y^{[5]}
			\ar[l, shift left=2mm] 
		\ar[l, shift right=2mm]
			\ar[l, shift left=4mm] 
		\ar[l, shift right=4mm]
		\ar[l]
			\\
			X
\end{tikzcd}
\end{equation}
Here $W^{[1, 2]}$, $W^{[1, 3]}$ and $W^{[1, 4]}$ are the covers of the bundle gerbe $\mathcal{G}$, the morphism $\mathfrak{m}$ and the 2-morphism $a$, respectively, and $W^{[m, n]}$ is the $m$-fold fiber product of $W^{[1, n]}$ over $Y^{[n]}$.
The vertical arrows are the usual projection maps for the iterated fiber product.
Moreover, for $m \in \N$ and $3 \leq n \geq 5$, there is a surjective submersion
\begin{equation}
\label{MapsWmn}
W^{[m, n]} \longrightarrow \underbrace{\cdots \times_{Y^{[n]}} W^{[m, n-1]}_{i_1 \dots i_{n-1}} \times_{Y^{[n]}} \cdots }_{\text{fiber product over all tuples}~1 \leq i_1 < \dots < i_{n-1} \leq n},
\end{equation}
where $W^{[m, n-1]}_{i_1 \dots i_{n-1}}$ denotes the pullback of $W^{[m, n-1]}$ along the projection $Y^{[n]} \to Y^{[n-1]}$ onto the indicated factors (see Notation~\ref{NotationSubmersion}).
This provides the horizontal arrows.
However, the diagram is only partially bisimplicial as the first two columns are missing (and we only need the depicted part), and its rows are only \emph{semi}-simplicial, as there are no degeneracy maps in general.

The data and conditions of a bundle 2-gerbe $\mathbb{G}$ can now be organized with respect to this diagram as follows.
\begin{equation} 
\label{Bundle2GerbeBigDiagram}
\begin{aligned}
%
\mathbb{G} =  \left[
\begin{tikzcd}[column sep=0.7cm]
& \substack{\text{coherence} \\ \text{condition} \\ \text{for}~\lambda} 
\ar[d, dotted, -] 
&  &  \\
 & 
 	\lambda 
	\ar[d, dotted, -]
	& 
		\substack{\text{compatibility} \\ \text{of}~\lambda~\text{and}~\mu}
			\ar[l, dotted, -]
		\ar[d, dotted, -]
		& 
		& 
			\\
 & 
 	\mathfrak{L}
 	\ar[d, dotted, -]
	& 
		\mu
			\ar[l,  dotted, -]
 		\ar[d,  dotted, -]
		&
			\substack{\text{compatibility} \\ \text{of}~\mu~\text{and}~\alpha}
 			\ar[d,  dotted, -] 
			\ar[l, dotted, -]
		& 
			\\
&
	{\color{gray}\bullet} 
	\ar[d,  dotted, -]
	& 
		\mathfrak{M}
			\ar[l,  dotted, -]
		\ar[d,  dotted, -]
		&
			\alpha
			\ar[d,  dotted, -]
			\ar[l,  dotted, -]
		& 
		\substack{\text{coherence} \\ \text{condition} \\ \text{for}~\alpha}
			\ar[l,  dotted, -] 
		\ar[d,  dotted, -]
			\\
Y \ar[d]
&
	Y^{[2]}
		\ar[l, shift left=1mm] 
		\ar[l, shift right=1mm]
	&
		Y^{[3]}
			\ar[l]
 			\ar[l, shift left=2mm]
 			\ar[l, shift right=2mm]
		&
			Y^{[4]}
			\ar[l, shift left=1mm] 
		\ar[l, shift right=1mm]
			\ar[l, shift left=3mm] 
		\ar[l, shift right=3mm]
		& 
		Y^{[5]}
			\ar[l, shift left=2mm] 
		\ar[l, shift right=2mm]
			\ar[l, shift left=4mm] 
		\ar[l, shift right=4mm]
		\ar[l] 
			\\
			X
\end{tikzcd}
\right]
\end{aligned}
\end{equation}
In other words, the data consist of two super line bundles, $\mathfrak{L}$ and $\mathfrak{M}$, and three grading preserving line bundle isomorphisms, $\lambda$, $\mu$ and $\alpha$, subject to four coherence conditions.

\begin{notation}
\label{NotationBisimplicialPullbacks}
Refining Notation~\ref{NotationSubmersion}, we denote pullbacks along the vertical maps as upper indices and pullbacks along the horizontal maps by lower indices.
\end{notation}

Using this notation, the mapping properties of the line bundle isomorphisms from \eqref{Bundle2GerbeBigDiagram} are
\begin{equation*}
\begin{aligned}
  &\lambda : & \mathfrak{L}^{23} \otimes \mathfrak{L}^{12} &\longrightarrow \mathfrak{L}^{13} \\
   &\mu : & \mathfrak{M}^2 \otimes \mathfrak{L}_{23} \otimes \mathfrak{L}_{12} &\longrightarrow \mathfrak{L}_{13} \otimes \mathfrak{M}^1\\
   &\alpha : & \mathfrak{M}_{124} \otimes \mathfrak{M}_{234} &\longrightarrow \mathfrak{M}_{134} \otimes \mathfrak{M}_{123}.
\end{aligned}
\end{equation*}
The coherence condition for $\lambda$ over $W^{[4, 2]}$ is the usual condition \eqref{CoherenceGerbeMultiplication} for a bundle gerbe product.
The compatibility condition for $\lambda$ and $\mu$ over $W^{[3,3]}$ is
\begin{equation}
\label{CoherenceNu}
\begin{tikzcd}
\mathfrak{M}^{3} \otimes \mathfrak{L}_{23}^{23} \otimes \mathfrak{L}_{12}^{23} \otimes \mathfrak{L}_{23}^{12} \otimes \mathfrak{L}_{12}^{12} \ar[r, equal] \ar[d, "\mu^{23} \otimes 1\otimes 1"']&
\mathfrak{M}^{3} \otimes \mathfrak{L}_{23}^{23} \otimes \mathfrak{L}_{23}^{12}  \otimes \mathfrak{L}_{12}^{23} \otimes \mathfrak{L}_{12}^{12} \ar[d, "1\otimes \lambda_{23} \otimes \lambda_{12}"]\\
 \mathfrak{L}_{13}^{23} \otimes \mathfrak{M}^{2}  \otimes \mathfrak{L}_{23}^{12} \otimes \mathfrak{L}_{12}^{12} \ar[d, "1 \otimes \mu^{12}"']&
 \mathfrak{M}^{3} \otimes \mathfrak{L}_{23}^{13} \otimes \mathfrak{L}_{12}^{13} \ar[d, "1 \otimes \mu^{13}"]\\
  \mathfrak{L}_{13}^{23}  \otimes \mathfrak{L}_{13}^{12} \otimes \mathfrak{M}^1 \ar[r, "\lambda_{13}"]
  & \mathfrak{L}_{13}^{13} \otimes \mathfrak{M}^1.
\end{tikzcd} 
\end{equation}
The compatibility condition for $\alpha$ and $\mu$ over $W^{[2, 4]}$ is
\begin{equation}
\label{CoherenceAlphaTilde}
\begin{tikzcd}[column sep=2cm]
\mathfrak{M}_{124}^2 \otimes \mathfrak{M}_{234}^2 \otimes \mathfrak{L}_{34} \otimes \mathfrak{L}_{23} \otimes \mathfrak{L}_{12} 
	\ar[r, "\alpha^2 \otimes 1 \otimes 1 \otimes 1"] 
	\ar[d, "1 \otimes \mu_{234} \otimes 1"']
& 
\mathfrak{M}_{134}^2 \otimes\mathfrak{M}_{123}^2 \otimes \mathfrak{L}_{34} \otimes \mathfrak{L}_{23} \otimes \mathfrak{L}_{12}
	\ar[d, equal]
\\
\mathfrak{M}_{124}^2 \otimes \mathfrak{L}_{24} \otimes \mathfrak{M}^1_{234} \otimes  \mathfrak{L}_{12}
	\ar[d, equal]
&
\mathfrak{M}_{134}^2 \otimes \mathfrak{L}_{34} \otimes\mathfrak{M}_{123}^2 \otimes \mathfrak{L}_{23} \otimes \mathfrak{L}_{12}
	\ar[d, "1 \otimes 1 \otimes \mu_{123} "]
\\
\mathfrak{M}_{124}^2 \otimes \mathfrak{L}_{24} \otimes  \mathfrak{L}_{12} \otimes \mathfrak{M}^1_{234}
	\ar[d, "1 \otimes \mu_{124}"']
&
\mathfrak{M}_{134}^2 \otimes \mathfrak{L}_{34} \otimes \mathfrak{L}_{13}  \otimes\mathfrak{M}_{123}^1 
	\ar[d, "\mu_{134} \otimes 1"]
\\
 \mathfrak{L}_{14} \otimes  \mathfrak{M}_{124}^1 \otimes \mathfrak{M}^1_{234}
 	\ar[r, "1 \otimes \alpha^1"]
	&
	\mathfrak{L}_{14} \otimes \mathfrak{M}_{134}^1 \otimes\mathfrak{M}_{123}^1.
\end{tikzcd}
\end{equation}
Finally, the cocycle condition for $\alpha$ over $W^{[1, 5]}$ is the commutativity of
\begin{equation}
\label{CoherenceAlpha} 
\begin{tikzcd}[column sep=-1cm]
& 
\mathfrak{M}_{125} \otimes \mathfrak{M}_{235} \otimes \mathfrak{M}_{345}
	\ar[dl, "1 \otimes \alpha_{2345}"'] \ar[dr, "\alpha_{1235} \otimes 1"]
	&
	\\
\mathfrak{M}_{125} \otimes \mathfrak{M}_{245} \otimes \mathfrak{M}_{234}
	\ar[d, "\alpha_{1245} \otimes 1"']
&
&
\mathfrak{M}_{135} \otimes \mathfrak{M}_{123} \otimes \mathfrak{M}_{345}
	\ar[d, equal]
	\\
\mathfrak{M}_{145} \otimes \mathfrak{M}_{124} \otimes \mathfrak{M}_{234}
	\ar[dr, "1 \otimes \alpha_{1234}"']
	&
	&
\mathfrak{M}_{135} \otimes \mathfrak{M}_{345} \otimes \mathfrak{M}_{123}
	\ar[dl, "\alpha_{1345} \otimes 1"]
	\\
	&
\mathfrak{M}_{145} \otimes \mathfrak{M}_{134} \otimes \mathfrak{M}_{123}.
\end{tikzcd} 
\end{equation}
Throughout, morphisms labeled as equality are the braiding isomorphism in the category of super line bundles.

\begin{remark}
Just as (super) bundle gerbes over a manifold $X$ are the objects of a bicategory, (super) bundle 2-gerbes over $X$ ought to be the objects of a tricategory (in fact, varying $X$, one should obtain a stack of tricategories on the site of manifolds).
However, no description of this tricategory exists in the literature.
This lack of theory exists for a good reason:
While working with bicategories is unproblematic (although sometimes slightly cumbersome), the complexity increases immensely when passing to tricategories (see, e.g., \cite[\S2.2]{GPSTricategories} \cite[\S3.1]{GurskiTricategories}).
Therefore, it may be more sensible to use the language of $\infty$-categories from the outset when working with (higher) bundle gerbes; 
however, in practice, it is often convenient to work with the explicit bicategorical description of bundle gerbes and the transition from this to an $\infty$-categorical setup has not been worked out in the literature.
In this paper, we will not need any tricategory of (super) bundle 2-gerbes, and will just give ad hoc definitions whenever necessary.
\end{remark}

\begin{remark}
This way of representing bundle gerbes is similar to the definition of a \emph{bigerbe}, as introduced in \cite{Bigerbes}; see also \cite{Rigid2Gerbes}.
The key difference is that for bigerbes, the $W^{[m, n]}$ are required to be fiber products also in the $m$ index while the maps \eqref{MapsWmn} need not be surjective submersions.
On the other hand, for bigerbes, the authors require the line bundle $\mathfrak{M}$ to be trivial.
\end{remark}

\subsection{Isomorphisms of super bundle 2-gerbes}
\label{SectionIso2Gerbes}

We now discuss \emph{isomorphisms} of super bundle gerbes. 
While it is possible to extract from the literature when two bundle 2-gerbes are isomorphic \cite[Def.~4.6]{CJMSW}, the definition of \emph{isomorphisms} of bundle 2-gerbes does not exist in the literature (to our knowledge).
However, one can find the definition of a \emph{trivialization} of a bundle 2-gerbe, which is an isomorphism $\mathbb{G} \to \mathbb{I}$, where $\mathbb{I}$ is the trivial bundle 2-gerbe; this is \cite[Definition~11.1]{StevensonBundle2Gerbes}, see also \cite[Definition~2.2.1]{WaldorfStringConnections}.
This definition generalizes to the case of general 2-morphisms (and lifts to the setting of \emph{super} bundle 2-gerbes) in a straightforward fashion, with the following result.

\begin{definition}[Isomorphism of super bundle 2-gerbes]
\label{DefinitionIso2gerbe}
Let $\tilde{\mathbb{G}}$ and $\mathbb{G}$ be super bundle 2-gerbes, described in terms of open covers $\tilde{Y}$ and $Y$ of $X$.
An \emph{isomorphism of super bundle 2-gerbes} $\mathcal{h} : \tilde{\mathbb{G}} \to \mathbb{G}$ consists of an common refinement
\begin{equation*}
\begin{tikzcd}
\tilde{Y} & \ar[l] Z \ar[r] & Y
\end{tikzcd}
\end{equation*}
 of covers of $X$, a super bundle gerbe $\mathcal{H}$ over $Z$, an isomorphism
\begin{equation*}
  \mathfrak{n} : \mathcal{H}_2 \otimes \tilde{\mathcal{G}} \longrightarrow \mathcal{G} \otimes \mathcal{H}_1
\end{equation*}
of super bundle gerbes over $Z^{[2]}$ and a 2-morphism
\begin{equation}
\label{TwoMorphismForIsomorphism}
\begin{tikzcd}[column sep=1.5cm]
  \mathcal{H}_3 \otimes \tilde{\mathcal{G}}_{23}  \otimes \tilde{\mathcal{G}}_{12} 
   \ar[r, "1 \otimes\tilde{\m}"] \ar[d, "\mathfrak{n}_{23} \otimes 1"'] 
  & \mathcal{H}_{3} \otimes \tilde{\mathcal{G}}_{13} 
    \ar[dd, " \mathfrak{n}_{13}"] 
        \ar[ddl, Rightarrow, "b"', shorten=1.4cm]
  \\
  \mathcal{G}_{23} \otimes \mathcal{H}_2 \otimes \tilde{\mathcal{G}}_{12} 
  \ar[d, "1 \otimes \mathfrak{n}_{12}"']
  &
  \\
  \mathcal{G}_{23} \otimes \mathcal{G}_{12} \otimes \mathcal{H}_1 
  \ar[r, "\m \otimes 1"]
  & \mathcal{G}_{13} \otimes \mathcal{H}_1
\end{tikzcd}
\end{equation}
over $Z^{[3]}$ satisfying a cocycle condition over $Z^{[4]}$.
Here and throughout, we denote the pullbacks of $\tilde{\mathcal{G}}$, $\mathcal{G}$, $\tilde{\mathfrak{m}}$, $\mathfrak{m}$, $\tilde{a}$ and $a$ to fiber products of $Z$ with the same letters again.
Two super bundle 2-gerbes are said to be \emph{(stably) isomorphic} if and only if there exists an isomorphism between them.
\end{definition}

\begin{remark}
To obtain morphisms of super bundle 2-gerbes that are not isomorphisms, one can use the language of 2-vector bundles introduced in \cite{kristel20212vector}.
Then the bundle gerbe $\mathcal{H}$ in Definition~\ref{DefinitionIso2gerbe} is replaced by a general super 2-vector bundle, and $\mathfrak{n}$ is an isomorphism of such.
This is a natural categorification of the corresponding concept of non-invertible morphisms of bundle gerbes introduced in \cite{WaldorfMoreMorphisms}.
\end{remark}

As super bundle 2-gerbes themselves, the data corresponding to morphisms between them can be organized with respect to a partial semi-bisimplicial diagram, which in this case takes the form
\begin{equation} 
\label{CoverDiagramMorphism}
\begin{tikzcd}
V^{[4, 1]}
\ar[d, shift left=1mm] 
\ar[d, shift left=3mm] 
\ar[d, shift right=1mm] 
\ar[d, shift right=3mm] 
    &  &  &  \\
V^{[3, 1]}
\ar[d]
\ar[d, shift left=2mm]
\ar[d, shift right=2mm]
		& 
 	V^{[3, 2]}  
	\ar[d]
 	\ar[d, shift left=2mm]
 	\ar[d, shift right=2mm]
			\ar[l, shift left=1mm]
			\ar[l, shift right=1mm]
	& & 
		& 
			\\
V^{[2, 1]} 
\ar[d, shift left=1mm]
\ar[d, shift right=1mm]
	& 
 	V^{[2, 2]}  
 	\ar[d, shift left=1mm]
 	\ar[d, shift right=1mm]
	\ar[l, shift left=1mm]
	\ar[l, shift right=1mm]
	& 
		V^{[2, 3]}
			\ar[l]
 			\ar[l, shift left=2mm]
 			\ar[l, shift right=2mm]
 		\ar[d, shift left=1mm]
 		\ar[d, shift right=1mm]
		& & 
			\\
V^{[1, 1]}
\ar[d]
	&
	V^{[1, 2]} 
	\ar[l, shift left=1mm]
	\ar[l, shift right=1mm]
	\ar[d]
	& 
		V^{[1, 3]}
			\ar[l]
 			\ar[l, shift left=2mm]
 			\ar[l, shift right=2mm]
		\ar[d]
		&
			V^{[1, 4]}
			\ar[d]
			\ar[l, shift left=1mm] 
		\ar[l, shift right=1mm]
			\ar[l, shift left=3mm] 
		\ar[l, shift right=3mm]
		& 
			\\
Z \ar[d]
&
	Z^{[2]}
		\ar[l, shift left=1mm] 
		\ar[l, shift right=1mm]
	&
		Z^{[3]}
			\ar[l]
 			\ar[l, shift left=2mm]
 			\ar[l, shift right=2mm]
		&
			Z^{[4]}
			\ar[l, shift left=1mm] 
		\ar[l, shift right=1mm]
			\ar[l, shift left=3mm] 
		\ar[l, shift right=3mm]
			\\
			X
\end{tikzcd}
\end{equation}
Here $V^{[1, 1]}$, $V^{[1, 2]}$ and $V^{[1, 3]}$ are the covers for the super bundle gerbe $\mathcal{H}$, the isomorphism $\mathfrak{n}$ and the 2-morphism $b$, respectively.
As before, for $m \in \N$ and $1 \leq n \leq 4$, these come with surjective submersions
\begin{equation*}
  V^{[m, n]} \longrightarrow \underbrace{ \cdots \times_{Z^{[n]}} V^{[m, n-1]}_{i_1 \dots i_{n-1}} \times_{Z^{[n]}} \cdots}_{\text{fiber product over all tuples}~1 \leq i_1 < \dots < i_{n-1} \leq n}
\end{equation*}
as well as
\begin{equation*}
 V^{[m, n]} \longrightarrow \tilde{W}^{[m, n]} \times_{Z^{[n]}} W^{[m, n]},
\end{equation*}
where we assume that $\mathbb{G}$ is given in terms of the cover diagram \eqref{CoverDiagram} and in the corresponding diagram of $\tilde{\mathbb{G}}$, all covers are decorated with a tilde.
Hence the whole diagram \eqref{CoverDiagramMorphism}, past the first column, maps to the fiber product of the cover diagrams of $\tilde{\mathbb{G}}$ and $\mathbb{G}$ over $X$.
We use the same letters again for the pullbacks of the defining data for $\tilde{\mathbb{G}}$ and $\mathbb{G}$ along these maps.
For example, $\mathfrak{L}$ (respectively $\tilde{\mathfrak{L}}$) denotes the pullback to $V^{[2, 2]}$ of the line bundle of $\mathbb{G}$ over $W^{[2, 2]}$ (respectively of the line bundle of $\tilde{\mathbb{G}}$ over $\tilde{W}^{[2, 2]}$).

The data and conditions for the isomorphism $\mathcal{h}: \tilde{\mathbb{G}} \to \mathbb{G}$ can now be arranged into a diagram corresponding to \eqref{CoverDiagramMorphism} as follows.
\begin{equation} 
\label{TwoMorphismDiagram}
\begin{aligned}
\mathcal{h}  = 
\left[
\begin{tikzcd}[column sep=0.7cm]
\substack{\text{coherence} \\ \text{condition}\\ \text{for}~\eta}
\ar[d,  dotted, -] 
    &  &   \\
 \eta
\ar[d,  dotted, -]
		& 
 	\substack{\text{compatibility} \\ \text{of}~\eta~\text{and}~\nu\\ \text{with} ~\tilde{\lambda}~\text{and}~\lambda}  
	\ar[d,  dotted, -]
	\ar[l,  dotted, -]
	& & 
			\\
\mathfrak{H}
\ar[d,  dotted, -]
	& 
 	\nu 
 	\ar[d,  dotted, -]
	\ar[l,  dotted, -]
	& 
 	\substack{\text{compatibility} \\ \text{of}~\nu~\text{and}~\beta \\ \text{with}~\tilde{\mu}~\text{and}~\mu}  
			\ar[l,  dotted, -]
 		\ar[d,  dotted, -]
		& 
			\\
{\color{gray}\bullet}
\ar[d,  dotted, -]
	&
	\mathfrak{N} 
	\ar[l,  dotted, -]
	\ar[d,  dotted, -]
	& 
		\beta
			\ar[l,  dotted, -]
		\ar[d,  dotted, -]
		&
			\substack{\text{compatibility} \\ \text{of}~\beta~\text{with} \\ \tilde{\alpha}~\text{and}~\alpha}
			\ar[d, dotted, -]
			\ar[l,  dotted, -]
			\\
Z \ar[d]
&
	Z^{[2]}
		\ar[l, shift left=1mm] 
		\ar[l, shift right=1mm]
	&
		Z^{[3]}
			\ar[l]
 			\ar[l, shift left=2mm]
 			\ar[l, shift right=2mm]
		&
			Z^{[4]}
			\ar[l, shift left=1mm] 
		\ar[l, shift right=1mm]
			\ar[l, shift left=3mm] 
		\ar[l, shift right=3mm]
			\\
			X
\end{tikzcd}
\right]
\end{aligned}
\end{equation}
Hence there are again two super line bundles $\mathfrak{H}$ and $\mathfrak{N}$, three isomorphisms of super line bundles $\eta$, $\nu$ and $\beta$, and four coherence conditions. 
As before, denoting horizontal pullbacks with lower indices and vertical pullbacks with upper indices, the line bundle isomorphisms are
\begin{equation*}
\begin{aligned}
  &\eta :  & \mathfrak{H}^{23} \otimes \mathfrak{H}^{12} &\longrightarrow \mathfrak{H}^{13} \\
  &\nu : & \mathfrak{N}^2 \otimes \mathfrak{H}_2 \otimes \tilde{\mathfrak{L}}  &\longrightarrow  \mathfrak{L} \otimes \mathfrak{H}_1 \otimes \mathfrak{N}^1  \\
  &\beta : & \mathfrak{N}_{13} \otimes \tilde{\mathfrak{M}} &\longrightarrow \mathfrak{M} \otimes \mathfrak{N}_{23} \otimes \mathfrak{N}_{12}.
\end{aligned}
\end{equation*}
Here we denoted the pullback of $\tilde{\mathfrak{L}}$ and $\mathfrak{L}$ to $S^{[2]}$ by the same letters again.
The coherence condition for $\eta$ is just the usual associativity condition for a bundle gerbe product.
The compatibility condition of $\eta$ and $\nu$ with $\lambda$ and $\tilde{\lambda}$ over $V^{[3, 2]}$ is
\begin{equation}
\label{CoherenceEta}
\begin{tikzcd}[column sep=3.4cm]
  \mathfrak{N}^3 \otimes \mathfrak{H}_2^{23}  \otimes  \mathfrak{H}_2^{12} \otimes \tilde{\mathfrak{L}}^{23} \otimes \tilde{\mathfrak{L}}^{12} 
 \ar[d, equal]
 \ar[r, "1 \otimes \eta_2 \otimes \tilde{\lambda}"]
 &
    \mathfrak{N}^3 \otimes\mathfrak{H}_2^{13} \otimes \tilde{\mathfrak{L}}^{13}
     \ar[dddd, "\nu^{13}"]
 \\
\mathfrak{N}^3 \otimes \mathfrak{H}_2^{23}  \otimes \tilde{\mathfrak{L}}^{23}  \otimes  \mathfrak{H}_2^{12} \otimes \tilde{\mathfrak{L}}^{12}
 \ar[d, "\nu^{23}  \otimes 1 \otimes  1"']
 &
 \\
\tilde{\mathfrak{L}}^{23} \otimes \mathfrak{H}_1^{23} \otimes  \mathfrak{N}^2 \otimes  \mathfrak{H}_2^{12} \otimes \tilde{\mathfrak{L}}^{12}
	 \ar[d, " 1 \otimes 1\otimes \nu^{12}"'] 
	 \\
{\mathfrak{L}}^{23} \otimes \mathfrak{H}_1^{23}  \otimes \mathfrak{L}^{12} \otimes \mathfrak{H}_1^{12} \otimes   \mathfrak{N}^1
	  \ar[d, equal]
	  \\
{\mathfrak{L}}^{23} \otimes \mathfrak{L}^{12} \otimes \mathfrak{H}_1^{23}  \otimes \mathfrak{H}_1^{12} \otimes   \mathfrak{N}^1
	\ar[r, "\lambda  \otimes \eta_1 \otimes 1"] 
	&
	\mathfrak{L}^{13}  \otimes   \mathfrak{H}_1^{13} \otimes \mathfrak{N}^1.
\end{tikzcd}
\end{equation}
The compatibility condition for $\beta$, $\nu$, $\tilde{\mu}$ and $\mu$ over $V^{[2, 3]}$ is 
\begin{equation}
\label{CoherenceBetaTilde}
\begin{tikzcd}[column sep=1.3cm]
  \mathfrak{N}_{13}^2 \otimes \tilde{\mathfrak{M}}^2 \otimes \mathfrak{H}_3 \otimes \tilde{\mathfrak{L}}_{23} \otimes \tilde{\mathfrak{L}}_{12}
  \ar[r, "\beta^2\otimes 1 \otimes 1"]
  \ar[d, equal]
  	&  
	\mathfrak{M}^2 \otimes \mathfrak{N}_{23}^2 \otimes \mathfrak{N}_{12}^2 \otimes \mathfrak{H}_3 \otimes \tilde{\mathfrak{L}}_{23} \otimes \tilde{\mathfrak{L}}_{12}
		\ar[d, equal]
  \\
     \mathfrak{N}_{13}^2\otimes \mathfrak{H}_3  \otimes \tilde{\mathfrak{M}}^2 \otimes \tilde{\mathfrak{L}}_{23} \otimes \tilde{\mathfrak{L}}_{12}
  \ar[ddd, "1 \otimes 1 \otimes \tilde{\mu}"']
   &
     	\mathfrak{M}^2 \otimes \mathfrak{N}_{23}^2 \otimes \mathfrak{H}_3 \otimes \tilde{\mathfrak{L}}_{23} \otimes \mathfrak{N}_{12}^2  \otimes \tilde{\mathfrak{L}}_{12}
		\ar[d, "1\otimes \nu_{23} \otimes 1 \otimes 1"]
	\\
  	&
	\mathfrak{M}^2  \otimes \mathfrak{L}_{23}\otimes  \mathfrak{H}_2 \otimes \mathfrak{N}_{23}^1   \otimes \mathfrak{N}_{12}^2 \otimes \tilde{\mathfrak{L}}_{12}
	\ar[d, equal]
	\\
	&
	\mathfrak{M}^2\otimes \mathfrak{L}_{23}  \otimes \mathfrak{N}_{23}^1 \otimes \mathfrak{N}_{12}^2  \otimes  \mathfrak{H}_2 \otimes \tilde{\mathfrak{L}}_{12} 
	\ar[d, "1 \otimes 1 \otimes 1\otimes \nu_{12}"]
	\\
  \mathfrak{N}_{13}^2 \otimes \mathfrak{H}_3 \otimes \tilde{\mathfrak{L}}_{13} \otimes \tilde{\mathfrak{M}}^1
  	\ar[ddd, "\nu_{13}\otimes 1"']
	&
	\mathfrak{M}^2\otimes \mathfrak{L}_{23}  \otimes \mathfrak{N}_{23}^1 \otimes \mathfrak{L}_{12}  \otimes  \mathfrak{H}_1  \otimes \mathfrak{N}_{12}^1
	\ar[d, equal]
	\\
	&
	\mathfrak{M}^2\otimes \mathfrak{L}_{23} \otimes \mathfrak{L}_{12}  \otimes \mathfrak{N}_{23}^1  \otimes  \mathfrak{H}_1  \otimes \mathfrak{N}_{12}^1
	\ar[d, "\mu \otimes 1\otimes 1\otimes 1"]
	\\
	&
	 \mathfrak{L}_{13}  \otimes \mathfrak{M}^1  \otimes \mathfrak{N}_{23}^1 \otimes  \mathfrak{H}_1  \otimes \mathfrak{N}_{12}^1
	 \ar[d, equal]
	 \\
\mathfrak{L}_{13} \otimes \mathfrak{H}_1 \otimes \mathfrak{N}_{13}^1  \otimes \tilde{\mathfrak{M}}^1
	\ar[r, "1 \otimes 1 \otimes \beta^1"]
	 &
	 \mathfrak{L}_{13}   \otimes  \mathfrak{H}_1 \otimes \mathfrak{M}^1  \otimes \mathfrak{N}_{23}^1 \otimes \mathfrak{N}_{12}^1
\end{tikzcd}
\end{equation}
and, finally, the compatibility condition for $\beta$, $\tilde{\alpha}$ and $\alpha$ over $V^{[1, 4]}$ is
\begin{equation}
\label{CocycleConditionb}
\begin{tikzcd}[column sep=1.8cm]
\mathfrak{N}_{14} \otimes \tilde{\mathfrak{M}}_{124} \otimes \tilde{\mathfrak{M}}_{234} 
	\ar[r, "1 \otimes \tilde{\alpha}"]
	\ar[d, "\beta_{124} \otimes 1"']
	&
	\mathfrak{N}_{14} \otimes \tilde{\mathfrak{M}}_{134} \otimes \tilde{\mathfrak{M}}_{123}
	\ar[d, "\beta_{134} \otimes 1"]
	\\
	\mathfrak{M}_{124} \otimes \mathfrak{N}_{24}  \otimes \mathfrak{N}_{12} \otimes \tilde{\mathfrak{M}}_{234}
	\ar[d, equal]
	&
	\mathfrak{M}_{134} \otimes \mathfrak{N}_{34} \otimes \mathfrak{N}_{13} \otimes \tilde{\mathfrak{M}}_{123}
	\ar[d, "1 \otimes 1 \otimes \beta_{123}"]
	\\
\mathfrak{M}_{124} \otimes \mathfrak{N}_{24}  \otimes \tilde{\mathfrak{M}}_{234} \otimes \mathfrak{N}_{12}
	\ar[d, "1 \otimes \beta_{234} \otimes 1"]
	&
	\mathfrak{M}_{134} \otimes \mathfrak{N}_{34} \otimes \mathfrak{M}_{123} \otimes \mathfrak{N}_{23} \otimes \mathfrak{N}_{12}
	\ar[d, equal]
	\\
\mathfrak{M}_{124} \otimes \mathfrak{M}_{234} \otimes \mathfrak{N}_{34} \otimes \mathfrak{N}_{23}  \otimes \mathfrak{N}_{12}
	\ar[r, "\alpha \otimes 1 \otimes 1 \otimes 1"]
	&
	\mathfrak{M}_{134} \otimes \mathfrak{M}_{123}  \otimes \mathfrak{N}_{34} \otimes \mathfrak{N}_{23}  \otimes \mathfrak{N}_{12}
\end{tikzcd}
\end{equation}
%

\subsection{Classification of super bundle 2-gerbes}
\label{SectionClassificationBundle2Gerbes}

Just as super bundle gerbes have an associated orientation bundle, bundle 2-gerbes over a manifold $X$ have an associated orientation  gerbe.

\begin{definition}[Orientation gerbe]
\label{DefinitionOrientationGerbe}
The \emph{orientation gerbe} of a super bundle 2-gerbe $\mathbb{G}$ over $X$ is the $\Z_2$-bundle gerbe $\orclass(\mathbb{G})$ over $X$ obtained by applying the functor \eqref{OrientationFunctor} to all data of $\mathbb{G}$.
\end{definition}

Explicitly if we use the same notation as in Definition~\ref{DefinitionSuper2Gerbe} for the defining data of $\mathbb{G}$, $\orclass(\mathbb{G})$ has cover $Y$ and its principal $\Z_2$-bundle over $Y^{[2]}$ is $\orclass(\mathcal{G})$.
Using that the orientation bundle functor is monoidal, we obtain a bundle gerbe product
\begin{equation*}
\begin{tikzcd}
  \orclass(\mathcal{G}_{23}) \otimes \orclass(\mathcal{G}_{12}) \cong \orclass(\mathcal{G}_{23} \otimes \mathcal{G}_{12}) \ar[r, "\orclass(\mathfrak{m})"] & \orclass(\mathcal{G}_{13}),
\end{tikzcd}
\end{equation*}
which is associative over $Y^{[3]}$ by the existence of the associator 2-morphism $a$.
An isomorphism  of bundle 2-gerbes $\mathcal{h} : \tilde{\mathbb{G}} \to \mathbb{G}$ as in Definition~\ref{DefinitionIso2gerbe} induces an isomorphism $\orclass(\mathcal{h}) : \orclass(\tilde{\mathbb{G}}) \to \orclass(\mathbb{G})$ of $\Z_2$-bundle gerbes in a straight forward way, again by applying the functor \eqref{OrientationFunctor} to all data. 
In particular, isomorphic super bundle 2-gerbes have isomorphic orientation gerbes.

A principal $\Z_2$-bundle gerbe $\mathcal{P}$ over a manifold $X$ is classified by a single characteristic class 
\begin{equation*}
  w_2(\mathcal{P}) \in H^2(X, \Z_2),
\end{equation*}
which one could call the \emph{second Stiefel-Whitney class}.
Hence any super bundle 2-gerbe $\mathbb{G}$ has an associated characteristic class $w_2(\orclass(\mathbb{G}))$, which depends only on its isomorphism class, by taking $w_2$ of its orientation gerbe.

\begin{lemma}
\label{LemmaPurelyEven2Gerbe}
If $\mathbb{G}$ is a super bundle 2-gerbe such that its orientation gerbe $\orclass(\mathbb{G})$ is trivializable, then it is isomorphic to a purely even super bundle 2-gerbe.
\end{lemma}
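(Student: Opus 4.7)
The plan is to mimic, one categorical level up, the strategy used in the proof of Lemma~\ref{LemmaTrivialOrientationBundle}. A trivialization of the $\Z_2$-bundle gerbe $\orclass(\mathbb{G})$ consists of a principal $\Z_2$-bundle $\mathcal{P}$ over $Y$ together with an isomorphism $\phi : \orclass(\mathcal{G}) \otimes \mathcal{P}_2 \to \mathcal{P}_1$ of $\Z_2$-bundles over $Y^{[2]}$, compatible over $Y^{[3]}$ with the bundle gerbe product of $\orclass(\mathbb{G})$ induced by $\orclass(\mathfrak{m})$.

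First I would replace the cover $Y$ of $\mathbb{G}$ by $\tilde{Y} \defeq \mathcal{P}$, viewed as a surjective submersion onto $X$ via the composition $\mathcal{P} \to Y \to X$. Refining $\mathbb{G}$ along this map yields a canonically isomorphic super bundle 2-gerbe $\tilde{\mathbb{G}}$. On the two-fold fiber product $\tilde{Y}^{[2]} = \mathcal{P} \times_Y \mathcal{P}$, the two projections provide tautological trivializations of $\mathcal{P}_1$ and $\mathcal{P}_2$; combined with $\phi$, these supply a canonical trivialization $\tilde{\tau}$ of the orientation bundle $\orclass(\tilde{\mathcal{G}})$ of the pulled-back super bundle gerbe.

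By Remark~\ref{RemarkTrivializationInducesRefinement}, the trivialization $\tilde{\tau}$ induces a canonical refinement $r : \mathcal{G}' \to \tilde{\mathcal{G}}$, where $\mathcal{G}'$ is purely even; explicitly, $\mathcal{G}'$ has cover equal to the subcover of the defining cover of $\tilde{\mathcal{G}}$ on which the orientation class singled out by $\tilde{\tau}$ is attained. I would then verify that the multiplication $\tilde{\mathfrak{m}}$ of $\tilde{\mathbb{G}}$ restricts to a (grading preserving) morphism of purely even bundle gerbes $\mathcal{G}'_{23} \otimes \mathcal{G}'_{12} \to \mathcal{G}'_{13}$ over $\tilde{Y}^{[3]}$. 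This is exactly where the compatibility of $\phi$ with the bundle gerbe product of $\orclass(\mathbb{G})$ (rather than the mere existence of a trivialization of $\orclass(\mathcal{G})$) enters: it says that $\orclass(\tilde{\mathfrak{m}})$ intertwines $\tilde{\tau}_{23} \otimes \tilde{\tau}_{12}$ with $\tilde{\tau}_{13}$, so that $\tilde{\mathfrak{m}}$ preserves the selected subcovers and descends to the sought restriction. The associator $\tilde{a}$ of $\tilde{\mathbb{G}}$ then automatically restricts to an associator $a'$ for the resulting purely even structure, its cocycle condition over $\tilde{Y}^{[5]}$ being inherited from that of $\tilde{a}$.

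The main obstacle will be bookkeeping: one has to track the subcovers carefully through the partial semi-bisimplicial cover diagram \eqref{CoverDiagram} and verify that the selections furnished by $\tilde{\tau}$ are compatible at every level $\tilde{Y}^{[n]}$ simultaneously. Once this diagram-chase is carried out, the data $(\mathcal{G}', \mathfrak{m}', a')$ assemble into a purely even super bundle 2-gerbe $\mathbb{G}'$, and the composition of the refinement $\mathbb{G}' \to \tilde{\mathbb{G}}$ with the canonical isomorphism $\tilde{\mathbb{G}} \cong \mathbb{G}$ produces, in the sense of Definition~\ref{DefinitionIso2gerbe}, the desired isomorphism $\mathbb{G}' \to \mathbb{G}$.
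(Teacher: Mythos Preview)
Your proposal is correct and follows essentially the same route as the paper: enlarge the cover to the total space of the $\Z_2$-bundle $\mathcal{P}$, use the tautological trivializations of the pulled-back $\mathcal{P}_i$ together with $\phi$ to trivialize $\orclass(\tilde{\mathcal{G}})$, pass to the purely even refinement via Remark~\ref{RemarkTrivializationInducesRefinement}, and then invoke the compatibility of $\phi$ with $\orclass(\mathfrak{m})$ to ensure the multiplication descends. One slip: you write $\tilde{Y}^{[2]} = \mathcal{P} \times_Y \mathcal{P}$, but the fiber product must be taken over $X$, i.e.\ $\tilde{Y}^{[2]} = \mathcal{P} \times_X \mathcal{P}$; your argument about tautological sections of $\mathcal{P}_1$, $\mathcal{P}_2$ is correct on this larger space.
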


\begin{proof}
Let us denote the defining data for $\mathbb{G}$ as in Definition~\ref{DefinitionSuper2Gerbe}.
Let $\mathfrak{t}$ be a trivialization of $\orclass(\mathbb{G})$, consisting of a principal $\Z_2$-bundle $P$ over $Y$ and an isomorphism 
\begin{equation}
\label{IsoTauP}
\tau : P_2 \otimes \orclass(\mathcal{G}) \to P_1
\end{equation}
 over $Y^{[2]}$.
We define a bundle 2-gerbe isomorphic to $\tilde{\mathbb{G}}$ given in terms of the larger cover $P \to Y \to X$.
Denote by $\rho$ the projection $P \to Y$ and use the same notation for the map induced map $P^{[k]} \to Y^{[k]}$ on fiber products over $X$.
As the pullback $\rho^*P$ is canonically trivial (and so are the pullbacks $\rho^*P_i$ to $P^{[k]}$),
%
%
we obtain from \eqref{IsoTauP} a trivialization $\rho^*\tau : \orclass(\rho^*\mathcal{G}) \to \Z_2$ of $\rho^*\mathcal{G}$ over $P^{[2]}$.
As in the proof of Lemma~\ref{LemmaTrivialOrientationBundle} (see also Remark~\ref{RemarkTrivializationInducesRefinement}), this provides a purely even bundle gerbe $\tilde{\mathcal{G}}$ over $P^{[2]}$ together with a refinement $r : \tilde{\mathcal{G}} \to \rho^*\mathcal{G}$.

Now, there is a unique morphism $\tilde{\mathfrak{m}}$ of purely even super bundle gerbes over $P^{[3]}$ such that the diagram
\begin{equation}
\label{2cellInProof}
\begin{tikzcd}
\rho^*\mathcal{G}_{23} \otimes \rho^* \mathcal{G}_{12} 
\ar[d, equal] 
& &
  \tilde{\mathcal{G}}_{23} \otimes \tilde{\mathcal{G}}_{12} 
  	\ar[d, dashed, "\tilde{\mathfrak{m}}"]  
	\ar[ll, "r_{23} \otimes r_{12}"']
	\\
	\rho^*(\mathcal{G}_{23} \otimes \mathcal{G}_{12})
	\ar[r, "\rho^*\mathfrak{m}"]
	 &
	 \rho^*\mathcal{G}_{13}
	 &
	 \ar[l, "r_{13}"']
 \tilde{\mathcal{G}}_{13}.
\end{tikzcd}
\end{equation}
strictly commutes after turning all refinements into morphisms of super bundle gerbes using \eqref{InclusionRefinements}. 
This gives the bundle 2-gerbe multiplication of $\tilde{\mathbb{G}}$.
The associator $\tilde{\alpha}$ of $\tilde{\mathbb{G}}$ is just the pullback $\rho^*\alpha$ composed with various pullbacks of the 2-cell \eqref{2cellInProof}.
The cocycle condition for $\tilde{\alpha}$ then follows directly from that of $\alpha$.
Hence we constructed a super bundle 2-gerbe $\tilde{\mathbb{G}}$.
An isomorphism $\tilde{\mathbb{G}} \cong \mathbb{G}$ is easily constructed using the refinement $r$.

We claim that $\tilde{\mathbb{G}}$ is purely even.
By construction, its defining bundle gerbe $\tilde{\mathcal{G}}$ is purely even.
Observe that it follows from the coherence condition \eqref{CoherenceMorphismsBundleGerbes} for morphisms of super bundle gerbes that a morphism between purely even bundle gerbes is either purely even or purely odd.
To see that $\tilde{\mathfrak{m}}$ is purely even, it suffices to show that the induced map $\orclass(\tilde{\mathfrak{m}})$ on orientation bundles is the identity (using the fact that the orientation bundle of a purely even bundle gerbe is \emph{canonically} trivial).
To this end, we use that the morphism $\tau$ satisfies the compatibility condition
\begin{equation*}
\begin{tikzcd}
P_3 \otimes \orclass(\mathcal{G}_{23}) \otimes \orclass(\mathcal{G}_{12}) \ar[r, "\tau_{23}"] \ar[d, equal] &
P_2 \otimes \orclass(\mathcal{G}_{12})  \ar[r, "\tau_{12}"] &
  P_1 \ar[d, equal] \\
  P_3 \otimes \orclass(\mathcal{G}_{23} \otimes \mathcal{G}_{12}) \ar[r, "1 \otimes \orclass(\mathfrak{m})"'] & P_3 \otimes \orclass(\mathcal{G}_{13}) \ar[r, "\tau_{13}"] &
 P_1.
\end{tikzcd}
\end{equation*}
Pulling back this diagram to $P^{[3]}$ along $\rho$, we get the commutative diagram
\begin{equation*}
\begin{tikzcd}
\orclass(\rho^*\mathcal{G}_{23}) \otimes \orclass(\rho^*\mathcal{G}_{12}) 
\ar[rr, "\rho^*\tau_{23} \otimes \rho^*\tau_{12}"] \ar[d, equal] 
& &
  {\Z}_2 \ar[d, equal] \\
  \orclass(\rho^*\mathcal{G}_{23} \otimes \rho^*\mathcal{G}_{12}) \ar[r, "1 \otimes \orclass(\rho^*\mathfrak{m})"'] 
  &
  \orclass(\rho^*\mathcal{G}_{13}) \ar[r, "\rho^*\tau_{13}"] &
 {\Z}_2
\end{tikzcd}
\end{equation*}
using the canonical trivializations of $\rho^*P_i$.
Now, by Remark~\ref{RemarkTrivializationInducesRefinement}, the refinement $r$ has the property that the induced map $\orclass(r) : \underline{\Z}_2 = \orclass(\tilde{\mathcal{G}}) \to \orclass(\rho^*\mathcal{G})$ equals the inverse of $\rho^*\tau$ under the canonical trivializations of $\rho^*P_2$ and $\rho^*P_1$.
Hence the last diagram equals the diagram obtained from applying the orientation bundle functor to the diagram \eqref{2cellInProof}, except that the latter has $\orclass(\tilde{\mathfrak{m}})$ as rightmost vertical arrow.
Comparing the diagrams, this shows that $\orclass(\tilde{\mathfrak{m}})$ is the identity, as desired.
\end{proof}
%
%

\begin{remark}
It is well known that ungraded bundle 2-gerbes $\mathbb{G}$ over a manifold $X$ have a \emph{characteristic class}
\begin{equation*}
  \CCclass(\mathbb{G}) \in H^4(X, \Z).
\end{equation*}
Any cohomology class in $H^4(X, \Z)$ is represented by a bundle 2-gerbe and two isomorphic bundle 2-gerbes share the same characteristic class; see, e.g., \cite[\S7]{StevensonBundle2Gerbes}.
Such a class is not defined for \emph{super} bundle 2-gerbes, as it is \emph{not} true that any super bundle 2-gerbe gives an ungraded bundle 2-gerbe by forgetting the grading -- in contrast to the case of super bundle (1-)gerbes.
This is due to the fact that the braiding isomorphism of the category of \emph{super} bundle gerbes is used at various places in the definition of super bundle 2-gerbes.
\end{remark}

\bibliography{literature.bib}

\end{document}